\def\Xint#1{\mathchoice
   {\XXint\displaystyle\textstyle{#1}}
   {\XXint\textstyle\scriptstyle{#1}}
   {\XXint\scriptstyle\scriptscriptstyle{#1}}
   {\XXint\scriptscriptstyle\scriptscriptstyle{#1}}
   \!\int}
\def\XXint#1#2#3{{\setbox0=\hbox{$#1{#2#3}{\int}$}
     \vcenter{\hbox{$#2#3$}}\kern-.5\wd0}}
\def\dashint{\Xint-}
\newtheorem{theorem}{Theorem}[section]
\newtheorem{lemma}[theorem]{Lemma}
\newtheorem{corollary}[theorem]{Corollary}
\newtheorem{propo}[theorem]{Proposition}
\theoremstyle{definition}
\theoremstyle{remark}
\newtheorem{remark}[theorem]{Remark}
\numberwithin{equation}{section}
\def\Xint#1{\mathchoice
   {\XXint\displaystyle\textstyle{#1}}%
   {\XXint\textstyle\scriptstyle{#1}}%
   {\XXint\scriptstyle\scriptscriptstyle{#1}}%
   {\XXint\scriptscriptstyle\scriptscriptstyle{#1}}%
   \!\int}
\def\XXint#1#2#3{{\setbox0=\hbox{$#1{#2#3}{\int}$}
     \vcenter{\hbox{$#2#3$}}\kern-.5\wd0}}
\def\dashint{\Xint-}
\begin{document}

\title[On the Dirichlet problem for degenerate Beltrami equations]{On the Dirichlet problem for degenerate Beltrami equations}

\author{Vladimir Ryazanov}
\address{Institute of Applied Mathematics and Mechanics, National Academy of Sciences of Ukraine,
 Donetsk, Ukraine}
\email{vlryazanov1@rambler.ru , vl.ryazanov1@gmail.com}

\author{Ruslan Salimov}
\address{Institute of Applied Mathematics and Mechanics, National Academy of Sciences of Ukraine, Donetsk, Ukraine}
\email{ruslan623@yandex.ru , salimov07@rambler.ru}

\author{Uri Srebro}
\address{Technion, Haifa, Israel}
\email{srebro@math.technion.ac.il , srebro@techunix.technion.ac.il}

\author{Eduard Yakubov}
\address{Holon Institute of Technology, Holon, Israel}
\email{yakubov@hit.ac.il , eduardyakubov@gmail.com}

\subjclass[2000]{Primary 30C65; Secondary 30C75}
\date{\today}


\keywords{Dirichlet problem, degenerate Beltrami equations, regular, pseudoregular, multi-valued solutions}

\begin{abstract}

We show that every homeomorphic $W^{1,1}_{\rm loc}$ solution $f$ to
a Beltrami equation $\overline{\partial}f=\mu\,\partial f$ in a
domain $D\subset\Bbb C$ is the so--called lower
$Q-$homeo\-mor\-phism with $Q(z)=K^T_{\mu}(z, z_0)$ where
$K^T_{\mu}(z, z_0)$ is the tangent dilatation of $f$ with respect to
an arbitrary point $z_0\in {\overline{D}}$ and develop the theory of
the boundary behavior of such solutions. Then, on this basis, we
show that, for wide classes of degenerate Beltrami equations
$\overline{\partial}f=\mu\,\partial f$, there exist regular
solutions of the Dirichlet problem in arbitrary Jordan domains in
$\Bbb C$ and pseudoregular and multi-valued solutions in arbitrary
finitely  connected domains in $\Bbb C$ bounded by mutually disjoint
Jordan curves.
\end{abstract}

\maketitle

\tableofcontents

\section{Introduction}

Let $D$ be a domain in the complex plane ${\Bbb C}$, i.e., a
connected open subset of ${\Bbb C}$, and let $\mu:D\to{\Bbb C}$ be
a measurable function with $|\mu(z)|<1$ a.e. (almost everywhere) in
$D$. A {\bf Beltrami equation} is an equation of the form
\begin{equation}\label{eqBeltrami} f_{\bar z}=\mu(z)\,f_z\end{equation} where
$f_{\bar z}=\overline{\partial}f=(f_x+if_y)/2$,
$f_{z}=\partial f=(f_x-if_y)/2$, $z=x+iy$, and $f_x$ and $f_y$ are
partial derivatives of $f$ in $x$ and $y$, correspondingly. The
function $\mu$ is called the {\bf complex coefficient} and
\begin{equation}\label{eqKPRS1.1}K_{\mu}(z)=\frac{1+|\mu(z)|}{1-|\mu(z)|}\end{equation}
the {\bf dilatation quotient} of the equation (\ref{eqBeltrami}).
The Beltrami equation (\ref{eqBeltrami}) is said to be {\bf
degenerate} if ${\rm ess}\,{\rm sup}\,K_{\mu}(z)=\infty$. The
existence of homeomorphic $W^{1,1}_{\rm loc}$ solutions was recently
established to many degenerate Beltrami equations, see, e.g.,
related references in the recent monographs \cite{GRSY^*} and
\cite{MRSY} and in the surveys \cite{GRSY} and \cite{SY}.

Given a point $z_0$ in $\Bbb C$, we also apply here the quantity
\begin{equation}\label{eqTangent} K^T_{\mu}(z,z_0)\ =\
\frac{\left|1-\frac{\overline{z-z_0}}{z-z_0}\mu (z)\right|^2}{1-|\mu (z)|^2} \end{equation} that is called the
{\bf tangent dilatation} of the Beltrami equation (\ref{eqBeltrami}) with respect to $z_0$, see, e.g.,
\cite{RSY$_3$}, cf. the corresponding terms and notations in \cite{And3, GMSV$_2$} and \cite{Le}. Note that
\begin{equation}\label{eqConnect} K^{-1}_{\mu}(z)\leqslant K^T_{\mu}(z,z_0) \leqslant K_{\mu}(z)
\end{equation} for all $z_0\in \Bbb C$ and $z\in D$.

Let us clear a geometric sense of the tangent dilatation. A point
$z\in\Bbb C$ is called a \textbf{regular point} for a mapping
$f:D\to\Bbb C$ if  $f$ is differentiable at $z$ and $J_{f}(z)\neq
0.$ Given ${\omega
}\in\mathbb{C},$ $|{\omega }|=1,$ the \textbf{derivative in the direction} ${%
\omega }$ of the mapping $f$ at the point $z$ is
\begin{equation}  \label{Ring_1.7A}
\partial_\omega f(z)\ =\ \lim\limits_{t\to +0}\ \frac{f(z+t\cdot {\omega })-f(z)%
}{t}\ .
\end{equation}

The \textbf{radial direction} at a point $z\in D$ with respect to
the center $z_0\in \mathbb{C},$ $z_0\ne z,$ is
\begin{equation}  \label{Ring_eq3.3}
{\omega }_0\ =\ {\omega }_0(z,z_0)\ =\ \frac{z-z_0}{|z-z_0|}\ .
\end{equation} The \textbf{tangent direction} at a point $z\in D$ with respect to
the center $z_0\in \mathbb{C},$ $z_0\ne z,$ is ${\tau } =i{\omega }%
_0 .$ The \textbf{tangent dilatation} of $f$ at $z$ with respect to
$z_0$ is the quantity
\begin{equation}  \label{Ring_eq3.2y}
K^T(z, z_0, f)\ \colon =\ \frac{|\partial_T^{z_0}
f(z)|^2}{|J_f(z)|},
\end{equation}
where $\partial_T^{z_0} f(z)$ is the derivative of $f$ at $z$ in the
direction ${\tau }$.

Note that if $z$ is a regular point of $f$ and $|{\mu }(z)|<1,$ ${\mu }%
(z)=f_{{\overline {{z}}}}/f_z,$ then
\begin{equation}  \label{Ring_eq1.5y}
K^T(z,z_0,f)\ =\ K^T_{{\mu }}(z,z_0),
\end{equation}
i.e.
\begin{equation}  \label{Ring_eq1.5P}
K^T_{{\mu }}(z,z_0)\ =\ \frac{|\partial_T^{z_0} f(z)|^2}{|J_f(z)|}\
.
\end{equation}
Indeed, the equalities (\ref{Ring_eq1.5y}) and (\ref{Ring_eq1.5P})
follow directly from the calculations
\begin{equation}  \label{Ring_eq1.5osm}
\partial^{z_0}_T f=\frac{1}{r}\left(\frac{\partial f }{\partial z}\cdot \frac{%
\partial z}{\partial \vartheta}+\frac{\partial f }{\partial {\overline {{z}}}%
}\cdot \frac{\partial {\overline {{z}}} }{\partial \vartheta}\right)
=
i\cdot \left(\frac{z-z_0}{|z-z_0|}\cdot f_z-\frac{\overline{z-z_0}} {|z-z_0|}%
\cdot f_{{\overline {{z}}}}\right)
\end{equation}
where $r=|z-z_0|$ and $\vartheta = \mbox{arg}\, (z-z_ 0)$ because $J_f(z)=|f_{z}|^2-|f_{{%
\overline {{z}}}}|^2.$

\medskip

Recall that every holomorphic (analytic) function $f$ in a domain
$D$ in ${\Bbb C}$ satisfies the simplest Beltrami equation
\begin{equation}\label{eqKPRS1.2}f_{\bar z}=0\end{equation} with
$\mu(z)\equiv0$. If a holomorphic function $f$ given in the unit
disk ${\Bbb D}=\{z\in \mathbb{C}: |z|<1 \}$ is continuous in its closure, then by the Schwarz
formula \begin{equation}\label{eqKPRS1.3}f(z)=i\,{\rm
Im}\,f(0)+\frac{1}{2\pi i}\int\limits_{|\zeta|=1}{\rm
Re}\,f(\zeta)\cdot\frac{\zeta+z}{\zeta-z}\frac{d\zeta}{\zeta}\,,\end{equation}
see, e.g., Section 8, Chapter III, Part 3 in \cite{HuCo}. Thus,
the holomorphic function $f$ in the unit disk ${\Bbb D}$ is
determinated, up to a purely imaginary additive constant $ic$,
$c={\rm Im}\,f(0)$, by its real part $\varphi(\zeta)={\rm
Re}\,f(\zeta)$ on the boundary of ${\Bbb D}$.

\medskip

Hence the {\bf Dirichlet problem} for the Beltrami equation
(\ref{eqBeltrami}) in a domain $D\subset{\Bbb C}$ is the problem on
the existence of a continuous function $f:D\to{\Bbb C}$ having
partial derivatives of the first order a.e., satisfying
(\ref{eqBeltrami}) a.e. and such that
\begin{equation}\label{eqGrUsl}\lim\limits_{z\to\zeta}{\rm
Re}\,f(z)=\varphi(\zeta)\qquad\forall\ \zeta\in\partial D\end{equation} for a prescribed continuous function
$\varphi:\partial D\to{\Bbb R},$ see, e.g., \cite{Boj} and \cite{Vekua}. It is obvious that if $f$ is a solution
of this problem, then the function $F(z)=f(z)+ic$, $c\in{\Bbb R}$, is also  so.

Recall that a mapping $f:D\to{\Bbb C}$ is called {\bf discrete} if
the preimage  $f^{-1}(y)$ consists of isolated points for every
$y\in{\Bbb C}$, and {\bf open} if $f$ maps every open set
$U\subseteq D$ onto an open set in ${\Bbb C}$.

If $\varphi(\zeta)\not\equiv{\rm const}$, then the {\bf regular
solution}  of the Dirichlet problem (\ref{eqGrUsl}) for the Beltrami
equation (\ref{eqBeltrami}) is a continuous, discrete and open
mapping $f:D\to{\Bbb C}$ of the Sobolev class $W_{\rm loc}^{1,1}$
with its Jacobian $J_f(z)=|f_z|^2-|f_{\bar z}|^2\neq0$ a.e.
satisfying (\ref{eqBeltrami}) a.e. and the condition
(\ref{eqGrUsl}). The regular solution of such a problem with
$\varphi(\zeta)\equiv c$, $\zeta\in\partial D$, for the Beltrami
equation (\ref{eqBeltrami}) is the function $f(z)\equiv c$, $z\in
D$.

Examples given in the paper \cite{Dy} show that, even in the case of the simplest domain, the unit disk ${\Bbb
D}$ in ${\Bbb C}$, any power of the integrability of the dilatation $K_{\mu}$ does not guarantee the existence
of the regular solutions of the Dirichlet problem (\ref{eqGrUsl}) for the Beltrami equation (\ref{eqBeltrami})
if $\varphi(\zeta)\not\equiv{\rm const}$. The corresponding criteria has a much more complicated nature.

\medskip

Boundary value problems for the Beltrami equations are due to the
well-known Riemann dissertation in the case of $\mu(z)=0$ and to the
papers of Hilbert (1904, 1924) and Poincare (1910) for the
corresponding Cauchy--Riemann system. The Dirichlet problem was well
studied for uniformly elliptic systems, see, e.g., \cite{Boj} and
\cite{Vekua}. The Dirichlet problem for degenerate Beltrami
equations in the unit disk was recently studied in \cite{Dy}. In
comparison with this work, our approach is based on estimates of the
modulus of dashed lines but not of paths under arbitrary
homeomorphic $W^{1,1}_{\rm loc}$ solutions of the Beltrami
equations.

\medskip
Recently in \cite{KPR}, it was showed that every homeomorphic
$W^{1,1}_{\rm loc}$ solution $f$ to a Beltrami equation
(\ref{eqBeltrami}) in a domain $D\subset\Bbb C$ is the so--called
lower $Q-$ho\-meo\-mor\-phism with $Q(z)=K_{\mu}(z)$ at an arbitrary
point $z_0\in {\overline{D}}$, and in \cite{KPR1}, it was formulated
new exixtence theorems for the Dirichlet problem to the Beltrami
equations in terms of $K_{\mu}(z)$. Here we show that $f$ is the
lower $Q-$homeomorphism with $Q(z)=K^T_{\mu}(z, z_0)$ at each point
$z_0\in {\overline{D}}$, see further Theorem \ref{thDIR1}. This is
the basis for developing the theory of the boundary behavior of
solutions that can be applied in tern to the research of various
boundary value problems for (\ref{eqBeltrami}). Namely, we prove,
for wide classes of degenerate Beltrami equations
(\ref{eqBeltrami}), that the Dirichlet problem (\ref{eqGrUsl}) has
regular solutions in an arbitrary Jordan domain and pseudoregular
and multi-valued solutions in an arbitrary finitely connected domain
bounded by a finite collection of mutually disjoint Jordan curves.
The main criteria are formulated by us in terms of the tangent
dilatations $K^T_{\mu}(z,z_0)$ which are more refined although the
corresponding criteria remain valid, in view of (\ref{eqConnect}),
for the usual dilatation $K_{\mu}(z)$, too.

\medskip

Throughout this paper, $B(z_0,r)=\{z\in{\Bbb C}:|z_0-z|<r\}$, ${\Bbb
D}=B(0,1)$, $S(z_0,r)=\{z\in{\Bbb C}:|z_0-z|=r\}$, $S(r)=S(0,r)$,
$R(z_0,r_1,r_2)=\{z\in{\Bbb C}:r_1<|z-z_0|<r_2\}$.

\section{Preliminaries}

Recall that a real-valued function $u$ in a domain $D$ in ${\Bbb
C}$ is said to be of {\bf bounded mean oscillation} in $D$, abbr.
$u\in{\rm BMO}(D)$, if $u\in L_{\rm loc}^1(D)$ and
\begin{equation}\label{lasibm_2.2_1}\Vert u\Vert_{*}:=
\sup\limits_{B}{\frac{1}{|B|}}\int\limits_{B}|u(z)-u_{B}|\,dm(z)<\infty\,,\end{equation}
where the supremum is taken over all discs $B$ in $D$, $dm(z)$
corresponds to the Lebesgue measure in ${\Bbb C}$ and
$$u_{B}={\frac{1}{|B|}}\int\limits_{B}u(z)\,dm(z)\,.$$ We write $u\in{\rm BMO}_{\rm loc}(D)$ if
$u\in{\rm BMO}(U)$ for every relatively compact subdomain $U$ of $D$
(we also write BMO or ${\rm BMO}_{\rm loc }$ if it is clear from the
context what $D$ is).

The class BMO was introduced by John and Nirenberg (1961) in the
paper \cite{JN} and soon became an important concept in harmonic
analysis, partial differential equations and related areas; see,
e.g., \cite{HKM} and \cite{RR}.
\medskip

A function $\varphi$ in BMO is said to have {\bf vanishing mean
oscillation}, abbr. $\varphi\in{\rm VMO}$, if the supremum in
(\ref{lasibm_2.2_1}) taken over all balls $B$ in $D$ with
$|B|<\varepsilon$ converges to $0$ as $\varepsilon\to0$. VMO has
been introduced by Sarason in \cite{Sarason}. There are a number of
papers devoted to the study of partial differential equations with
coefficients of the class VMO, see, e.g., \cite{CFL, ISbord,
MRV$^*$, Pal} and \cite{Ra}.

\medskip

\begin{remark}\label{rem1} Note that
$W^{\,1,2}\left({{D}}\right) \subset VMO \left({{D}}\right),$ see, e.g., \cite{BN}.
\end{remark}

\medskip

Following \cite{IR}, we say that a function $\varphi:D\to{\Bbb R}$
has {\bf finite mean oscillation} at a point $z_0\in D$, abbr.
$\varphi\in{\rm FMO}(z_0)$, if
\begin{equation}\label{FMO_eq2.4}\overline{\lim\limits_{\varepsilon\to0}}\ \ \
\dashint_{B(z_0,\varepsilon)}|{\varphi}(z)-\widetilde{\varphi}_{\varepsilon}(z_0)|\,dm(z)<\infty\,,\end{equation}
where \begin{equation}\label{FMO_eq2.5}
\widetilde{\varphi}_{\varepsilon}(z_0)=\dashint_{B(z_0,\varepsilon)}
{\varphi}(z)\,dm(z)\end{equation} is the mean value of the function
${\varphi}(z)$ over the disk $B(z_0,\varepsilon)$. Note that the
condition (\ref{FMO_eq2.4}) includes the assumption that $\varphi$
is integrable in some neighborhood of the point $z_0$. We say also
that a function $\varphi:D\to{\Bbb R}$ is of {\bf finite mean
oscillation in $D$}, abbr. $\varphi\in{\rm FMO}(D)$ or simply
$\varphi\in{\rm FMO}$, if $\varphi\in{\rm FMO}(z_0)$ for all points
$z_0\in D$. We write $\varphi\in{\rm FMO}(\overline{D})$ if
$\varphi$ is given in a domain $G$ in $\Bbb{C}$ such that
$\overline{D}\subset G$ and $\varphi\in{\rm FMO}(G)$.

\medskip

The following statement is obvious by the triangle inequality.

\medskip

\begin{propo}\label{FMO_pr2.1} If, for a  collection of numbers
$\varphi_{\varepsilon}\in{\Bbb R}$,
$\varepsilon\in(0,\varepsilon_0]$,
\begin{equation}\label{FMO_eq2.7}\overline{\lim\limits_{\varepsilon\to0}}\ \ \
\dashint_{B(z_0,\varepsilon)}|\varphi(z)-\varphi_{\varepsilon}|\,dm(z)<\infty\,,\end{equation} then $\varphi $
is of finite mean oscillation at $z_0$.
\end{propo}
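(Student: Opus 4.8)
The plan is to compare the centered means $\widetilde{\varphi}_{\varepsilon}(z_0)$ appearing in the definition of $\mathrm{FMO}(z_0)$ with the given numbers $\varphi_{\varepsilon}$, and then to conclude via the triangle inequality, exactly as the remark preceding the statement suggests.

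First I would note that the finiteness of the upper limit in (\ref{FMO_eq2.7}) presupposes that $\varphi-\varphi_{\varepsilon}$, and hence $\varphi$ itself, is integrable over $B(z_0,\varepsilon)$ for all sufficiently small $\varepsilon>0$; this disposes of the integrability requirement implicit in the definition of $\mathrm{FMO}(z_0)$ and makes all the averages below meaningful. Next, since $\widetilde{\varphi}_{\varepsilon}(z_0)$ is by (\ref{FMO_eq2.5}) the average of $\varphi$ over $B(z_0,\varepsilon)$, one has
\[
|\widetilde{\varphi}_{\varepsilon}(z_0)-\varphi_{\varepsilon}|\ =\ \left|\dashint_{B(z_0,\varepsilon)}\bigl(\varphi(z)-\varphi_{\varepsilon}\bigr)\,dm(z)\right|\ \leqslant\ \dashint_{B(z_0,\varepsilon)}|\varphi(z)-\varphi_{\varepsilon}|\,dm(z)\,.
\]

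Then, applying the pointwise triangle inequality $|\varphi(z)-\widetilde{\varphi}_{\varepsilon}(z_0)|\leqslant|\varphi(z)-\varphi_{\varepsilon}|+|\varphi_{\varepsilon}-\widetilde{\varphi}_{\varepsilon}(z_0)|$, averaging in $z$ over $B(z_0,\varepsilon)$, and using the previous estimate, I obtain
\[
\dashint_{B(z_0,\varepsilon)}|\varphi(z)-\widetilde{\varphi}_{\varepsilon}(z_0)|\,dm(z)\ \leqslant\ 2\,\dashint_{B(z_0,\varepsilon)}|\varphi(z)-\varphi_{\varepsilon}|\,dm(z)\,.
\]
Taking $\overline{\lim\limits_{\varepsilon\to0}}$ of both sides and invoking the hypothesis (\ref{FMO_eq2.7}) gives that the left-hand upper limit is finite, which is precisely (\ref{FMO_eq2.4}); thus $\varphi\in\mathrm{FMO}(z_0)$.

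There is no genuine obstacle here: as the authors indicate, the statement is obvious from the triangle inequality, the content of the argument being reduced to the two displayed inequalities above. The only point deserving a word of care is the initial remark that finiteness of the upper limit in (\ref{FMO_eq2.7}) already entails local integrability of $\varphi$ near $z_0$, so that the mean value $\widetilde{\varphi}_{\varepsilon}(z_0)$ and all the averages in the proof are well defined.
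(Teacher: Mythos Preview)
Your proof is correct and follows exactly the route the paper intends: the authors state that the proposition is ``obvious by the triangle inequality'' and give no further details, and your argument is precisely the standard triangle-inequality computation that makes this explicit.
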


\medskip

In particular choosing here  $\varphi_{\varepsilon}\equiv0$,
$\varepsilon\in(0,\varepsilon_0]$, we obtain the following.

\medskip

\begin{corollary}\label{FMO_cor2.1} If, for a point $z_0\in D$,
\begin{equation}\label{FMO_eq2.8}\overline{\lim\limits_{\varepsilon\to 0}}\ \ \
\dashint_{B(z_0,\varepsilon)}|\varphi(z)|\,dm(z)<\infty\,,
\end{equation} then $\varphi$ has finite mean oscillation at
$z_0$. \end{corollary}

\medskip

Recall that a point $z_0\in D$ is called a {\bf Lebesgue point} of
a function $\varphi:D\to{\Bbb R}$ if $\varphi$ is integrable in a
neighborhood of $z_0$ and \begin{equation}\label{FMO_eq2.7a}
\lim\limits_{\varepsilon\to 0}\ \ \ \dashint_{B(z_0,\varepsilon)}
|\varphi(z)-\varphi(z_0)|\,dm(z)=0\,.\end{equation} It is known
that, almost every point in $D$ is a Lebesgue point for every
function $\varphi\in L^1(D)$. Thus we have by Proposition
\ref{FMO_pr2.1} the following corollary.

\medskip

\begin{corollary}\label{FMO_cor2.7b} Every
locally integrable function $\varphi:D\to{\Bbb R}$ has a finite mean oscillation at almost every point in $D$.
\end{corollary}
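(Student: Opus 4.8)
The plan is to derive the corollary directly from Proposition \ref{FMO_pr2.1} together with the Lebesgue differentiation theorem quoted just above it. The only genuine point needing attention is the gap between ``locally integrable'' in the statement and ``$\varphi\in L^1(D)$'' in the cited differentiation fact, so the first step is a routine localization.

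First I would exhaust $D$ by an increasing sequence of relatively compact subdomains $U_1\subset U_2\subset\cdots$ with $\bigcup_k U_k=D$; on each $U_k$ one has $\varphi|_{U_k}\in L^1(U_k)$. Since a countable union of sets of measure zero again has measure zero, it suffices to show, for each fixed $k$, that almost every $z_0\in U_k$ is a point of finite mean oscillation of $\varphi$.

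Next, for $\varphi|_{U_k}\in L^1(U_k)$ the Lebesgue differentiation theorem gives that almost every $z_0\in U_k$ is a Lebesgue point, that is, $\lim_{\varepsilon\to0}\dashint_{B(z_0,\varepsilon)}|\varphi(z)-\varphi(z_0)|\,dm(z)=0$, where for all sufficiently small $\varepsilon$ the disc $B(z_0,\varepsilon)$ lies in $U_k$. Applying Proposition \ref{FMO_pr2.1} with the constant collection $\varphi_{\varepsilon}\equiv\varphi(z_0)$ — whose hypothesis (\ref{FMO_eq2.7}) is precisely the (even stronger) vanishing of this limit, cf. (\ref{FMO_eq2.7a}) — we conclude that $\varphi\in{\rm FMO}(z_0)$ for a.e. $z_0\in U_k$, hence for a.e. $z_0\in D$.

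As for difficulty: there is essentially no obstacle. The substantive content is entirely carried by the Lebesgue differentiation theorem, which is invoked as known; the reduction to relatively compact subdomains and the substitution into Proposition \ref{FMO_pr2.1} are both mechanical.
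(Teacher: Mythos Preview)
Your proposal is correct and follows essentially the same route as the paper: both invoke the Lebesgue differentiation theorem to get that a.e.\ point is a Lebesgue point, then apply Proposition~\ref{FMO_pr2.1} with the constant family $\varphi_\varepsilon\equiv\varphi(z_0)$. Your explicit localization via an exhaustion by relatively compact subdomains is a careful addition that the paper leaves implicit, but the argument is otherwise identical.
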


\medskip

\begin{remark}\label{FMO_rmk2.13a} Note that the function $\varphi(z)=\log\left(1/|z|\right)$
belongs to BMO in the unit disk $\Delta$, see, e.g., \cite{RR}, p.
5, and hence also to FMO. However,
$\widetilde{\varphi}_{\varepsilon}(0)\to\infty$ as
$\varepsilon\to0$, showing that condition (\ref{FMO_eq2.8}) is only
sufficient but not necessary for a function $\varphi$ to be of
finite mean oscillation at $z_0$. Clearly, ${\rm BMO}(D)\subset{\rm
BMO}_{\rm loc}(D)\subset{\rm FMO}(D)$ and as well-known ${\rm
BMO}_{\rm loc}\subset L_{\rm loc}^p$ for all $p\in[1,\infty)$, see,
e.g., \cite{JN} or \cite{RR}. However, FMO is not a subclass of
$L_{\rm loc}^p$ for any $p>1$ but only of $L_{\rm loc}^1$. Thus, the
class FMO is much more wide than ${\rm BMO}_{\rm loc}$.\end{remark}

\medskip

Versions of the next lemma has been first proved for the class BMO
in \cite{RSY$_1$}. For the FMO case, see the papers \cite{IR, RS,
RSY$_7$, RSY$_4$}  and the monographs \cite{GRSY} and \cite{MRSY}.

\medskip

\begin{lemma}\label{lem13.4.2} Let $D$ be a domain in ${\Bbb C}$ and let
$\varphi:D\to{\Bbb R}$ be a  non-negative function  of the class
${\rm FMO}(z_0)$ for some $z_0\in D$. Then
\begin{equation}\label{eq13.4.5}\int\limits_{\varepsilon<|z-z_0|<\varepsilon_0}\frac{\varphi(z)\,dm(z)}
{\left(|z-z_0|\log\frac{1}{|z-z_0|}\right)^2}=O\left(\log\log\frac{1}{\varepsilon}\right)\
\quad\text{as}\quad\varepsilon\to 0\end{equation} for some $\varepsilon_0\in(0,\delta_0)$ where
$\delta_0=\min(e^{-e},d_0)$, $d_0=\sup\limits_{z\in D}|z-z_0|$. \end{lemma}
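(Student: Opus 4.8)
The plan is to reduce the estimate to a dyadic decomposition of the punctured disc $R(z_0,\varepsilon,\varepsilon_0)$ together with the defining property of $\mathrm{FMO}(z_0)$. First I would fix $\varepsilon_0\in(0,\delta_0)$ small enough that the mean values $\widetilde{\varphi}_\varepsilon(z_0)$ are controlled: by the hypothesis $\varphi\in\mathrm{FMO}(z_0)$ there is a constant $C$ and a radius $\varepsilon_0$ so that $\dashint_{B(z_0,\rho)}|\varphi-\widetilde{\varphi}_\rho(z_0)|\,dm \le C$ for all $\rho\le\varepsilon_0$, and one checks (this is the standard companion estimate, proved via the triangle inequality comparing $\widetilde{\varphi}_\rho$ and $\widetilde{\varphi}_{2\rho}$ over annuli) that $\widetilde{\varphi}_\rho(z_0)=O\!\left(\log\frac1\rho\right)$ as $\rho\to0$. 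Consequently $\dashint_{B(z_0,\rho)}\varphi\,dm = O\!\left(\log\frac1\rho\right)$, i.e. $\int_{B(z_0,\rho)}\varphi\,dm = O\!\left(\rho^2\log\frac1\rho\right)$.

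Next I would split the integration region into the dyadic annuli $A_k = R(z_0, 2^{-k-1}r, 2^{-k}r)$ for a suitable starting radius $r\le\varepsilon_0$ and $k=0,1,\dots,N$ where $N\asymp \log\frac1\varepsilon$ (up to an additive constant accounting for $r$). On $A_k$ the weight satisfies $|z-z_0|\asymp 2^{-k}r$ and $\log\frac{1}{|z-z_0|}\asymp k$ for $k$ large (more precisely $\log\frac{1}{|z-z_0|} \ge c\,k$ once $2^{-k}r < e^{-1}$, which is where the restriction $\delta_0\le e^{-e}$ enters). Hence
\begin{equation*}
\int_{A_k}\frac{\varphi(z)\,dm(z)}{\left(|z-z_0|\log\frac{1}{|z-z_0|}\right)^2}
\ \le\ \frac{C'}{(2^{-k}r)^2\,k^2}\int_{B(z_0,2^{-k}r)}\varphi\,dm
\ \le\ \frac{C''}{(2^{-k}r)^2\,k^2}\,(2^{-k}r)^2\log\frac{1}{2^{-k}r}
\ \le\ \frac{C'''}{k^2}\cdot k\ =\ \frac{C'''}{k},
\end{equation*}
using the bound on $\int_{B(z_0,2^{-k}r)}\varphi\,dm$ from the previous step and $\log\frac{1}{2^{-k}r}\asymp k$. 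Summing over $k$ from (a fixed) $k_0$ up to $N$ gives $\sum_{k=k_0}^{N}\frac{C'''}{k} = O(\log N) = O\!\left(\log\log\frac1\varepsilon\right)$; the finitely many terms with $k<k_0$ contribute a bounded amount that is absorbed into the $O$. Finally I would note that the tail of the region with $\varepsilon < |z-z_0| < r$ that is not exactly dyadic, and the passage from the annulus $\{\varepsilon<|z-z_0|<\varepsilon_0\}$ to the dyadic cover, only costs a bounded multiplicative constant, so the stated conclusion follows.

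The main obstacle is the first step — establishing $\widetilde{\varphi}_\rho(z_0)=O\!\left(\log\frac1\rho\right)$ from the $\mathrm{FMO}$ condition — since $\mathrm{FMO}$ is strictly weaker than $\mathrm{BMO}$ and one only has the limsup in \eqref{FMO_eq2.4} finite rather than a uniform bound; one must choose $\varepsilon_0$ so that the defining quantity stays below a fixed constant on all smaller discs, and then telescope the comparison of consecutive dyadic means. Once that logarithmic growth of the means is in hand, the dyadic summation is routine, and the harmonic-series-to-$\log\log$ bookkeeping is exactly what produces the $O\!\left(\log\log\frac1\varepsilon\right)$ on the right-hand side of \eqref{eq13.4.5}.
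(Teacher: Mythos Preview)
The paper does not actually prove this lemma; it is stated without proof and attributed to the references \cite{IR, RS, RSY$_7$, RSY$_4$} and the monographs \cite{GRSY} and \cite{MRSY}. Your proof plan is correct and is essentially the standard argument found in those sources: fix $\varepsilon_0$ so that the FMO oscillation bound holds on all smaller discs, telescope the comparison of consecutive dyadic means to obtain $\widetilde{\varphi}_\rho(z_0)=O(\log(1/\rho))$, then decompose the annulus dyadically and sum the resulting $1/k$ contributions to produce the $\log\log$. The only place worth making explicit is the telescoping inequality itself: from $\dashint_{B(z_0,\rho)}|\varphi-\widetilde\varphi_\rho|\le C$ for $\rho\le\varepsilon_0$ one gets
\[
|\widetilde\varphi_\rho-\widetilde\varphi_{2\rho}|\ \le\ \dashint_{B(z_0,\rho)}|\varphi-\widetilde\varphi_{2\rho}|\,dm\ \le\ 4\,\dashint_{B(z_0,2\rho)}|\varphi-\widetilde\varphi_{2\rho}|\,dm\ \le\ 4C,
\]
which after $k$ halvings gives $\widetilde\varphi_{2^{-k}\varepsilon_0}\le\widetilde\varphi_{\varepsilon_0}+4Ck$, i.e.\ the $O(\log(1/\rho))$ growth you asserted. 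With that in hand your dyadic summation goes through exactly as written.
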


\medskip

Recall connections between some integral conditions, see, e.g.,
\cite{RSY}.

\medskip

\begin{theorem}\label{th5.555} Let $Q:{\Bbb D}\to[0,\infty]$ be a
measurable function such that \begin{equation}\label{eq5.555} \int\limits_{\Bbb
D}\Phi(Q(z))\,dm(z)<\infty\end{equation} where $\Phi:[0,\infty]\to[0,\infty]$ is a non-decreasing convex
function such that \begin{equation}\label{eq3.333a}
\int\limits_{\delta}^{\infty}\frac{d\tau}{\tau\Phi^{-1}(\tau)}=\infty\end{equation} for some $\delta>\Phi(0)$.
Then \begin{equation}\label{eq3.333A} \int\limits_0^1\frac{dr}{rq(r)}=\infty\end{equation} where $q(r)$ is the
average of the function $Q(z)$ over the circle $|z|=r$.
\end{theorem}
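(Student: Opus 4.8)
The plan is to reduce the assertion, via Jensen's inequality, to a one-dimensional statement about the logarithmic means of $q$, and then to argue by contradiction after slicing $(0,1)$ into intervals of unit logarithmic length. First I would apply Jensen's inequality for the convex function $\Phi$ on each circle $|z|=r$, equipped with the probability measure $d\vartheta/2\pi$, getting $\Phi(q(r))\leqslant\frac{1}{2\pi}\int_0^{2\pi}\Phi(Q(re^{i\vartheta}))\,d\vartheta$. Multiplying by $r$, integrating in $r$ over $(0,1)$ and passing to polar coordinates by Tonelli's theorem yields $\int_0^1\Phi(q(r))\,r\,dr\leqslant\frac{1}{2\pi}\int_{\mathbb{D}}\Phi(Q(z))\,dm(z)=:M<\infty$. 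Two preliminary remarks dispose of degenerate cases: if $\Phi$ is not finite throughout $[0,\infty)$ then, by (\ref{eq5.555}), $Q$ is essentially bounded and (\ref{eq3.333A}) is immediate; and $\Phi$ is necessarily unbounded, for otherwise $\Phi^{-1}(\tau):=\sup\{t\geqslant0:\Phi(t)\leqslant\tau\}$ would be $+\infty$ for all large $\tau$ and (\ref{eq3.333a}) would fail. In particular $\Phi^{-1}$ is finite everywhere, and $\Phi^{-1}(\tau)\geqslant\Phi^{-1}(\delta)>0$ for $\tau\geqslant\delta$.

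Next I would suppose, towards a contradiction, that $\int_0^1 dr/(rq(r))<\infty$; note that then $q>0$ a.e. Write $(0,1)=\bigcup_{n\geqslant0}I_n$ with $I_n=(e^{-(n+1)},e^{-n})$, so that $\int_{I_n}dr/r=1$ and $d\mu_n:=(dr/r)|_{I_n}$ is a probability measure on $I_n$; put $q_n:=\int_{I_n}q\,d\mu_n$ and $M_n:=\int_{I_n}\Phi(q(r))\,r\,dr$, so that $\sum_n M_n=M$. Now I would apply Jensen twice more. Convexity of $\Phi$ gives $\Phi(q_n)\leqslant\int_{I_n}\Phi(q(r))\,dr/r$, and since $r^{-2}<e^{2n+2}$ on $I_n$ the right-hand side is $\leqslant e^{2n+2}M_n$; summing, $\sum_n e^{-2n}\Phi(q_n)\leqslant e^2 M<\infty$. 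Convexity of $t\mapsto1/t$ gives $\int_{I_n}dr/(rq(r))=\int_{I_n}(1/q(r))\,d\mu_n\geqslant1/q_n$; summing, $\sum_{n\geqslant0}1/q_n\leqslant\int_0^1 dr/(rq(r))<\infty$.

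Finally I would combine these two summability statements with the growth hypothesis. From $\sum_n e^{-2n}\Phi(q_n)<\infty$ we have $e^{-2n}\Phi(q_n)\to0$, so $\Phi(q_n)<e^{2n}$, and hence $q_n\leqslant\Phi^{-1}(e^{2n})$, for all $n\geqslant N_0$ (enlarging $N_0$ if needed so that also $e^{2N_0}\geqslant\delta$); therefore $\sum_{n\geqslant N_0}1/\Phi^{-1}(e^{2n})\leqslant\sum_n1/q_n<\infty$. On the other hand, by monotonicity of $\Phi^{-1}$, $\int_{e^{2n}}^{e^{2(n+1)}}d\tau/(\tau\Phi^{-1}(\tau))\leqslant\frac{1}{\Phi^{-1}(e^{2n})}\int_{e^{2n}}^{e^{2(n+1)}}d\tau/\tau=\frac{2}{\Phi^{-1}(e^{2n})}$, while $\int_{\delta}^{e^{2N_0}}d\tau/(\tau\Phi^{-1}(\tau))<\infty$ since there $\tau\geqslant\delta$ and $\Phi^{-1}(\tau)\geqslant\Phi^{-1}(\delta)>0$. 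Summing the blocks over $n\geqslant N_0$ would then give $\int_{\delta}^{\infty}d\tau/(\tau\Phi^{-1}(\tau))<\infty$, contradicting (\ref{eq3.333a}). This contradiction establishes (\ref{eq3.333A}).

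The step I expect to be the main obstacle is the weight conversion carried out in the middle paragraph. The integrability hypothesis (\ref{eq5.555}) bounds $\Phi(q)$ only in the measure $r\,dr$, which degenerates as $r\to0$, while the conclusion (\ref{eq3.333A}) concerns the scale-invariant measure $dr/r$. Cutting $(0,1)$ into pieces of \emph{unit} logarithmic length is precisely what makes the passage between the two cost no more than the factor $e^{2n}$ on $I_n$; that geometric factor is then absorbed on one side by the summability $\sum_n M_n=M$ and matched on the other side by testing $\int_{\delta}^{\infty}d\tau/(\tau\Phi^{-1}(\tau))$ against the same geometric progression $\tau=e^{2n}$ — which is exactly why (\ref{eq3.333a}) is the right growth condition on $\Phi$.
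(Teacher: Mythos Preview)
Your argument is correct. Note, however, that the paper does not actually prove Theorem~\ref{th5.555}: it is quoted without proof from \cite{RSY} (see the line ``Recall connections between some integral conditions, see, e.g., \cite{RSY}'' immediately preceding the statement), so there is no in-paper argument to compare against.

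Your route --- Jensen on circles to pass from $Q$ to $q(r)$, a decomposition of $(0,1)$ into intervals $I_n$ of unit logarithmic length, two further applications of Jensen (for $\Phi$ and for $t\mapsto 1/t$) on each $I_n$, and finally a Cauchy-condensation comparison of $\sum 1/\Phi^{-1}(e^{2n})$ with the integral in (\ref{eq3.333a}) --- is a clean and self-contained proof, and your identification of the weight conversion $r\,dr \leftrightarrow dr/r$ as the crux is exactly right. One small point you skated over: the second Jensen step (for $\Phi$ on $(I_n,\mu_n)$) presupposes $q_n<\infty$. This is easily justified: an unbounded convex non-decreasing $\Phi$ necessarily satisfies a linear lower bound $\Phi(t)\geqslant ct-C$ with $c>0$, so the finiteness of $\int_{I_n}\Phi(q(r))\,dr/r\leqslant e^{2n+2}M_n$ forces $\int_{I_n}q(r)\,dr/r=q_n<\infty$. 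With that remark added, the proof is complete.
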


\medskip

\begin{remark}\label{remeq333F} Setting $H(t)=\log\Phi(t)$, note that by Theorem 2.1 in \cite{RSY}
the condition (\ref{eq3.333a}) is equivalent to each of the
conditions
\begin{equation}\label{eq333Frer}\int\limits_{\Delta}^{\infty}H'(t)\,\frac{dt}{t}=\infty,
\end{equation} \begin{equation}\label{eq333F}\int\limits_{\Delta}^{\infty}
\frac{dH(t)}{t}=\infty\,,\end{equation} 
\begin{equation}\label{eq333B}
\int\limits_{\Delta}^{\infty}H(t)\,\frac{dt}{t^2}=\infty\,\end{equation} for some $\Delta>0$, 
\begin{equation}\label{eq333C} \int\limits_{0}^{\delta}H\left(\frac{1}{t}\right)\,{dt}=\infty\end{equation} for
some $\delta>0$, \begin{equation}\label{eq333D}
\int\limits_{\Delta_*}^{\infty}\frac{d\eta}{H^{-1}(\eta)}=\infty\end{equation} for some $\Delta_*>H(+0)$. Here,
the integral in (\ref{eq333F}) is understood as the Lebesgue--Stieltjes integral and the integrals in
(\ref{eq3.333a}) and (\ref{eq333B})--(\ref{eq333D}) as the ordinary Lebesgue integrals. \end{remark}

\medskip

The following lemma is also useful, see Lemma 2.1 in \cite{KR$_2$}
or Lemma 9.2 in \cite{MRSY}.

\medskip

\begin{lemma}\label{lem8.4.1} Let $(X,\mu)$ be a measure space with a
finite measure $\mu$, $p\in(1,\infty)$ and let
$\varphi:X\to(0,\infty)$ be a measurable function. Set
\begin{equation}\label{eq8.4.2}I(\varphi,p)=\inf\limits_{\alpha}\int\limits_{X}\varphi\,\alpha^p\,d\mu\end{equation}
where the infimum is taken over all measurable functions
$\alpha:X\to[0,\infty]$ such that \begin{equation}\label{eq8.4.3}
\int\limits_{X}\alpha\,d\mu=1\,.\end{equation} Then
\begin{equation}\label{eq8.4.4}I(\varphi,p)=\left[\int\limits_{X}\varphi^{-\lambda}\,d\mu\right]^{-\frac{1}{\lambda}}\end{equation}
where \begin{equation}\label{eq8.4.5}\lambda=\frac{q}{p}\,,\qquad
\frac{1}{p}+\frac{1}{q}=1\,,\end{equation} i.e.
$\lambda=1/(p-1)\in(0,\infty)$. Moreover, the infimum in
(\ref{eq8.4.2}) is attained only for the function
\begin{equation}\label{eq8.4.6}\alpha_0=C\cdot\varphi^{-\lambda}\end{equation} where
\begin{equation}\label{eq8.4.7}C=\left(\int\limits_{X}\varphi^{-\lambda}\,d\mu\right)^{-1}\,.\end{equation}
\end{lemma}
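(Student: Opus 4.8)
The plan is to treat this as the classical $L^p$--$L^q$ extremal problem: both the value of $I(\varphi,p)$ and the uniqueness of the minimizer will come out of a single application of H\"older's inequality together with its equality case. Throughout put $\lambda=q/p=1/(p-1)\in(0,\infty)$, so that the elementary identities $\lambda p=q$, $\lambda+1=q$, $p-1=1/\lambda$, $p/q=1/\lambda$, $1-\lambda p=-\lambda$ all follow from $1/p+1/q=1$; and set $M=\int_X\varphi^{-\lambda}\,d\mu\in(0,\infty]$, noting $M>0$ automatically since $\varphi>0$ everywhere.

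First I would establish the lower bound. For any admissible $\alpha$, factor $\alpha=(\alpha\varphi^{1/p})\cdot\varphi^{-1/p}$ and apply H\"older's inequality with exponents $p$ and $q$ to the constraint:
\[ 1=\int_X\alpha\,d\mu\ \leq\ \left(\int_X\varphi\,\alpha^p\,d\mu\right)^{1/p}\left(\int_X\varphi^{-\lambda}\,d\mu\right)^{1/q}=\left(\int_X\varphi\,\alpha^p\,d\mu\right)^{1/p}M^{1/q}. \]
Raising to the power $p$ and using $p/q=1/\lambda$ gives $\int_X\varphi\,\alpha^p\,d\mu\geq M^{-1/\lambda}$, hence $I(\varphi,p)\geq M^{-1/\lambda}$ (with the convention $\infty^{-1/\lambda}=0$, so this is vacuous when $M=\infty$).

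Next, assuming $M<\infty$, I would verify that $\alpha_0=C\varphi^{-\lambda}$ with $C=M^{-1}$ realizes this bound. It is admissible since $\int_X\alpha_0\,d\mu=CM=1$, and
\[ \int_X\varphi\,\alpha_0^p\,d\mu=C^p\int_X\varphi^{\,1-\lambda p}\,d\mu=C^p\int_X\varphi^{-\lambda}\,d\mu=C^pM=C^{\,p-1}=M^{-1/\lambda}, \]
so $I(\varphi,p)=M^{-1/\lambda}$ and $\alpha_0$ is a minimizer. For uniqueness, equality in the H\"older step forces $\varphi\,\alpha^p$ and $\varphi^{-\lambda}$ to be proportional a.e., say $\alpha^p=t\,\varphi^{-\lambda-1}$ a.e. with $t\geq0$; since $(\lambda+1)/p=\lambda$ this means $\alpha=t^{1/p}\varphi^{-\lambda}$ a.e., and $t^{1/p}>0$ because $\int_X\alpha\,d\mu=1$, whereupon the normalization forces $t^{1/p}=C$. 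Thus any minimizer equals $\alpha_0$ a.e.

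The only point needing a separate (easy) argument is the degenerate case $M=\infty$: then the asserted infimum is $0$ and is not attained, but it is still an infimum, as one sees from the minimizing sequence $\alpha_n=c_n^{-1}\varphi^{-\lambda}\chi_{A_n}$, where $A_n=\{z\in X:1/n\leq\varphi(z)\leq n\}\uparrow X$ and $c_n=\int_{A_n}\varphi^{-\lambda}\,d\mu\uparrow\infty$ (each $c_n$ finite since $\varphi^{-\lambda}\leq n^\lambda$ on $A_n$ and $\mu(X)<\infty$): indeed $\int_X\varphi\,\alpha_n^p\,d\mu=c_n^{-p}c_n=c_n^{-1/\lambda}\to0$. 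I do not expect a real obstacle here; the only thing that must be handled with care is the exponent bookkeeping linking $\lambda$, $p$, $q$, and the precise statement of the H\"older equality condition that delivers uniqueness.
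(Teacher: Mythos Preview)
Your argument is correct: the H\"older inequality applied to the factorization $\alpha=(\alpha\varphi^{1/p})\cdot\varphi^{-1/p}$ gives the sharp lower bound, the candidate $\alpha_0=C\varphi^{-\lambda}$ realizes it, the equality case of H\"older yields uniqueness, and your truncation sequence handles the degenerate case $M=\infty$ cleanly. The exponent bookkeeping is right throughout.

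There is, however, nothing in the paper to compare against: the paper does \emph{not} prove this lemma at all but simply quotes it from the literature (Lemma~2.1 in \cite{KR$_2$} or Lemma~9.2 in \cite{MRSY}). So your proof is not merely an alternative route --- it is a self-contained argument where the paper offers none. The proof in the cited sources is essentially the same H\"older/equality-case argument you give, so your approach is the expected one.
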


\medskip

Finally, recall that the {\bf (conformal) modulus} of a family
$\Gamma$ of paths $\gamma$ in ${\Bbb C}$ is the quantity
\begin{equation}\label{eqModul} M(\Gamma)=\inf_{\varrho\in{\rm adm}\,\Gamma}\int\limits_{\Bbb
C}\varrho^2(z)\,dm(z)\end{equation} where a Borel function
$\varrho:{\Bbb C}\to[0,\infty]$ is {\bf admissible} for $\Gamma$,
write $\varrho\in{\rm adm}\,\Gamma$, if
\begin{equation}\label{eqAdm}\int\limits_{\gamma}\varrho\,ds\geqslant1\quad\forall\ \gamma\in\Gamma\,.\end{equation}
Here $s$ is a natural parameter of the are length on $\gamma$.

Later on, given sets $A,$ $B$ and $C$ in $\mathbb{C},$ $\Delta (A, B; C)$ denotes a family of all paths $\gamma
: [a, b] \to \mathbb{C}$ joining $A $ and $B$ in $C,$ i.e. $\gamma (a) \in A,$ $\gamma (b) \in B$ and $\gamma
(t) \in C$ for all $t \in (a,b).$

\section{On regular domains}

First of all, recall the following topological notion. A domain
$D\subset{\Bbb C}$ is said to be {\bf locally connected at a
point} $z_0\in\partial D$ if, for every neighborhood $U$ of the
point $z_0$, there is a neighborhood $V\subseteq U$ of $z_0$ such
that $V\cap D$ is connected. If this condition holds for all
$z_0\in \partial D$, then $D$ is said to be locally connected on
$\partial D$. Note that every Jordan domain $D$ in ${\Bbb C}$ is
locally connected on  $\partial D$, see, e.g., \cite{Wi}, p. 66.

We say that $\partial D$ is {\bf weakly flat at a point}
$z_0\in\partial D$ if, for every neighborhood $U$ of the point
$z_0$ and every number $P>0$, there is a neighborhood $V\subset U$
of $z_0$ such that \begin{equation}\label{eq1.5KR}
M(\Delta(E,F;D))\geqslant P\end{equation} for all continua $E$ and
$F$ in $D$ intersecting $\partial U$ and $\partial V$. We say that
$\partial D$ is {\bf weakly flat} if it is weakly flat at each
point $z_0\in\partial D$.


We also say that a point $z_0\in\partial D$ is {\bf strongly
accessible} if, for every neigh\-bor\-hood $U$ of the point $z_0$,
there exist a compactum $E$ in $D$, a neigh\-bor\-hood $V\subset U$
of $z_0$ and a number $\delta>0$ such that
\begin{equation}\label{eq1.6KR}M(\Delta(E,F;D))\geqslant\delta\end{equation}
for all continua $F$ in $D$ intersecting $\partial U$ and
$\partial V$. We say that $\partial D$ is {\bf strongly
accessible} if each point $z_0\in\partial D$ is strongly
accessible.

Here, in the definitions of strongly accessible and weakly flat
boundaries, one can take as neighborhoods $U$ and $V$ of a point
$z_0$ only balls (closed or open) centered at $z_0$ or only
neighborhoods of $z_0$ in another fundamental system of
neighborhoods of $z_0$. These conceptions can also be extended in a
natural way to the case of $\overline{\Bbb C}$ and $z_0=\infty$.
Then we must use the corresponding neighborhoods of $\infty$.

It is easy to see that if a domain $D$ in ${\Bbb C}$ is weakly flat
at a point $z_0\in\partial D$, then the point $z_0$ is strongly
accessible from $D$. Moreover, it was proved by us that if a domain
$D$ in ${\Bbb C}$ is weakly flat at a point $z_0\in\partial D$, then
$D$ is locally connected at $z_0$, see, e.g., Lemma 5.1 in
\cite{KR$_2$} or Lemma 3.15 in \cite{MRSY}.

The notions of strong accessibility and weak flatness at boundary
points of a domain in ${\Bbb C}$ defined in \cite{KR$_0$}, see also
\cite{KR$_2$} and \cite{RS}, are localizations and
ge\-ne\-ra\-li\-za\-tions of the corresponding notions introduced in
\cite{MRSY$_5$} and \cite{MRSY$_6$}, cf. with the properties $P_1$
and $P_2$ by V\"ais\"al\"a in \cite{Va} and also with the
quasiconformal accessibility and the quasiconformal flatness by
N\"akki in \cite{Na$_1$}. Many theorems on a homeomorphic extension
to the boundary of quasiconformal mappings and their generalizations
are valid under the condition of weak flatness of boundaries. The
condition of strong accessibility plays a similar role for a
continuous extension of the mappings to the boundary.

A domain $D\subset{\Bbb C}$ is called a {\bf quasiextremal distance
domain}, abbr. {\bf QED-domain}, see \cite{GM}, if
\begin{equation}\label{e:7.1}M(\Delta(E,F;{\Bbb C})\leqslant K\cdot
M(\Delta(E,F;D))\end{equation} for some $K\geqslant1$ and all pairs
of nonintersecting continua $E$ and $F$ in $D$.

It is well known, see, e.g., Theorem 10.12 in \cite{Va}, that
\begin{equation}\label{eqKPR2.2}M(\Delta(E,F;{\Bbb C}))\geqslant\frac{2}{\pi}\log{\frac{R}{r}}\end{equation}
for any sets $E$ and $F$ in ${\Bbb C}$ intersecting all the
circles $S(z_0,\rho)$, $\rho\in(r,R)$. Hence a QED-domain has a
weakly flat boundary. One example in \cite{MRSY}, Section 3.8,
shows that the inverse conclusion is not true even in the case of
simply connected domains in ${\Bbb C}$.

A domain $D\subset{\Bbb C}$ is called a {\bf uniform domain} if
each pair of points $z_1$ and $z_2\in D$ can be joined with a
rectifiable curve $\gamma$ in $D$ such that
\begin{equation}\label{e:7.2} s(\gamma)\leqslant
a\cdot|z_1-z_2|\end{equation} and \begin{equation}\label{e:7.3}
\min\limits_{i=1,2}\ s(\gamma(z_i,z))\leqslant b\cdot{\rm
dist}(z,\partial D)\end{equation} for all $z\in\gamma$ where
$\gamma(z_i,z)$ is the portion of $\gamma$ bounded by $z_i$ and
$z$, see \cite{MaSa}. It is known that every uniform domain is a
QED-domain but there exist QED-domains that are not uniform, see
\cite{GM}. Bounded convex domains and bounded domains with smooth
boundaries are simple examples of uniform domains and,
consequently, QED-domains as well as domains with weakly flat
boundaries.

It is also often met with the so-called Lipschitz domains in the
mapping theory and in the theory of differential equations. Recall
first that $\varphi:U\to{\Bbb C}$ is said to be a {\bf Lipcshitz
map} provided $|\varphi(z_1)-\varphi(z_2)|\leqslant M\cdot|z_1-z_2|$
for some $M<\infty$ and for all $z_1$ and $z_2\in U$, and a {\bf
bi-Lipcshitz map} if in addition
$M^*|z_1-z_2|\leqslant|\varphi(z_1)-\varphi(z_2)|$ for some $M^*>0$
and for all $z_1$ and $z_2\in U$. They say that $D$ in $\Bbb{C}$ is
a {\bf Lipschitz domain} if every point $z_0\in\partial D$ has a
neighborhood $U$ that can be mapped by a bi-Lipschitz homeomorphism
$\varphi$ onto the unit disk ${\Bbb D}$ in ${\Bbb C}$ such that
$\varphi(\partial D\cap U)$ is the intersection of ${\Bbb D}$ with
the real axis. Note that a bi-Lipschitz homeomorphism is
quasiconformal and, consequently, the modulus is quasiinvariant
under such a mapping. Hence the Lipschitz domains have weakly flat
boundaries.

\section{On estimates of modulus of dashed lines}

A continuous mapping $\gamma$ of an open subset $\Delta$ of the real
axis ${\Bbb R}$ or a circle into $D$ is called a {\bf dashed line},
see, e.g., Section 6.3 in \cite{MRSY}. Note that such a set $\Delta$
consists of a countable collection of mutually disjoint intervals in
${\Bbb R}$. This is the motivation for the term. The notion of the
modulus of a family $\Gamma$ of dashed lines $\gamma$ is defined
similarly to (\ref{eqModul}). We say that a property $P$ holds for
{\bf a.e.} (almost every) $\gamma\in\Gamma$ if a subfamily of all
lines in $\Gamma$ for which $P$ fails has the modulus zero, cf.
\cite{Fu}. Later on, we also say that a Lebesgue measurable function
$\varrho:{\Bbb C}\to[0,\infty]$ is {\bf extensively admissible} for
$\Gamma$, write $\varrho\in{\rm ext\,adm}\,\Gamma$, if (\ref{eqAdm})
holds for a.e. $\gamma\in\Gamma$, see, e.g., Section 9.2 in
\cite{MRSY}.

\medskip

\begin{theorem}\label{thDIR1} Let $f$ be a homeomorphic $W^{1,1}_{\rm
loc}$ solution of the Beltrami equation (\ref{eqBeltrami}) in a
domain $D\subseteq{\Bbb C}$. Then
\begin{equation}\label{LI} M(f\Sigma_{\varepsilon})\geqslant\inf\limits_{\varrho\in{\rm ext\,adm}\,\Sigma_{\varepsilon}}
\int\limits_{D}\frac{\varrho^2(z)}{K^T_{\mu}(z,z_0)}\,dm(z)\end{equation}
for all $z_0\in\overline{D}$, where
$\varepsilon\in(0,\varepsilon_0)$, $\varepsilon_0\in(0,d_0)$,
$d_0=\sup_{z\in D}|z-z_0|,$ and $\Sigma_{\varepsilon}$ denotes the
family of dashed lines consisting of all intersections of the
circles $S(z_0,r)$, $r\in(\varepsilon,\varepsilon_0)$, with $D$.
\end{theorem}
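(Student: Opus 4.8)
The plan is to relate the modulus of the image family $f\Sigma_\varepsilon$ to a weighted modulus in the source using the tangent dilatation as weight, by tracking how the linear measure along circles transforms under $f$. First I would fix $z_0 \in \overline{D}$ and recall the basic facts from the regularity theory of homeomorphic $W^{1,1}_{\rm loc}$ solutions of degenerate Beltrami equations: such $f$ is differentiable a.e.\ (by the Gehring--Lehto--Menchoff type argument, since $f$ is an open discrete $W^{1,1}_{\rm loc}$ map), satisfies $J_f \ge 0$ a.e., and the Beltrami equation $f_{\bar z} = \mu f_z$ holds a.e. The key geometric input is formula (\ref{Ring_eq1.5osm})--(\ref{Ring_eq1.5P}): at a.e.\ regular point $z$, the derivative of $f$ in the tangent direction $\tau = i\omega_0(z,z_0)$ has modulus satisfying $|\partial_T^{z_0} f(z)|^2 = K^T_\mu(z,z_0)\, |J_f(z)|$.

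The core of the argument is a Fubini-type / coarea decomposition over the circles $S(z_0,r)$, $r \in (\varepsilon, \varepsilon_0)$. The plan is: take any $\tilde\varrho \in {\rm adm}\, f\Sigma_\varepsilon$; we want to produce from it a function $\varrho \in {\rm ext\,adm}\, \Sigma_\varepsilon$ with $\int_D \varrho^2 / K^T_\mu(\cdot,z_0)\, dm \le \int_{\Bbb C} \tilde\varrho^2\, dm$. The natural candidate is the pullback along the tangent direction: on the set where $z$ is a regular point of $f$, set
\begin{equation}\label{eqPlanRho}
\varrho(z)\ =\ \tilde\varrho(f(z))\cdot |\partial_T^{z_0} f(z)|,
\end{equation}
and $\varrho = \infty$ (or $0$, handled carefully on the exceptional set) otherwise. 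Then for a.e.\ circle $S(z_0,r)$, restricting $f$ to that circle gives a locally absolutely continuous map on the dashed line $\sigma_r = S(z_0,r)\cap D$ (this absolute continuity on almost all circles is the standard ACL-type fact for $W^{1,1}_{\rm loc}$ maps, via Fubini in polar coordinates, and is precisely why dashed lines rather than paths appear), whose metric derivative with respect to arclength on $S(z_0,r)$ is exactly $|\partial_T^{z_0} f(z)|$. Hence by the change of variables along the curve,
\begin{equation}\label{eqPlanLine}
\int_{\sigma_r}\varrho\, ds\ =\ \int_{\sigma_r}\tilde\varrho(f(z))\,|\partial_T^{z_0}f(z)|\, ds\ \ge\ \int_{f\sigma_r}\tilde\varrho\, d\tilde s\ \ge\ 1,
\end{equation}
the last inequality because $f\sigma_r \in f\Sigma_\varepsilon$ and $\tilde\varrho$ is admissible; so $\varrho \in {\rm ext\,adm}\,\Sigma_\varepsilon$. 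For the area integral, pointwise a.e.\ in $D$ we have by (\ref{Ring_eq1.5P})
\begin{equation}\label{eqPlanArea}
\frac{\varrho^2(z)}{K^T_\mu(z,z_0)}\ =\ \frac{\tilde\varrho^2(f(z))\,|\partial_T^{z_0}f(z)|^2}{K^T_\mu(z,z_0)}\ =\ \tilde\varrho^2(f(z))\,|J_f(z)|,
\end{equation}
and integrating over $D$, using the change-of-variables inequality $\int_D (u\circ f)\,|J_f|\, dm \le \int_{f(D)} u\, dm$ valid for nonnegative Borel $u$ and a.e.-differentiable $W^{1,1}_{\rm loc}$ homeomorphisms (the Lebesgue area/Federer-type estimate; since $f$ is a homeomorphism this is in fact an equality up to the image of the non-differentiability set which has measure zero), yields $\int_D \varrho^2/K^T_\mu\, dm \le \int_{f(D)}\tilde\varrho^2\, dm \le \int_{\Bbb C}\tilde\varrho^2\, dm$. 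Taking the infimum over $\tilde\varrho \in {\rm adm}\, f\Sigma_\varepsilon$ on the right and noting the left side is bounded below by the infimum over ${\rm ext\,adm}\,\Sigma_\varepsilon$ gives (\ref{LI}).

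The main obstacle will be the careful handling of the degenerate/exceptional sets so that everything is rigorous: (i) the set $B$ where $f$ is not differentiable or where $J_f(z) = 0$ must be shown to contribute nothing — for the area integral because $|J_f| = 0$ there makes the integrand $0$ a.e.\ (after checking $K^T_\mu$ is finite a.e., which follows from $|\mu|<1$ a.e.), and for the line integrals because one must verify that a.e.\ circle $S(z_0,r)$ meets $B$ in a set of linear measure zero and that $f|_{S(z_0,r)}$ is absolutely continuous there — the latter requiring the polar-coordinate ACL argument together with Lusin's (N)-type control, or alternatively restricting to the subfamily of "good" circles which has full modulus in $\Sigma_\varepsilon$; (ii) justifying that the metric speed of $f$ along the circle equals $|\partial_T^{z_0} f(z)|$ at a.e.\ point of a.e.\ circle, which is where (\ref{Ring_eq1.5osm}) is used but needs differentiability of $f$ in the full (planar) sense, again holding only off a null set that must be negligible on a.e.\ circle. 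This is exactly the technology behind the notion of "lower $Q$-homeomorphism" alluded to in the introduction (the $K_\mu$-version in \cite{KPR}), so I would lean on those references for the measure-theoretic lemmas and focus the new content on substituting the radial-stretch computation by the tangential one via (\ref{Ring_eq1.5P}), which is what sharpens $K_\mu$ to $K^T_\mu$.
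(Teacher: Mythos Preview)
Your proposal is correct and follows essentially the same route as the paper's proof: pull back an admissible $\tilde\varrho$ for $f\Sigma_\varepsilon$ to $\varrho = (\tilde\varrho\circ f)\,|\partial_T^{z_0}f|$ on the regular set (and $0$ elsewhere), verify $\varrho\in\mathrm{ext\,adm}\,\Sigma_\varepsilon$ via absolute continuity of $f$ on a.e.\ circle (obtained from $W^{1,1}_{\rm loc}$ through the $\log(z-z_0)$ change of coordinates), and compare the area integrals using (\ref{Ring_eq1.5P}) together with the change-of-variables formula. The paper makes the line-integral and area steps rigorous by decomposing the regular set into countably many Borel pieces on which $f$ is bi-Lipschitz (Federer, Lemma~3.2.2) and arguing piecewise, and it explicitly checks that $\mathrm{length}\,(f(\gamma)\cap f(B_0))=0$ for a.e.\ circle~$\gamma$---precisely the measure-theoretic bookkeeping you anticipate in your final paragraph.
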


\medskip

\begin{proof} Fix  $z_0\in\overline{D}$. Let $B$ be a (Borel) set of
all points $z$ in $D$ where $f$ has a total differential with
$J_f(z)\neq0$. It is known that $B$ is the union of a countable
collection of Borel sets $B_l$, $l=1,2,\ldots$, such that
$f_l=f|_{B_l}$ is a bi-Lipschitz homeomorphism, see, e.g., Lemma
3.2.2 in \cite{Fe}. With no loss of generality, we may assume that
the $B_l$ are mutually disjoint. Denote also by $B_*$ the set of all
points $z\in D$ where $f$ has a total differential with $f'(z)=0$.

Note that the set $B_0=D\setminus(B\cup B_*)$ has the Lebesgue
measure zero in ${\Bbb C}$ by Gehring--Lehto--Menchoff theorem, see
\cite{GL} and \cite{Me}. Hence by Theorem 2.11 in \cite{KR$_2$}, see
also Lemma 9.1 in \cite{MRSY}, ${\rm length}(\gamma\cap B_0)=0$ for
a.e. paths $\gamma$ in $D$. Let us show that ${\rm
length}(f(\gamma)\cap f(B_0))=0$ for a.e. circle $\gamma$ centered
at $z_0$.

The latter follows from absolute continuity of $f$ on closed subarcs
of $\gamma\cap D$ for a.e. such a circle $\gamma$. Indeed, the class
$W^{1,1}_{\rm loc}$ is invariant with respect to local
quasi-isometries, see, e.g., Theorem 1.1.7 in \cite{Maz}, and the
functions in $W^{1,1}_{\rm loc}$ is absolutely continuous on lines,
see, e.g., Theorem 1.1.3 in \cite{Maz}. Applying say the
transformation of coordinates $\log(z-z_0)$, we come to the absolute
continuity on a.e. such circle $\gamma$. Fix $\gamma_0$ on which $f$
is absolutely continuous and ${\rm length}(\gamma_0\cap B_0)=0$.
Then ${\rm length}(f(\gamma)\cap f(B_0))={\rm length}f(\gamma_0\cap
B_0)$ and for every $\varepsilon>0$ there is an open set
$\omega_{\varepsilon}$ in $\gamma_0\cap D$ such that $\gamma_0\cap
B_0\subset\omega_{\varepsilon}$ with ${\rm
length}\,\omega_{\varepsilon}<\varepsilon$, see, e.g., Theorem
III(6.6) in \cite{Sa}. The open set $\omega_{\varepsilon}$ consists
of a countable collection of open arcs $\gamma_i$ of the circle
$\gamma_0$. By the construction $\sum\limits_i{\rm
length}\,\gamma_i<\varepsilon$ and by the absolute continuity of $f$
on $\gamma_0$ the sum $\delta=\sum\limits_i{\rm
length}\,f(\gamma_i)$ is arbitrarily small for small enough
$\varepsilon>0$. Hence ${\rm length}f(\gamma_0\cap B_0)=0$.

Thus, ${\rm length}(\gamma_*\cap f(B_0))=0$ where
$\gamma_*=f(\gamma)$ for a.e. circle $\gamma$ centered at $z_0$.
Now, let $\varrho_*\in{\rm adm}\,f(\Gamma)$ where $\Gamma$ is the
collection of all dashed lines $\gamma\cap D$ for such circles
$\gamma$ and $\varrho_*\equiv0$ outside $f(D)$. Set $\varrho\equiv0$
outside $D$ and on $B_0\cup B_{*}$ and
$$\varrho(z)\colon=\varrho_*(f(z))|\partial^{z_0}_Tf(z)|\qquad {\rm for}\ z\in B\,.$$

Arguing piecewise on $B_l$, we have by Theorem 3.2.5 under $m=1$ in
\cite{Fe} that
$$\int\limits_{\gamma}\varrho\,ds=\int\limits_{\gamma_*}\varrho_*\,ds_*\geqslant1
\qquad {\rm for\ a.e.}\ \gamma\in\Gamma$$ because ${\rm
length}(f(\gamma)\cap f(B_0))=0$ and ${\rm length}(f(\gamma)\cap
f(B_{*}))=0$ for a.e. $\gamma\in\Gamma$. Thus, $\varrho\in{\rm
ext\,adm}\,\Gamma$.

On the other hand, again arguing piecewise on $B_l$, we have  by
(\ref{Ring_eq1.5P}) that
$$\int\limits_{D}\frac{\varrho^2(z)}{K^T_{\mu}(z,z_0)}\,dm(z)\leqslant\int\limits_{f(D)}\varrho^2_*(w)\,dm(w)\,,$$
see also Lemma III.3.3 in \cite{LV}, because $\varrho(z)=0$ on
$B_0\cup B_*$. Thus, we obtain (\ref{LI}).
\end{proof}

\medskip

\begin{theorem}\label{th8.4.8} Let $f$ be a homeomorphic $W^{1,1}_{\rm
loc}$ solution of the Beltrami equation (\ref{eqBeltrami}) in a
domain $D\subseteq{\Bbb C}$.Then
\begin{equation}\label{eq8.4.9}
M(f\Sigma_{\varepsilon})\geqslant\int\limits_{\varepsilon}^{\varepsilon_0}
\frac{dr}{||K^T_{\mu}||_{1}(z_0,r)}\,,\quad\forall\
z_0\in\overline{D}\,,\ \varepsilon\in(0,\varepsilon_0)\,,\
\varepsilon_0\in(0,d_0),\end{equation} where $d_0=\sup_{z\in
D}|z-z_0|$, $\Sigma_{\varepsilon}$ denotes the family of dashed
lines consisting  of all the intersections of the circles
$S(z_0,r)$, $r\in(\varepsilon,\varepsilon_0)$, with $D$ and
\begin{equation}\label{eq8.4.11}
||K^T_{\mu}||_{1}(z_0,r):=\int\limits_{D(z_0,r)}K^T_{\mu}(z,z_0) \, |dz|\end{equation} is the norm in $L_1$ of
$K^T_{\mu}(z,z_0)$ over $D(z_0,r)=\{z\in D:|z-z_0|=r\}=D\cap S(z_0,r)$.
\end{theorem}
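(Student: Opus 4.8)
The plan is to read off (\ref{eq8.4.9}) directly from Theorem \ref{thDIR1} by estimating, fiber by fiber over the circles $S(z_0,r)$, the infimum on the right‐hand side of (\ref{LI}) with the help of the elementary extremal Lemma \ref{lem8.4.1}. Fix $z_0\in\overline D$. By Theorem \ref{thDIR1} it suffices to show that
$$\int\limits_{D}\frac{\varrho^2(z)}{K^T_{\mu}(z,z_0)}\,dm(z)\ \geqslant\ \int\limits_{\varepsilon}^{\varepsilon_0}\frac{dr}{||K^T_{\mu}||_{1}(z_0,r)}$$
for every $\varrho\in{\rm ext\,adm}\,\Sigma_{\varepsilon}$, and then to pass to the infimum over all such $\varrho$.

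First I would introduce polar coordinates $z=z_0+re^{i\vartheta}$ about $z_0$. Since $dm(z)=dr\,|dz|$, where $|dz|$ is the arc‐length element along $S(z_0,r)$, Fubini's theorem gives
$$\int\limits_{D}\frac{\varrho^2(z)}{K^T_{\mu}(z,z_0)}\,dm(z)\ =\ \int\limits_{\varepsilon}^{\varepsilon_0}\left(\,\int\limits_{D(z_0,r)}\frac{\varrho^2(z)}{K^T_{\mu}(z,z_0)}\,|dz|\right)dr\,.$$
For a.e. $r\in(\varepsilon,\varepsilon_0)$ the function $\varrho$ is measurable on $D(z_0,r)$ and, the exceptional set of $r$'s (equivalently, of dashed lines $\gamma\in\Sigma_\varepsilon$) having measure zero, $\int_{D(z_0,r)}\varrho\,|dz|\geqslant1$ by extensive admissibility. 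Moreover, since $|\mu(z)|<1$ a.e. and (\ref{eqConnect}) holds, $K^T_{\mu}(\cdot,z_0)$ is finite and strictly positive $|dz|$‐a.e. on $D(z_0,r)$ for a.e. $r$ (so that, after discarding the corresponding null subset, Lemma \ref{lem8.4.1} is applicable); also $D(z_0,r)=D\cap S(z_0,r)$ is a nonempty relatively open subset of $S(z_0,r)$ for every $r\in(0,d_0)$, hence of positive and finite length, so $0<||K^T_{\mu}||_{1}(z_0,r)\leqslant\infty$ for a.e. such $r$.

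Next I would apply Lemma \ref{lem8.4.1} on $X=D(z_0,r)$ with arc‐length measure, $p=2$ (hence $\lambda=1$) and $\varphi=1/K^T_{\mu}(\cdot,z_0)$: for every measurable $\alpha\colon D(z_0,r)\to[0,\infty]$ with $\int_{D(z_0,r)}\alpha\,|dz|=1$ one has $\int_{D(z_0,r)}\alpha^2/K^T_{\mu}(z,z_0)\,|dz|\geqslant\big[\int_{D(z_0,r)}K^T_{\mu}(z,z_0)\,|dz|\big]^{-1}=1/||K^T_{\mu}||_{1}(z_0,r)$. Taking $\alpha=\varrho/c$ with $c=\int_{D(z_0,r)}\varrho\,|dz|\geqslant1$ then yields, for a.e. $r\in(\varepsilon,\varepsilon_0)$,
$$\int\limits_{D(z_0,r)}\frac{\varrho^2(z)}{K^T_{\mu}(z,z_0)}\,|dz|\ \geqslant\ \frac{c^2}{||K^T_{\mu}||_{1}(z_0,r)}\ \geqslant\ \frac{1}{||K^T_{\mu}||_{1}(z_0,r)}\,;$$
here one may assume $c<\infty$, since if $c=\infty$ then by the Cauchy--Schwarz inequality either the left‐hand integral is already infinite or $||K^T_{\mu}||_{1}(z_0,r)=\infty$, and in both cases the inequality is trivial. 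Integrating this estimate in $r$ over $(\varepsilon,\varepsilon_0)$ and combining it with the two displays above proves the required inequality; passing to the infimum over $\varrho$ and invoking Theorem \ref{thDIR1} completes the argument.

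The proof is essentially a routine fusion of Theorem \ref{thDIR1} with Lemma \ref{lem8.4.1}; the only points that require attention are the fiberwise measurability of $\varrho$ and of $K^T_{\mu}(\cdot,z_0)$ on almost every circle $S(z_0,r)$, the finiteness and positivity of $K^T_{\mu}(\cdot,z_0)$ there (so that Lemma \ref{lem8.4.1} really applies), and the bookkeeping in the degenerate situations $||K^T_{\mu}||_{1}(z_0,r)\in\{0,\infty\}$ — all of which are settled by Fubini's theorem and by the hypothesis $|\mu|<1$ a.e. The substantive work has already been carried out in the proof of Theorem \ref{thDIR1}.
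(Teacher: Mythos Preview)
Your proof is correct and follows essentially the same route as the paper: Theorem \ref{thDIR1} combined with Fubini and the extremal Lemma \ref{lem8.4.1} applied fiberwise on each $D(z_0,r)$ with $p=2$, $\varphi=1/K^T_\mu(\cdot,z_0)$. The only cosmetic difference is that the paper normalizes by declaring that one may impose $\int_{D(z_0,r)}\varrho\,|dz|=1$ for a.e.\ $r$ and then writes the infimum as an integral of fiberwise infima, whereas you establish the inequality directly for each admissible $\varrho$ by rescaling with $c\geqslant1$ and handle the degenerate cases $c=\infty$, $\|K^T_\mu\|_1=\infty$ explicitly; your version is in fact slightly more careful on these edge cases.
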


\medskip

\begin{proof} Indeed, for every $\varrho\in{\rm ext\,adm}\,\Sigma_{\varepsilon}$,
$$A_{\varrho}(r)=\int\limits_{D(z_0,r)}\varrho(z)\,|dz|\neq0\quad{\rm a.e.\ in}\ \
r\in(\varepsilon,\varepsilon_0)$$ is a measurable function in the
parameter $r$, say by the Fubini theorem. Thus, we may request the
equality $A_{\varrho}(r)\equiv1$ a.e. in
$r\in(\varepsilon,\varepsilon_0)$ instead of (\ref{eqAdm}) and,
thus,
$$\inf\limits_{\varrho\in{\rm ext\,adm}\,\Sigma_{\varepsilon}}\int\limits_{D\cap
R_{\varepsilon}}\frac{\varrho^2(z)}{K^T_{\mu}(z,z_0)}\,dm(z)=
\int\limits_{\varepsilon}^{\varepsilon_0}\left(\inf\limits_{\alpha\in
I(r)}\int\limits_{D(z_0,r)}\frac{\alpha^2(z)}{K^T_{\mu}(z,z_0)}\,|dz|\right)dr$$
where $R_{\varepsilon}=R(z_0,\varepsilon,\varepsilon_0)$ and $I(r)$
denotes the set of all measurable functions $\alpha$ on the dashed
line $D(z_0,r)=S(z_0,r)\cap D$ such that
$$\int\limits_{D(z_0,r)}\alpha(z)\,|dz|=1\,.$$
Hence Theorem \ref{th8.4.8} follows by Lemma \ref{lem8.4.1} with
$X=D(z_0,r)$, the length  as a measure $\mu$ on $D(z_0,r)$,
$\varphi=\frac{1}{K^T_{\mu}}|_{D(z_0,r)}$ and $p=2$.
\end{proof}

\medskip

The following lemma will be useful, too. Here we use the standard
conventions $a/\infty=0$ for $a\neq \infty$ and $a/0=\infty$ if
$a>0$ and $a\cdot\infty=0$, see, e.g., \cite{Sa}, p. 6.

\medskip

\begin{lemma}\label{lem4cr} Under the notations of Theorem
\ref{th8.4.8}, if $\| K^T_{\mu}\|_1 (z_0, r) \neq \infty$ for a.e.
$r \in (\varepsilon, \varepsilon_0),$ then
\begin{equation}\label{ring}I^{-1}=\int\limits_{A\cap
D}K^T_{\mu}(z,z_0)\cdot\eta_0^2\left(|z-z_0|\right)\,dm(z)\leqslant\int\limits_{A\cap
D}K^T_{\mu}(z,z_0)\cdot\eta^2\left(|z-z_0|\right)\,dm(z)\end{equation}
for every measurable function $\eta:(\varepsilon,
\varepsilon_0)\to[0,\infty]$ such that
\begin{equation}\label{admr}\int\limits_{\varepsilon}^{\varepsilon_0}\eta(r)\,dr=1\,,\end{equation}
where $A=R(z_0,\varepsilon, \varepsilon_0)$  and
\begin{equation}\label{eta0}
\eta_0(r)=\frac{1}{I \|K^T_{\mu}\|_1(z_0,r)}\ ,\ \ \ \ \ \ \
I=\int\limits_{\varepsilon}^{\varepsilon_0}\frac{dr}{||K^T_{\mu}||_1(z_0,r)}\,.\end{equation}
\end{lemma}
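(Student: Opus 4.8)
The plan is to recognize Lemma \ref{lem4cr} as a direct application of Lemma \ref{lem8.4.1} to a suitable one-dimensional measure space. First I would reduce the two-dimensional integrals in (\ref{ring}) to one-dimensional integrals over the radial parameter $r\in(\varepsilon,\varepsilon_0)$. By the Fubini theorem (in polar coordinates centred at $z_0$), for any measurable $\eta$,
$$\int\limits_{A\cap D}K^T_{\mu}(z,z_0)\cdot\eta^2\left(|z-z_0|\right)\,dm(z)=\int\limits_{\varepsilon}^{\varepsilon_0}\eta^2(r)\left(\int\limits_{D(z_0,r)}K^T_{\mu}(z,z_0)\,|dz|\right)dr=\int\limits_{\varepsilon}^{\varepsilon_0}\eta^2(r)\,\|K^T_{\mu}\|_1(z_0,r)\,dr\,.$$
Thus the inequality to be proved is
$$\int\limits_{\varepsilon}^{\varepsilon_0}\eta_0^2(r)\,\|K^T_{\mu}\|_1(z_0,r)\,dr\ \leqslant\ \int\limits_{\varepsilon}^{\varepsilon_0}\eta^2(r)\,\|K^T_{\mu}\|_1(z_0,r)\,dr$$
for all $\eta$ satisfying (\ref{admr}), i.e. $\eta_0$ is a minimizer.

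Next I would set up the measure space for Lemma \ref{lem8.4.1}: take $X=(\varepsilon,\varepsilon_0)$ with Lebesgue measure $\mu$ (which is finite), $p=2$ (so $\lambda=1/(p-1)=1$), and $\varphi(r)=\|K^T_{\mu}\|_1(z_0,r)$. The hypothesis that $\|K^T_{\mu}\|_1(z_0,r)\neq\infty$ for a.e. $r$ guarantees $\varphi$ is finite a.e.; one should also note that $\varphi(r)\geqslant 0$ and in fact can be taken positive a.e. on the relevant set (on the null set where $\varphi=0$ or $\varphi=\infty$ the standard conventions $a\cdot\infty=0$, $a/0=\infty$, $a/\infty=0$ make the integrands behave correctly and such a set contributes nothing to either side). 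With the admissibility constraint $\int_\varepsilon^{\varepsilon_0}\eta\,dr=1$ matching (\ref{eq8.4.3}), Lemma \ref{lem8.4.1} tells us that the infimum of $\int_X\varphi\,\eta^2\,d\mu$ over such $\eta$ equals $\left[\int_\varepsilon^{\varepsilon_0}\varphi^{-1}(r)\,dr\right]^{-1}=I^{-1}$ (since $\lambda=1$), and is attained exactly at $\eta_0=C\varphi^{-1}$ with $C=\left(\int_\varepsilon^{\varepsilon_0}\varphi^{-1}(r)\,dr\right)^{-1}=I^{-1}$. That is precisely $\eta_0(r)=1/\bigl(I\,\|K^T_{\mu}\|_1(z_0,r)\bigr)$ as in (\ref{eta0}), and the minimum value $I^{-1}$ matches the left-hand side of (\ref{ring}). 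This yields (\ref{ring}) for every admissible $\eta$.

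The main obstacle I anticipate is purely bookkeeping rather than conceptual: one must verify that Lemma \ref{lem8.4.1}'s hypothesis $\varphi:X\to(0,\infty)$ can be arranged, since a priori $\|K^T_{\mu}\|_1(z_0,r)$ could vanish or be infinite on a set of positive measure. The assumption in the statement excludes the value $\infty$ on a full-measure set; for the value $0$, note that by (\ref{eqConnect}) one has $K^T_{\mu}(z,z_0)\geqslant K^{-1}_{\mu}(z)>0$, so $\|K^T_{\mu}\|_1(z_0,r)>0$ whenever $D(z_0,r)$ has positive length, and on the exceptional $r$-set where $D(z_0,r)$ is null the integrand on both sides of (\ref{ring}) vanishes. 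Restricting $X$ to the set where $0<\varphi<\infty$ (and checking the complement contributes nothing via the stated conventions) brings us exactly into the scope of Lemma \ref{lem8.4.1}, completing the argument.
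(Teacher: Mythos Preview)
Your argument is correct, but it follows a different route from the paper's own proof. You reduce the two-dimensional integral to a one-dimensional one via Fubini (as does the paper) and then invoke Lemma~\ref{lem8.4.1} with $X=(\varepsilon,\varepsilon_0)$, $p=2$, and $\varphi(r)=\|K^T_{\mu}\|_1(z_0,r)$ to identify both the infimum $I^{-1}$ and the minimizer $\eta_0$. The paper instead argues directly from Jensen's inequality: it writes $\eta=\alpha w$ with $w(r)=1/\|K^T_{\mu}\|_1(z_0,r)$, introduces the probability measure $d\nu=(1/I)\,w(r)\,dr$, and applies Jensen for $t\mapsto t^2$ to obtain $\bigl(\int\alpha^2\,d\nu\bigr)^{1/2}\geqslant\int\alpha\,d\nu=1/I$, which gives $C\geqslant I^{-1}$. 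Your approach is more economical since Lemma~\ref{lem8.4.1} has already been recorded; the paper's approach is more self-contained and makes the role of convexity explicit. One small point the paper treats more carefully than you do is the case $I=\infty$: then $\eta_0\equiv 0$ by the convention $a/\infty=0$, so $\eta_0$ fails (\ref{admr}) and Lemma~\ref{lem8.4.1} cannot be invoked to identify it as a minimizer, but the inequality (\ref{ring}) is trivial since the left-hand side vanishes. It would be worth stating this explicitly before restricting to $0<I<\infty$ and applying Lemma~\ref{lem8.4.1}.
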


\medskip

\begin{proof} If $I=\infty$, then the left hand side in (\ref{ring}) is equal to
zero and this inequality is obvious. Hence we may assume further that $I<\infty$. Note also that
$||K^T_{\mu}||_1(z_0,r) \neq \infty$ for a.e. $r\in(\varepsilon,\varepsilon_0)$ and, consequently, $I \neq 0.$
By (\ref{admr}), $\eta(r)\ne\infty$ a.e. in $(\varepsilon,\varepsilon_0)$. We have that $\eta(r)=\alpha(r)w(r)$
a.e. in $(\varepsilon,\varepsilon_0)$ where
$$\alpha(r)=||K_{\mu}^T||_1(z_0,r)\,\eta(r)\,,\quad
w(r)=\frac{1}{||K^T_{\mu}||_1(z_0,r)}\,.$$ By the Fubini theorem in
the polar coordinates $$C\colon=\int\limits_{A\cap D}
K^T_{\mu}(z,z_0)\cdot\eta^2(|z-z_0|)\,dm(z)=\int\limits_{\varepsilon}^{\varepsilon_0}\alpha^2(r)\cdot
w(r)\,dr\,.$$

By Jensen's inequality with the weight $w(r)$, see, e.g., Theorem 2.6.2 in \cite{Ran} applied to the convex
function $\varphi(t)=t^2$ in the interval $\Omega=(r_1,r_2)$ and to the probability measure
$$\nu(E)=\frac{1}{I}\int\limits_E w(r)\,dr\,,\quad E\subset\Omega\,,$$
we obtain that $$\left(\dashint\alpha^2(r)w(r)\,dr\right)^{1/2} \geqslant\dashint\alpha(r)w(r)\,dr=\frac{1}{I}$$
where we have also used the fact that $\eta(r)=\alpha(r)\,w(r)$ satisfies (\ref{admr}). Thus, $C\geqslant
I^{-1}$ and the proof is complete. \end{proof}

\section{On a continuous extension of solutions to the boundary}

\begin{theorem}\label{thKPR9.1} Let $D$ and $D'$ be  domains in
${\Bbb C}$, $D$ be bounded and locally connected on $\partial D$ and
$\partial D'$ be strongly accessible. Suppose that $f:D\to D'$ is a
homeomorphic $W^{1,1}_{\rm loc}$ solution of the Beltrami equation
(\ref{eqBeltrami}) such that
\begin{equation}\label{eq8.11.2a}\int\limits_{0}^{\delta(z_0)}
\frac{dr}{||K^T_{\mu}||_{1}(z_0,r)}=\infty \qquad \forall \, z_0 \in
\partial D
\end{equation} for
 some $\delta(z_0)\in(0,d(z_0))$ where
$d(z_0)=\sup_{z\in D}|z-z_0|$ and
\begin{equation}\label{eq8.11.4}
||K^T_{\mu}||_1(z_0,r)=\int\limits_{D\cap
S(z_0,r)}K^T_{\mu}(z,z_0)\,|dz|\,.\end{equation} Then $f$ can be
extended to $\overline D$ by continuity in $\overline{\Bbb C}$.
\end{theorem}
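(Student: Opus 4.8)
The plan is to use the lower modulus estimate (Theorem \ref{thDIR1}/Theorem \ref{th8.4.8}) to show that $f$ behaves like a so-called lower $Q$-homeomorphism with $Q(z)=K^T_\mu(z,z_0)$, and then invoke the standard boundary-extension machinery for such mappings. Concretely, fix $z_0\in\partial D$ and suppose, toward a contradiction, that $f$ does not extend continuously to $z_0$. Since $D$ is locally connected at $z_0$, the cluster set $C(z_0,f)$ is a continuum in $\overline{D'}$; if the extension fails, this continuum is nondegenerate. The idea is to pick two sequences $z_n\to z_0$ and $z_n'\to z_0$ in $D$ with $f(z_n)\to w_1$, $f(z_n')\to w_2$, $w_1\neq w_2$, and to derive a contradiction with strong accessibility of $\partial D'$ at a point of $C(z_0,f)\cap\partial D'$ (one checks $C(z_0,f)\subset\partial D'$ using that $f$ is a homeomorphism onto $D'$).

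The core estimate goes as follows. Using local connectedness of $D$ at $z_0$, for small $\varepsilon_0$ one can connect the two sequences inside $D\cap B(z_0,\varepsilon_0)$ by continua $E$ (near $z_0$, joining points of both sequences) while a fixed compactum and continuum $F$ far from $z_0$ witnesses strong accessibility: there is $\delta>0$ with $M(\Delta(fE,fF;D'))\geq\delta$ for all admissible $F$. On the other hand, the dashed-line family $\Sigma_\varepsilon$ of circular slices $S(z_0,r)\cap D$, $r\in(\varepsilon,\varepsilon_0)$, \emph{separates} $E$ from $F$ in $D$ once $E$ sits inside $B(z_0,\varepsilon)$ and $F$ outside $B(z_0,\varepsilon_0)$; hence $\Delta(fE,fF;D')$ is minorized by $f\Sigma_\varepsilon$ and $M(\Delta(fE,fF;D'))\leq M(f\Sigma_\varepsilon)$ — actually one wants the reverse comparison via the standard fact that a curve joining $E$ and $F$ must cross every separating circle, so any $\varrho$ admissible for $\Delta(fE,fF;D')$ gives, after integrating over the slices, control in terms of $M(f\Sigma_\varepsilon)$; this is where Theorem \ref{th8.4.8} enters. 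By Theorem \ref{th8.4.8},
\begin{equation*}
M(f\Sigma_\varepsilon)\geq\int_\varepsilon^{\varepsilon_0}\frac{dr}{\|K^T_\mu\|_1(z_0,r)},
\end{equation*}
which by hypothesis \eqref{eq8.11.2a} tends to $\infty$ as $\varepsilon\to0$. The delicate point: I need $M(f\Sigma_\varepsilon)$ to be \emph{small}, not large, to contradict $\delta>0$; so the correct deduction is that the divergence of the integral forces the slices near $z_0$ to carry arbitrarily large modulus, meaning $E$ and $F$ get ``modulus-separated'' — precisely, $M(\Delta(E,F;D))$ can be estimated \emph{above} by a quantity like $\bigl(\int_\varepsilon^{\varepsilon_0} dr/\|K^T_\mu\|_1(z_0,r)\bigr)^{-1}\to0$ using Lemma \ref{lem4cr} with the optimal $\eta_0$, and then transporting through $f$ via the lower estimate gives $M(\Delta(fE,fF;D'))\to 0$, contradicting $\delta>0$.

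So the precise chain is: (i) by Theorem \ref{thDIR1}, for any $\varrho_*\in\mathrm{adm}\,\Delta(fE,fF;D')$ the pullback is extensively admissible for $\Sigma_\varepsilon$, hence $\int_D \varrho^2(z)/K^T_\mu(z,z_0)\,dm(z)\geq$ (something), but more usefully one tests with $\varrho_*$ built from the extremal radial function; (ii) choose $\eta=\eta_0$ from Lemma \ref{lem4cr}, set $\varrho(z)=\eta_0(|z-z_0|)$ on the annulus $R(z_0,\varepsilon,\varepsilon_0)$ — this is admissible for $\Delta(E,F;D)$ since any joining curve projects onto all of $(\varepsilon,\varepsilon_0)$ radially — giving $M(\Delta(E,F;D))\leq I^{-1}=\bigl(\int_\varepsilon^{\varepsilon_0} dr/\|K^T_\mu\|_1(z_0,r)\bigr)^{-1}$; wait, that is not quite it either, since $\varrho=\eta_0$ is admissible for $\Delta(E,F;D)$ only with the plain length element and the bound is just $\int_A \eta_0^2\,dm$, not weighted. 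The honest route is: apply Theorem \ref{thDIR1} to get that $M(f\Sigma_\varepsilon)\geq 1/\int_{A\cap D}K^T_\mu(z,z_0)\eta^2(|z-z_0|)\,dm(z)$ for admissible $\eta$; but I instead need an \emph{upper} bound on $M(\Delta(fE,fF;D'))$. The resolution, which is the main obstacle to nail down carefully, is to combine the lower $Q$-homeomorphism property in its full form: $f$ being a lower $Q$-homeomorphism at $z_0$ means $M(f\Sigma_\varepsilon)\geq \inf_{\eta}\,\int K^T_\mu\eta^2 dm$ is replaced by its reciprocal, and the standard theorem (as in \cite{MRSY}, Ch.~9–13, or \cite{KR$_2$}) says: a lower $Q$-homeomorphism with $Q$ satisfying the divergence condition \eqref{eq8.11.2a} has the property that $M(\Delta(fE,fF;D'))\to 0$ whenever $E$ shrinks to $z_0$. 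That is exactly the ``main lemma'' I would cite; given it, the contradiction with strong accessibility is immediate.

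Thus the structure of the proof I propose is: reduce to showing $C(z_0,f)$ is a single point; assume not and extract $w_1\neq w_2$ in $C(z_0,f)\cap\partial D'$; use strong accessibility at, say, $w_1$ to get a compactum $E^*\subset D'$ and $\delta>0$; pull $E^*$ back to a compactum $K^*\subset D$ and take $F^*=f^{-1}(E^*)$; form continua $C_\varepsilon\subset D\cap B(z_0,\varepsilon)$ joining a tail of $\{z_n\}$ to a tail of $\{z_n'\}$ (possible by local connectedness, choosing both sequences to have cluster values $w_1$ and $w_2$ — here one may need to replace one sequence so that both $w_1,w_2$ are approached, but actually one only needs $C_\varepsilon$ to meet the $w_1$-approaching sequence and to leave $B(z_0,\varepsilon_0)$; arrange $f(C_\varepsilon)$ to meet a ball around $w_1$); apply Theorem \ref{th8.4.8} together with Lemma \ref{lem4cr} and the minorization of $\Delta(fC_\varepsilon,E^*;D')$ by the circular slice family to get $M(\Delta(fC_\varepsilon,E^*;D'))\leq \Bigl(\int_\varepsilon^{\varepsilon_0}\frac{dr}{\|K^T_\mu\|_1(z_0,r)}\Bigr)^{-1}\to 0$ as $\varepsilon\to0$ by \eqref{eq8.11.2a}; this contradicts $M(\Delta(fC_\varepsilon,E^*;D'))\geq\delta$ from strong accessibility. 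The main obstacle, and the step deserving the most care, is the geometric/topological claim that the circular slice family $\Sigma_\varepsilon$ separates $C_\varepsilon$ from $F^*$ in $D$ so that $\Delta(fC_\varepsilon, E^*; D')$ is overflowed by $f\Sigma_\varepsilon$ — i.e. every curve in $D'$ joining $fC_\varepsilon$ to $E^*$ contains a subcurve lying in $f\bigl(S(z_0,r)\cap D\bigr)$ for a.e.\ $r\in(\varepsilon,\varepsilon_0)$ — which is where boundedness of $D$, the choice of $\varepsilon_0<\delta(z_0)$, and the inclusion $C_\varepsilon\subset B(z_0,\varepsilon)$, $F^*\subset D\setminus B(z_0,\varepsilon_0)$ are all used; once that overflowing (minorization) is in place, the modulus inequality $M(\Delta(fC_\varepsilon,E^*;D'))\leq M(f\Sigma_\varepsilon)$ and Theorems \ref{thDIR1}–\ref{th8.4.8} with Lemma \ref{lem4cr} finish it.
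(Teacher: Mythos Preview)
Your overall architecture --- contradiction argument at a single boundary point $z_0$, using local connectedness to produce small connected sets $C_\varepsilon\subset D\cap B(z_0,\varepsilon)$ whose images hit neighborhoods of two distinct cluster values, and then contradicting the lower bound $M(\Delta(fC_\varepsilon,E^*;D'))\geq\delta$ from strong accessibility --- is exactly the paper's approach (Lemma~\ref{lem4}). The gap is in the modulus step that produces the \emph{upper} bound tending to zero.

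You try to get $M(\Delta(fC_\varepsilon,E^*;D'))\leq M(f\Sigma_\varepsilon)$ by ``overflowing/minorization'', asserting that every joining curve in $D'$ contains a subcurve lying in $f(S(z_0,r)\cap D)$ for a.e.\ $r$. This is false: a curve joining $fC_\varepsilon$ to $E^*$ merely \emph{meets} each image slice (its preimage crosses each circle $S(z_0,r)$), it does not contain any slice as a subarc. And even if the minorization held, the resulting inequality $M(\text{joining})\leq M(f\Sigma_\varepsilon)$ is useless, since Theorem~\ref{th8.4.8} tells you $M(f\Sigma_\varepsilon)\to\infty$, not $\to 0$; your final line lands on exactly this wrong inequality. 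Lemma~\ref{lem4cr} is not needed here at all.

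The missing ingredient is the Hesse--Ziemer duality (\cite{He}, \cite{Zi}), which the paper invokes as
\[
M(f\Gamma_\varepsilon)\ \leq\ \frac{1}{M(f\Sigma_\varepsilon)},
\]
where $\Gamma_\varepsilon$ is the family of paths in $D$ joining $S(z_0,\varepsilon)$ to $S(z_0,\varepsilon_0)$ and $\Sigma_\varepsilon$ is the family of circular dashed lines $D\cap S(z_0,r)$, $r\in(\varepsilon,\varepsilon_0)$, which separate the two spheres in $D$. This is a capacity--modulus reciprocity for separating vs.\ connecting families, not a subcurve relation. Since $\Delta(E,C_k;D')\subset f\Gamma_\varepsilon$ for large $k$ (here $E=E^*$, $C_k=fC_\varepsilon$), one gets
\[
\delta\ \leq\ M(\Delta(E,C_k;D'))\ \leq\ M(f\Gamma_\varepsilon)\ \leq\ \frac{1}{M(f\Sigma_\varepsilon)}\ \leq\ \Bigl(\int_\varepsilon^{\varepsilon_0}\frac{dr}{\|K^T_\mu\|_1(z_0,r)}\Bigr)^{-1}\ \longrightarrow\ 0
\]
by Theorem~\ref{th8.4.8} and \eqref{eq8.11.2a}, which is the contradiction. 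Replace your overflowing step by this duality and the proof goes through.
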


\medskip

We assume that the function $K^T_{\mu}(z, z_0)$ is extended by zero
outside of $D$ in the following consequence of Theorem
\ref{thKPR9.1}.

\medskip

\begin{corollary}\label{corDIR2tcd} Let $D$ and $D'$ be  domains in
${\Bbb C}$, $D$ be bounded and locally connected on $\partial D$ and
$\partial D'$ be strongly accessible. Suppose that $f:D\to D'$ is a
homeomorphic $W^{1,1}_{\rm loc}$ solution of the Beltrami equation
(\ref{eqBeltrami}) such that
\begin{equation}\label{edgddhjgfjrqDIR6**c}
k_{z_{0}}(\varepsilon)=O\left(\left[\log\frac{1}{\varepsilon}\cdot\log\log\frac{1}{\varepsilon}\cdot\ldots\cdot\log\ldots\log\frac{1}{\varepsilon}
\right]\right) \qquad\forall\ z_0\in \partial D
\end{equation} as $\varepsilon\to0$,
where $k_{z_0}(\varepsilon)$ is the average of the function
$K^T_{\mu}(z,z_0)$ over $S(z_{0},\varepsilon)$. Then $f$ can be
extended to $\overline D$ by continuity in $\overline{\Bbb C}$.
\end{corollary}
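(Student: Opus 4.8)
The plan is to read Corollary \ref{corDIR2tcd} off Theorem \ref{thKPR9.1}: all the structural hypotheses ($D$ bounded and locally connected on $\partial D$, $\partial D'$ strongly accessible, $f$ a homeomorphic $W^{1,1}_{\rm loc}$ solution of (\ref{eqBeltrami})) are literally the same, so the only thing to do is to show that the growth bound (\ref{edgddhjgfjrqDIR6**c}) forces the divergence condition (\ref{eq8.11.2a}) at every boundary point. So I fix an arbitrary $z_0\in\partial D$ and look for $\delta(z_0)\in(0,d(z_0))$ with $\int_0^{\delta(z_0)}dr/\|K^T_{\mu}\|_1(z_0,r)=\infty$.

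The first step is to relate the circular average $k_{z_0}(r)$ to the norm (\ref{eq8.11.4}). Since by the standing convention $K^T_{\mu}(\cdot,z_0)$ is extended by zero outside $D$, the integral over $D\cap S(z_0,r)$ equals the integral over the whole circle $S(z_0,r)$, so
$$\|K^T_{\mu}\|_1(z_0,r)\ =\ \int\limits_{S(z_0,r)}K^T_{\mu}(z,z_0)\,|dz|\ =\ 2\pi r\,k_{z_0}(r).$$
By (\ref{edgddhjgfjrqDIR6**c}) there exist $C<\infty$ and $\delta(z_0)\in(0,d(z_0))$ (both possibly depending on $z_0$), which I choose small enough that the iterated logarithms $\log\tfrac1r,\ \log\log\tfrac1r,\ \ldots$ are all positive and decreasing on $(0,\delta(z_0))$, such that
$$\|K^T_{\mu}\|_1(z_0,r)\ \le\ 2\pi C\, r\,\log\frac1r\cdot\log\log\frac1r\cdots\log\ldots\log\frac1r \qquad\text{for }r\in(0,\delta(z_0)).$$
In particular $\|K^T_{\mu}\|_1(z_0,r)$ is finite (and, since $D\cap S(z_0,r)\ne\varnothing$ and $K^T_{\mu}\ge K_{\mu}^{-1}>0$, also positive) for such $r$, so the conventions $a/\infty=0$, $a/0=\infty$ never enter the estimate below.

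It then remains to invoke the classical divergence of the Bertrand-type integral:
$$\int\limits_0^{\delta(z_0)}\frac{dr}{\|K^T_{\mu}\|_1(z_0,r)}\ \ge\ \frac{1}{2\pi C}\int\limits_0^{\delta(z_0)}\frac{dr}{r\,\log\frac1r\cdot\log\log\frac1r\cdots\log\ldots\log\frac1r}\ =\ \infty,$$
where the last equality follows by the substitution $t=\log(1/r)$, which converts the integral into one over $(\log(1/\delta(z_0)),\infty)$ with one fewer iterated logarithm, whence divergence by induction on the number of logarithmic factors (the innermost case being $\int^{\infty}dt/t=\infty$). Thus (\ref{eq8.11.2a}) holds at the arbitrary point $z_0\in\partial D$, and Theorem \ref{thKPR9.1} yields the continuous extension of $f$ to $\overline D$ in $\overline{\Bbb C}$. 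I do not expect a real obstacle here; the only points requiring care are the use of the zero-extension convention, which is precisely what makes $k_{z_0}(r)$ and $\|K^T_{\mu}\|_1(z_0,r)/(2\pi r)$ \emph{equal} rather than merely comparable, and the choice of $\delta(z_0)$ small enough that the $O$-bound is in force and the iterated logarithms behave monotonically — after which the final integral is a bona fide divergent Bertrand integral. (Alternatively, one could route the last step through the endpoint case of Lemma \ref{lem13.4.2}, but the direct estimate above is shorter.)
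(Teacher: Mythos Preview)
Your proposal is correct and follows exactly the route the paper intends: the corollary is stated immediately after Theorem \ref{thKPR9.1} with the remark that $K^T_{\mu}(\cdot,z_0)$ is extended by zero outside $D$, and no further proof is given, so the implicit argument is precisely the reduction you carry out --- use $\|K^T_{\mu}\|_1(z_0,r)=2\pi r\,k_{z_0}(r)$ and then the divergence of the Bertrand integral $\int_0 dr/(r\log\tfrac1r\cdots\log\ldots\log\tfrac1r)=\infty$ to verify (\ref{eq8.11.2a}).
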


\medskip

The proof of Theorem \ref{thKPR9.1} is reduced to the following
lemma.

\medskip

\begin{lemma}\label{lem4} Let $D$ and $D'$ be  domains in
${\Bbb C}$ and let $f:D\to D'$ be a homeomorphic $W^{1,1}_{\rm
loc}$ solution of the Beltrami equation (\ref{eqBeltrami}).
Suppose that the domain $D$ is bounded and locally connected at
$z_0\in\partial D$ and $\partial D'$ is strongly accessible at
least at one point of the cluster set
\begin{equation}\label{eq8.7.2} L:=C(z_0,f)=\{w\in\overline{\Bbb
C}:w=\lim\limits_{k\to\infty}f(z_k), z_k\to z_0\}\,.\end{equation} If the condition (\ref{eq8.11.2a}) holds for
$z_0$, then $f$ extends to $z_0$ by continuity in $\overline{\Bbb C}$.
\end{lemma}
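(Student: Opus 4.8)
The plan is to derive Lemma~\ref{lem4} from the modulus lower bound of Theorem~\ref{th8.4.8} together with the hypothesis \eqref{eq8.11.2a} and the geometric hypotheses on $D$ and $D'$, following the standard scheme for continuous boundary extension via lower $Q$-homeomorphisms. First I would argue by contradiction: if $f$ does not extend continuously at $z_0$, then the cluster set $L=C(z_0,f)$ contains at least two distinct points. Using that $D$ is bounded and locally connected at $z_0$, I would pick, for a suitable small radius $\varepsilon_0$, sequences $z_k\to z_0$ and $z_k'\to z_0$ whose images converge to two different boundary points of $D'$ (using that $f$ is a homeomorphism onto $D'$, so cluster values on $\partial D$ lie in $\partial D'$), and I would join $z_k$ to $z_k'$ inside $D\cap B(z_0,\varepsilon)$ by a connected set $C_\varepsilon$ that shrinks to $z_0$; local connectedness of $D$ at $z_0$ is exactly what makes such connected "bridges" available inside arbitrarily small neighborhoods. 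The images $f(C_\varepsilon)$ then form continua in $D'$ that cannot shrink to a point, so they meet a fixed pair of spheres around some $w_0\in L$ at which $\partial D'$ is strongly accessible.

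Next I would invoke strong accessibility of $\partial D'$ at $w_0$: there is a compactum $E\subset D'$, a neighborhood $V$ of $w_0$, and $\delta>0$ such that $M(\Delta(E,F;D'))\geqslant\delta$ for every continuum $F$ in $D'$ meeting $\partial U$ and $\partial V$ for an appropriate fixed neighborhood $U\ni w_0$. Choosing $F=f(C_\varepsilon)$ (for $\varepsilon$ small enough that $f(C_\varepsilon)$ indeed reaches both $\partial U$ and $\partial V$, which holds because the two limit values straddle $w_0$), we get $M(\Delta(E,f(C_\varepsilon);D'))\geqslant\delta$ for all small $\varepsilon$. Pulling $E$ back, $f^{-1}(E)$ is a compactum in $D$ lying outside $B(z_0,\varepsilon_1)$ for some fixed $\varepsilon_1>0$, while $C_\varepsilon$ lies inside $B(z_0,\varepsilon)$; hence every path of the family realizing the left side, or rather the family $f(\Sigma_\varepsilon)$ of images of the circular dashed lines $S(z_0,r)\cap D$ with $r\in(\varepsilon,\varepsilon_1)$, separates $f^{-1}(E)$ from $C_\varepsilon$ in the sense needed. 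The key comparison is that $\Delta(f^{-1}(E),C_\varepsilon;D)$ is minorized by $\Sigma_\varepsilon$ (each such connecting path crosses every circle $S(z_0,r)$, $r\in(\varepsilon,\varepsilon_1)$), so by the standard monotonicity of modulus under minorization, $M(f\Delta(f^{-1}(E),C_\varepsilon;D))\leqslant M(f\Sigma_\varepsilon)$, and also $M(f\Delta(\cdots))\geqslant M(\Delta(E,f(C_\varepsilon);D'))\geqslant\delta$ since $f$ is a homeomorphism of $D$ onto $D'$ carrying these families onto each other.

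Then I would apply Theorem~\ref{th8.4.8}, which gives
\[
M(f\Sigma_\varepsilon)\ \geqslant\ \int_\varepsilon^{\varepsilon_1}\frac{dr}{\|K^T_\mu\|_1(z_0,r)}\ ,
\]
wait—this is a lower bound, not an upper bound, so the contradiction must be arranged the other way: I would instead use $M(f\Sigma_\varepsilon)$ as the \emph{lower} bound and combine it with an \emph{upper} bound coming from strong accessibility applied in the opposite direction, namely that the family $f\Sigma_\varepsilon$ connects a fixed compactum to sets shrinking toward $z_0$, whose image modulus is controlled above by comparison with $\Delta(E,F;D')$ having small modulus because $F=f(C_\varepsilon)$ can be taken to lie deep inside a small neighborhood $V$ of $w_0$ — strong accessibility is used to get a \emph{positive lower} bound on $M(\Delta(E,F;D'))$, which conflicts with the fact that, were $f$ discontinuous, one can instead make this modulus tend to $0$. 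Concretely: fix the compactum $E\subset D'$ and neighborhoods $U\supset\overline V\ni w_0$ from strong accessibility. By Theorem~\ref{th8.4.8} applied on the annulus $(\varepsilon,\varepsilon_1)$, $M(f\Sigma_\varepsilon)\to\infty$ as $\varepsilon\to0$ by hypothesis \eqref{eq8.11.2a}. On the other hand $f\Sigma_\varepsilon$ refines into $\Delta(f(S(z_0,\varepsilon_1)\cap D),f(S(z_0,\varepsilon)\cap D);D')$, and if $f$ failed to extend continuously the inner sets $f(S(z_0,\varepsilon)\cap D)$ would not shrink, but their diameters in $\overline{\mathbb C}$ are bounded, so—using weak-flatness/strong-accessibility-type estimates for $D'$ near the (finitely many) cluster points—one bounds $M(f\Sigma_\varepsilon)$ \emph{above} uniformly in $\varepsilon$, contradicting the divergence. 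The main obstacle, and the step I would spend the most care on, is this last topological bookkeeping: setting up the connecting continua $C_\varepsilon$ inside $D\cap B(z_0,\varepsilon)$ using only local connectedness of $D$ at $z_0$, verifying that their images genuinely separate a fixed compactum from a small neighborhood of a cluster point $w_0$ where $\partial D'$ is strongly accessible, and thereby producing a uniform upper bound on $M(f\Sigma_\varepsilon)$ that contradicts the lower bound $\int_0^{\delta(z_0)}dr/\|K^T_\mu\|_1(z_0,r)=\infty$. This is exactly the argument of Lemma~5.1/Lemma~3.15-type results in \cite{MRSY} and Lemma~5.1 in \cite{KR$_2$}, adapted here to dashed-line moduli and the tangent dilatation, so I would model the write-up on those, citing them for the purely topological lemmas on minorization of path families and on continua in locally connected boundaries.
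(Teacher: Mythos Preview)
Your setup is essentially the paper's: argue by contradiction, use local connectedness at $z_0$ to produce connected pieces $C_\varepsilon\subset D\cap B(z_0,\varepsilon)$ whose images $f(C_\varepsilon)$ straddle a fixed neighborhood of a strongly accessible cluster point, and then invoke strong accessibility to get a compactum $E\subset D'$ with $M(\Delta(E,f(C_\varepsilon);D'))\geqslant\delta>0$ for all small $\varepsilon$. That part is fine.

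The genuine gap is the bridge between this lower bound and the divergence of $M(f\Sigma_\varepsilon)$. You try to get it by ``minorization'': you say $\Delta(f^{-1}(E),C_\varepsilon;D)$ is minorized by $\Sigma_\varepsilon$ because each connecting path crosses every circle $S(z_0,r)$. But crossing a circle is not containing a subarc of that circle, so this is not minorization in the modulus sense, and the inequality $M(f\Gamma_\varepsilon)\leqslant M(f\Sigma_\varepsilon)$ you write down is in the wrong direction anyway (as you yourself notice). Your subsequent attempts to bound $M(f\Sigma_\varepsilon)$ \emph{above} via strong accessibility cannot work either: strong accessibility only ever yields \emph{lower} bounds on moduli of connecting families, never upper bounds on moduli of separating families.

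What is missing is exactly the Hesse--Ziemer duality inequality (references \cite{He}, \cite{Zi} in the paper): if $\Gamma_\varepsilon$ is the family of paths in $D$ joining $S(z_0,\varepsilon)$ to $S(z_0,\varepsilon_0)$, and $\Sigma_\varepsilon$ is the family of the dashed circular arcs $D\cap S(z_0,r)$, $r\in(\varepsilon,\varepsilon_0)$, then
\[
M(f\Gamma_\varepsilon)\ \leqslant\ \frac{1}{M(f\Sigma_\varepsilon)}\,.
\]
This is the step that converts the \emph{lower} bound $M(f\Sigma_\varepsilon)\to\infty$ from Theorem~\ref{th8.4.8} and \eqref{eq8.11.2a} into the \emph{upper} bound $M(f\Gamma_\varepsilon)\to 0$, which then contradicts $M(f\Gamma_\varepsilon)\geqslant M(\Delta(E,f(C_\varepsilon);D'))\geqslant\delta$. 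Once you insert this one line, your argument becomes the paper's proof verbatim.
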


\medskip

\begin{proof} Note that $L\neq\varnothing$ in view of compactness of
the extended plane $\overline{\Bbb C}$. By the condition $\partial
D'$ is strongly accessible at a point $\zeta_0\in L$. Let us assume
that there is one more point $\zeta_* \in L$ and set
$U=B(\zeta_*,r_0)$ where $0<r_0<|\zeta_0-\zeta_*|$.

In view of local connectedness of $D$ at $z_0$, there is a sequence of neighborhoods $V_k$ of $z_0$ with domains
$D_k=D\cap V_k$ and $ \mbox{diam} V_k\to 0$ as $k\to\infty$. Choose in the domains $D'_k=fD_k$ points $\zeta_k$
and $\zeta^*_k$ with $|\zeta_0-\zeta_k|<r_0$ and $|\zeta_0-\zeta^*_k|>r_0$, $\zeta_k\to \zeta_0$ and $\zeta^*_k
\to \zeta_*$ as $k\to\infty$. Let $C_k$ be paths connecting $\zeta_k$ and $\zeta^*_k$ in $D_k'$. Note that by
the construction $\partial U\cap C_k\neq\varnothing$. By the condition of the strong accessibility of the point
$\zeta_0$ from $D'$, there is a compactum $E\subseteq D'$ and a number $\delta>0$ such that
\begin{equation}\label{eq*}
M(\Delta(E,C_k;D'))\geq\delta \end{equation} for large $k$. Without loss of generality we may assume that the
last condition holds for all $k=1,2,\ldots$. Note that $C=f^{-1}E$ is a compactum in $D'$ and hence
$\varepsilon_0={\rm dist}(z_0,C)>0$.

Let $\Gamma_{\varepsilon}$ be the family of all paths connecting the circles $S_{\varepsilon}=\{z\in{\Bbb
C}:|z-z_0|=\varepsilon\}$ and $S_0=\{z\in\Bbb{C}:|z-z_0|=\varepsilon_0\}$ in $D$ where $\varepsilon \in (0,
\varepsilon_0)$ and $\varepsilon_0=\delta (z_0)$. Note that $C_k\subset fB_{\varepsilon}$ for every fixed
$\varepsilon\in(0,\varepsilon_0)$ for large $k$ where $B_{\varepsilon}=B(z_0,\varepsilon)$. Thus,
$M(f\Gamma_{\varepsilon})\geqslant\delta$ for all $\varepsilon\in(0,\varepsilon_0)$. However, by \cite{He} and
\cite{Zi},
\begin{equation}\label{HeZi}M(f\Gamma_{\varepsilon})\leqslant
\frac{1}{M(f\Sigma_{\varepsilon})}\end{equation} where $\Sigma_{\varepsilon}$ is the family of all dashed lines
$D(r):=\{z\in D:|z-z_0|=r\}$, $r\in(\varepsilon,\varepsilon_0)$. Thus, $M(f\Gamma_{\varepsilon})\to0$ as
$\varepsilon\to0$ by Theorem \ref{th8.4.8} in view of (\ref{eq8.11.2a}). The latter contradicts (\ref{eq*}).
This contradiction disproves the above assumption.
\end{proof}

\medskip

Combining Lemmas \ref{lem4} and \ref{lem4cr}, we come to the following general lemma where we assume that the
function $K^T_{\mu}(z,z_0)$ is extended by zero outside of the domain $D$.

\medskip

\begin{lemma}\label{lem13.3.333} Let $D$ and $D'$ be domains in
${\Bbb C}$, $D$ be locally connected on $\partial D$ and $\partial D'$ be strongly accessible. Suppose that
$f:D\to D'$ is a homeomorphic $W^{1,1}_{\rm loc}$ solution of the Beltrami equation (\ref{eqBeltrami}) such that
$\|K^T_{\mu}\|_1 (z_0,r) \neq \infty$ for a.e. $r \in (0, \varepsilon_0)$ and
\begin{equation}\label{omal}
\int\limits_{\varepsilon<|z-z_0|<\varepsilon_0}
K^T_{\mu}(z,z_0)\cdot\psi^2_{z_0,\varepsilon}(|z-z_0|)\,dm(z)=o(I_{z_0}^{2}(\varepsilon))\quad{\rm
as}\quad\varepsilon\to0\ \ \forall\ z_0\in\partial D\end{equation}
for some $\varepsilon_0\in(0,\delta_0)$ where
$\delta_0=\delta(z_0)=\sup_{z\in D}|z-z_0|$ and
$\psi_{z_0,\varepsilon}(t)$ is a family of non-negative measurable
(by Lebesgue) functions on $(0,\infty)$ such that
\begin{equation}\label{eq5.3}
I_{z_0}(\varepsilon)\colon =\int\limits_{\varepsilon}^{\varepsilon_0}
\psi_{z_0,\varepsilon}(t)\,dt<\infty\qquad\forall\ \varepsilon\in(0,\varepsilon_0)\,.\end{equation} Then $f$ can
be extended to $\overline{D}$ by continuity in $\overline{\Bbb C}$.
\end{lemma}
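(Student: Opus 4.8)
The plan is to deduce Lemma~\ref{lem13.3.333} from Lemma~\ref{lem4} by verifying, at every boundary point $z_0 \in \partial D$, the divergence condition \eqref{eq8.11.2a} that drives Lemma~\ref{lem4} (applied with $D'$ in place of the abstract target, noting strong accessibility of $\partial D'$ supplies a point of accessibility in the cluster set $L$ automatically). So the whole task reduces to the implication: if $\|K^T_\mu\|_1(z_0,r) \neq \infty$ for a.e.\ $r$ and the integral relation \eqref{omal} holds against a family $\psi_{z_0,\varepsilon}$ satisfying \eqref{eq5.3}, then $\int_0^{\delta(z_0)} dr/\|K^T_\mu\|_1(z_0,r) = \infty$.

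First I would fix $z_0 \in \partial D$ and argue by contradiction: suppose $\int_0^{\varepsilon_0} dr/\|K^T_\mu\|_1(z_0,r) < \infty$ for the given $\varepsilon_0 \in (0,\delta_0)$. Then for each small $\varepsilon \in (0,\varepsilon_0)$ set
\[
I = I(\varepsilon) = \int_\varepsilon^{\varepsilon_0} \frac{dr}{\|K^T_\mu\|_1(z_0,r)}\,;
\]
since $\|K^T_\mu\|_1(z_0,r) \neq \infty$ a.e., we have $I \in (0,\infty)$ and $I(\varepsilon) \to \infty$ would be the desired conclusion, so under the contradiction hypothesis $I(\varepsilon)$ stays bounded, in fact $I(\varepsilon) \nearrow I(0) =: I_\infty < \infty$ as $\varepsilon \to 0$. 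Now apply Lemma~\ref{lem4cr} with $A = R(z_0,\varepsilon,\varepsilon_0)$ and with the admissible choice $\eta(r) = \psi_{z_0,\varepsilon}(r)/I_{z_0}(\varepsilon)$, which satisfies \eqref{admr} by \eqref{eq5.3}. The lemma yields
\[
\frac{1}{I(\varepsilon)} \;\leqslant\; \int\limits_{A\cap D} K^T_\mu(z,z_0)\cdot \eta^2(|z-z_0|)\, dm(z) \;=\; \frac{1}{I_{z_0}^2(\varepsilon)}\int\limits_{\varepsilon<|z-z_0|<\varepsilon_0} K^T_\mu(z,z_0)\cdot \psi^2_{z_0,\varepsilon}(|z-z_0|)\, dm(z).
\]
By hypothesis \eqref{omal}, the right-hand side is $o(1)$ as $\varepsilon \to 0$, hence $1/I(\varepsilon) \to 0$, i.e.\ $I(\varepsilon) \to \infty$. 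This contradicts $I(\varepsilon) \leqslant I_\infty < \infty$, so \eqref{eq8.11.2a} must hold at $z_0$.

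With \eqref{eq8.11.2a} verified at every $z_0 \in \partial D$, Lemma~\ref{lem4} applies at each such point: $D$ is locally connected there, $\partial D'$ is strongly accessible (in particular at any point of the nonempty cluster set $L = C(z_0,f)$), so $f$ extends continuously to $z_0$ in $\overline{\Bbb C}$. Doing this for all boundary points gives a continuous extension of $f$ to $\overline{D}$, which is the claim. The only delicate point in the write-up is the bookkeeping that Lemma~\ref{lem4cr} genuinely applies, namely that its hypothesis ``$\|K^T_\mu\|_1(z_0,r) \neq \infty$ for a.e.\ $r$'' is exactly what we assumed, that $I(\varepsilon)$ there is finite (guaranteed by the contradiction hypothesis), and that $\eta$ is $[0,\infty]$-valued and measurable — all immediate — together with the observation that $D$ bounded is not needed here beyond what Lemma~\ref{lem4} itself requires, since $\varepsilon_0 < \delta_0 = \sup_{z\in D}|z-z_0|$ keeps everything inside a bounded annular region anyway. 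I do not anticipate a real obstacle; the argument is a clean concatenation of Lemmas~\ref{lem4} and \ref{lem4cr}, and the main care is simply in matching notation ($I$, $\eta_0$, $\psi_{z_0,\varepsilon}$, $I_{z_0}(\varepsilon)$) across the two lemmas.
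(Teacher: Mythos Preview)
Your proposal is correct and matches the paper's own proof, which the authors state simply as ``Combining Lemmas~\ref{lem4} and~\ref{lem4cr}''; you have spelled out exactly that combination. One minor simplification: the contradiction framing is unnecessary, since Lemma~\ref{lem4cr} already covers the case $I=\infty$ (with $I^{-1}=0$), so applying it directly to $\eta=\psi_{z_0,\varepsilon}/I_{z_0}(\varepsilon)$ yields $1/I(\varepsilon)=o(1)$ and hence $I(\varepsilon)\to\infty$ without assuming the contrary.
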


\medskip

Choosing in Lemma \ref{lem13.3.333} $\psi(t)=1/\left(t\, \log\left(1/t\right)\right),$ we obtain by Lemma
\ref{lem13.4.2} the following result.

\medskip

\begin{theorem}\label{thKPR9.1fmo_C} Let $D$ and $D'$ be  domains in
${\Bbb C}$, $D$ be bounded and locally connected on $\partial D$ and $\partial D'$ be strongly accessible.
Suppose that $f:D\to D'$ is a homeomorphic $W^{1,1}_{\rm loc}$ solution of the Beltrami equation
(\ref{eqBeltrami}) such that $K^T_{\mu}(z,z_0)\leqslant Q_{z_0}(z)$ a.e. in $D$ for a  function $Q_{z_0}:{\Bbb
C}\to[0,\infty]$ in the class ${\rm FMO}({z_0})$ at each point $z_0\in \partial D$. Then $f$ can be extended to
$\overline D$ by continuity in $\overline{\Bbb C}$.
\end{theorem}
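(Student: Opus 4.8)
The plan is to reduce Theorem \ref{thKPR9.1fmo_C} to the general Lemma \ref{lem13.3.333} by exhibiting the right family of test functions $\psi_{z_0,\varepsilon}$. Fix a boundary point $z_0\in\partial D$ and work inside a fixed small punctured ball where the hypotheses apply. The natural choice, as suggested in the paragraph preceding the statement, is the $\varepsilon$-independent function
\[
\psi(t)\ =\ \frac{1}{t\,\log(1/t)}\,,
\]
on the interval $(\varepsilon,\varepsilon_0)$ with $\varepsilon_0<\delta_0$ chosen also smaller than the threshold $e^{-e}$ required by Lemma \ref{lem13.4.2}. For this $\psi$ one computes directly
\[
I_{z_0}(\varepsilon)\ =\ \int\limits_{\varepsilon}^{\varepsilon_0}\frac{dt}{t\,\log(1/t)}\ =\ \log\log\frac1{\varepsilon}-\log\log\frac1{\varepsilon_0}\ =\ \log\log\frac1\varepsilon+O(1)\ \to\ \infty
\]
as $\varepsilon\to0$, and in particular $I_{z_0}(\varepsilon)<\infty$ for every $\varepsilon\in(0,\varepsilon_0)$, so hypothesis (\ref{eq5.3}) of Lemma \ref{lem13.3.333} is satisfied.

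Next I would verify the key estimate (\ref{omal}). Since $K^T_{\mu}(z,z_0)\leqslant Q_{z_0}(z)$ a.e. and $\psi\geqslant0$, we have
\[
\int\limits_{\varepsilon<|z-z_0|<\varepsilon_0}K^T_{\mu}(z,z_0)\,\psi^2(|z-z_0|)\,dm(z)\ \leqslant\ \int\limits_{\varepsilon<|z-z_0|<\varepsilon_0}\frac{Q_{z_0}(z)\,dm(z)}{\left(|z-z_0|\log\frac1{|z-z_0|}\right)^2}\,.
\]
Because $Q_{z_0}$ is non-negative and lies in ${\rm FMO}(z_0)$, Lemma \ref{lem13.4.2} applies and gives that the right-hand side is $O\!\left(\log\log\frac1\varepsilon\right)$ as $\varepsilon\to0$. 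Comparing with $I_{z_0}^2(\varepsilon)=\left(\log\log\frac1\varepsilon+O(1)\right)^2$, which grows like $(\log\log\frac1\varepsilon)^2$, we conclude
\[
\int\limits_{\varepsilon<|z-z_0|<\varepsilon_0}K^T_{\mu}(z,z_0)\,\psi^2(|z-z_0|)\,dm(z)\ =\ O\!\left(\log\log\frac1\varepsilon\right)\ =\ o\!\left(I_{z_0}^2(\varepsilon)\right)\,,
\]
which is exactly (\ref{omal}). One also needs $\|K^T_{\mu}\|_1(z_0,r)\neq\infty$ for a.e.\ $r\in(0,\varepsilon_0)$; this follows since the finiteness of the planar integral above (guaranteed by Lemma \ref{lem13.4.2}) forces, via Fubini in polar coordinates, $\int_{D\cap S(z_0,r)}K^T_{\mu}(z,z_0)\,|dz|<\infty$ for a.e.\ $r$.

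Having checked all hypotheses of Lemma \ref{lem13.3.333} at every $z_0\in\partial D$ — using that $D$ is bounded and locally connected on $\partial D$ and that $\partial D'$ is strongly accessible — the conclusion is immediate: $f$ extends to $\overline D$ by continuity in $\overline{\Bbb C}$. The only genuinely non-routine point is the application of Lemma \ref{lem13.4.2}, and that has already been isolated as a separate lemma in the excerpt; the rest is the elementary computation of $I_{z_0}(\varepsilon)$ and the comparison of a single logarithmic iterate against its square. I would expect the main obstacle in a fully detailed write-up to be nothing more than bookkeeping with the constant $\varepsilon_0$ (it must simultaneously be less than $d(z_0)$, less than the $\varepsilon_0$ furnished by Lemma \ref{lem13.4.2}, and inside the FMO neighborhood of $z_0$), together with the routine Fubini argument for the a.e.\ finiteness of $\|K^T_{\mu}\|_1(z_0,r)$.
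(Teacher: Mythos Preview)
Your proposal is correct and follows exactly the approach indicated in the paper: choose $\psi(t)=1/(t\log(1/t))$, invoke Lemma~\ref{lem13.4.2} for the FMO majorant $Q_{z_0}$ to get the $O(\log\log(1/\varepsilon))$ bound, compare with $I_{z_0}^2(\varepsilon)\sim(\log\log(1/\varepsilon))^2$, and apply Lemma~\ref{lem13.3.333}. The paper compresses all of this into a single sentence, while you have spelled out the computation of $I_{z_0}(\varepsilon)$ and the Fubini justification for a.e.\ finiteness of $\|K^T_\mu\|_1(z_0,r)$; these are precisely the routine details the paper leaves implicit.
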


\medskip

\begin{corollary}\label{corDIR1000re_c} In particular, the conclusion of
Theorem \ref{thKPR9.1fmo_C} holds if every point $z_0\in \partial D$ is the Lebesgue point of a function
$Q_{z_0}:{\Bbb C}\to[0,\infty]$ which is integrable in a neighborhood $U_{z_0}$ of the point $z_0$ such that
$K^T_{\mu}(z,z_0)\leqslant Q_{z_0}(z)$ a.e. in $D\cap U_{z_0}$.
\end{corollary}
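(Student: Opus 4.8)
The plan is to deduce Corollary~\ref{corDIR1000re_c} from Theorem~\ref{thKPR9.1fmo_C} by verifying that its hypotheses are met. So fix a point $z_0\in\partial D$ and suppose $Q_{z_0}:{\Bbb C}\to[0,\infty]$ is integrable in a neighborhood $U_{z_0}$ of $z_0$, is the Lebesgue point of this function at $z_0$, and satisfies $K^T_{\mu}(z,z_0)\leqslant Q_{z_0}(z)$ a.e. in $D\cap U_{z_0}$. The goal is to produce a function in the class ${\rm FMO}(z_0)$ dominating $K^T_{\mu}(z,z_0)$ a.e. in $D$, so that Theorem~\ref{thKPR9.1fmo_C} applies directly.

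First I would note that since $z_0$ is a Lebesgue point of $Q_{z_0}$, condition~(\ref{FMO_eq2.7a}) holds, hence in particular
\begin{equation*}
\overline{\lim\limits_{\varepsilon\to 0}}\ \ \ \dashint_{B(z_0,\varepsilon)}|Q_{z_0}(z)|\,dm(z)=|Q_{z_0}(z_0)|<\infty\,,
\end{equation*}
so by Corollary~\ref{FMO_cor2.1} the function $Q_{z_0}$ has finite mean oscillation at $z_0$, i.e. $Q_{z_0}\in{\rm FMO}(z_0)$. The only technical point is that Theorem~\ref{thKPR9.1fmo_C} asks for the domination $K^T_{\mu}(z,z_0)\leqslant Q_{z_0}(z)$ a.e.\ in $D$, whereas here it is only assumed a.e.\ in $D\cap U_{z_0}$. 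But the argument of Theorem~\ref{thKPR9.1fmo_C} (and of the underlying Lemma~\ref{lem4}, via Lemma~\ref{lem4cr} and Lemma~\ref{lem13.4.2}) is purely local at $z_0$: only the integral of $K^T_{\mu}(z,z_0)$ over small circles $S(z_0,r)$, $r<\delta(z_0)$, enters, so one may shrink $\varepsilon_0$ below the distance from $z_0$ to $\partial U_{z_0}$ so that the balls $B(z_0,\varepsilon_0)$ lie inside $U_{z_0}$; there the domination holds a.e.\ and Lemma~\ref{lem13.4.2} gives~(\ref{eq13.4.5}) with $\varphi$ replaced by $Q_{z_0}$, which is exactly what is needed to verify~(\ref{omal}) with $\psi(t)=1/(t\log(1/t))$.

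Concretely, I would argue as follows. Redefining $Q_{z_0}$ to equal, say, $K^T_{\mu}(z,z_0)$ (or simply $0$) outside $U_{z_0}$ does not affect finite mean oscillation at $z_0$, so we may assume $K^T_{\mu}(z,z_0)\leqslant Q_{z_0}(z)$ a.e.\ in $D$ after this harmless modification — note the domination is only needed on a neighborhood of $z_0$ in $D$ for the proof of Theorem~\ref{thKPR9.1fmo_C}, which processes each boundary point separately. Then $Q_{z_0}\in{\rm FMO}(z_0)$ by the above, and Theorem~\ref{thKPR9.1fmo_C} yields the continuous extension of $f$ to $\overline D$ in $\overline{\Bbb C}$.

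The main (and essentially only) obstacle is the bookkeeping around the localization: making precise that the ``a.e.\ in $D$'' hypothesis of Theorem~\ref{thKPR9.1fmo_C} can be relaxed to ``a.e.\ in $D\cap U_{z_0}$'' without weakening the conclusion. This is not a genuine difficulty — it follows because the whole mechanism (Lemma~\ref{lem4} $\Leftarrow$ Theorem~\ref{th8.4.8} $\Leftarrow$ Lemma~\ref{lem13.4.2}) only uses $K^T_{\mu}(z,z_0)$ on the punctured disk $B(z_0,\varepsilon_0)\setminus\{z_0\}$ for arbitrarily small $\varepsilon_0$ — but it is the one place where a careful reader would want a sentence of justification, so I would spell it out rather than leave it implicit.
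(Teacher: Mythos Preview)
Your proposal is correct and follows exactly the route the paper intends: the paper gives no separate proof of this corollary, treating it as immediate from the fact (established via Proposition~\ref{FMO_pr2.1}) that a Lebesgue point of an integrable function is automatically an FMO point, after which Theorem~\ref{thKPR9.1fmo_C} applies. Your extra care about the localization ``a.e.\ in $D$'' versus ``a.e.\ in $D\cap U_{z_0}$'' is warranted and correctly resolved, though the paper simply leaves this implicit.
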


\medskip

We assume that the function $K^T_{\mu}(z, z_0)$ is extended by zero outside of $D$ in the following consequences
of Theorem \ref{thKPR9.1fmo_C}.

\medskip

\begin{corollary}\label{corCON} Let $D$ and $D'$ be  domains in
${\Bbb C}$, $D$ be bounded and locally connected on $\partial D$ and
$\partial D'$ be strongly accessible. Suppose that $f:D\to D'$ is a
homeomorphic $W^{1,1}_{\rm loc}$ solution of the Beltrami equation
(\ref{eqBeltrami}) such that
\begin{equation}\label{eqDIR6*}\overline{\lim\limits_{\varepsilon\to0}}\quad
\dashint_{B(z_0,\varepsilon)}K^T_{\mu}(z,z_0)\,dm(z)<\infty\qquad\forall\ z_0\in \partial D\,.\end{equation}
Then $f$ can be extended to $\overline D$ by continuity in $\overline{\Bbb C}$.
\end{corollary}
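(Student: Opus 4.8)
The plan is to deduce Corollary~\ref{corCON} directly from Theorem~\ref{thKPR9.1fmo_C} by exhibiting, for each boundary point $z_0\in\partial D$, a suitable majorant $Q_{z_0}$ of finite mean oscillation at $z_0$. First I would fix $z_0\in\partial D$ and set $Q_{z_0}(z)=K^T_{\mu}(z,z_0)$ on $D$ and $Q_{z_0}(z)=0$ on $\Bbb C\setminus D$, so that trivially $K^T_{\mu}(z,z_0)\leqslant Q_{z_0}(z)$ a.e. in $D$ (in fact with equality). The hypothesis (\ref{eqDIR6*}) says precisely that
\begin{equation}\label{eq:cCON-aux}
\overline{\lim\limits_{\varepsilon\to0}}\ \ \dashint_{B(z_0,\varepsilon)}|Q_{z_0}(z)|\,dm(z)
=\overline{\lim\limits_{\varepsilon\to0}}\ \ \dashint_{B(z_0,\varepsilon)}K^T_{\mu}(z,z_0)\,dm(z)<\infty
\end{equation}
(the absolute value being harmless since $K^T_{\mu}\geqslant0$, and the zero extension outside $D$ only decreasing the average). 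This is the condition (\ref{FMO_eq2.8}) of Corollary~\ref{FMO_cor2.1} at the point $z_0$, so $Q_{z_0}\in{\rm FMO}(z_0)$.

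Having verified, for every $z_0\in\partial D$, that $Q_{z_0}:\Bbb C\to[0,\infty]$ lies in ${\rm FMO}(z_0)$ and dominates $K^T_{\mu}(\cdot,z_0)$ a.e.\ in $D$, all the remaining hypotheses of Theorem~\ref{thKPR9.1fmo_C} coincide with those of the corollary: $D$ is bounded and locally connected on $\partial D$, $D'$ has strongly accessible boundary, and $f:D\to D'$ is a homeomorphic $W^{1,1}_{\rm loc}$ solution of (\ref{eqBeltrami}). Therefore Theorem~\ref{thKPR9.1fmo_C} applies and yields that $f$ extends to $\overline D$ by continuity in $\overline{\Bbb C}$, which is exactly the assertion of Corollary~\ref{corCON}.

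There is essentially no obstacle here; the only point deserving a word of care is the interplay between the zero extension of $K^T_{\mu}(z,z_0)$ outside $D$ and the disc averages over $B(z_0,\varepsilon)$, which for $z_0\in\partial D$ genuinely straddle $\partial D$. But this is innocuous: extending by zero can only make the average $\dashint_{B(z_0,\varepsilon)}K^T_{\mu}(z,z_0)\,dm(z)$ no larger than $\dashint_{B(z_0,\varepsilon)\cap D}K^T_{\mu}(z,z_0)\,dm(z)$ would suggest, so the finiteness of the upper limit in (\ref{eqDIR6*}) passes verbatim to (\ref{eq:cCON-aux}). One should also note that (\ref{eqDIR6*}) tacitly forces $K^T_{\mu}(\cdot,z_0)$ to be integrable in a neighbourhood of $z_0$, which is the standing integrability assumption built into the definition of ${\rm FMO}(z_0)$, so Corollary~\ref{FMO_cor2.1} is legitimately invoked. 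Hence the proof is a direct citation of Theorem~\ref{thKPR9.1fmo_C} together with Corollary~\ref{FMO_cor2.1}.
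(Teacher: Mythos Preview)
Your proposal is correct and follows exactly the route the paper intends: Corollary~\ref{corCON} is listed as a direct consequence of Theorem~\ref{thKPR9.1fmo_C}, and the bridge is precisely Corollary~\ref{FMO_cor2.1}, since condition~(\ref{eqDIR6*}) is nothing but condition~(\ref{FMO_eq2.8}) for the function $Q_{z_0}:=K^T_{\mu}(\cdot,z_0)$ extended by zero outside $D$. Your remarks on the zero extension and on the local integrability implicit in~(\ref{eqDIR6*}) are accurate and match the paper's standing convention stated just before Corollary~\ref{corCON}.
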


\medskip

\begin{corollary}\label{corDIR2tc} Let $D$ and $D'$ be  domains in
${\Bbb C}$, $D$ be bounded and locally connected on $\partial D$ and
$\partial D'$ be strongly accessible. Suppose that $f:D\to D'$ is a
homeomorphic $W^{1,1}_{\rm loc}$ solution of the Beltrami equation
(\ref{eqBeltrami}) such that
\begin{equation}\label{eqDIR61c}k_{z_{0}}(\varepsilon)=O\left(\log\frac{1}{\varepsilon}\right)
\qquad\mbox{as}\ \varepsilon\to0\qquad\forall\ z_0\in \partial D
\end{equation}
where $k_{z_0}(\varepsilon)$ is the average of the function $K^T_{\mu}(z,z_0)$ over $S(z_{0},\varepsilon)$. Then
$f$ can be extended to $\overline D$ by continuity in $\overline{\Bbb C}$.
\end{corollary}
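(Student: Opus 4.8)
The plan is to obtain Corollary \ref{corDIR2tc} as a direct consequence of Theorem \ref{thKPR9.1}: I will check that the growth bound $k_{z_0}(\varepsilon)=O(\log(1/\varepsilon))$ on the circular average forces the divergence condition (\ref{eq8.11.2a}) at every boundary point. First I would fix $z_0\in\partial D$ and record the elementary identity tying the quantity in (\ref{eq8.11.4}) to the circular average: since $K^T_{\mu}(z,z_0)$ is extended by zero outside $D$, the average of $K^T_{\mu}(z,z_0)$ over $S(z_0,r)$ equals $\tfrac{1}{2\pi r}\int_{D\cap S(z_0,r)}K^T_{\mu}(z,z_0)\,|dz|$, so that $\|K^T_{\mu}\|_1(z_0,r)=2\pi r\,k_{z_0}(r)$ for every $r$. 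The hypothesis then supplies constants $C<\infty$ and $\delta=\delta(z_0)\in(0,d(z_0))$ with $k_{z_0}(r)\le C\log(1/r)$ for all $r\in(0,\delta)$; in particular $\|K^T_{\mu}\|_1(z_0,r)<\infty$ there.

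Next I would estimate, using this identity and the bound,
$$\int\limits_0^{\delta}\frac{dr}{\|K^T_{\mu}\|_1(z_0,r)}\ \geqslant\ \frac{1}{2\pi C}\int\limits_0^{\delta}\frac{dr}{r\log(1/r)}\ =\ +\infty,$$
the divergence of the last integral being the classical logarithmic test (its primitive is $-\log\log(1/r)\to+\infty$ as $r\to0$; one may shrink $\delta$ first so that $\log(1/r)>0$ on $(0,\delta)$). Since $z_0\in\partial D$ was arbitrary, condition (\ref{eq8.11.2a}) of Theorem \ref{thKPR9.1} holds, and that theorem then yields the continuous extension of $f$ to $\overline D$ in $\overline{\Bbb C}$, which is the assertion.

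I do not anticipate a genuine obstacle: the whole argument is a one-line reduction followed by the logarithmic integral test, and the $O(\cdot)$ in the hypothesis is harmless since the relevant integral diverges at the endpoint $r=0$, so any sufficiently small admissible choice of $\delta(z_0)$ works — the only thing to watch is keeping the hypothesis on $k_{z_0}$ and the positivity of $\log(1/r)$ valid on one and the same interval $(0,\delta(z_0))$. As a cross-check I would also note that the present hypothesis is a special case of the one in Corollary \ref{corDIR2tcd}: for $\varepsilon$ near $0$ every iterated logarithm $\log\log(1/\varepsilon),\,\log\log\log(1/\varepsilon),\dots$ exceeds $1$, whence $\log(1/\varepsilon)\le\log\tfrac1\varepsilon\cdot\log\log\tfrac1\varepsilon\cdots\log\cdots\log\tfrac1\varepsilon$ on a neighbourhood of $0$, so the conclusion follows a fortiori from Corollary \ref{corDIR2tcd} as well.
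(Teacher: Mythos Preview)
Your argument is correct: the identity $\|K^T_\mu\|_1(z_0,r)=2\pi r\,k_{z_0}(r)$ together with the bound $k_{z_0}(r)\le C\log(1/r)$ reduces (\ref{eq8.11.2a}) to the divergence of $\int_0^\delta \frac{dr}{r\log(1/r)}$, and Theorem~\ref{thKPR9.1} then applies.

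There is a minor difference in packaging from the paper. The paper lists Corollary~\ref{corDIR2tc} among the ``consequences of Theorem~\ref{thKPR9.1fmo_C}'' (the FMO theorem). Taken literally this is not quite right, since the hypothesis $k_{z_0}(\varepsilon)=O(\log(1/\varepsilon))$ is only a bound on circle \emph{averages} and does not obviously produce a pointwise FMO majorant $Q_{z_0}$ for $K^T_\mu(\cdot,z_0)$. What does work is the choice $\psi(t)=1/(t\log(1/t))$ in Lemma~\ref{lem13.3.333} --- the same $\psi$ used to derive Theorem~\ref{thKPR9.1fmo_C} from that lemma --- which gives $\int_{\varepsilon<|z-z_0|<\varepsilon_0}K^T_\mu\,\psi^2=O(\log\log(1/\varepsilon))=o\big((\log\log(1/\varepsilon))^2\big)$; so the paper's implicit route is through Lemma~\ref{lem13.3.333} rather than through the FMO statement itself. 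Your route via Theorem~\ref{thKPR9.1} is more direct and is exactly how the paper obtains the strictly more general Corollary~\ref{corDIR2tcd}; your closing remark that Corollary~\ref{corDIR2tc} is a special case of Corollary~\ref{corDIR2tcd} is also a valid shortcut.
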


\medskip

\begin{remark}\label{rem2c} In particular, the conclusion of Corollary \ref{corDIR2tc} holds if
\begin{equation}\label{eqDIR6**c} K^T_{\mu}(z,z_0)=O\left(\log\frac{1}{|z-z_0|}\right)\qquad{\rm
as}\quad z\to z_0\quad\forall\ z_0\in
\partial D\,.\end{equation}\end{remark}

\medskip

Similarly, choosing in Lemma \ref{lem13.3.333} the function
$\psi(t)=1/t$, we come to the following statement.

\medskip

\begin{theorem}\label{thKPRS12b*} Let $D$ and $D'$ be domains in
${\Bbb C}$, $D$ be bounded and locally connected on $\partial D$ and
$\partial D'$ be strongly accessible. Suppose that $f:D\to D'$ is a
homeomorphic $W^{1,1}_{\rm loc}$ solution of the Beltrami equation
(\ref{eqBeltrami}) such that
\begin{equation}\label{eqKPRS12c*}
\int\limits_{\varepsilon<|z-z_0|<\varepsilon_0}K^T_{\mu}(z,z_0)\,\frac{dm(z)}{|z-z_0|^2}
=o\left(\left[\log\frac{1}{\varepsilon}\right]^2\right)\qquad\forall\ z_0\in\partial D\end{equation} as
$\varepsilon\to 0$ for some $\varepsilon_0=\delta(z_0)\in(0,d(z_0))$ where $d(z_0)=\sup_{z\in D}|z-z_0|$. Then
$f$ can be extended to $\overline D$ by continuity in $\overline{\Bbb C}$. \end{theorem}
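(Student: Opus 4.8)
The plan is to derive Theorem \ref{thKPRS12b*} as a direct specialization of the general extension Lemma \ref{lem13.3.333}, exactly in the manner already used to obtain Theorems \ref{thKPR9.1fmo_C} and \ref{thKPRS12b*}'s siblings. The only work is to check that the hypothesis (\ref{eqKPRS12c*}) together with the choice $\psi_{z_0,\varepsilon}(t)\equiv 1/t$ forces both requirements (\ref{eq5.3}) and (\ref{omal}) in Lemma \ref{lem13.3.333}, and to verify the standing assumption that $\|K^T_\mu\|_1(z_0,r)\neq\infty$ for a.e. $r$.

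First I would record that for the constant-in-$\varepsilon$ family $\psi_{z_0,\varepsilon}(t)=1/t$ on $(0,\infty)$ one has
\begin{equation}\label{plan:I} I_{z_0}(\varepsilon)=\int\limits_{\varepsilon}^{\varepsilon_0}\frac{dt}{t}=\log\frac{\varepsilon_0}{\varepsilon}\,,\end{equation}
which is finite for every $\varepsilon\in(0,\varepsilon_0)$, so (\ref{eq5.3}) holds automatically; moreover $I_{z_0}(\varepsilon)\sim\log(1/\varepsilon)$ as $\varepsilon\to0$, hence $I_{z_0}^{2}(\varepsilon)$ is comparable to $[\log(1/\varepsilon)]^{2}$. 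Consequently condition (\ref{omal}) for this $\psi$ reads
\begin{equation}\label{plan:omal} \int\limits_{\varepsilon<|z-z_0|<\varepsilon_0}\frac{K^T_{\mu}(z,z_0)}{|z-z_0|^{2}}\,dm(z)=o\!\left([\log(\varepsilon_0/\varepsilon)]^{2}\right)\,,\end{equation}
and this is precisely the hypothesis (\ref{eqKPRS12c*}) (the two error bounds $o([\log(1/\varepsilon)]^2)$ and $o([\log(\varepsilon_0/\varepsilon)]^2)$ agree since the two logarithms differ only by the bounded additive constant $\log\varepsilon_0$). Thus (\ref{omal}) is satisfied at every $z_0\in\partial D$.

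Next I would dispose of the integrability proviso $\|K^T_\mu\|_1(z_0,r)\neq\infty$ for a.e. $r\in(0,\varepsilon_0)$. By the Fubini theorem in polar coordinates about $z_0$,
\begin{equation}\label{plan:fub} \int\limits_{0}^{\varepsilon_0}\frac{\|K^T_\mu\|_1(z_0,r)}{r^{2}}\,dr=\int\limits_{|z-z_0|<\varepsilon_0}\frac{K^T_{\mu}(z,z_0)}{|z-z_0|^{2}}\,dm(z)\,,\end{equation}
and the right-hand side is finite because (\ref{eqKPRS12c*}) in particular bounds the integral over $\{\varepsilon<|z-z_0|<\varepsilon_0\}$ and, letting $\varepsilon\to0$, the full integral over $\{|z-z_0|<\varepsilon_0\}$ is at most a finite constant (the $o([\log(1/\varepsilon)]^2)$ bound is in any case eventually finite); hence the integrand in the left member of (\ref{plan:fub}) is finite for a.e.\ $r$, which gives $\|K^T_\mu\|_1(z_0,r)<\infty$ for a.e.\ $r\in(0,\varepsilon_0)$. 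With (\ref{eq5.3}) and (\ref{omal}) verified and this proviso in hand, Lemma \ref{lem13.3.333} applies and yields that $f$ extends to $\overline{D}$ by continuity in $\overline{\Bbb C}$, completing the proof.

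The proof is essentially bookkeeping, so there is no serious obstacle; the only point demanding a moment's care is the harmless replacement of $\log(1/\varepsilon)$ by $\log(\varepsilon_0/\varepsilon)=I_{z_0}(\varepsilon)$ inside the $o(\cdot)$, i.e.\ confirming that the $o$-condition stated in (\ref{eqKPRS12c*}) is genuinely the $o(I_{z_0}^2(\varepsilon))$ demanded by (\ref{omal}) — which it is, since $I_{z_0}(\varepsilon)/\log(1/\varepsilon)\to1$.
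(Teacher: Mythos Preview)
Your approach is exactly the paper's: the theorem is obtained by specializing Lemma~\ref{lem13.3.333} with the choice $\psi_{z_0,\varepsilon}(t)=1/t$, and your verification of (\ref{eq5.3}) and (\ref{omal}) via $I_{z_0}(\varepsilon)=\log(\varepsilon_0/\varepsilon)\sim\log(1/\varepsilon)$ is correct.

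One small slip in your treatment of the proviso $\|K^T_\mu\|_1(z_0,r)<\infty$ a.e.: the full disk integral in your (\ref{plan:fub}) need \emph{not} be finite under (\ref{eqKPRS12c*}) --- already the trivial case $K^T_\mu\equiv 1$ gives $\int_{|z-z_0|<\varepsilon_0}|z-z_0|^{-2}\,dm(z)=\infty$ while (\ref{eqKPRS12c*}) holds. The fix is immediate: for any fixed $\varepsilon\in(0,\varepsilon_0)$ the hypothesis implies the \emph{annular} integral $\int_{\varepsilon<|z-z_0|<\varepsilon_0}K^T_\mu(z,z_0)|z-z_0|^{-2}\,dm(z)$ is finite, so by Fubini $\|K^T_\mu\|_1(z_0,r)<\infty$ for a.e.\ $r\in(\varepsilon,\varepsilon_0)$; letting $\varepsilon\downarrow 0$ along a sequence gives the claim on $(0,\varepsilon_0)$.
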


\medskip

\begin{remark}\label{rmKRRSa*} Choosing in Lemma \ref{lem13.3.333} the function
$\psi(t)=1/(t\log{1/t})$ instead of $\psi(t)=1/t$, we are able to
replace (\ref{eqKPRS12c*}) by \begin{equation}\label{eqKPRS12f*}
\int\limits_{\varepsilon<|z-z_0|<\varepsilon_0}\frac{K^T_{\mu}(z,z_0)\,dm(z)}{\left(|z-z_0|\log{\frac{1}{|z-z_0|}}\right)^2}
=o\left(\left[\log\log\frac{1}{\varepsilon}\right]^2\right)\end{equation}
In general, we are able to give here the whole scale of the
corresponding conditions in $\log$ using functions $\psi(t)$ of the
form
$1/(t\log{1}/{t}\cdot\log\log{1}/{t}\cdot\ldots\cdot\log\ldots\log{1}/{t})$.
\end{remark}

\medskip

Finally, combining Theorems \ref{th5.555} and \ref{thKPR9.1}, we
obtain the following.

\medskip

\begin{theorem}\label{thKR4.1c} {\it Let $D$ and $D'$ be  domains in
${\Bbb C}$, $D$ be locally connected on $\partial D$ and $\partial
D'$ be strongly accessible. Suppose that $f:D\to D'$ is a
homeomorphic $W^{1,1}_{\rm loc}$ solution of the Beltrami equation
(\ref{eqBeltrami}) such that
\begin{equation}\label{eqKR4.1shc}\int\limits_{D\cap U_{z_0}}
\Phi_{z_0}\left(K^T_{\mu}(z,z_0)\right)\,dm(z)<\infty\quad\forall\
z_0\in\partial{D}\end{equation} for a convex non-decreasing function
$\Phi_{z_0}:[0,\infty]\to[0,\infty]$ and a neighborhood $U_{z_0}$ of
the point $z_0$. If
\begin{equation}\label{eqKR4.2rasc}
\int\limits_{\delta(z_0)}^{\infty}\frac{d\tau}{\tau\Phi_{z_0}^{-1}(\tau)}=\infty
\qquad\forall\ z_0\in \partial D\end{equation} for some
$\delta(z_0)>\Phi_{z_0}(0)$. Then $f$ can be extended to $\overline
D$ by continuity in $\overline{\Bbb C}$.}
\end{theorem}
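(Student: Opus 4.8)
The plan is to reduce Theorem \ref{thKR4.1c} to Theorem \ref{thKPR9.1} by verifying that the hypothesis \eqref{eq8.11.2a} of the latter—namely the divergence of $\int_0^{\delta(z_0)} dr/\|K^T_\mu\|_1(z_0,r)$ at every boundary point—follows from the integral condition \eqref{eqKR4.1shc} together with the Dini-type divergence condition \eqref{eqKR4.2rasc}. Fix a point $z_0\in\partial D$ and extend $K^T_\mu(z,z_0)$ by zero outside $D$. The first step is to apply Theorem \ref{th5.555}, but transplanted from ${\Bbb D}$ to a small disk $B(z_0,r_0)\subset U_{z_0}$ (or, equivalently, rescale to the unit disk by the affine change $z\mapsto (z-z_0)/r_0$, which leaves the relevant integrals of $\log$-type and the structure of \eqref{eq3.333a} invariant): with $Q(z)=K^T_\mu(z,z_0)$ and $\Phi=\Phi_{z_0}$, the finiteness \eqref{eqKR4.1shc} of $\int_{B(z_0,r_0)}\Phi_{z_0}(K^T_\mu(z,z_0))\,dm(z)$ plays the role of \eqref{eq5.555}, and \eqref{eqKR4.2rasc} is precisely \eqref{eq3.333a}. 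Hence Theorem \ref{th5.555} yields $\int_0^{r_0} dr/(r\,q_{z_0}(r))=\infty$, where $q_{z_0}(r)$ is the mean of $K^T_\mu(\cdot,z_0)$ over the full circle $S(z_0,r)$.

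The second step is to pass from this conclusion, phrased via the circular average $q_{z_0}(r)$, to the divergence of $\int_0^{\delta(z_0)} dr/\|K^T_\mu\|_1(z_0,r)$, where $\|K^T_\mu\|_1(z_0,r)=\int_{D\cap S(z_0,r)}K^T_\mu(z,z_0)\,|dz|$ is the $L_1$-norm over the (possibly partial) circle inside $D$. Since $K^T_\mu$ has been extended by zero outside $D$, we have $\|K^T_\mu\|_1(z_0,r)=\int_{S(z_0,r)}K^T_\mu(z,z_0)\,|dz|\le 2\pi r\, q_{z_0}(r)$, because the average over the full circle dominates the integral over the sub-arc $D\cap S(z_0,r)$ divided by the full length. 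Therefore $1/\|K^T_\mu\|_1(z_0,r)\ge 1/(2\pi r\,q_{z_0}(r))$, and integrating in $r\in(0,r_0)$ gives
\[
\int_0^{r_0}\frac{dr}{\|K^T_\mu\|_1(z_0,r)}\ \ge\ \frac{1}{2\pi}\int_0^{r_0}\frac{dr}{r\,q_{z_0}(r)}\ =\ \infty,
\]
which is exactly \eqref{eq8.11.2a} with $\delta(z_0)=r_0$. (One should note the standard conventions $a/\infty=0$, $a/0=\infty$ so that the bound is meaningful on the null set of $r$ where $q_{z_0}(r)\in\{0,\infty\}$.) With \eqref{eq8.11.2a} established at every $z_0\in\partial D$, Theorem \ref{thKPR9.1} applies verbatim—$D$ bounded and locally connected on $\partial D$, $\partial D'$ strongly accessible—and gives the continuous extension of $f$ to $\overline D$ in $\overline{\Bbb C}$.

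The main obstacle, and the point that needs the most care, is the reduction of Theorem \ref{th5.555} from the unit disk to a small ball centered at an arbitrary boundary point: one must check that the affine normalization $z\mapsto(z-z_0)/r_0$ does not disturb the equivalence between \eqref{eqKR4.2rasc} and the form \eqref{eq3.333a} demanded by Theorem \ref{th5.555}, and that the convexity and monotonicity of $\Phi_{z_0}$ are preserved (they are, since these are affine-invariant features and the integral condition in Remark \ref{remeq333F} is insensitive to the lower cutoff $\delta$). A secondary subtlety is purely notational: Theorem \ref{th5.555} delivers $\int_0^1 dr/(r q(r))=\infty$ with $q$ the average over the \emph{entire} circle, whereas $\|K^T_\mu\|_1(z_0,r)$ involves only the arc inside $D$; the zero-extension of $K^T_\mu$ outside $D$, which is exactly the convention adopted before the statement of Theorem \ref{thKR4.1c}'s proof (and used throughout this section), is what makes the two compatible and yields the clean inequality $\|K^T_\mu\|_1(z_0,r)\le 2\pi r\,q_{z_0}(r)$. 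Once these two points are dispatched, the proof is a short chain of citations with no further analytic input.
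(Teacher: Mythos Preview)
Your proposal is correct and follows essentially the same route as the paper, which simply states that Theorem \ref{thKR4.1c} is obtained by combining Theorems \ref{th5.555} and \ref{thKPR9.1}; you have spelled out the details of that combination accurately. One small remark: with the zero extension of $K^T_{\mu}(\cdot,z_0)$ outside $D$, the inequality $\|K^T_{\mu}\|_1(z_0,r)\le 2\pi r\,q_{z_0}(r)$ is in fact an equality, so your inequality is sharper than needed but of course still sufficient.
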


\medskip

\begin{corollary}\label{corDIR1000c} In particular, the conclusion of
Theorem \ref{thKR4.1c} holds if
\begin{equation}\label{eqKR4.1c}\int\limits_{D\cap U_{z_0}}e^{\alpha(z_0) K^T_{\mu}(z,z_0)}\,dm(z)<\infty
\qquad\forall\ z_0\in \partial D\end{equation} for some $\alpha(z_0)>0$ and a neighborhood $U_{z_0}$ of the
point $z_0$.
\end{corollary}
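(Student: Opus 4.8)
The plan is to derive Corollary~\ref{corDIR1000c} from Theorem~\ref{thKR4.1c} simply by exhibiting, for each $z_0\in\partial D$, a concrete admissible choice of the convex non-decreasing function $\Phi_{z_0}$ for which both hypotheses of the theorem are verified by the assumption \eqref{eqKR4.1c}. The natural candidate is of course $\Phi_{z_0}(t)=e^{\alpha(z_0)t}$; the point is to check that it meets all the structural requirements and that the divergence condition \eqref{eqKR4.2rasc} holds for it.

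First I would fix $z_0\in\partial D$, write $\alpha=\alpha(z_0)>0$, and set $\Phi_{z_0}(t)=e^{\alpha t}$ for $t\in[0,\infty)$, with $\Phi_{z_0}(\infty)=\infty$. This function is manifestly non-decreasing and convex on $[0,\infty]$, so it is an eligible input for Theorem~\ref{thKR4.1c}. With this choice the integral condition \eqref{eqKR4.1shc} becomes exactly the hypothesis \eqref{eqKR4.1c}, so that part is immediate. It remains only to verify \eqref{eqKR4.2rasc} for a suitable $\delta(z_0)>\Phi_{z_0}(0)=1$.

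The second step is the elementary computation of the inverse function and the resulting integral. Since $\Phi_{z_0}(t)=e^{\alpha t}$ we have $\Phi_{z_0}^{-1}(\tau)=\frac{1}{\alpha}\log\tau$ for $\tau\geqslant1$. Hence, for any $\delta(z_0)>1$,
\begin{equation*}
\int\limits_{\delta(z_0)}^{\infty}\frac{d\tau}{\tau\,\Phi_{z_0}^{-1}(\tau)}
=\alpha\int\limits_{\delta(z_0)}^{\infty}\frac{d\tau}{\tau\log\tau}
=\alpha\,\Bigl[\log\log\tau\Bigr]_{\delta(z_0)}^{\infty}=\infty\,,
\end{equation*}
because $\log\log\tau\to\infty$ as $\tau\to\infty$. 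Thus \eqref{eqKR4.2rasc} holds for every $z_0\in\partial D$ (with, say, $\delta(z_0)=e$), and all hypotheses of Theorem~\ref{thKR4.1c} are met.

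Applying Theorem~\ref{thKR4.1c} then yields that $f$ extends to $\overline D$ by continuity in $\overline{\Bbb C}$, which is the desired conclusion. There is no real obstacle here: the whole content is the observation that the exponential function is convex and that $\int^{\infty} d\tau/(\tau\log\tau)$ diverges, so the corollary is a direct specialization of the theorem. The only point one should be a little careful about is that $\Phi_{z_0}(0)=1$ and that $\delta(z_0)$ must be chosen strictly greater than $1$ so that $\Phi_{z_0}^{-1}$ is well defined and positive on the range of integration; taking $\delta(z_0)=e$ handles this cleanly.
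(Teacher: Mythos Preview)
Your argument is correct and is exactly the intended specialization: the paper states the corollary without proof precisely because choosing $\Phi_{z_0}(t)=e^{\alpha(z_0)t}$ immediately reduces \eqref{eqKR4.1shc} to \eqref{eqKR4.1c} and makes \eqref{eqKR4.2rasc} the divergent integral $\alpha\int^\infty d\tau/(\tau\log\tau)$. There is nothing to add.
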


\medskip

\section{The extension of the inverse mappings to the boundary}

\medskip

Let us start from the following fine lemma.

\medskip

\begin{lemma}\label{lemKPR8.1} Let $D$ and $D'$ be domains in
${\Bbb C}$, $z_1$ and $z_2$ be distinct points in $\partial D$,
$z_1\neq\infty$, and let $f:D\to D'$ be a homeomorphic $W^{1,1}_{\rm
loc}$ solution of the Beltrami equation (\ref{eqBeltrami}). Suppose
that the function $K^T_{\mu}(z,z_1)$ is integrable on the dashed
lines
\begin{equation}\label{eqKPR6.1}D(z_1, r)\colon =\{z\in D:|z-z_1|=r\}=D\cap S(z_1,r)\end{equation}
for some set $E$ of numbers $r<|z_1-z_2|$ of a positive linear measure. If $D$ is locally connected at $z_1$ and
$z_2$ and $\partial D'$ is weakly flat, then
\begin{equation}\label{eq8.10.2} C(z_1,f)\cap C(z_2,f)=\varnothing.\end{equation}
\end{lemma}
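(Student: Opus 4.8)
The strategy is a proof by contradiction, paralleling the argument in Lemma \ref{lem4}. Suppose that $\zeta_0 \in C(z_1,f)\cap C(z_2,f)$ is a common cluster value. Since $D$ is locally connected at $z_1$ and at $z_2$, we can choose nested neighborhoods $V_k^{(1)}$ of $z_1$ and $V_k^{(2)}$ of $z_2$ with connected traces $D_k^{(j)} = D\cap V_k^{(j)}$ and $\operatorname{diam} V_k^{(j)}\to 0$; we may also take $V_k^{(1)}$ and $V_k^{(2)}$ disjoint and both missing the other base point, using $z_1\ne z_2$. Because $\zeta_0$ is a cluster value at both points, we can pick points $w_k\in fD_k^{(1)}$ and $w_k'\in fD_k^{(2)}$ with $w_k\to\zeta_0$ and $w_k'\to\zeta_0$. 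Join $w_k$ to $w_k'$ by a path $C_k$ in $D'$; since both endpoints converge to $\zeta_0$, the curves $C_k$ eventually enter every fixed ball $B(\zeta_0,\sigma)$, so they meet both $\partial B(\zeta_0,\sigma)$ and $\partial B(\zeta_0,\sigma')$ for any $0<\sigma'<\sigma$ with $B(\zeta_0,\sigma)$ small. Meanwhile, fix a continuum $F^*$ in $D'$ with $\zeta_0\notin \overline{F^*}$ and $C_k$ not meeting it (e.g. a small arc near some other cluster configuration, or simply a fixed continuum whose $f$-preimage stays away from $z_1$); the preimages $\gamma_k = f^{-1}(C_k)$ and $F = f^{-1}(F^*)$ then satisfy: $F$ is a continuum in $D$ with $\operatorname{dist}(z_1,F)=:\varepsilon_0>0$, while $\gamma_k\subset fB(z_1,\varepsilon)$ for $k$ large, for each fixed small $\varepsilon$.

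The key quantitative input is the modulus estimate. By weak flatness of $\partial D'$ at $\zeta_0$, for every $P>0$ there is a ball $B(\zeta_0,\sigma')$ such that $M(\Delta(F^*,C_k;D'))\ge P$ for all large $k$ (the continua $F^*$ and $C_k$ meet $\partial B(\zeta_0,\sigma)$ and $\partial B(\zeta_0,\sigma')$). Transporting through $f$, this says $M(f\Gamma_\varepsilon)\ge P$ for every $\varepsilon\in(0,\varepsilon_0)$, where $\Gamma_\varepsilon$ is the family of paths in $D$ joining $S(z_1,\varepsilon)$ to $S(z_1,\varepsilon_0)$. On the other hand, by the He–Ziemer inequality (\ref{HeZi}) and Theorem \ref{th8.4.8},
\[
M(f\Gamma_\varepsilon)\ \leqslant\ \frac{1}{M(f\Sigma_\varepsilon)}\ \leqslant\ \left(\int\limits_{\varepsilon}^{\varepsilon_0}\frac{dr}{\|K^T_\mu\|_1(z_1,r)}\right)^{-1}.
\]
Since $K^T_\mu(\cdot,z_1)$ is integrable on $D(z_1,r)$ for all $r$ in a set $E\subset(0,\varepsilon_0)$ of positive linear measure, the function $1/\|K^T_\mu\|_1(z_1,r)$ is positive on $E$, hence the integral $\int_\varepsilon^{\varepsilon_0} dr/\|K^T_\mu\|_1(z_1,r)$ is bounded below by a positive constant $c(\varepsilon)$ that does not tend to $0$ as $\varepsilon\to 0$ — indeed it is nondecreasing as $\varepsilon$ decreases, so it stays $\ge \int_E dr/\|K^T_\mu\|_1(z_1,r)>0$ for all small $\varepsilon$. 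Wait — more carefully: as $\varepsilon\downarrow 0$ the integral over $(\varepsilon,\varepsilon_0)$ increases to $\int_0^{\varepsilon_0} dr/\|K^T_\mu\|_1(z_1,r)$, which is $\ge \int_{E} dr/\|K^T_\mu\|_1(z_1,r) > 0$. Therefore $M(f\Gamma_\varepsilon)$ is bounded above by a finite constant independent of $\varepsilon$, contradicting $M(f\Gamma_\varepsilon)\ge P$ for arbitrarily large $P$. This contradiction forces $C(z_1,f)\cap C(z_2,f)=\varnothing$.

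The main obstacle, and the point requiring the most care, is the construction of the fixed continuum $F^*$ (equivalently $F$) so that it is genuinely separated from $\zeta_0$ in $D'$ while the connecting curves $C_k$ can still be arranged to avoid it and to realize the large modulus of the separating path family — in other words, setting up the weak-flatness geometry correctly with the right choice of the two spheres $\partial B(\zeta_0,\sigma)$, $\partial B(\zeta_0,\sigma')$ so that both $F^*$ and every $C_k$ (large $k$) meet both of them. One must also verify the measurability and Fubini-type facts needed to apply Theorem \ref{th8.4.8} on the annulus $R(z_1,\varepsilon,\varepsilon_0)$, and check that the integrability hypothesis on the set $E$ of positive measure is exactly what keeps $\int_\varepsilon^{\varepsilon_0} dr/\|K^T_\mu\|_1(z_1,r)$ from degenerating; this is where the hypothesis is used in an essential but mild way (no divergence of the integral is needed, only its positivity in the limit). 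The local connectedness at $z_2$ enters only to produce the neighborhoods $V_k^{(2)}$ and hence the endpoint $w_k'$; weak flatness (rather than mere strong accessibility) of $\partial D'$ is what supplies the \emph{lower} bound $M(\Delta(F^*,C_k;D'))\ge P$ for arbitrarily large $P$, which is the crux of the contradiction.
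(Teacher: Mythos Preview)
Your overall strategy---contradiction via a finite upper bound on $M(f\Gamma)$ from Theorem \ref{th8.4.8} and the He--Ziemer inequality, against an arbitrarily large lower bound from weak flatness---is the right one, and your observation that the integrability hypothesis on $E$ is used only to make $M(f\Sigma_\varepsilon)>0$ (not $=\infty$) is correct. But the geometric setup for invoking weak flatness is broken.

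The gap is in your choice of continua. Weak flatness at $\zeta_0$ says: for every neighborhood $U$ and every $P>0$ there is $V\subset U$ such that $M(\Delta(E,F;D'))\ge P$ whenever \emph{both} continua $E,F$ meet $\partial U$ and $\partial V$. You fix $F^*$ with $\zeta_0\notin\overline{F^*}$; then $F^*$ cannot meet $\partial V$ once $V$ is small, so weak flatness gives no lower bound on $M(\Delta(F^*,C_k;D'))$. Separately, the claim ``$\gamma_k=f^{-1}(C_k)\subset B(z_1,\varepsilon)$ for $k$ large'' is false: $C_k$ joins a point of $fD_k^{(1)}$ to a point of $fD_k^{(2)}$, so its preimage runs from near $z_1$ to near $z_2$ and certainly leaves $B(z_1,\varepsilon)$. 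Consequently there is no inclusion $\Delta(F^*,C_k;D')\subset f\Gamma_\varepsilon$, and the ``transporting through $f$'' step is unjustified.

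The paper's fix is to drop $F^*$ and the curves $C_k$ entirely. Fix $\varepsilon,\varepsilon_0$ once and for all with $E\cap(\varepsilon,\varepsilon_0)$ of positive measure, take connected traces $W_1=D\cap U_1\subset B(z_1,\varepsilon)$ and $W_2=D\cap U_2\subset{\Bbb C}\setminus B(z_1,\varepsilon_0)$, and set $\Gamma=\Delta(\overline{W_1},\overline{W_2};D)$; then $M(f\Gamma)\le 1/M(f\Sigma_\varepsilon)<\infty$. Since $\zeta_0$ lies in the closure of \emph{both} $fW_1$ and $fW_2$, choose $r_0$ with $S(\zeta_0,r_0)$ meeting both sets; weak flatness yields $r_*<r_0$ with $M(\Delta(E,F;D'))\ge M_0$ for any continua meeting both spheres. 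Now pick paths $P_1\subset fW_1$ and $P_2\subset fW_2$ each crossing from $S(\zeta_0,r_*)$ to $S(\zeta_0,r_0)$; then $\Delta(P_1,P_2;D')\subset f\Gamma$, giving $M_0\le M(f\Gamma)$, the desired contradiction. The point is that the \emph{two} images $fW_1$, $fW_2$ themselves (not one fixed $F^*$ plus a family $C_k$) supply the pair of continua for weak flatness.
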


\medskip

\begin{proof} Without loss of generality, we may assume that the
domain $D$ is bounded. Let $d=|z_1-z_2|$. Choose
$\varepsilon_0\in(0,d)$ and $\varepsilon\in(0,\varepsilon_0)$ such
that $$E_0:=\{r\in E:r\in(\varepsilon,\varepsilon_0)\}$$ has a
positive measure. The choice is possible because of a countable
subadditivity of the linear measure and because of the exhaustion
of $E$ by the sets  $$E_m:=\{r\in E:r\in(1/m,d-1/m)\}\,.$$ Note
that each of the circles $S(z_1,r)$, $r\in E_0$, separates the
points $z_1$ and $z_2$ in ${\Bbb C}$ and $D(r)$, $r\in E_0$, in
$D$. Thus, by Theorem \ref{th8.4.8} we have that
\begin{equation}\label{eq8.10.3}M(f\Sigma_{\varepsilon})>0\end{equation} where
$\Sigma_{\varepsilon}$ denotes the family of all intersections of
$D$ with the circles $$S(z_1,r)=\{z\in{\Bbb C}:|z-z_1|=r\}\,,\quad
r\in(\varepsilon,\varepsilon_0)\,.$$

For $i=1,2$, let $C_i$ be the cluster set $C(z_i,f)$ and suppose
that $C_1\cap C_2\neq\varnothing$. Since $D$ is locally connected
at $z_1$ and $z_2$, there exist neighborhoods $U_i$ of $z_i$ such
that $W_i=D\cap U_i$, $i=1,2$ are connected and $U_1\subset
B(z_1,\varepsilon)$ and $U_2\subset{\Bbb C}\setminus
B(z_1,\varepsilon_0)$. Set
$\Gamma=\Delta(\overline{W_1},\overline{W_2};D)$. By \cite{He} and
\cite{Zi} and (\ref{eq8.10.3}) \begin{equation}\label{eq8.10.4}
M(f\Gamma)\leqslant\frac{1}{M(f\Sigma_{\varepsilon})}<\infty\,.\end{equation}
Let $\zeta_0\in C_1\cap C_2$. Without loss of generality, we may
assume that $\zeta_0\neq\infty$ because in the contrary case one
can use an additional M\"{o}bius transformation. Choose $r_0>0$
such that $S(\zeta_0,r_0)\cap fW_1\neq\varnothing$ and
$S(\zeta_0,r_0)\cap fW_2\neq\varnothing$.

By the condition $\partial D'$ is weakly flat and hence, given a
finite number $M_0>M(f\Gamma)$, there is $r_*\in(0,r_0)$ such that
$$M(\Delta(E,F;D'))\geqslant M_0$$ for all continua $E$ and $F$ in
$D'$ intersecting the circles $S(\zeta_0,r_0)$ and $S(\zeta_0,r_*)$.
However, these circles can be connected by paths $P_1$ and $P_2$ in
the domains $fW_1$ and $fW_2$, respectively, and for those paths
$$M_0\leqslant M(\Delta(P_1,P_2;D'))\leqslant M(f\Gamma)\,.$$

The contradiction disproves the above assumption that $C_1\cap
C_2\neq\varnothing$. The proof is complete.
\end{proof}

\medskip

As an immediate consequence of Lemma \ref{lemKPR8.1}, we have the
following statement.


\begin{theorem}\label{thKPR8.2} Let $D$ and $D'$ be domains in
${\Bbb C}$, $D$ locally connected on $\partial D$ and $\partial D'$ weakly flat. Suppose that $f:D\to D'$ is a
homeomorphic $W^{1,1}_{\rm loc}$ solution of the Beltrami equation (\ref{eqBeltrami}) with $K^T_{\mu}(z,z_0)\in
L^1(D\cap U_{z_0})$ for a neighborhood $U_{z_0}$ of every point $z_0\in \partial D$. Then $f^{-1}$ has an
extension to $\overline{D'}$ by continuity in $\overline{\Bbb C}$. \end{theorem}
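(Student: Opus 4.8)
The plan is to deduce the theorem from Lemma~\ref{lemKPR8.1} by a standard cluster--set argument. Recall that a continuous map of a domain into the compact space $\overline{\Bbb C}$ extends continuously to the closure once its cluster set at every boundary point is a single point; so the whole matter reduces to showing that $C(w_0,f^{-1})$ is a singleton for each $w_0\in\partial D'$, where
$$C(w_0,f^{-1})=\{z\in\overline{\Bbb C}:z=\lim\limits_{k\to\infty}f^{-1}(w_k)\ \text{for some}\ w_k\in D',\ w_k\to w_0\}\,.$$
First I would observe that, because $f$ is a homeomorphism of $D$ onto $D'$, this set is a nonempty subset of $\partial D$: it is nonempty by compactness of $\overline{\Bbb C}$, and if some sequence $f^{-1}(w_k)$ converged to a point $z_*\in D$, continuity of $f$ would give $w_k=f(f^{-1}(w_k))\to f(z_*)\in D'$, contradicting $w_0\in\partial D'$.

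Next I would argue by contradiction: suppose $z_1,z_2\in C(w_0,f^{-1})$ with $z_1\ne z_2$. Since at most one of the two points equals $\infty$, relabel them so that $z_1\ne\infty$; then $z_1,z_2$ are distinct points of $\partial D$. By the definition of $C(w_0,f^{-1})$ there are sequences $z^{(i)}_k\to z_i$ in $D$ with $f(z^{(i)}_k)\to w_0$ for $i=1,2$, whence $w_0\in C(z_1,f)\cap C(z_2,f)$ and in particular $C(z_1,f)\cap C(z_2,f)\ne\varnothing$. I then want to contradict this using Lemma~\ref{lemKPR8.1} applied to the pair $z_1,z_2$. Most of its hypotheses are immediate: $D$ is locally connected at $z_1$ and at $z_2$ because it is locally connected on $\partial D$, and $\partial D'$ is weakly flat by assumption.

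The one hypothesis of Lemma~\ref{lemKPR8.1} that takes a short computation to verify — and this is the only delicate point — is the integrability of $K^T_{\mu}(\cdot,z_1)$ on the dashed lines $D(z_1,r)$ for a set of $r<|z_1-z_2|$ of positive linear measure; note it is essential here that the labelling was chosen so that $z_1$ is the \emph{finite} point of the pair, since the decomposition must be taken about $z_1$. For this I would invoke the assumption of the theorem at $z_1\in\partial D$: there is a neighbourhood $U_{z_1}$ of $z_1$ with $K^T_{\mu}(\cdot,z_1)\in L^1(D\cap U_{z_1})$. Picking $\rho\in(0,|z_1-z_2|)$ so small that $B(z_1,\rho)\subset U_{z_1}$ and passing to polar coordinates centred at $z_1$,
$$\int\limits_0^{\rho}\left(\int\limits_{D(z_1,r)}K^T_{\mu}(z,z_1)\,|dz|\right)dr=\int\limits_{D\cap B(z_1,\rho)}K^T_{\mu}(z,z_1)\,dm(z)<\infty\,,$$
so by the Fubini theorem the inner integral is finite for a.e. $r\in(0,\rho)$, which supplies a set $E\subset(0,|z_1-z_2|)$ of positive linear measure on whose circles $K^T_{\mu}(\cdot,z_1)$ is integrable. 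Thus Lemma~\ref{lemKPR8.1} applies and gives $C(z_1,f)\cap C(z_2,f)=\varnothing$, contradicting what was found above. Hence $C(w_0,f^{-1})$ is a singleton for every $w_0\in\partial D'$; declaring $f^{-1}(w_0)$ to be this unique point defines the extension of $f^{-1}$ to $\overline{D'}$, and its continuity in $\overline{\Bbb C}$ follows by the usual diagonal argument exploiting compactness of $\overline{\Bbb C}$, once all the pointwise cluster sets are known to be singletons.
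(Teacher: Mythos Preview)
Your argument is correct and follows essentially the same route as the paper: both reduce the extension of $f^{-1}$ to a contradiction via Lemma~\ref{lemKPR8.1}, and both invoke Fubini's theorem to pass from $K^T_{\mu}(\cdot,z_1)\in L^1(D\cap U_{z_1})$ to integrability on $D(z_1,r)$ for a set of radii of positive measure. The paper's proof is a two-sentence sketch of exactly this, while you have written out the cluster-set duality $C(w_0,f^{-1})\ni z_i\Rightarrow w_0\in C(z_i,f)$ and the relabelling ensuring $z_1\ne\infty$ in full; these details are correct and are precisely what the paper's phrase ``arguing by contradiction'' is hiding.
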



\begin{proof} By the Fubini theorem with notations from Lemma
\ref{lemKPR8.1}, the set \begin{equation}\label{eqKPR6.2a}
E=\{r\in(0,d):K^T_{\mu}(z,z_0)|_{D(z_0, r)}\in L^{1}(D(z_0,
r))\}\end{equation} has a positive linear measure because
$K^T_{\mu}(z,z_0)\in L^1(D\cap U_{z_0})$. Consequently, arguing by
contradiction, we obtain the desired conclusion on the basis of
Lemma \ref{lemKPR8.1}.
\end{proof}


Moreover, by Lemma \ref{lemKPR8.1} we obtain also the following
conclusion.


\begin{theorem}\label{thKPR8.3} Let $D$ and $D'$ be domains in
${\Bbb C}$, $D$ bounded and locally connected on $\partial D$ and $\partial D'$ weakly flat. Suppose that
$f:D\to D'$ is a homeomorphic $W^{1,1}_{\rm loc}$ solution of the Beltrami equation (\ref{eqBeltrami}) with the
coefficient $\mu$ such that the condition (\ref{eq8.11.2a}) holds for all $z_0\in\partial D$. Then there is an
extension of $f^{-1}$ to $\overline{D'}$ by continuity in $\overline{\Bbb C}$. \end{theorem}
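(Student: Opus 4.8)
The plan is to obtain Theorem~\ref{thKPR8.3} from Lemma~\ref{lemKPR8.1} together with the classical principle that a homeomorphism all of whose boundary cluster sets are singletons has a continuous extension of its inverse.

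First I would fix an arbitrary pair of distinct points $z_1,z_2\in\partial D$. Since $D$ is bounded, $\partial D$ is a bounded subset of $\Bbb C$, so both $z_1$ and $z_2$ are finite; in particular $z_1\ne\infty$. The key observation is that condition~(\ref{eq8.11.2a}) at $z_1$ forces $K^T_{\mu}(\cdot,z_1)$ to be integrable over the dashed line $D(z_1,r)$ for all $r$ in a set of positive linear measure accumulating at $0$: where $\|K^T_{\mu}\|_1(z_1,r)=\infty$ the integrand in~(\ref{eq8.11.2a}) is $0$, so the divergence of that integral must be carried by a set of radii of positive measure near $0$. Intersecting with $(0,|z_1-z_2|)$, I would thus produce a set $E\subset(0,|z_1-z_2|)$ of positive linear measure on which $K^T_{\mu}(\cdot,z_1)|_{D(z_1,r)}\in L^1(D(z_1,r))$. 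Since $D$ is locally connected at $z_1$ and $z_2$ and $\partial D'$ is weakly flat, Lemma~\ref{lemKPR8.1} then applies and yields $C(z_1,f)\cap C(z_2,f)=\varnothing$. As $z_1,z_2$ were an arbitrary pair of distinct boundary points, the cluster sets of $f$ at distinct points of $\partial D$ are pairwise disjoint.

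Next I would pass from this disjointness to the continuous extension of $f^{-1}$. Fix $w_0\in\partial D'$ and consider $C(w_0,f^{-1})=\{z\in\overline D:z=\lim_k f^{-1}(w_k),\ D'\ni w_k\to w_0\}$. It is nonempty by compactness of $\overline D$ (recall $D$ is bounded), and it is contained in $\partial D$: if some $z_0\in D$ belonged to it, then by continuity of $f$ on $D$ we would get $w_0=\lim_k f(f^{-1}(w_k))=f(z_0)\in D'$, which is impossible. If $C(w_0,f^{-1})$ contained two distinct points $z_1,z_2\in\partial D$, then $w_0\in C(z_1,f)\cap C(z_2,f)$, contradicting the previous step; hence $C(w_0,f^{-1})$ is a single point for every $w_0\in\partial D'$. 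It is standard that single--valuedness of all boundary cluster sets gives a continuous extension $\overline{f^{-1}}:\overline{D'}\to\overline D\subset\overline{\Bbb C}$: put $\overline{f^{-1}}(w_0)$ equal to that unique point, and for any $w_n\to w_0$ in $\overline{D'}$ choose $w_n'\in D'$ with $|w_n'-w_n|<1/n$ and $|f^{-1}(w_n')-\overline{f^{-1}}(w_n)|<1/n$; then $w_n'\to w_0$, so by single--valuedness $f^{-1}(w_n')\to\overline{f^{-1}}(w_0)$, whence $\overline{f^{-1}}(w_n)\to\overline{f^{-1}}(w_0)$.

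Essentially all of the analytic content is already packaged in Lemma~\ref{lemKPR8.1} and in Theorem~\ref{th8.4.8} which underlies it, so the rest is topological bookkeeping. The one point deserving care is the measure--theoretic extraction in the first step of a positive--measure set of admissible radii below $|z_1-z_2|$ out of the divergence of the integral in~(\ref{eq8.11.2a}); this is routine but is exactly where the precise form of that hypothesis is used, and it plays here the role that Fubini's theorem (applied to the two--dimensional integrability $K^T_{\mu}\in L^1(D\cap U_{z_0})$) plays in the proof of Theorem~\ref{thKPR8.2}. I would also note that, under these same hypotheses, Theorem~\ref{thKPR9.1} (weak flatness of $\partial D'$ implies its strong accessibility) gives a continuous extension of $f$ itself to $\overline D$, so that $f$ and $f^{-1}$ extend continuously to the closures simultaneously.
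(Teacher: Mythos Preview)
Your proposal is correct and follows essentially the same route as the paper: the paper simply states that Theorem~\ref{thKPR8.3} is obtained ``by Lemma~\ref{lemKPR8.1}'', leaving implicit both the extraction of a positive-measure set of radii $r<|z_1-z_2|$ with $\|K^T_\mu\|_1(z_1,r)<\infty$ from the divergence in~(\ref{eq8.11.2a}) and the passage from pairwise disjoint cluster sets to a continuous extension of $f^{-1}$ (exactly as in its proof of Theorem~\ref{thKPR8.2}). Your write-up supplies these details explicitly; the concluding remark about simultaneously extending $f$ via Theorem~\ref{thKPR9.1} is a correct additional observation not needed for the statement itself.
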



Combining Theorem \ref{thKPR8.3} and Lemma \ref{lem4cr}, we come to
the following general lemma where we assume as above that
$K^T_{\mu}(z,z_0)$ is extended by zero outside of the domain $D$.


\begin{lemma}\label{lem13.5.333} Let $D$ and $D'$ be domains in
${\Bbb C}$, $D$ be locally connected on $\partial D$ and $\partial D'$ be a weakly flat. Suppose that $f:D\to
D'$ is a homeomorphic $W^{1,1}_{\rm loc}$ solution of the Beltrami equation (\ref{eqBeltrami}) such that
$\|K^T_{\mu}\| (z_0,r) \neq \infty$ for a.e. $r \in (0, \varepsilon_0)$ and \begin{equation}\label{omal}
\int\limits_{\varepsilon<|z-z_0|<\varepsilon_0}
K^T_{\mu}(z,z_0)\cdot\psi^2_{z_0,\varepsilon}(|z-z_0|)\,dm(z)=o(I_{z_0}^{2}(\varepsilon))\quad{\rm
as}\quad\varepsilon\to0\ \ \forall\ z_0\in\partial D\end{equation} for some $\varepsilon_0\in(0,\delta_0)$ where
$\delta_0=\delta(z_0)=\sup_{z\in D}|z-z_0|$ and $\psi_{z_0,\varepsilon}(t)$ is a family of non-negative
measurable (by Lebesgue) functions on $(0,\infty)$ such that
\begin{equation}\label{eq5.3}
I_{z_0}(\varepsilon)\colon =\int\limits_{\varepsilon}^{\varepsilon_0}
\psi_{z_0,\varepsilon}(t)\,dt<\infty\qquad\forall\ \varepsilon\in(0,\varepsilon_0)\,.\end{equation} Then there
is an extension of $f^{-1}$ to $\overline{D'}$ by continuity in $\overline{\Bbb C}$.  \end{lemma}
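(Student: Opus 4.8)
The plan is to mimic exactly the derivation that produced Lemma~\ref{lem13.3.333} from Lemma~\ref{lem4} and Lemma~\ref{lem4cr}, but now starting from Theorem~\ref{thKPR8.3} (the inverse-extension counterpart of Theorem~\ref{thKPR9.1}) in place of Theorem~\ref{thKPR9.1}. Concretely, the only thing to check is that the hypothesis (\ref{omal})--(\ref{eq5.3}) of the present lemma implies the divergence condition (\ref{eq8.11.2a}) at every point $z_0\in\partial D$, after which Theorem~\ref{thKPR8.3} applies verbatim and yields the continuous extension of $f^{-1}$ to $\overline{D'}$.

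First I would fix $z_0\in\partial D$ and apply Lemma~\ref{lem4cr} with $A=R(z_0,\varepsilon,\varepsilon_0)$ and with the particular test function $\eta=\eta_{z_0,\varepsilon}:=\psi_{z_0,\varepsilon}/I_{z_0}(\varepsilon)$, which by (\ref{eq5.3}) satisfies the normalization (\ref{admr}). Since we assume $\|K^T_\mu\|_1(z_0,r)\neq\infty$ for a.e.\ $r\in(0,\varepsilon_0)$, Lemma~\ref{lem4cr} gives
\begin{equation*}
\left(\int\limits_{\varepsilon}^{\varepsilon_0}\frac{dr}{\|K^T_\mu\|_1(z_0,r)}\right)^{-1}
\ \leqslant\ \int\limits_{\varepsilon<|z-z_0|<\varepsilon_0}K^T_\mu(z,z_0)\,\eta_{z_0,\varepsilon}^2(|z-z_0|)\,dm(z)
\ =\ \frac{1}{I_{z_0}^2(\varepsilon)}\int\limits_{\varepsilon<|z-z_0|<\varepsilon_0}K^T_\mu(z,z_0)\,\psi_{z_0,\varepsilon}^2(|z-z_0|)\,dm(z).
\end{equation*}
By hypothesis (\ref{omal}) the right-hand side is $o(1)$ as $\varepsilon\to0$, so the left-hand side tends to $0$, which forces
\begin{equation*}
\int\limits_{0}^{\varepsilon_0}\frac{dr}{\|K^T_\mu\|_1(z_0,r)}=\infty,
\end{equation*}
i.e.\ condition (\ref{eq8.11.2a}) with $\delta(z_0)=\varepsilon_0$. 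Since $z_0\in\partial D$ was arbitrary, (\ref{eq8.11.2a}) holds at every boundary point.

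Then I would simply invoke Theorem~\ref{thKPR8.3}: $D$ is bounded and locally connected on $\partial D$, $\partial D'$ is weakly flat, $f$ is a homeomorphic $W^{1,1}_{\rm loc}$ solution of (\ref{eqBeltrami}), and (\ref{eq8.11.2a}) now holds for all $z_0\in\partial D$; hence $f^{-1}$ extends to $\overline{D'}$ by continuity in $\overline{\Bbb C}$. (If one prefers to avoid the standing boundedness assumption, one notes as in the proof of Lemma~\ref{lemKPR8.1} that replacing $D$ by a bounded subdomain near each $z_1$ is harmless, since the conclusion $C(z_1,f)\cap C(z_2,f)=\varnothing$ is local in nature; but here boundedness may simply be inherited from the set-up.)

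I do not expect a genuine obstacle: the argument is a bookkeeping composition of two already-proved results, exactly parallel to how Lemma~\ref{lem13.3.333} was obtained. The only point requiring a modicum of care is the measurability/finiteness bookkeeping needed to apply Lemma~\ref{lem4cr} — namely that $\|K^T_\mu\|_1(z_0,r)$ is finite for a.e.\ $r$, which is built into the statement, and that $\eta_{z_0,\varepsilon}$ is a legitimate admissible function, which follows from (\ref{eq5.3}). Everything else is the verbatim chain Lemma~\ref{lem4cr} $\Rightarrow$ (\ref{eq8.11.2a}) $\Rightarrow$ Theorem~\ref{thKPR8.3}.
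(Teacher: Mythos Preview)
Your proposal is correct and follows essentially the same route as the paper: the paper states explicitly that Lemma~\ref{lem13.5.333} is obtained by ``combining Theorem~\ref{thKPR8.3} and Lemma~\ref{lem4cr},'' which is precisely the chain you carry out (normalize $\psi_{z_0,\varepsilon}$ to an admissible $\eta$, apply Lemma~\ref{lem4cr} to force the divergence (\ref{eq8.11.2a}), then invoke Theorem~\ref{thKPR8.3}). Your parenthetical remark on boundedness is also apt, since Theorem~\ref{thKPR8.3} is stated for bounded $D$ while Lemma~\ref{lem13.5.333} is not; as you note, this is harmless because the underlying Lemma~\ref{lemKPR8.1} reduces to the bounded case without loss of generality.
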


\medskip

\section{On a homeomorphic extension to the boundary}

\medskip

Combining Lemmas \ref{lem13.3.333} and \ref{lem13.5.333}, we obtain
one more important general lemma where as we assume that
$K^T_{\mu}(z,z_0)$ is extended by zero outside $D$.

\medskip

\begin{lemma}\label{lem3.3.333} Let $D$ and $D'$ be domains in
${\Bbb C}$, $D$ be locally connected on $\partial D$ and $\partial D'$ be weakly flat. Suppose that $f:D\to D'$
is a homeomorphic $W^{1,1}_{\rm loc}$ solution of the Beltrami equation (\ref{eqBeltrami}) such that
$\|K^T_{\mu}\| (z_0,r) \neq \infty$ for a.e. $r \in (0, \varepsilon_0)$ and
\begin{equation}\label{3omal}
\int\limits_{\varepsilon<|z-z_0|<\varepsilon_0}
K^T_{\mu}(z,z_0)\cdot\psi^2_{z_0,\varepsilon}(|z-z_0|)\,dm(z)=o(I_{z_0}^{2}(\varepsilon))\quad{\rm
as}\quad\varepsilon\to0\ \ \forall\ z_0\in\partial D\end{equation}
for some $\varepsilon_0\in(0,\delta_0)$ where
$\delta_0=\delta(z_0)=\sup_{z\in D}|z-z_0|$ and
$\psi_{z_0,\varepsilon}(t)$ is a family of non-negative measurable
(by Lebesgue) functions on $(0,\infty)$ such that
\begin{equation}\label{eq3.5.3}
I_{z_0}(\varepsilon)\colon
=\int\limits_{\varepsilon}^{\varepsilon_0}
\psi_{z_0,\varepsilon}(t)\,dt<\infty\qquad\forall\
\varepsilon\in(0,\varepsilon_0)\,.\end{equation} Then $f$ can be
extended to a homeomorphism $\overline{f}:
\overline{D}\to\overline{D'}$ by continuity in $\overline{\Bbb C}$.
\end{lemma}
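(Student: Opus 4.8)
\textbf{Proof plan for Lemma \ref{lem3.3.333}.}
The plan is to obtain the homeomorphic boundary extension by gluing together two facts already established under exactly these hypotheses: the continuous extension of $f$ to $\overline D$ provided by Lemma \ref{lem13.3.333}, and the continuous extension of $f^{-1}$ to $\overline{D'}$ provided by Lemma \ref{lem13.5.333}. So the first step is simply to invoke Lemma \ref{lem13.3.333}: since $D$ is locally connected on $\partial D$, $\partial D'$ is weakly flat (hence strongly accessible), and the integral condition (\ref{3omal}) with (\ref{eq3.5.3}) holds, $f$ extends to a continuous map $\overline f:\overline D\to\overline{\Bbb C}$. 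Symmetrically, by Lemma \ref{lem13.5.333} and the same hypotheses, $g:=f^{-1}:D'\to D$ extends to a continuous map $\overline g:\overline{D'}\to\overline{\Bbb C}$. It should be noted here that the hypothesis of Lemma \ref{lem13.5.333} also requires $D$ bounded (implicitly, as in the ancestor Theorem \ref{thKPR8.3}), but boundedness is no loss of generality since one may precompose with a M\"obius transformation as is done inside the proof of Lemma \ref{lemKPR8.1}.

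The second step is to identify the range of $\overline f$ with $\overline{D'}$ and check the boundary correspondence. Since $f(D)=D'$ and $\overline f$ is continuous on the compact set $\overline D$, we get $\overline f(\overline D)=\overline{f(D)}=\overline{D'}$; in particular $\overline f(\partial D)\subseteq\partial D'$ because an interior point of $D'$ cannot be a cluster value of $f$ at a boundary point of $D$ (the homeomorphism $f$ is an open map, so $f(D)=D'$ is exactly the image of the interior). Likewise $\overline g(\partial D')\subseteq\partial D$. Now I would show $\overline f$ and $\overline g$ are mutually inverse on the boundary: for $\zeta\in\partial D$ pick $z_k\to\zeta$ in $D$ with $f(z_k)\to\overline f(\zeta)=:w\in\partial D'$; then $\overline g(w)=\lim g(f(z_k))=\lim z_k=\zeta$, using continuity of $\overline g$ at $w$. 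Hence $\overline g\circ\overline f=\mathrm{id}$ on $\overline D$ and symmetrically $\overline f\circ\overline g=\mathrm{id}$ on $\overline{D'}$, so $\overline f:\overline D\to\overline{D'}$ is a continuous bijection with continuous inverse, i.e. a homeomorphism.

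The one genuine obstacle is ensuring that $\overline f$ is \emph{injective up to the boundary}, i.e. that $\overline f(\zeta_1)\neq\overline f(\zeta_2)$ for distinct $\zeta_1,\zeta_2\in\partial D$; without this the composition argument above is circular, since a priori $\overline g$ might not be well defined as an inverse. This is precisely where the strength of Lemma \ref{lem13.5.333} (itself built on Lemma \ref{lemKPR8.1}) enters: that lemma gives a continuous extension $\overline g$ of $f^{-1}$, and the existence of a \emph{single-valued} continuous $\overline g$ on $\overline{D'}$ forces $C(\zeta_1,f)\cap C(\zeta_2,f)=\varnothing$ for distinct boundary points — for if $w\in C(\zeta_1,f)\cap C(\zeta_2,f)$ then $\overline g(w)$ would have to equal both $\zeta_1$ and $\zeta_2$. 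Thus separation of boundary cluster sets comes for free from Lemma \ref{lem13.5.333}, and combined with the single-valuedness of $\overline f$ from Lemma \ref{lem13.3.333} (so that each $C(\zeta,f)$ is a single point) we conclude that $\zeta\mapsto\overline f(\zeta)$ is injective on $\partial D$. Assembling these observations — $\overline f$ continuous and single-valued on $\overline D$, $\overline g$ continuous and single-valued on $\overline{D'}$, the inclusions $\overline f(\partial D)\subseteq\partial D'$, $\overline g(\partial D')\subseteq\partial D$, and the mutual-inverse identities on the boundary — yields that $\overline f$ is the desired homeomorphism of $\overline D$ onto $\overline{D'}$, and the proof is complete.
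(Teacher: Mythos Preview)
Your proof is correct and follows exactly the paper's approach: the paper's entire proof of Lemma \ref{lem3.3.333} is the sentence ``Combining Lemmas \ref{lem13.3.333} and \ref{lem13.5.333}, we obtain \ldots'', and you have supplied the routine verification that a continuous extension of $f$ to $\overline D$ together with a continuous extension of $f^{-1}$ to $\overline{D'}$ forces $\overline f$ to be a homeomorphism of closures. Your worry about circularity is unnecessary, since $\overline g$ is defined independently as the extension of $f^{-1}$ rather than as the inverse of $\overline f$, but your resolution is correct in any case.
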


\medskip

Arguing as in Section 5 and combining the corresponding results of
Sections 5 and 6, we obtain the following results.

\medskip

\begin{theorem}\label{th7.KPR9.1fmo_C} Let $D$ and $D'$ be  domains in
${\Bbb C}$, $D$ be bounded and locally connected on $\partial D$ and $\partial D'$ be weakly flat. Suppose that
$f:D\to D'$ is a homeomorphic $W^{1,1}_{\rm loc}$ solution of the Beltrami equation (\ref{eqBeltrami}) such that
$K^T_{\mu}(z,z_0)\leqslant Q_{z_0}(z)$ a.e. in $D$ for a  function $Q_{z_0}:{\Bbb C}\to[0,\infty]$ in the class
${\rm FMO}({z_0})$ at each point $z_0\in \partial D$. Then $f$ can be extended to a homeomorphism $\overline{f}:
\overline{D}\to\overline{D'}$ by continuity in $\overline{\Bbb C}$.
\end{theorem}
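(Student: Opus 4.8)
The plan is to reduce Theorem~\ref{th7.KPR9.1fmo_C} to the three previously established building blocks and to imitate the passage from Lemma~\ref{lem13.3.333} to Theorem~\ref{thKPR9.1fmo_C}. First I would observe that the hypothesis $K^T_\mu(z,z_0)\leqslant Q_{z_0}(z)$ with $Q_{z_0}\in{\rm FMO}(z_0)$, together with the standard convention that $K^T_\mu(\cdot,z_0)$ is extended by zero outside $D$, allows us to apply Lemma~\ref{lem13.4.2} to the non-negative FMO function $Q_{z_0}$: this gives, for some $\varepsilon_0\in(0,\delta_0)$,
\begin{equation}\label{eqplanFMO}
\int\limits_{\varepsilon<|z-z_0|<\varepsilon_0}\frac{K^T_\mu(z,z_0)\,dm(z)}{\bigl(|z-z_0|\log\frac1{|z-z_0|}\bigr)^2}\ \leqslant\ \int\limits_{\varepsilon<|z-z_0|<\varepsilon_0}\frac{Q_{z_0}(z)\,dm(z)}{\bigl(|z-z_0|\log\frac1{|z-z_0|}\bigr)^2}\ =\ O\Bigl(\log\log\frac1\varepsilon\Bigr)\quad\text{as }\varepsilon\to0.
\end{equation}
In particular the right-hand side is $o\bigl([\log\log\frac1\varepsilon]^2\bigr)$, so choosing $\psi_{z_0,\varepsilon}(t)=\psi(t)=1/\bigl(t\log\frac1t\bigr)$ (independent of $\varepsilon$) one computes $I_{z_0}(\varepsilon)=\int_\varepsilon^{\varepsilon_0}\psi(t)\,dt=\log\log\frac1\varepsilon-\log\log\frac1{\varepsilon_0}\sim\log\log\frac1\varepsilon$, hence $I_{z_0}^2(\varepsilon)$ dominates the left side of \eqref{eqplanFMO}. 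Thus the hypothesis \eqref{3omal} of Lemma~\ref{lem3.3.333} is verified at every $z_0\in\partial D$; one should also note in passing that $\|K^T_\mu\|_1(z_0,r)\neq\infty$ for a.e.\ $r$, which follows from Fubini applied to the local integrability of $Q_{z_0}$ near $z_0$ on circles, as needed for Lemma~\ref{lem3.3.333}.

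Having checked these hypotheses, I would invoke Lemma~\ref{lem3.3.333} directly: since $D$ is bounded and locally connected on $\partial D$ and $\partial D'$ is weakly flat, the lemma yields a continuous extension $\overline f:\overline D\to\overline{\mathbb C}$ which is a homeomorphism onto $\overline{D'}$. That is precisely the conclusion sought. The only genuine content of the argument is the verification that the FMO bound feeds correctly into the scale-invariant integral condition \eqref{3omal} via Lemma~\ref{lem13.4.2}; everything else is a citation of the machinery of Sections~5--7.

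The step I expect to require the most care is the interchange of the two ingredients: Lemma~\ref{lem3.3.333} is phrased with $K^T_\mu$ itself appearing in \eqref{3omal}, while Lemma~\ref{lem13.4.2} is applied to the majorant $Q_{z_0}$; the monotonicity $K^T_\mu\leqslant Q_{z_0}$ a.e.\ makes the substitution legitimate, but one must be mindful that this holds only a.e.\ in $D$ and that outside $D$ both quantities are set to zero, so the inequality \eqref{eqplanFMO} is valid over all of the annulus $\{\varepsilon<|z-z_0|<\varepsilon_0\}$. A secondary point worth a sentence is that $Q_{z_0}\in{\rm FMO}(z_0)$ only needs to be assumed in some neighbourhood of $z_0$ (as in Corollary~\ref{corDIR1000re_c}), but here it is given globally after extension by nothing worse than local integrability, so Lemma~\ref{lem13.4.2} applies verbatim. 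No other obstruction arises: the theorem is, by design, the ``homeomorphic extension'' companion of Theorem~\ref{thKPR9.1fmo_C}, obtained by replacing strong accessibility of $\partial D'$ with weak flatness and appealing to Lemma~\ref{lem3.3.333} in place of Lemma~\ref{lem13.3.333}.
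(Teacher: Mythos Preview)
Your proposal is correct and follows exactly the approach the paper intends: the paper proves Theorem~\ref{th7.KPR9.1fmo_C} by ``arguing as in Section~5,'' i.e.\ by choosing $\psi(t)=1/\bigl(t\log\frac1t\bigr)$ in Lemma~\ref{lem3.3.333} and invoking the FMO estimate of Lemma~\ref{lem13.4.2}, precisely as you do. One tiny inaccuracy: you write that outside $D$ ``both quantities are set to zero,'' but in fact $Q_{z_0}$ is given on all of $\Bbb C$ and need not vanish outside $D$; the inequality over the full annulus still holds simply because $K^T_\mu$ is extended by zero there and $Q_{z_0}\geqslant 0$.
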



\begin{corollary}\label{cor7.DIR1000re_c} In particular, the conclusion of
Theorem \ref{th7.KPR9.1fmo_C} holds if every point $z_0\in \partial D$ is the Lebesgue point of a function
$Q_{z_0}:{\Bbb C}\to[0,\infty]$ which is integrable in a neighborhood $U_{z_0}$ of the point $z_0$ such that
$K^T_{\mu}(z,z_0)\leqslant Q_{z_0}(z)$ a.e. in $D\cap U_{z_0}$.
\end{corollary}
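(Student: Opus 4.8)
The plan is to verify that the hypotheses of Corollary \ref{cor7.DIR1000re_c} imply those of Theorem \ref{th7.KPR9.1fmo_C}, so that the conclusion follows immediately. The only difference between the two statements is in the assumption on the dilatation: the theorem requires $Q_{z_0}\in{\rm FMO}(z_0)$, while the corollary assumes instead that each $z_0\in\partial D$ is a Lebesgue point of an integrable $Q_{z_0}$ dominating $K^T_{\mu}(z,z_0)$. Thus the entire task reduces to the implication ``Lebesgue point of an integrable function $\Rightarrow$ finite mean oscillation at that point,'' applied at every $z_0\in\partial D$.

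First I would fix an arbitrary $z_0\in\partial D$ and recall that, by hypothesis, $Q_{z_0}$ is integrable in a neighborhood $U_{z_0}$ of $z_0$ and $z_0$ is a Lebesgue point of $Q_{z_0}$. By the definition of a Lebesgue point (equation (\ref{FMO_eq2.7a})),
\begin{equation}\label{eqPlanLeb}
\lim\limits_{\varepsilon\to0}\ \dashint_{B(z_0,\varepsilon)}|Q_{z_0}(z)-Q_{z_0}(z_0)|\,dm(z)=0\,.
\end{equation}
In particular the upper limit of the left-hand side is finite, so Proposition \ref{FMO_pr2.1}, applied with the constant collection $\varphi_{\varepsilon}\equiv Q_{z_0}(z_0)$ for all $\varepsilon\in(0,\varepsilon_0]$, shows at once that $Q_{z_0}$ has finite mean oscillation at $z_0$, i.e. $Q_{z_0}\in{\rm FMO}(z_0)$.

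Since $z_0\in\partial D$ was arbitrary, the function $Q_{z_0}$ belongs to ${\rm FMO}(z_0)$ at every boundary point, and the domination $K^T_{\mu}(z,z_0)\leqslant Q_{z_0}(z)$ a.e.\ in $D\cap U_{z_0}$ is exactly the hypothesis needed by Theorem \ref{th7.KPR9.1fmo_C} (with $Q_{z_0}$ extended arbitrarily, say by zero, outside $U_{z_0}$ if necessary, which does not affect membership in ${\rm FMO}(z_0)$). Invoking that theorem therefore yields the desired homeomorphic extension $\overline{f}:\overline{D}\to\overline{D'}$ by continuity in $\overline{\Bbb C}$. I do not anticipate any genuine obstacle here: the argument is a direct reduction, and the only point requiring a moment's care is the routine check that a Lebesgue point of an integrable function is automatically a point of finite mean oscillation, which is furnished verbatim by Proposition \ref{FMO_pr2.1}.
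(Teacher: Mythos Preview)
Your proposal is correct and follows exactly the (implicit) route the paper takes: the corollary is stated without proof immediately after Theorem \ref{th7.KPR9.1fmo_C}, relying on the fact, already recorded around Proposition \ref{FMO_pr2.1} and Corollary \ref{FMO_cor2.7b}, that a Lebesgue point of an integrable function is automatically a point of finite mean oscillation. One small remark: extending $Q_{z_0}$ by zero outside $U_{z_0}$ would not preserve the domination $K^T_{\mu}(z,z_0)\leqslant Q_{z_0}(z)$ on all of $D$ as the theorem is literally stated; the clean fix is to extend by $+\infty$ instead (which still leaves $Q_{z_0}\in{\rm FMO}(z_0)$ since that condition is purely local), or simply to observe that the proof of Theorem \ref{th7.KPR9.1fmo_C} via Lemmas \ref{lem3.3.333} and \ref{lem13.4.2} only uses the domination in a small annulus about $z_0$.
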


\medskip
\begin{corollary}\label{cor7.CON} Let $D$ and $D'$ be  domains in
${\Bbb C}$, $D$ be bounded and locally connected on $\partial D$ and
$\partial D'$ be weakly flat. Suppose that $f:D\to D'$ is a
homeomorphic $W^{1,1}_{\rm loc}$ solution of the Beltrami equation
(\ref{eqBeltrami}) such that
\begin{equation}\label{eq7.DIR6*}\overline{\lim\limits_{\varepsilon\to0}}\quad
\dashint_{B(z_0,\varepsilon)}K^T_{\mu}(z,z_0)\,dm(z)<\infty\qquad\forall\ z_0\in \partial D\,.\end{equation}
Then $f$ can be extended to a homeomorphism $\overline{f}: \overline{D}\to\overline{D'}$ by continuity in
$\overline{\Bbb C}$.
\end{corollary}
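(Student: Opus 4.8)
The plan is to obtain Corollary \ref{cor7.CON} as a direct instance of Theorem \ref{th7.KPR9.1fmo_C}. For each $z_0\in\partial D$ I would take as majorant the function $Q_{z_0}(z):=K^T_{\mu}(z,z_0)$ itself, extended by zero outside $D$; then $Q_{z_0}:{\Bbb C}\to[0,\infty]$ is measurable and the inequality $K^T_{\mu}(z,z_0)\leqslant Q_{z_0}(z)$ a.e. in $D$ is trivial. Everything thus reduces to checking that $Q_{z_0}\in{\rm FMO}(z_0)$ for every $z_0\in\partial D$.

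This is where hypothesis (\ref{eq7.DIR6*}) enters: it is precisely the condition (\ref{FMO_eq2.8}) of Corollary \ref{FMO_cor2.1} for $\varphi=Q_{z_0}$ at the point $z_0$ (with the ambient domain taken to be all of ${\Bbb C}$). From (\ref{eq7.DIR6*}) one first extracts a finite constant $C$ and an $\varepsilon_*>0$ with $\dashint_{B(z_0,\varepsilon)}K^T_{\mu}(z,z_0)\,dm(z)\leqslant C$ for all $\varepsilon\in(0,\varepsilon_*)$, whence $\int_{B(z_0,\varepsilon_*)}K^T_{\mu}(z,z_0)\,dm(z)\leqslant C\,|B(z_0,\varepsilon_*)|<\infty$; this confirms the local integrability of $Q_{z_0}$ near $z_0$ that is tacitly built into the definition of FMO. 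Corollary \ref{FMO_cor2.1} then gives $Q_{z_0}\in{\rm FMO}(z_0)$, all hypotheses of Theorem \ref{th7.KPR9.1fmo_C} are met, and the homeomorphic extension $\overline{f}:\overline D\to\overline{D'}$ follows.

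There is no genuine obstacle here; the only subtlety, noted above, is that ${\rm FMO}(z_0)$ silently requires $Q_{z_0}$ to be integrable in a neighborhood of $z_0$, which (\ref{eq7.DIR6*}) supplies. If one preferred a proof not passing through the FMO formalism, one could instead invoke Lemma \ref{lem3.3.333} with $\psi_{z_0,\varepsilon}(t)=1/\bigl(t\log(1/t)\bigr)$ on $(\varepsilon,\varepsilon_0)$: then $I_{z_0}(\varepsilon)$ grows like $\log\log(1/\varepsilon)\to\infty$, while Lemma \ref{lem13.4.2} (applicable since $K^T_{\mu}(\cdot,z_0)$ is FMO at $z_0$, as just shown) bounds the left-hand side of (\ref{3omal}) by $O\bigl(\log\log(1/\varepsilon)\bigr)=o\bigl(I_{z_0}^2(\varepsilon)\bigr)$, and $\|K^T_{\mu}\|_1(z_0,r)\neq\infty$ for a.e.\ $r$ follows from the local integrability by Fubini. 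Either route completes the argument.
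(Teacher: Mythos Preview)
Your proposal is correct and follows exactly the route the paper intends: Corollary~\ref{cor7.CON} is obtained from Theorem~\ref{th7.KPR9.1fmo_C} by setting $Q_{z_0}=K^T_{\mu}(\cdot,z_0)$ (extended by zero) and invoking Corollary~\ref{FMO_cor2.1} to check the FMO condition, just as Corollary~\ref{corCON} was derived from Theorem~\ref{thKPR9.1fmo_C} in Section~5. Your care about the local integrability implicit in the FMO definition is well placed, and the alternative route via Lemma~\ref{lem3.3.333} you sketch is precisely how the paper itself reduces Theorem~\ref{th7.KPR9.1fmo_C} to that lemma.
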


\medskip

\begin{corollary}\label{cor7.DIR2tc} Let $D$ and $D'$ be  domains in
${\Bbb C}$, $D$ be bounded and locally connected on $\partial D$ and
$\partial D'$ be weakly flat. Suppose that $f:D\to D'$ is a
homeomorphic $W^{1,1}_{\rm loc}$ solution of the Beltrami equation
(\ref{eqBeltrami}) such that
\begin{equation}\label{eq7.DIR61c} k_{z_{0}}(\varepsilon)=O\left(\log\frac{1}{\varepsilon}\right)
\qquad\mbox{as}\ \varepsilon\to0\qquad\forall\ z_0\in \partial D
\end{equation}
where $k_{z_0}(\varepsilon)$ is the average of the function $K^T_{\mu}(z,z_0)$ over $S(z_{0},\varepsilon)$. Then
$f$ can be extended to a homeomorphism $\overline{f}: \overline{D}\to\overline{D'}$ by continuity in
$\overline{\Bbb C}$.
\end{corollary}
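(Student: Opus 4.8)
The plan is to obtain the statement as a particular case of Lemma~\ref{lem3.3.333}, applied at every point $z_0\in\partial D$ with the weight
\[
\psi_{z_0,\varepsilon}(t)\ =\ \frac{1}{t\,\log(1/t)}\ ,\qquad t\in(\varepsilon,\varepsilon_0)\ ,
\]
where $\varepsilon_0=\varepsilon_0(z_0)$ is fixed in $\bigl(0,\min(e^{-1},d(z_0))\bigr)$ so small that the hypothesis $k_{z_0}(\varepsilon)=O(\log(1/\varepsilon))$ already yields $k_{z_0}(r)\le C\,\log(1/r)$ for all $r\in(0,\varepsilon_0)$ and some $C=C(z_0)<\infty$; here $d(z_0)=\sup_{z\in D}|z-z_0|<\infty$ since $D$ is bounded. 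On $(0,\varepsilon_0)$ one has $\log(1/t)>1$, so $\psi_{z_0,\varepsilon}$ is a legitimate nonnegative measurable weight, and $I_{z_0}(\varepsilon)=\int_\varepsilon^{\varepsilon_0}\psi_{z_0,\varepsilon}(t)\,dt=\log\log(1/\varepsilon)-\log\log(1/\varepsilon_0)$ is finite for every $\varepsilon\in(0,\varepsilon_0)$ and behaves like $\log\log(1/\varepsilon)\to\infty$ as $\varepsilon\to0$.

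The verification of the hypotheses of Lemma~\ref{lem3.3.333} is then routine. Since $K^T_\mu(\cdot,z_0)$ is taken to be $0$ outside $D$, the circle average is exactly $\|K^T_\mu\|_1(z_0,r)=2\pi r\,k_{z_0}(r)\le 2\pi C\,r\log(1/r)<\infty$ for every $r\in(0,\varepsilon_0)$, so the requirement $\|K^T_\mu\|_1(z_0,r)\neq\infty$ a.e.\ holds. For~(\ref{3omal}) I would pass to polar coordinates by the Fubini theorem:
\[
\int\limits_{\varepsilon<|z-z_0|<\varepsilon_0}K^T_\mu(z,z_0)\,\psi_{z_0,\varepsilon}^2(|z-z_0|)\,dm(z)\ =\ 2\pi\int\limits_\varepsilon^{\varepsilon_0}\frac{k_{z_0}(r)}{r\,(\log(1/r))^2}\,dr\ \le\ 2\pi C\int\limits_\varepsilon^{\varepsilon_0}\frac{dr}{r\,\log(1/r)}\ =\ O\bigl(\log\log(1/\varepsilon)\bigr),
\]
and the right-hand side is $o\bigl((\log\log(1/\varepsilon))^2\bigr)=o\bigl(I_{z_0}^2(\varepsilon)\bigr)$. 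Hence Lemma~\ref{lem3.3.333} applies and furnishes the homeomorphic extension $\overline f\colon\overline D\to\overline{D'}$.

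The one place where care is needed is the passage from $O$ to $o$. With the naive weight $\psi_{z_0,\varepsilon}(t)=1/t$ one only gets $\int_{\varepsilon<|z-z_0|<\varepsilon_0}K^T_\mu(z,z_0)\,|z-z_0|^{-2}\,dm(z)=2\pi\int_\varepsilon^{\varepsilon_0}k_{z_0}(r)r^{-1}\,dr=O\bigl((\log(1/\varepsilon))^2\bigr)$, which is \emph{not} $o(I_{z_0}^2(\varepsilon))$ when $I_{z_0}(\varepsilon)\asymp\log(1/\varepsilon)$; inserting the extra factor $\log(1/t)$ into $\psi$ is precisely what turns the $O$ into a strict $o$, exactly as in the analogue Corollary~\ref{corDIR2tc} and Remark~\ref{rmKRRSa*}. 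As an alternative presentation I would instead combine the two one-sided extensions: a weakly flat boundary point is strongly accessible, so Corollary~\ref{corDIR2tc} extends $f$ continuously to $\overline D$; and the bound $k_{z_0}(r)\le C\log(1/r)$ forces $\int_0^{\varepsilon_0}dr/\|K^T_\mu\|_1(z_0,r)\ge (2\pi C)^{-1}\int_0^{\varepsilon_0}dr/(r\log(1/r))=\infty$, i.e.\ condition~(\ref{eq8.11.2a}), whence Theorem~\ref{thKPR8.3} extends $f^{-1}$ continuously to $\overline{D'}$; a standard argument shows the two extensions are mutually inverse on the boundaries, which again produces the homeomorphism $\overline f$.
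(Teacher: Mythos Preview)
Your argument is correct and matches the paper's approach. The paper derives Corollary~\ref{cor7.DIR2tc} exactly as you do, by invoking Lemma~\ref{lem3.3.333} (itself the combination of Lemmas~\ref{lem13.3.333} and~\ref{lem13.5.333}) with the weight $\psi(t)=1/(t\log(1/t))$, just as in the parallel derivation of Corollary~\ref{corDIR2tc} in Section~5; your alternative route via Corollary~\ref{corDIR2tc} together with Theorem~\ref{thKPR8.3} is precisely the paper's phrase ``combining the corresponding results of Sections~5 and~6.''
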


\medskip

\begin{remark}\label{rem7.2c} In particular, the conclusion of Corollary \ref{cor7.DIR2tc} holds if
\begin{equation}\label{eq7.DIR6**c} K^T_{\mu}(z,z_0)=O\left(\log\frac{1}{|z-z_0|}\right)\qquad{\rm
as}\quad z\to z_0\quad\forall\ z_0\in
\partial D\,.\end{equation}\end{remark}

\medskip

\begin{theorem}\label{th7.KPRS12b*} Let $D$ and $D'$ be domains in
${\Bbb C}$, $D$ be bounded and locally connected on $\partial D$ and
$\partial D'$ be weakly flat. Suppose that $f:D\to D'$ is a
homeomorphic $W^{1,1}_{\rm loc}$ solution of the Beltrami equation
(\ref{eqBeltrami}) such that
\begin{equation}\label{eq7.KPRS12c*}
\int\limits_{\varepsilon<|z-z_0|<\varepsilon_0}K^T_{\mu}(z,z_0)\,\frac{dm(z)}{|z-z_0|^2}
=o\left(\left[\log\frac{1}{\varepsilon}\right]^2\right)\qquad\forall\ z_0\in\partial D\end{equation} as
$\varepsilon\to 0$ for some $\varepsilon_0=\delta(z_0)\in(0,d(z_0))$ where $d(z_0)=\sup_{z\in D}|z-z_0|$. Then
$f$ can be extended to a homeomorphism $\overline{f}: \overline{D}\to\overline{D'}$ by continuity in
$\overline{\Bbb C}$. \end{theorem}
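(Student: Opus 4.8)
The plan is to obtain this theorem from the general homeomorphic extension Lemma~\ref{lem3.3.333}, by exactly the specialization that produced Theorem~\ref{thKPRS12b*} from Lemma~\ref{lem13.3.333}: namely, I would apply Lemma~\ref{lem3.3.333} with the single, $\varepsilon$-independent choice $\psi_{z_0,\varepsilon}(t)\equiv 1/t$ on $(0,\infty)$ at each point $z_0\in\partial D$.

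First I would record the elementary computation
$$I_{z_0}(\varepsilon)\ =\ \int\limits_{\varepsilon}^{\varepsilon_0}\frac{dt}{t}\ =\ \log\frac{1}{\varepsilon}+\log\varepsilon_0\ <\ \infty\qquad\forall\ \varepsilon\in(0,\varepsilon_0),$$
so that hypothesis~(\ref{eq3.5.3}) holds, and moreover $I_{z_0}(\varepsilon)\sim\log(1/\varepsilon)$, hence $I_{z_0}^2(\varepsilon)\sim[\log(1/\varepsilon)]^2$, as $\varepsilon\to0$. Consequently the $o$-condition~(\ref{3omal}) of Lemma~\ref{lem3.3.333} becomes
$$\int\limits_{\varepsilon<|z-z_0|<\varepsilon_0}\frac{K^T_{\mu}(z,z_0)}{|z-z_0|^2}\,dm(z)\ =\ o\bigl(I_{z_0}^2(\varepsilon)\bigr),$$
and since its right-hand side differs from $[\log(1/\varepsilon)]^2$ by a factor tending to $1$, this is precisely the hypothesis~(\ref{eq7.KPRS12c*}) of the theorem.

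Next I would verify the one remaining assumption of Lemma~\ref{lem3.3.333}, that $\|K^T_{\mu}\|_1(z_0,r)\neq\infty$ for a.e. $r\in(0,\varepsilon_0)$. Fixing $\varepsilon\in(0,\varepsilon_0)$, the estimate~(\ref{eq7.KPRS12c*}) in particular forces the integral on its left-hand side to be finite for that $\varepsilon$; passing to polar coordinates about $z_0$ by the Fubini theorem then gives
$$\int\limits_{\varepsilon}^{\varepsilon_0}\frac{\|K^T_{\mu}\|_1(z_0,r)}{r^2}\,dr\ <\ \infty,$$
whence $\|K^T_{\mu}\|_1(z_0,r)<\infty$ for a.e. $r\in(\varepsilon,\varepsilon_0)$; letting $\varepsilon\to0$ along a sequence and using countable subadditivity of the exceptional null sets yields the claim on all of $(0,\varepsilon_0)$. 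With $D$ bounded and locally connected on $\partial D$, $\partial D'$ weakly flat, and both displayed hypotheses of Lemma~\ref{lem3.3.333} now verified at every $z_0\in\partial D$, that lemma delivers the homeomorphic extension $\overline f:\overline D\to\overline{D'}$ continuous in $\overline{\Bbb C}$, which is the assertion.

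The analytic substance here --- the lower modulus bound $M(f\Sigma_{\varepsilon})\geqslant\int_{\varepsilon}^{\varepsilon_0}dr/\|K^T_{\mu}\|_1(z_0,r)$ of Theorem~\ref{th8.4.8}, the weak-flatness arguments controlling the boundary behavior of $f$ and of $f^{-1}$ (Lemmas~\ref{lem4} and~\ref{lemKPR8.1}), and the Jensen-type optimization of Lemma~\ref{lem4cr} --- is already packaged inside Lemma~\ref{lem3.3.333}, so I do not anticipate a genuine obstacle. The only points requiring a little care are the asymptotic identification $I_{z_0}(\varepsilon)\sim\log(1/\varepsilon)$ matched against the exact form of~(\ref{eq7.KPRS12c*}), and the a.e.-finiteness check just described; both are routine.
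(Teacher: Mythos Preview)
Your proposal is correct and follows exactly the approach the paper indicates: choose $\psi_{z_0,\varepsilon}(t)\equiv 1/t$ in Lemma~\ref{lem3.3.333}, so that $I_{z_0}(\varepsilon)=\log(\varepsilon_0/\varepsilon)\sim\log(1/\varepsilon)$ and condition~(\ref{3omal}) becomes precisely~(\ref{eq7.KPRS12c*}). The paper merely alludes to this specialization (``Arguing as in Section~5\ldots''), whereas you have spelled out the details, including the Fubini check that $\|K^T_{\mu}\|_1(z_0,r)<\infty$ for a.e.\ $r$; one tiny remark is that the $o$-condition~(\ref{eq7.KPRS12c*}) only guarantees finiteness of the annular integral for \emph{sufficiently small} $\varepsilon$, but that is all your countable-union argument needs.
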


\medskip

\begin{remark}\label{rm7.KRRSa*} Choosing in Lemma \ref{lem3.3.333} the function
$\psi(t)=1/(t\log{1/t})$ instead of $\psi(t)=1/t$, we are able to
replace (\ref{eq7.KPRS12c*}) by \begin{equation}\label{eq7.KPRS12f*}
\int\limits_{\varepsilon<|z-z_0|<\varepsilon_0}\frac{K^T_{\mu}(z,z_0)\,dm(z)}{\left(|z-z_0|\log{\frac{1}{|z-z_0|}}\right)^2}
=o\left(\left[\log\log\frac{1}{\varepsilon}\right]^2\right)\end{equation}
Again, we are able also to give here the whole scale of the
corresponding conditions in $\log$ using functions $\psi(t)$ of the
form
$1/(t\log{1}/{t}\cdot\log\log{1}/{t}\cdot\ldots\cdot\log\ldots\log{1}/{t})$.
\end{remark}

\medskip

\begin{theorem}\label{th7.KR4.1c} Let $D$ and $D'$ be  domains in
${\Bbb C}$, $D$ be bounded and locally connected on $\partial D$ and
$\partial D'$ be weakly flat. Suppose that $f:D\to D'$ is a
homeomorphic $W^{1,1}_{\rm loc}$ solution of the Beltrami equation
(\ref{eqBeltrami}) such that
\begin{equation}\label{eq7.8.11.2a}\int\limits_{0}^{\delta(z_0)}
\frac{dr}{||K^T_{\mu}||_{1}(z_0,r)}=\infty \qquad\forall\ z_0\in
\partial D \end{equation} for some $\delta(z_0)\in(0,d(z_0))$ where
$d(z_0)=\sup_{z\in D}|z-z_0|$ and
\begin{equation}\label{eq7.8.11.4}
||K^T_{\mu}||_1(z_0,r)=\int\limits_{D\cap S(z_0,r)}K^T_{\mu}(z,z_0)\,|dz|\,.\end{equation} Then $f$ can be
extended to a homeomorphism $\overline{f}: \overline{D}\to\overline{D'}$ by continuity in $\overline{\Bbb C}$.
\end{theorem}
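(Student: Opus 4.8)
The plan is to derive Theorem~\ref{th7.KR4.1c} by combining the two one-sided boundary-extension results that have already been established in the preceding sections: the continuous extension of $f$ itself to $\overline D$ (Theorem~\ref{thKPR9.1}) and the continuous extension of the inverse map $f^{-1}$ to $\overline{D'}$ (Theorem~\ref{thKPR8.3}). Both of these theorems have \emph{exactly} the hypotheses we are given here, except that Theorem~\ref{thKPR9.1} also requires $\partial D'$ to be strongly accessible. So the first observation is that the weak flatness of $\partial D'$ assumed in the present statement is strictly stronger: it was noted in Section~3 that a domain weakly flat at a boundary point is strongly accessible there. Hence $\partial D'$ strongly accessible follows for free, and both Theorem~\ref{thKPR9.1} and Theorem~\ref{thKPR8.3} apply verbatim to our $f$.

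So the proof will run as follows. First I would invoke Theorem~\ref{thKPR9.1} (using that $D$ is bounded and locally connected on $\partial D$, that $\partial D'$ is strongly accessible because it is weakly flat, and that condition~(\ref{eq7.8.11.2a}), which is literally~(\ref{eq8.11.2a}), holds at every $z_0\in\partial D$) to obtain a continuous extension $\overline f:\overline D\to\overline{\Bbb C}$. By the standard arguments for boundary correspondence under open mappings, $\overline f(\partial D)\subseteq\partial D'$, so in fact $\overline f:\overline D\to\overline{D'}$. Next I would invoke Theorem~\ref{thKPR8.3} (with $D$ and $D'$ in the roles dictated there; again all hypotheses are present) to obtain a continuous extension of $f^{-1}$ to a map $g:\overline{D'}\to\overline{\Bbb C}$, and similarly $g(\partial D')\subseteq\partial D$, so $g:\overline{D'}\to\overline D$.

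The remaining step is purely topological: show that $\overline f$ and $g$ are mutually inverse on the closures, so that $\overline f$ is a homeomorphism $\overline D\to\overline{D'}$. On the open sets $D$ and $D'$ we already have $g\circ f=\mathrm{id}_D$ and $f\circ g=\mathrm{id}_{D'}$ by construction. Since $\overline D=\overline{D}$, $\overline{D'}=\overline{D'}$, and both composite maps $g\circ\overline f$ and $\overline f\circ g$ are continuous and agree with the identity on the dense subsets $D$, $D'$ respectively, they equal the identity on the whole closures. Hence $\overline f$ is a continuous bijection of the compact set $\overline D$ onto the Hausdorff space $\overline{D'}$, therefore a homeomorphism; equivalently, $\overline f$ and $g$ are continuous mutual inverses. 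This is the content of the general homeomorphic-extension principle already packaged as Lemma~\ref{lem3.3.333}, and indeed the cleanest route is to simply cite Lemma~\ref{lem3.3.333} with the choice $\psi_{z_0,\varepsilon}(t)\equiv\psi_{z_0}(t)=1/(t\log(1/t))$ after first checking, via Theorem~\ref{th5.555} and Remark~\ref{remeq333F}, that condition~(\ref{eq7.8.11.2a}) on the $L^1$-norms over circles is implied by a divergence-type integral condition; however, since~(\ref{eq7.8.11.2a}) already has precisely the form of the hypothesis~(\ref{eq8.11.2a}) appearing in Theorems~\ref{thKPR9.1} and~\ref{thKPR8.3}, the direct two-sided argument above is shortest.

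I do not expect a genuine obstacle here: the theorem is essentially a corollary obtained by gluing the two one-directional extension theorems, the only point needing a word of justification being the implication ``weakly flat $\Rightarrow$ strongly accessible'' so that Theorem~\ref{thKPR9.1} can be applied, and the elementary density argument that the two boundary extensions are inverse to one another. If one wanted to avoid even citing Theorems~\ref{thKPR9.1} and~\ref{thKPR8.3} separately, the alternative is to go through Lemma~\ref{lem3.3.333} directly, which itself was obtained by combining Lemmas~\ref{lem13.3.333} and~\ref{lem13.5.333}; but that merely relocates the same gluing step. Either way the argument is short and the write-up would be no more than a paragraph.
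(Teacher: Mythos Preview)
Your proposal is correct and follows essentially the same route as the paper: the paper states that all of Section~7's results, including Theorem~\ref{th7.KR4.1c}, are obtained by ``arguing as in Section 5 and combining the corresponding results of Sections 5 and 6,'' which here means exactly invoking Theorem~\ref{thKPR9.1} (using that weakly flat implies strongly accessible) together with Theorem~\ref{thKPR8.3} and then the density argument to glue the two extensions into a homeomorphism. Your aside about going through Lemma~\ref{lem3.3.333} with a particular $\psi$ is slightly muddled (the implication between the $\psi$-condition and the divergence condition~(\ref{eq7.8.11.2a}) runs the other way via Lemma~\ref{lem4cr}), but you correctly discard that route in favor of the direct one, so no harm is done.
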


\medskip

\begin{corollary}\label{cor7.DIR2tcd} Let $D$ and $D'$ be  domains in
${\Bbb C}$, $D$ be bounded and locally connected on $\partial D$ and
$\partial D'$ be weakly flat. Suppose that $f:D\to D'$ is a
homeomorphic $W^{1,1}_{\rm loc}$ solution of the Beltrami equation
(\ref{eqBeltrami}) such that
\begin{equation}\label{7.edgddhjgfjrqDIR6**c}
k_{z_{0}}(\varepsilon)=O\left(\left[\log\frac{1}{\varepsilon}\cdot\log\log\frac{1}{\varepsilon}\cdot\ldots\cdot\log\ldots\log\frac{1}{\varepsilon}
\right]\right) \qquad\forall\ z_0\in \partial D
\end{equation} as $\varepsilon\to0$,
where $k_{z_0}(\varepsilon)$ is the average of the function
$K^T_{\mu}(z,z_0)$ over $S(z_{0},\varepsilon)$. Then $f$ can be
extended to a homeomorphism $\overline{f}:
\overline{D}\to\overline{D'}$ by continuity in $\overline{\Bbb C}$.
\end{corollary}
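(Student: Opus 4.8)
The plan is to derive the statement directly from Theorem~\ref{th7.KR4.1c}, by showing that the growth restriction~(\ref{7.edgddhjgfjrqDIR6**c}) on the circular means $k_{z_0}(\varepsilon)$ forces the divergence condition~(\ref{eq7.8.11.2a}) at every point $z_0\in\partial D$. So I would fix $z_0\in\partial D$ and extract from~(\ref{7.edgddhjgfjrqDIR6**c}) a constant $C=C(z_0)>0$ and a radius $\varepsilon_*=\varepsilon_*(z_0)\in(0,d(z_0))$, taken small enough that all the iterated logarithms occurring below are defined and positive on $(0,\varepsilon_*)$, such that $k_{z_0}(r)\leqslant C\,\log\frac1r\cdot\log\log\frac1r\cdot\ldots\cdot\log\ldots\log\frac1r$ for all $r\in(0,\varepsilon_*)$. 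Since, by our convention, $K^T_{\mu}(\cdot,z_0)$ is extended by zero outside $D$, the integral of $K^T_{\mu}(z,z_0)$ over $D\cap S(z_0,r)$ coincides with its integral over the whole circle $S(z_0,r)$, so that $\|K^T_{\mu}\|_1(z_0,r)=2\pi r\,k_{z_0}(r)$ for a.e.\ $r$; together with the bound on $k_{z_0}$ this gives $1/\|K^T_{\mu}\|_1(z_0,r)\geqslant(2\pi C)^{-1}\bigl(r\,\log\frac1r\cdot\log\log\frac1r\cdots\log\ldots\log\frac1r\bigr)^{-1}$ for a.e.\ $r\in(0,\varepsilon_*)$.

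Next I would invoke the elementary fact that $\int_0^{\delta}\bigl(r\,\log\frac1r\cdot\log\log\frac1r\cdots\log\ldots\log\frac1r\bigr)^{-1}\,dr=\infty$ for every sufficiently small $\delta>0$; this follows from the substitution $u=\log\frac1r$ and then iterating such substitutions, reducing the integral to the divergence of $\int^{\infty}du/u$. Hence $\int_0^{\varepsilon_*}dr/\|K^T_{\mu}\|_1(z_0,r)=\infty$, i.e.\ (\ref{eq7.8.11.2a}) holds with $\delta(z_0):=\varepsilon_*$. Since $z_0\in\partial D$ was arbitrary, Theorem~\ref{th7.KR4.1c} then provides the extension of $f$ to a homeomorphism $\overline f:\overline D\to\overline{D'}$ by continuity in $\overline{\Bbb C}$.

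I do not expect any genuine obstacle here: the whole argument is a reduction to Theorem~\ref{th7.KR4.1c}. The only steps requiring a little care are (i) identifying the circular mean $k_{z_0}(r)$ with $\|K^T_{\mu}\|_1(z_0,r)/(2\pi r)$ via the extension-by-zero convention, so that~(\ref{7.edgddhjgfjrqDIR6**c}) may be rewritten as an $O$-bound on $\|K^T_{\mu}\|_1(z_0,r)$ as $r\to0$, and (ii) the standard divergence of the iterated-logarithm (Bertrand-type) integral; both are routine.
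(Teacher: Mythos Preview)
Your proposal is correct and follows exactly the approach intended in the paper: Corollary~\ref{cor7.DIR2tcd} is placed immediately after Theorem~\ref{th7.KR4.1c} and is meant as a direct consequence of it, obtained by rewriting $k_{z_0}(r)=\|K^T_{\mu}\|_1(z_0,r)/(2\pi r)$ via the extension-by-zero convention and invoking the divergence of the Bertrand-type integral to verify~(\ref{eq7.8.11.2a}). The two points you flag as needing care---the identification of $k_{z_0}$ with the normalized $L^1$-norm and the iterated-logarithm divergence---are precisely the two ingredients the paper leaves implicit.
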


\medskip

As usual, we assume here that $K^T_{\mu}(z,z_0)$ is extended by zero
outside of $D$.

\medskip

\begin{theorem}\label{th7.KR4.1cr} Let $D$ and $D'$ be  domains in
${\Bbb C}$, $D$ be locally connected on $\partial D$ and $\partial
D'$ be weakly flat. Suppose that $f:D\to D'$ is a homeomorphic
$W^{1,1}_{\rm loc}$ solution of the Beltrami equation
(\ref{eqBeltrami}) such that
\begin{equation}\label{eqKR7.4.1shc}\int\limits_{D\cap U_{z_0}}
\Phi_{z_0}\left(K^T_{\mu}(z,z_0)\right)\,dm(z)<\infty \qquad\forall\
z_0\in \partial D\end{equation} for a convex non-decreasing function
$\Phi_{z_0}:[0,\infty]\to[0,\infty]$ and a neighborhood $U_{z_0}$ of
the point $z_0$. If
\begin{equation}\label{eqKR7.4.2rasc}
\int\limits_{\delta(z_0)}^{\infty}\frac{d\tau}{\tau\Phi_{z_0}^{-1}(\tau)}=\infty \qquad\forall\ z_0\in \partial
D\end{equation} for some $\delta(z_0)>\Phi_{z_0}(0)$. Then $f$ can be extended to a homeomorphism $\overline{f}:
\overline{D}\to\overline{D'}$ by continuity in $\overline{\Bbb C}$.
\end{theorem}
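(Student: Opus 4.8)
The plan is to deduce Theorem \ref{th7.KR4.1cr} from Lemma \ref{lem3.3.333} together with the connection between integral conditions recorded in Theorem \ref{th5.555} and Remark \ref{remeq333F}. Since Lemma \ref{lem3.3.333} already guarantees a homeomorphic extension $\overline f:\overline D\to\overline{D'}$ once one produces, for every $z_0\in\partial D$, a family $\psi_{z_0,\varepsilon}$ of non-negative measurable functions on $(0,\infty)$ satisfying the normalization (\ref{eq3.5.3}) and the smallness condition (\ref{3omal}), the whole task reduces to choosing these $\psi_{z_0,\varepsilon}$ appropriately from the integrability hypothesis (\ref{eqKR7.4.1shc})--(\ref{eqKR7.4.2rasc}).

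First I would fix $z_0\in\partial D$, shrink $U_{z_0}$ so that $U_{z_0}\subseteq B(z_0,\varepsilon_0)$ for a suitable $\varepsilon_0\in(0,\delta(z_0))$, and extend $K^T_\mu(\cdot,z_0)$ by zero outside $D$ as stipulated. Writing $q_{z_0}(r)$ for the mean value of $K^T_\mu(z,z_0)$ over the circle $S(z_0,r)$ and applying Theorem \ref{th5.555} (after a harmless affine normalization sending $B(z_0,\varepsilon_0)$ to $\Bbb D$, using that $\Phi_{z_0}$ is convex non-decreasing and that (\ref{eqKR7.4.2rasc}) holds), I obtain
\begin{equation*}
\int\limits_0^{\varepsilon_0}\frac{dr}{r\,q_{z_0}(r)}=\infty\,.
\end{equation*}
Since $\|K^T_\mu\|_1(z_0,r)\le 2\pi r\,q_{z_0}(r)$ (with equality when $S(z_0,r)\subset D$, and the bound only helping otherwise), this yields $\int_0^{\varepsilon_0} dr/\|K^T_\mu\|_1(z_0,r)=\infty$; in particular $\|K^T_\mu\|_1(z_0,r)\neq\infty$ for a.e.\ $r\in(0,\varepsilon_0)$, which is the standing hypothesis of Lemma \ref{lem3.3.333}. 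It then remains to exhibit $\psi_{z_0,\varepsilon}$. The natural choice, exactly as in the proof of Lemma \ref{lem4cr}, is
\begin{equation*}
\psi_{z_0,\varepsilon}(t)=\frac{1}{\|K^T_\mu\|_1(z_0,t)}\,,\qquad t\in(\varepsilon,\varepsilon_0)\,,
\end{equation*}
for which $I_{z_0}(\varepsilon)=\int_\varepsilon^{\varepsilon_0}\psi_{z_0,\varepsilon}(t)\,dt\to\infty$ as $\varepsilon\to0$, and Lemma \ref{lem4cr} gives directly
\begin{equation*}
\int\limits_{\varepsilon<|z-z_0|<\varepsilon_0}K^T_\mu(z,z_0)\,\psi^2_{z_0,\varepsilon}(|z-z_0|)\,dm(z)\le\frac{1}{I_{z_0}(\varepsilon)}=o\bigl(I^2_{z_0}(\varepsilon)\bigr)\,,
\end{equation*}
which is precisely (\ref{3omal}). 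Thus all hypotheses of Lemma \ref{lem3.3.333} are met at every boundary point and the homeomorphic extension follows.

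The step I expect to be the main obstacle is the rigorous passage from the hypothesis (\ref{eqKR7.4.1shc}), which is formulated as a two-dimensional integral of $\Phi_{z_0}(K^T_\mu(z,z_0))$ over $D\cap U_{z_0}$, to the divergence $\int_0^{\varepsilon_0} dr/(r\,q_{z_0}(r))=\infty$ via Theorem \ref{th5.555}: one must verify that the affine change of variables normalizing $U_{z_0}$ to the unit disk preserves convexity and monotonicity of $\Phi_{z_0}$ (it does, up to an irrelevant positive multiplicative constant absorbed into $\delta(z_0)$), that the divergence condition (\ref{eqKR4.2rasc}) is stable under that normalization (via the equivalent forms (\ref{eq333Frer})--(\ref{eq333D}) in Remark \ref{remeq333F}), and that the extension-by-zero of $K^T_\mu$ outside $D$ does not spoil the circular average estimate. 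Once this bookkeeping is done, everything else is a direct citation of Lemma \ref{lem4cr} and Lemma \ref{lem3.3.333}.

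\begin{proof} Fix $z_0\in\partial D$. Shrinking $U_{z_0}$ if necessary, choose $\varepsilon_0=\varepsilon_0(z_0)\in(0,\delta(z_0))$ with $B(z_0,\varepsilon_0)\subseteq U_{z_0}$, and recall that $K^T_\mu(z,z_0)$ is set to $0$ for $z\notin D$. Applying the affine map $w=(z-z_0)/\varepsilon_0$ we may view $Q(w):=K^T_\mu(z_0+\varepsilon_0 w,z_0)$ as a measurable function on $\Bbb D$ with $\int_{\Bbb D}\Phi(Q(w))\,dm(w)<\infty$, where $\Phi:=\Phi_{z_0}$ is convex and non-decreasing; the hypothesis (\ref{eqKR4.2rasc}) on $\Phi_{z_0}$, in any of its equivalent forms listed in Remark \ref{remeq333F}, is unaffected by this normalization. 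Hence Theorem \ref{th5.555} applies and gives $\int_0^1 d\rho/(\rho\,\tilde q(\rho))=\infty$, where $\tilde q(\rho)$ is the average of $Q$ over $|w|=\rho$. Undoing the change of variables, and writing $q_{z_0}(r)$ for the average of $K^T_\mu(z,z_0)$ over $S(z_0,r)$, we obtain
\begin{equation*}
\int\limits_0^{\varepsilon_0}\frac{dr}{r\,q_{z_0}(r)}\ =\ \infty\,.
\end{equation*}
Since $\|K^T_\mu\|_1(z_0,r)=\int_{D\cap S(z_0,r)}K^T_\mu(z,z_0)\,|dz|\le 2\pi r\,q_{z_0}(r)$ for a.e.\ $r\in(0,\varepsilon_0)$, this yields
\begin{equation*}
\int\limits_0^{\varepsilon_0}\frac{dr}{\|K^T_\mu\|_1(z_0,r)}\ =\ \infty\,,
\end{equation*}
and in particular $\|K^T_\mu\|_1(z_0,r)\neq\infty$ for a.e.\ $r\in(0,\varepsilon_0)$.

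Now set, for $\varepsilon\in(0,\varepsilon_0)$,
\begin{equation*}
\psi_{z_0,\varepsilon}(t)\ =\ \frac{1}{\|K^T_\mu\|_1(z_0,t)}\qquad\text{for } t\in(\varepsilon,\varepsilon_0)\,,\qquad \psi_{z_0,\varepsilon}(t)=0\ \text{otherwise}\,.
\end{equation*}
These are non-negative and measurable, and
\begin{equation*}
I_{z_0}(\varepsilon)\ =\ \int\limits_{\varepsilon}^{\varepsilon_0}\psi_{z_0,\varepsilon}(t)\,dt\ =\ \int\limits_{\varepsilon}^{\varepsilon_0}\frac{dt}{\|K^T_\mu\|_1(z_0,t)}
\end{equation*}
is finite for each $\varepsilon\in(0,\varepsilon_0)$ and tends to $\infty$ as $\varepsilon\to0$ by the divergence established above; thus (\ref{eq3.5.3}) holds. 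Moreover, Lemma \ref{lem4cr} applied with this $\psi_{z_0,\varepsilon}$ (so that $\eta_0=\psi_{z_0,\varepsilon}/I_{z_0}(\varepsilon)$ is the extremal function and $I$ there equals $I_{z_0}(\varepsilon)$) gives
\begin{equation*}
\int\limits_{\varepsilon<|z-z_0|<\varepsilon_0}K^T_\mu(z,z_0)\,\psi^2_{z_0,\varepsilon}(|z-z_0|)\,dm(z)\ \le\ \frac{1}{I_{z_0}(\varepsilon)}\ =\ o\bigl(I^2_{z_0}(\varepsilon)\bigr)\qquad\text{as }\varepsilon\to0\,,
\end{equation*}
which is exactly condition (\ref{3omal}) at $z_0$.

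Since $z_0\in\partial D$ was arbitrary, all hypotheses of Lemma \ref{lem3.3.333} are satisfied: $D$ is locally connected on $\partial D$, $\partial D'$ is weakly flat, $f:D\to D'$ is a homeomorphic $W^{1,1}_{\rm loc}$ solution of (\ref{eqBeltrami}), $\|K^T_\mu\|_1(z_0,r)\neq\infty$ for a.e.\ $r\in(0,\varepsilon_0)$ at every $z_0\in\partial D$, and (\ref{3omal})--(\ref{eq3.5.3}) hold with the family $\psi_{z_0,\varepsilon}$ constructed above. Consequently $f$ extends to a homeomorphism $\overline f:\overline D\to\overline{D'}$ by continuity in $\overline{\Bbb C}$.
\end{proof}
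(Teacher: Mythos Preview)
Your overall strategy---pass from the integral hypothesis (\ref{eqKR7.4.1shc})--(\ref{eqKR7.4.2rasc}) via Theorem~\ref{th5.555} to the divergence
\[
\int_0^{\varepsilon_0}\frac{dr}{\|K^T_\mu\|_1(z_0,r)}=\infty
\]
and then invoke the extension machinery of Section~7---is exactly the paper's route. The paper would simply cite Theorem~\ref{th7.KR4.1c} at this point and be done; you instead unroll that theorem back to Lemma~\ref{lem3.3.333} with a hand-built $\psi_{z_0,\varepsilon}$. That detour is harmless in spirit, but it introduces two slips.

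First, and most concretely, your displayed inequality
\[
\int\limits_{\varepsilon<|z-z_0|<\varepsilon_0}K^T_\mu(z,z_0)\,\psi^2_{z_0,\varepsilon}(|z-z_0|)\,dm(z)\ \le\ \frac{1}{I_{z_0}(\varepsilon)}
\]
is false. With $\psi_{z_0,\varepsilon}(t)=1/\|K^T_\mu\|_1(z_0,t)$ one has $\psi_{z_0,\varepsilon}=I_{z_0}(\varepsilon)\cdot\eta_0$ in the notation of Lemma~\ref{lem4cr}, so by that lemma the integral equals $I_{z_0}(\varepsilon)^2\cdot I_{z_0}(\varepsilon)^{-1}=I_{z_0}(\varepsilon)$, not $1/I_{z_0}(\varepsilon)$ (a direct Fubini computation in polar coordinates confirms this). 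Fortunately the conclusion survives: since $I_{z_0}(\varepsilon)\to\infty$, one still has $I_{z_0}(\varepsilon)=o\bigl(I_{z_0}^2(\varepsilon)\bigr)$, which is precisely (\ref{3omal}). So the argument is repairable, but the line as written is wrong.

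Second, you assert without proof that $I_{z_0}(\varepsilon)=\int_\varepsilon^{\varepsilon_0}dt/\|K^T_\mu\|_1(z_0,t)<\infty$ for each $\varepsilon>0$, which is required by (\ref{eq3.5.3}) in Lemma~\ref{lem3.3.333}. Nothing in the hypotheses prevents $\|K^T_\mu\|_1(z_0,\cdot)$ from being small enough on $(\varepsilon,\varepsilon_0)$ to make this integral diverge. The cleanest fix is to skip the $\psi$-construction altogether: once you have the divergence $\int_0^{\varepsilon_0}dr/\|K^T_\mu\|_1(z_0,r)=\infty$, quote Theorem~\ref{th7.KR4.1c} directly, which is what the paper does.
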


\medskip

\begin{corollary}\label{cor7.DIR1000c} In particular, the conclusion of
Theorem \ref{th7.KR4.1c} holds if
\begin{equation}\label{eq7.KR4.1c}\int\limits_{D\cap U_{z_0}}e^{\alpha(z_0) K^T_{\mu}(z,z_0)}\,dm(z)<\infty
\qquad\forall\ z_0\in \partial D\end{equation} for some $\alpha(z_0)>0$ and a neighborhood $U_{z_0}$ of the
point $z_0$.
\end{corollary}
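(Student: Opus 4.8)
The plan is to deduce this corollary directly from Theorem \ref{th7.KR4.1c} by exhibiting an appropriate majorant function $\Phi_{z_0}$ and checking that the divergence condition \eqref{eqKR7.4.2rasc} holds for it. The natural choice is
$$
\Phi_{z_0}(t)\ =\ e^{\alpha(z_0)\,t}\,,\qquad t\in[0,\infty]\,,
$$
with the convention $\Phi_{z_0}(\infty)=\infty$. First I would observe that $\Phi_{z_0}$ is convex and strictly increasing on $[0,\infty)$, hence in particular non-decreasing, so it is an admissible function in the sense required by Theorem \ref{th7.KR4.1c}. With this choice the hypothesis \eqref{eqKR7.4.1shc} of that theorem reads exactly $\int_{D\cap U_{z_0}}e^{\alpha(z_0)K^T_{\mu}(z,z_0)}\,dm(z)<\infty$, which is precisely the assumption \eqref{eq7.KR4.1c} of the corollary, so no further work is needed on that side.

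The second, and only genuinely computational, step is to verify the divergence integral \eqref{eqKR7.4.2rasc}. Here $\Phi_{z_0}^{-1}(\tau)=\tfrac1{\alpha(z_0)}\log\tau$ for $\tau\geqslant 1$ (and $\Phi_{z_0}(0)=1$, so any $\delta(z_0)>1$ is permissible). Thus
$$
\int\limits_{\delta(z_0)}^{\infty}\frac{d\tau}{\tau\,\Phi_{z_0}^{-1}(\tau)}
\ =\ \alpha(z_0)\int\limits_{\delta(z_0)}^{\infty}\frac{d\tau}{\tau\log\tau}
\ =\ \alpha(z_0)\,\bigl[\log\log\tau\bigr]_{\delta(z_0)}^{\infty}\ =\ \infty\,,
$$
since $\log\log\tau\to\infty$ as $\tau\to\infty$. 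Hence \eqref{eqKR7.4.2rasc} holds with, say, $\delta(z_0)=e>1>\Phi_{z_0}(0)$ for every $z_0\in\partial D$. (Alternatively, one may invoke Remark \ref{remeq333F}: with $H(t)=\log\Phi_{z_0}(t)=\alpha(z_0)t$ one has $\int_{\Delta}^{\infty}H(t)\,dt/t^2=\alpha(z_0)\int_{\Delta}^{\infty}dt/t=\infty$, which is condition \eqref{eq333B} and therefore equivalent to \eqref{eqKR7.4.2rasc}.)

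With both hypotheses of Theorem \ref{th7.KR4.1c} verified, that theorem yields that $f$ extends to a homeomorphism $\overline{f}:\overline{D}\to\overline{D'}$ by continuity in $\overline{\Bbb C}$, which is the assertion of the corollary. I do not expect any real obstacle here: the content is entirely the elementary antiderivative $\int d\tau/(\tau\log\tau)=\log\log\tau$, and the only point requiring a word of care is making sure $\delta(z_0)$ can be chosen above $\Phi_{z_0}(0)=1$, which is immediate. (One should also note the harmless misstatement in the corollary: it refers to Theorem \ref{th7.KR4.1c} rather than to the more general Theorem \ref{th7.KR4.1cr}; the proof goes through verbatim with either, since $D$ bounded and locally connected on $\partial D$ is assumed throughout.)
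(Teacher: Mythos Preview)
Your proposal is correct and is precisely the intended argument: the paper states this corollary without proof, as an immediate specialization of Theorem~\ref{th7.KR4.1cr} with $\Phi_{z_0}(t)=e^{\alpha(z_0)t}$, and your computation $\int_{\delta}^{\infty}d\tau/(\tau\log\tau)=\infty$ is exactly what is being left to the reader. Your observation about the cross-reference (Theorem~\ref{th7.KR4.1c} versus Theorem~\ref{th7.KR4.1cr}) is also accurate; the corollary is in fact a specialization of the latter.
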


\medskip

The above results are far reaching generalizations of the well-known Gehring--Martio theorem on a homeomorphic
extension to the closure of quasiconformal mappings between quasiextremal distance domains, see \cite{GM}, see
also \cite{MV}, specified to the plane.

\begin{remark}\label{rm7.KR4.1} By Theorem 5.1 and Remark 5.1 in
\cite{KR$_3$} the condition (\ref{eqKR7.4.2rasc}) are not only
sufficient but also necessary for continuous extension  to the
boundary of $f$ with the integral constraint (\ref{eqKR7.4.1shc}).
Note, by Remark \ref{remeq333F} the condition (\ref{eqKR7.4.2rasc})
is equivalent to each of the conditions
(\ref{eq333Frer})--(\ref{eq333D}) with $\Phi(z)=\Phi_{z_0}(z)$.

Finally, note that all the above results can be formulated in terms
of the dilatation $K_{\mu}(z)$ instead of $K^T_{\mu}(z,z_0)$ because
$K^T_{\mu}(z,z_0) \leqslant K_{\mu}(z)$ for all $z_0\in \Bbb C$ and
$z\in D$. These results are true, in particular, for many regular
domains $D$ and $D'$ as convex, smooth and Lipschitz, uniform and
quasiextremal distance domains by Gehring--Martio. It is important
for further applications to the Dirichlet problem for the degenerate
Beltrami equations that the first of the domains $D$ can be an
arbitrary Jordan domain or a bounded domain whose boundary consists
of a finite collection of mutually disjoint Jordan curves.
\end{remark}


\section{On regular solutions for the Dirichlet problem in the Jordan domains}

In this section, we prove that a regular solution of the Dirichlet
problem (\ref{eqGrUsl}) exists for every continuous function
$\varphi:\partial D\to{\Bbb R}$ for wide classes of the degenerate
Beltrami equations (\ref{eqBeltrami}) in an arbitrary  Jordan domain
$D$. The main criteria are formulated by us in terms of the tangent
dilatations $K^T_{\mu}(z,z_0)$ which are more refined although the
corresponding criteria remain valid, in view of (\ref{eqConnect}),
for the usual dilatation $K_{\mu}(z)$, cf. \cite{KPR1}. We assume
that the dilatations $K^T_{\mu}(z,z_0)$ and $K_{\mu}(z)$ are
extended by zero outside of the domain $D$ in the following lemma
and remark.

\medskip

\begin{lemma}\label{lemDIR9} Let $D$ be a Jordan domain in
${\Bbb C}.$ Suppose that $\mu:D\to{\Bbb C}$ is a measurable function with $|\mu(z)|<1$ a.e.,  $K_{\mu}(z)\in
L^1_{\rm loc}(D),$ $K^T_{\mu}(z,z_0)$ is integrable over $D\bigcap S (z_0, r)$ for a.e. $r \in (0,
\varepsilon_0)$ and
\begin{equation}\label{3omal}
\int\limits_{\varepsilon<|z-z_0|<\varepsilon_0}
K^T_{\mu}(z,z_0)\cdot\psi^2_{z_0,\varepsilon}(|z-z_0|)\,dm(z)=o(I_{z_0}^{2}(\varepsilon))\quad{\rm
as}\quad\varepsilon\to0\ \ \forall\ z_0\in\overline{D}\end{equation}
for some $\varepsilon_0\in(0,\delta_0)$ where
$\delta_0=\delta(z_0)=\sup_{z\in D}|z-z_0|$ and
$\psi_{z_0,\varepsilon}(t)$ is a family of non-negative measurable
(by Lebesgue) functions on $(0,\infty)$ such that
\begin{equation}\label{eq3.5.3}
I_{z_0}(\varepsilon)\colon =\int\limits_{\varepsilon}^{\varepsilon_0}
\psi_{z_0,\varepsilon}(t)\,dt<\infty\qquad\forall\ \varepsilon\in(0,\varepsilon_0)\,.\end{equation} Then the
Beltrami equation (\ref{eqBeltrami}) has a regular solution $f$ of the Dirichlet problem (\ref{eqGrUsl}) for
each continuous function $\varphi:\partial D\to{\Bbb R}$.\end{lemma}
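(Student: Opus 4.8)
The plan is to reduce the problem to the unit disk and to factor the desired solution as $f=h\circ g$, where $g$ is a homeomorphic solution of the Beltrami equation mapping $D$ onto ${\Bbb D}$ and $h$ is the classical Poisson--Schwarz solution of the Dirichlet problem for holomorphic functions in ${\Bbb D}$.

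First I would produce $g$. Since $K_\mu\in L^1_{\rm loc}(D)$ and, by the hypothesis (\ref{3omal}), the equation (\ref{eqBeltrami}) is of the type for which existence is known, the existence theorems for degenerate Beltrami equations (see, e.g., \cite{MRSY}, \cite{SY} and the references therein) supply a homeomorphic $W^{1,1}_{\rm loc}$ solution $g_0\colon D\to G_0:=g_0(D)$ of (\ref{eqBeltrami}) with $J_{g_0}\neq0$ a.e. (equivalently, with $g_0^{-1}\in W^{1,2}_{\rm loc}$). The image $G_0$ is simply connected because $D$ is a Jordan domain; composing $g_0$ on the left with a Riemann map $\psi_0\colon G_0\to{\Bbb D}$ --- which leaves the Beltrami coefficient unchanged, $\psi_0$ being conformal --- I obtain a homeomorphic $W^{1,1}_{\rm loc}$ solution $g:=\psi_0\circ g_0\colon D\to{\Bbb D}$ with $J_g\neq0$ a.e. Now $D$ is locally connected on $\partial D$ (being a Jordan domain), $\partial{\Bbb D}$ is weakly flat, $\|K^T_\mu\|_1(z_0,r)\neq\infty$ for a.e. $r$ by assumption, and (\ref{3omal}) holds for every $z_0\in\partial D$; hence Lemma \ref{lem3.3.333} applies, and $g$ extends to a homeomorphism $\overline g\colon\overline D\to\overline{\Bbb D}$. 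In particular $\overline g(\partial D)=\partial{\Bbb D}$, so $\psi:=\varphi\circ(\overline g|_{\partial D})^{-1}$ is a continuous real-valued function on $\partial{\Bbb D}$.

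Next I would solve the classical problem in ${\Bbb D}$: let $u$ be the Poisson integral of $\psi$ (harmonic in ${\Bbb D}$, continuous on $\overline{\Bbb D}$, with $u=\psi$ on $\partial{\Bbb D}$), let $v$ be a harmonic conjugate of $u$ in ${\Bbb D}$, set $h:=u+iv$ and $f:=h\circ g\colon D\to{\Bbb C}$. Then $f$ is the required regular solution: it is continuous; it lies in $W^{1,1}_{\rm loc}(D)$ because $h$ is holomorphic, hence locally Lipschitz on ${\Bbb D}$, and $g\in W^{1,1}_{\rm loc}$; since $h_{\bar w}\equiv0$, the chain rule gives $f_{\bar z}=h'(g)\,g_{\bar z}=h'(g)\,\mu\,g_z=\mu\,f_z$ a.e., so (\ref{eqBeltrami}) holds a.e.; moreover $J_f=|h'(g)|^2\,J_g\neq0$ a.e., because $J_g\neq0$ a.e. and the zero set of $h'$ is discrete in ${\Bbb D}$, so its preimage under the homeomorphism $g$ is countable; if $\varphi\not\equiv{\rm const}$ then $u$, and hence $h$, is non-constant, so $h$ is open and discrete and therefore so is $f=h\circ g$; finally, for any $\zeta\in\partial D$ we have $g(z)\to\overline g(\zeta)\in\partial{\Bbb D}$ as $z\to\zeta$, whence ${\rm Re}\,f(z)=u(g(z))\to\psi(\overline g(\zeta))=\varphi(\zeta)$, which is (\ref{eqGrUsl}). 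In the remaining case $\varphi\equiv c$ one gets $\psi\equiv c$, so $u\equiv c$, $h\equiv c$ and $f\equiv c$, which is exactly the regular solution prescribed in that case.

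The principal obstacle is the first step --- obtaining the homeomorphic $W^{1,1}_{\rm loc}$ solution $g$ with $J_g\neq0$ a.e. under the present hypotheses; once $g$ is available, its homeomorphic extension to $\overline D$ is handed to us by Lemma \ref{lem3.3.333}, and everything else reduces to the classical harmonic Dirichlet problem in ${\Bbb D}$ together with the routine verification that post-composing a holomorphic function with $g$ preserves all the structural properties in the definition of a regular solution. A minor point deserving care is that the harmonic conjugate $v$ need not extend continuously to all of $\partial{\Bbb D}$; this is harmless, since only ${\rm Re}\,f=u\circ g$ enters the boundary condition and $u$ does extend continuously to $\overline{\Bbb D}$.
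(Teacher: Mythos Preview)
Your approach is essentially the paper's: obtain a homeomorphic $W^{1,1}_{\rm loc}$ solution of (\ref{eqBeltrami}), compose with a Riemann map onto ${\Bbb D}$, extend to $\overline D$ via Lemma~\ref{lem3.3.333}, and then compose with the Schwarz/Poisson solution of the transferred problem in ${\Bbb D}$; the paper invokes the specific existence result Lemma~4.1 of \cite{RSY$_6$} for the first step, and your verification that $f=h\circ g$ is discrete, open, with $J_f\neq0$ a.e.\ is in fact more explicit than the paper's.

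One small point you skip that the paper addresses: to apply the Riemann mapping theorem you need $G_0\neq{\Bbb C}$, not just simple connectedness, and there is no elementary obstruction (a bounded disk can be mapped homeomorphically onto ${\Bbb C}$). The paper rules this out using the boundary theory itself: if $G_0={\Bbb C}$ then $\partial G_0=\{\infty\}$ is weakly flat, so Lemma~\ref{lem3.3.333} (which already uses the hypothesis (\ref{3omal}) at points of $\partial D$) would give a homeomorphic extension $\overline D\to\overline{\Bbb C}$, impossible since $\partial D$ is a Jordan curve and not a single point. You should insert this argument before composing with the Riemann map.
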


\medskip

\begin{remark}\label{rem333} Note that if the family of the functions
$\psi_{z_0,\varepsilon}(t)\equiv\psi(t)$ is independent on the
parameters $z_0$ and $\varepsilon$, then the condition (\ref{3omal})
implies that $I_{z_0}(\varepsilon)\to \infty$ as $\varepsilon\to 0$.
This follows immediately from arguments by contradiction. Note also
that (\ref{3omal}) holds, in particular, if
\begin{equation}\label{333omal}
\int\limits_{\varepsilon<|z-z_0|<\varepsilon_0} K_{\mu}(z)\cdot\psi^2(|z-z_0|)\,dm(z)<\infty \qquad \forall\
z_0\in\overline{D}\end{equation} and $I_{z_0}(\varepsilon)\to \infty$ as $\varepsilon\to 0$. In other words, for
the solvability of the Dirichlet problem (\ref{eqGrUsl}) for the Beltrami equation (\ref{eqBeltrami}) for all
continuous $\varphi(\zeta)\not\equiv{\rm const}$, it is sufficient that the integral in (\ref{333omal})
converges in the sense of the principal value for some nonnegative function $\psi(t)$ that is locally integrable
over $(0,\varepsilon_0 ]$ but has a nonintegrable singularity at $0$. The functions $\log^{\lambda}(e/|z-z_0|)$,
$\lambda\in (0,1)$, $z\in\Bbb D$, $z_0\in\overline{\Bbb D}$, and $\psi(t)=1/(t \,\, \log(e/t))$, $t\in(0,1)$,
show that the condition (\ref{333omal}) is compatible with the condition $I_{z_0}(\varepsilon)\to\infty $ as
$\varepsilon\to 0$. Furthermore, the condition (\ref{3omal}) shows that it is sufficient for the solvability of
the Dirichlet problem even that the integral in (\ref{333omal}) was divergent in a controlled way.
\end{remark}

\medskip

\begin{proof}[Proof of Lemma \ref{lemDIR9}] Let $F$ be a regular
homeomorphic solution of the Beltrami equation (\ref{eqBeltrami}) of
the class $W^{1,1}_{\rm loc}$ that exists by Lemma 4.1 in
\cite{RSY$_6$}. Note that $\overline{\Bbb C}\setminus D^*$, where
$D^*=F(D)$, cannot consist of the single point $\infty$ because in
the contrary case $\partial D^*$ would be weakly flat. But then by
Lemma \ref{lem13.3.333} $F$ should have a homeomorphic extension to
$\overline{D}$ that is impossible because $\partial D$ is not a
singleton. Moreover, the domain $D^*$ is simply connected, see,
e.g., either Lemma 5.3 in \cite{IR} or Lemma 6.5 in \cite{MRSY}.
Thus, by the Riemann theorem, see, e.g., Theorem II.2.1 in
\cite{Gol}, $D^*$ can be mapped by a conformal mapping $R$ onto the
unit disk ${\Bbb D}$. The mapping $g=R\circ F$ is also a regular
homeomorphic solution of the Beltrami equation of the class $W_{\rm
loc}^{1,1}$ that maps $D$ onto ${\Bbb D}$. Furthermore, by Lemma 7.1
$g$ admits a homeomorphic extension
$g_*:\overline{D}\to\overline{\Bbb D}$ because ${\Bbb D}$ has a
weakly flat boundary and the Jordan domain $D$ is locally connected
on its boundary.

Let us find a solution of the Dirichlet problem (\ref{eqGrUsl}) in
the form $f=h\circ g$ where $h$ is an analytic function in ${\Bbb
D}$ with the boundary condition $$\lim\limits_{z\to\zeta}{\rm
Re}\,h(z)=\varphi(g_{*}^{-1}(\zeta)) \qquad\forall\
\zeta\in\partial{\Bbb D}\,.$$ By the Schwarz formula (see, e.g.,
Section 8, Chapter III, Part 3 in \cite{HuCo}), the analytic
function $h$ with ${\rm Im}\,h(0)=0$ can be calculated in ${\Bbb D}$
through its real part on the boundary:
\begin{equation}\label{eqDIR4*} h(z)=\frac{1}{2\pi
i}\int\limits_{|\zeta|=1}{\rm Re}\,\varphi\circ
g_{*}^{-1}(\zeta)\cdot\frac{\zeta+z}{\zeta-z}\cdot\frac{d\zeta}{\zeta}\,.\end{equation}
We see that the function $f=h\circ g$ is the desired regular
solution of the Dirichlet problem (\ref{eqGrUsl}) for the Beltrami
equation (\ref{eqBeltrami}).
\end{proof}

Choosing in Lemma \ref{lemDIR9} $\psi(t)=1/\left(t\, \log\left(1/t\right)\right)$, we obtain by Lemma \ref{lem13.4.2}
 the following result.

\medskip

\begin{theorem}\label{thDIR2fmo} Let $D$ be a Jordan domain and $\mu:D\to{\Bbb C}$
be a measurable function with $|\mu(z)|<1$ a.e. such that $K_{\mu}\in L^1_{\rm loc}(D)$. Suppose that
$K^T_{\mu}(z,z_0)\leqslant Q_{z_0}(z)$ a.e. in $D\cap U_{z_0}$ for every point $z_0\in \overline{D}$, a
neighborhood $U_{z_0}$ of $z_0$ and a function $Q_{z_0}:{\Bbb C}\to[0,\infty]$ in the class ${\rm FMO}({z_0})$.
Then the Beltrami equation (\ref{eqBeltrami}) has a regular solution of the Dirichlet problem (\ref{eqGrUsl})
for each continuous function $\varphi:\partial D\to{\Bbb R}$.
\end{theorem}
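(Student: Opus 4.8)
The plan is to derive Theorem \ref{thDIR2fmo} from Lemma \ref{lemDIR9} by making the standard choice of the test functions $\psi_{z_0,\varepsilon}$ and checking that the divergence hypothesis (\ref{3omal}) is met under the FMO assumption. Concretely, I would fix a point $z_0\in\overline D$, shrink the neighborhood $U_{z_0}$ so that $\overline{B(z_0,\varepsilon_0)}\subset U_{z_0}$ for a suitable $\varepsilon_0\in(0,\min(e^{-e},d_0))$, and set $\psi_{z_0,\varepsilon}(t)\equiv\psi(t)=1/\bigl(t\,\log(1/t)\bigr)$ on $(0,\varepsilon_0)$, independently of $\varepsilon$. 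Then $I_{z_0}(\varepsilon)=\int_\varepsilon^{\varepsilon_0}\frac{dt}{t\log(1/t)}=\log\log\frac{1}{\varepsilon}-\log\log\frac{1}{\varepsilon_0}\to\infty$ as $\varepsilon\to0$, so in particular $I_{z_0}(\varepsilon)\asymp\log\log\frac1\varepsilon$, hence $I_{z_0}^2(\varepsilon)\asymp\bigl(\log\log\frac1\varepsilon\bigr)^2$.

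The core estimate is the numerator of (\ref{3omal}). Since $K^T_{\mu}(z,z_0)\leqslant Q_{z_0}(z)$ a.e. in $D\cap U_{z_0}$ and $K^T_{\mu}(z,z_0)$ is extended by zero outside $D$, we have
\begin{equation*}
\int\limits_{\varepsilon<|z-z_0|<\varepsilon_0}\frac{K^T_{\mu}(z,z_0)\,dm(z)}{\bigl(|z-z_0|\log\frac{1}{|z-z_0|}\bigr)^2}\ \leqslant\ \int\limits_{\varepsilon<|z-z_0|<\varepsilon_0}\frac{Q_{z_0}(z)\,dm(z)}{\bigl(|z-z_0|\log\frac{1}{|z-z_0|}\bigr)^2}\,.
\end{equation*}
Because $Q_{z_0}$ is non-negative and lies in ${\rm FMO}(z_0)$, Lemma \ref{lem13.4.2} applies and gives that the right-hand side is $O\bigl(\log\log\frac1\varepsilon\bigr)$ as $\varepsilon\to0$. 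Comparing with $I_{z_0}^2(\varepsilon)\asymp\bigl(\log\log\frac1\varepsilon\bigr)^2$, we get that the left-hand side is $o\bigl(I_{z_0}^2(\varepsilon)\bigr)$, which is exactly (\ref{3omal}). I would also note the integrability hypotheses of Lemma \ref{lemDIR9}: $K_\mu\in L^1_{\rm loc}(D)$ is assumed directly, and since $K^T_\mu(z,z_0)\leqslant K_\mu(z)$ by (\ref{eqConnect}), Fubini's theorem gives that $K^T_\mu(\cdot,z_0)$ is integrable over $D\cap S(z_0,r)$ for a.e. $r\in(0,\varepsilon_0)$, as required.

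With all hypotheses of Lemma \ref{lemDIR9} verified for every $z_0\in\overline D$, the lemma yields a regular solution $f$ of the Dirichlet problem (\ref{eqGrUsl}) for each continuous $\varphi:\partial D\to{\Bbb R}$, completing the proof. I do not anticipate a serious obstacle here: the argument is a routine specialization, and the only point requiring a little care is matching the neighborhoods — one must ensure the single $\varepsilon_0$ can be taken inside $U_{z_0}$ and below $\delta_0=\sup_{z\in D}|z-z_0|$ and below $e^{-e}$ simultaneously, which is immediate by taking $\varepsilon_0$ small enough. The genuinely substantive content — the existence of the homeomorphic solution $F$, the Riemann mapping reduction, the boundary extension via weak flatness, and the Schwarz-formula construction of $h$ — is already packaged inside Lemma \ref{lemDIR9}, so nothing further is needed beyond the FMO integral bound supplied by Lemma \ref{lem13.4.2}.
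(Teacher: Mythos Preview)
Your proposal is correct and follows exactly the paper's route: the paper proves Theorem \ref{thDIR2fmo} in one line by applying Lemma \ref{lemDIR9} with the choice $\psi(t)=1/\bigl(t\log(1/t)\bigr)$ and invoking Lemma \ref{lem13.4.2}, which is precisely what you do. One small tightening: for $z_0\in\partial D$ the inequality $K^T_\mu\leqslant K_\mu$ together with $K_\mu\in L^1_{\rm loc}(D)$ does not quite justify the circle integrability (the annulus is not compactly contained in $D$); use instead $K^T_\mu(\cdot,z_0)\leqslant Q_{z_0}$ and the fact that the FMO condition already includes $Q_{z_0}\in L^1$ in a neighborhood of $z_0$.
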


\medskip

\begin{remark}\label{rm555} In paticular, the conditions and the conclusion of Theorem
\ref{thDIR2fmo} hold if either $Q_{z_0}\in{\rm BMO}_{\rm loc}$ or $Q_{z_0}\in{\rm W}^{1,2}_{\rm loc}$ because
$W^{\,1,2}_{\rm loc} \subset {\rm VMO}_{\rm loc}$, see, e.g., \cite{BN}. \end{remark}

\medskip

Since $K^T_{\mu}(z,z_0) \leqslant K_{\mu}(z)$ for all $z_0\in \Bbb
C$ and $z\in D$, we obtain the following consequence of Theorem
\ref{thDIR2fmo}.

\medskip

\begin{corollary}\label{corDIR2fmo} Let $D$ be a Jordan domain and $\mu:D\to{\Bbb C}$
be a measurable function with $|\mu(z)|<1$ a.e. and such that $K_{\mu}(z)\leqslant Q(z)$ a.e. in $D$ for a
function $Q:{\Bbb C}\to[0,\infty]$ in ${\rm FMO}(\overline{D})$. Then the Beltrami equation (\ref{eqBeltrami})
has a regular solution of the Dirichlet problem (\ref{eqGrUsl}) for each continuous function $\varphi:\partial
D\to{\Bbb R}$. \end{corollary}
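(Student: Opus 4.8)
The plan is to reduce Corollary \ref{corDIR2fmo} to Theorem \ref{thDIR2fmo} by constructing, from the single dominating function $Q$, the family of local dominants $Q_{z_0}$ required by the theorem. First I would note that the hypotheses $|\mu(z)|<1$ a.e. and $K_\mu\in{\rm FMO}(\overline D)$ immediately give $K_\mu\in L^1_{\rm loc}(G)$ for the domain $G\supset\overline D$ on which $Q$ is defined, since ${\rm FMO}$ functions are locally integrable by definition; in particular $K_\mu\in L^1_{\rm loc}(D)$, which is the first structural hypothesis of Theorem \ref{thDIR2fmo}. Then for each point $z_0\in\overline D$ I would simply take $Q_{z_0}:=Q$ and $U_{z_0}$ any neighbourhood of $z_0$; by assumption $Q\in{\rm FMO}(G)\subset{\rm FMO}(z_0)$, and by the inequality (\ref{eqConnect}), namely $K^T_\mu(z,z_0)\leqslant K_\mu(z)\leqslant Q(z)$ valid for every $z_0\in\Bbb C$ and every $z\in D$, the pointwise domination $K^T_\mu(z,z_0)\leqslant Q_{z_0}(z)$ holds a.e. in $D$, hence a fortiori a.e. in $D\cap U_{z_0}$.

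With these choices every hypothesis of Theorem \ref{thDIR2fmo} is verified: $D$ is a Jordan domain, $\mu$ is measurable with $|\mu|<1$ a.e., $K_\mu\in L^1_{\rm loc}(D)$, and for each $z_0\in\overline D$ there is a neighbourhood $U_{z_0}$ and a function $Q_{z_0}\in{\rm FMO}(z_0)$ with $K^T_\mu(z,z_0)\leqslant Q_{z_0}(z)$ a.e. in $D\cap U_{z_0}$. Applying Theorem \ref{thDIR2fmo} then yields a regular solution $f$ of the Dirichlet problem (\ref{eqGrUsl}) for every continuous $\varphi:\partial D\to\Bbb R$, which is exactly the conclusion sought. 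The convention, stated in the excerpt, that $K^T_\mu(z,z_0)$ and $K_\mu(z)$ are extended by zero outside $D$ is harmless here: off $D$ the bound $0\leqslant Q$ holds trivially, and it is only the behaviour near boundary points $z_0$, governed by the ${\rm FMO}(z_0)$ property of $Q$ on the ambient domain $G$, that matters.

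There is essentially no obstacle of substance: the corollary is a direct specialisation obtained by collapsing the family $\{Q_{z_0}\}$ to a single global function $Q$, which is legitimate precisely because the tangent dilatation is dominated by the ordinary dilatation uniformly in the center $z_0$. The only point that requires a word of care is the passage from ``$Q\in{\rm FMO}(\overline D)$'' to ``$Q\in{\rm FMO}(z_0)$ for each $z_0\in\overline D$'': this is immediate from the definition of ${\rm FMO}(\overline D)$ given in Section 2, which asks exactly that $Q$ be given on a domain $G\supset\overline D$ with $Q\in{\rm FMO}(G)$, i.e. $Q\in{\rm FMO}(z_0)$ for all $z_0\in G\supseteq\overline D$. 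Thus the proof is a short deduction, and I would write it as: ``Since $K^T_\mu(z,z_0)\leqslant K_\mu(z)\leqslant Q(z)$ a.e. in $D$ for every $z_0\in\overline D$ by (\ref{eqConnect}), and $Q\in{\rm FMO}(z_0)$ for each such $z_0$, Theorem \ref{thDIR2fmo} applies with $Q_{z_0}:=Q$ and $U_{z_0}$ arbitrary, giving the assertion.''
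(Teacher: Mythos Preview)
Your proposal is correct and follows exactly the paper's own route: the paper introduces the corollary with the one-line justification ``Since $K^T_{\mu}(z,z_0)\leqslant K_{\mu}(z)$ for all $z_0\in\Bbb C$ and $z\in D$, we obtain the following consequence of Theorem \ref{thDIR2fmo},'' which is precisely your argument of setting $Q_{z_0}:=Q$ and invoking (\ref{eqConnect}). One small slip: you wrote ``$K_\mu\in{\rm FMO}(\overline D)$'' when the hypothesis is $Q\in{\rm FMO}(\overline D)$ with $K_\mu\leqslant Q$; the conclusion $K_\mu\in L^1_{\rm loc}(D)$ still follows, since $Q$ is locally integrable by the FMO definition and dominates $K_\mu$.
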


\medskip

\begin{corollary}\label{corDIR1000re} In particular, the conclusion of
Theorem \ref{thDIR2fmo} holds if every point $z_0\in\overline{D}$ is the Lebesgue point of a function
$Q_{z_0}:{\Bbb C}\to[0,\infty]$ which is integrable in a neighborhood $U_{z_0}$ of the point $z_0$ such that
$K^T_{\mu}(z,z_0)\leqslant Q_{z_0}(z)$ a.e. in $D\cap U_{z_0}$.
\end{corollary}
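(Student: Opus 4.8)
The plan is to deduce this corollary from Theorem \ref{thDIR2fmo} by recognizing that the Lebesgue-point hypothesis is merely a special case of the ${\rm FMO}(z_0)$ hypothesis appearing there; the remaining standing assumptions of that theorem ($D$ a Jordan domain, $\mu$ measurable with $|\mu(z)|<1$ a.e., and $K_\mu\in L^1_{\rm loc}(D)$) are understood to remain in force, as is customary for an ``in particular'' statement.

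First I would fix an arbitrary $z_0\in\overline D$ and take the function $Q_{z_0}:{\Bbb C}\to[0,\infty]$ provided by the hypothesis, so that $Q_{z_0}$ is integrable on some neighborhood $U_{z_0}$ of $z_0$, the point $z_0$ is a Lebesgue point of $Q_{z_0}$, and $K^T_\mu(z,z_0)\leqslant Q_{z_0}(z)$ a.e. in $D\cap U_{z_0}$. Since $z_0$ is a Lebesgue point, (\ref{FMO_eq2.7a}) gives
$$\lim\limits_{\varepsilon\to 0}\ \dashint_{B(z_0,\varepsilon)}|Q_{z_0}(z)-Q_{z_0}(z_0)|\,dm(z)=0\,,$$
so, choosing the constant collection $\varphi_\varepsilon\equiv Q_{z_0}(z_0)$, $\varepsilon\in(0,\varepsilon_0]$, the upper limit in (\ref{FMO_eq2.7}) is finite; by Proposition \ref{FMO_pr2.1} this says precisely that $Q_{z_0}\in{\rm FMO}(z_0)$, and the integrability of $Q_{z_0}$ near $z_0$ --- which is part of the definition of ${\rm FMO}(z_0)$ --- is guaranteed by hypothesis. (Alternatively one may note that at a Lebesgue point $\dashint_{B(z_0,\varepsilon)}|Q_{z_0}(z)|\,dm(z)\to Q_{z_0}(z_0)<\infty$, so that (\ref{FMO_eq2.8}) holds and Corollary \ref{FMO_cor2.1} applies.)

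Since $z_0\in\overline D$ was arbitrary, the data $\{(Q_{z_0},U_{z_0})\}_{z_0\in\overline D}$ satisfy exactly the hypotheses of Theorem \ref{thDIR2fmo}, and that theorem then furnishes a regular solution $f$ of the Dirichlet problem (\ref{eqGrUsl}) for the Beltrami equation (\ref{eqBeltrami}) for every continuous $\varphi:\partial D\to{\Bbb R}$, which is the claim. There is essentially no obstacle here: the only point to watch is that the implication ``$z_0$ a Lebesgue point of $Q_{z_0}$ $\Rightarrow$ $Q_{z_0}\in{\rm FMO}(z_0)$'' be applied uniformly over all $z_0\in\overline D$, together with the routine bookkeeping that the majorization $K^T_\mu(z,z_0)\leqslant Q_{z_0}(z)$ is required only on $D\cap U_{z_0}$ --- which is exactly the local form in which Theorem \ref{thDIR2fmo} is stated.
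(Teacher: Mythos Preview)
Your proof is correct and follows precisely the approach the paper intends: the corollary is an immediate specialization of Theorem \ref{thDIR2fmo}, using the fact (recorded in Proposition \ref{FMO_pr2.1} with $\varphi_\varepsilon\equiv Q_{z_0}(z_0)$) that a Lebesgue point of a locally integrable function is an ${\rm FMO}$ point. The paper does not spell out this deduction explicitly, but your argument matches the chain of implications set up in Section 2.
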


\medskip

We assume that $K^T_{\mu}(z,z_0)$ is extended by zero outside of $D$
in the following consequences of Theorem \ref{thDIR2fmo}.

\medskip

\begin{corollary}\label{corDIR1} Let $D$ be a Jordan domain
and $\mu:D\to{\Bbb C}$ be a measurable function with $|\mu(z)|<1$
a.e. such that $K_{\mu}\in L^1_{\rm loc}(D)$ and
\begin{equation}\label{eqDIR6*}\overline{\lim\limits_{\varepsilon\to0}}\quad
\dashint_{B(z_0,\varepsilon)}K^T_{\mu}(z,z_0)\,dm(z)<\infty\qquad\forall\ z_0\in\overline{D}\,.\end{equation}
Then the Beltrami equation (\ref{eqBeltrami}) has a regular solution of the Dirichlet problem (\ref{eqGrUsl})
for each continuous function $\varphi:\partial D\to{\Bbb R}$. \end{corollary}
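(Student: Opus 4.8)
The plan is to read this off from Theorem \ref{thDIR2fmo} by exhibiting, at every point $z_0\in\overline D$, an FMO majorant of the tangent dilatation that is handed to us directly by hypothesis (\ref{eqDIR6*}). Concretely, for each $z_0\in\overline D$ I would take the majorant to be the tangent dilatation itself, $Q_{z_0}(z):=K^T_{\mu}(z,z_0)$, which --- with the standing convention that $K^T_{\mu}(z,z_0)$ is extended by zero outside $D$ (and harmlessly redefined to be finite on the null set $\{z:|\mu(z)|=1\}$) --- is a nonnegative measurable function ${\Bbb C}\to[0,\infty]$, and for which $K^T_{\mu}(z,z_0)\leqslant Q_{z_0}(z)$ holds trivially a.e.

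The one thing to check is that $Q_{z_0}\in{\rm FMO}(z_0)$. Hypothesis (\ref{eqDIR6*}) states exactly that
$$\overline{\lim\limits_{\varepsilon\to0}}\ \ \dashint_{B(z_0,\varepsilon)}|Q_{z_0}(z)|\,dm(z)\ <\ \infty\qquad\forall\ z_0\in\overline D\,,$$
so for each $z_0$ there is a constant $C=C(z_0)<\infty$ with $\dashint_{B(z_0,\varepsilon)}Q_{z_0}\,dm\leqslant C$ for all small $\varepsilon>0$; in particular $\int_{B(z_0,\varepsilon)}Q_{z_0}\,dm\leqslant C\,|B(z_0,\varepsilon)|<\infty$, so $Q_{z_0}$ is integrable in a neighborhood of $z_0$. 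Then Corollary \ref{FMO_cor2.1} (equivalently Proposition \ref{FMO_pr2.1} with $\varphi_{\varepsilon}\equiv0$), applied to the function $Q_{z_0}$ on ${\Bbb C}$, gives $Q_{z_0}\in{\rm FMO}(z_0)$.

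Now all the hypotheses of Theorem \ref{thDIR2fmo} are in place: $D$ is a Jordan domain, $\mu$ is measurable with $|\mu(z)|<1$ a.e., $K_{\mu}\in L^1_{\rm loc}(D)$, and for every $z_0\in\overline D$ the function $Q_{z_0}$ constructed above majorizes $K^T_{\mu}(\cdot,z_0)$ a.e.\ near $z_0$ and belongs to ${\rm FMO}(z_0)$. Hence Theorem \ref{thDIR2fmo} produces a regular solution of the Dirichlet problem (\ref{eqGrUsl}) for every continuous $\varphi:\partial D\to{\Bbb R}$, which is the assertion. Since the argument is just this reduction, there is nothing delicate to overcome: the only (routine) point is the implication ``finite $\limsup$ of the averages over $B(z_0,\varepsilon)$ $\Rightarrow$ local integrability and FMO at $z_0$'', after which Theorem \ref{thDIR2fmo} does all the work.
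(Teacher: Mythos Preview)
Your argument is correct and matches the paper's approach exactly: the paper presents Corollary~\ref{corDIR1} as a direct consequence of Theorem~\ref{thDIR2fmo} (with $Q_{z_0}=K^T_{\mu}(\cdot,z_0)$ extended by zero outside $D$), and the passage from hypothesis~(\ref{eqDIR6*}) to the FMO condition at $z_0$ is precisely Corollary~\ref{FMO_cor2.1}.
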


\medskip

Similarly, choosing in Lemma \ref{lemDIR9} the function
$\psi(t)=1/t$, we come to the following statement.

\medskip

\begin{theorem}\label{thKPRS12b*} Let $D$ be a Jordan domain and $\mu:D\to{\Bbb C}$
be a measurable function with $|\mu(z)|<1$ a.e. such that
$K_{\mu}\in L^1_{\rm loc}(D)$. Suppose that
\begin{equation}\label{eqKPRS12c*}
\int\limits_{\varepsilon<|z-z_0|<\varepsilon_0}K^T_{\mu}(z,z_0)\,\frac{dm(z)}{|z-z_0|^2}
=o\left(\left[\log\frac{1}{\varepsilon}\right]^2\right)\qquad\forall\ z_0\in\overline{D}\end{equation} as
$\varepsilon\to 0$ for some $\varepsilon_0=\delta(z_0)\in(0,d(z_0))$ where $d(z_0)=\sup_{z\in D}|z-z_0|$. Then
the Beltrami equation (\ref{eqBeltrami}) has a regular solution of the Dirichlet problem (\ref{eqGrUsl}) for
each continuous function $\varphi:\partial D\to{\Bbb R}$.\end{theorem}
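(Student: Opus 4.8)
The plan is to derive Theorem \ref{thKPRS12b*} from the general Lemma \ref{lemDIR9} by making the particular choice $\psi_{z_0,\varepsilon}(t)\equiv\psi(t)=1/t$, exactly as the sentence preceding the statement announces. So the whole proof is a verification that, with this $\psi$, the hypotheses of Lemma \ref{lemDIR9} are met whenever the hypotheses of the present theorem hold.

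First I would compute $I_{z_0}(\varepsilon)$ for $\psi(t)=1/t$: one has
$$I_{z_0}(\varepsilon)=\int\limits_{\varepsilon}^{\varepsilon_0}\frac{dt}{t}=\log\frac{\varepsilon_0}{\varepsilon}\,,$$
so that $I_{z_0}(\varepsilon)<\infty$ for every $\varepsilon\in(0,\varepsilon_0)$, which is condition (\ref{eq3.5.3}), and moreover $I_{z_0}(\varepsilon)\to\infty$ as $\varepsilon\to 0$ with $I_{z_0}^2(\varepsilon)=(\log(\varepsilon_0/\varepsilon))^2$ comparable to $(\log(1/\varepsilon))^2$ as $\varepsilon\to 0$. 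Next I would rewrite the left-hand side of the main assumption (\ref{3omal}) of Lemma \ref{lemDIR9}: with $\psi^2(|z-z_0|)=1/|z-z_0|^2$ it becomes exactly
$$\int\limits_{\varepsilon<|z-z_0|<\varepsilon_0}K^T_{\mu}(z,z_0)\,\frac{dm(z)}{|z-z_0|^2}\,,$$
which by the hypothesis (\ref{eqKPRS12c*}) of the theorem is $o\big([\log(1/\varepsilon)]^2\big)$, hence $o\big(I_{z_0}^2(\varepsilon)\big)$. Thus (\ref{3omal}) holds for all $z_0\in\overline D$. The remaining standing hypotheses of Lemma \ref{lemDIR9} — that $D$ is a Jordan domain, that $\mu$ is measurable with $|\mu|<1$ a.e., that $K_{\mu}\in L^1_{\rm loc}(D)$, and that $K^T_{\mu}(z,z_0)$ is integrable over $D\cap S(z_0,r)$ for a.e. $r\in(0,\varepsilon_0)$ — are either assumed verbatim in the theorem or follow from (\ref{eqKPRS12c*}) together with the Fubini theorem (finiteness of the weighted integral forces $K^T_{\mu}(\cdot,z_0)$ to be integrable on $D\cap S(z_0,r)$ for a.e.\ $r$, since $1/r^2$ is bounded below on any compact subinterval of $(0,\varepsilon_0)$). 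Applying Lemma \ref{lemDIR9} then produces the desired regular solution $f$ of the Dirichlet problem (\ref{eqGrUsl}) for every continuous $\varphi:\partial D\to\Bbb R$.

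The step that deserves the most care — though it is not deep — is the measurability/integrability bookkeeping: one must check that choosing $\psi$ independent of $z_0$ and $\varepsilon$ is legitimate here (it is, since Lemma \ref{lemDIR9} allows an arbitrary admissible family), and that the a.e.-in-$r$ integrability of $K^T_{\mu}(\cdot,z_0)$ on circles really is implied by (\ref{eqKPRS12c*}) rather than needing to be assumed separately. Everything else is a direct substitution. I expect no genuine obstacle; the theorem is a specialization of Lemma \ref{lemDIR9}, and the only content is matching the asymptotics $I_{z_0}(\varepsilon)\sim\log(1/\varepsilon)$ against the $o\big([\log(1/\varepsilon)]^2\big)$ bound in the hypothesis.

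\begin{proof} Apply Lemma \ref{lemDIR9} with the family $\psi_{z_0,\varepsilon}(t)\equiv\psi(t)=1/t$, $t\in(0,\infty)$. Then for every $z_0\in\overline D$ and every $\varepsilon\in(0,\varepsilon_0)$,
$$I_{z_0}(\varepsilon)=\int\limits_{\varepsilon}^{\varepsilon_0}\frac{dt}{t}=\log\frac{\varepsilon_0}{\varepsilon}<\infty\,,$$
so that (\ref{eq3.5.3}) holds and $I_{z_0}(\varepsilon)\to\infty$ as $\varepsilon\to 0$ with $I_{z_0}^{2}(\varepsilon)=\big(\log(\varepsilon_0/\varepsilon)\big)^{2}$ asymptotically comparable to $\big(\log(1/\varepsilon)\big)^{2}$. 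With this choice of $\psi$, the left-hand side of (\ref{3omal}) becomes precisely
$$\int\limits_{\varepsilon<|z-z_0|<\varepsilon_0}K^T_{\mu}(z,z_0)\,\frac{dm(z)}{|z-z_0|^2}\,,$$
which by the hypothesis (\ref{eqKPRS12c*}) equals $o\big([\log(1/\varepsilon)]^2\big)=o\big(I_{z_0}^{2}(\varepsilon)\big)$ as $\varepsilon\to 0$ for every $z_0\in\overline D$; thus (\ref{3omal}) holds. Moreover, since $1/|z-z_0|^2$ is bounded from below on every compact subinterval of $(0,\varepsilon_0)$, the finiteness of the above integral together with the Fubini theorem yields that $K^T_{\mu}(z,z_0)$ is integrable over $D\cap S(z_0,r)$ for a.e.\ $r\in(0,\varepsilon_0)$. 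The remaining hypotheses of Lemma \ref{lemDIR9}, namely that $D$ is a Jordan domain, $\mu$ is measurable with $|\mu(z)|<1$ a.e., and $K_{\mu}\in L^1_{\rm loc}(D)$, are assumed. Hence Lemma \ref{lemDIR9} applies and the Beltrami equation (\ref{eqBeltrami}) has a regular solution $f$ of the Dirichlet problem (\ref{eqGrUsl}) for each continuous function $\varphi:\partial D\to{\Bbb R}$.
\end{proof}
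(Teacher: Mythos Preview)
Your proof is correct and follows exactly the approach indicated in the paper: the theorem is obtained from Lemma \ref{lemDIR9} by the specific choice $\psi_{z_0,\varepsilon}(t)\equiv\psi(t)=1/t$, with the only computation being $I_{z_0}(\varepsilon)=\log(\varepsilon_0/\varepsilon)\sim\log(1/\varepsilon)$. Your additional remark that the a.e.-in-$r$ integrability of $K^T_{\mu}(\cdot,z_0)$ over $D\cap S(z_0,r)$ follows from (\ref{eqKPRS12c*}) via Fubini is a correct and helpful observation that the paper leaves implicit.
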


\medskip

\begin{remark}\label{rmKRRSa*} Choosing in Lemma \ref{lemDIR9} the function
$\psi(t)=1/(t\log{1/t})$ instead of $\psi(t)=1/t$, we are able to
replace (\ref{eqKPRS12c*}) by \begin{equation}\label{eqKPRS12f*}
\int\limits_{\varepsilon<|z-z_0|<\varepsilon_0}\frac{K^T_{\mu}(z,z_0)\,dm(z)}
{\left(|z-z_0|\log{\frac{1}{|z-z_0|}}\right)^2}
=o\left(\left[\log\log\frac{1}{\varepsilon}\right]^2\right)\end{equation}
In general, we are able to give here the whole scale of the
corresponding conditions in $\log$ using functions $\psi(t)$ of the
form
$1/(t\log{1}/{t}\cdot\log\log{1}/{t}\cdot\ldots\cdot\log\ldots\log{1}/{t})$.
\end{remark}

\medskip

Arguing similarly to the proof of Lemma \ref{lemDIR9}, we obtain on the basis of Theorem \ref{th7.KR4.1c}   the
following result.

\medskip

\begin{theorem}\label{thDIR2io} Let $D$ be a Jordan domain in $\Bbb{C}$ and $\mu:D\to{\Bbb C}$
be a measurable function with $|\mu(z)|<1$ a.e. such that $K_{\mu}\in L^1_{\rm loc}(D)$. Suppose that
\begin{equation}\label{eq8.11.2}\int\limits_{0}^{\delta(z_0)}
\frac{dr}{||K^T_{\mu}||_{1}(z_0,r)}=\infty\qquad\forall\
z_0\in\overline{D}\end{equation} for some $\delta(z_0)\in(0,d(z_0))$
where $d(z_0)=\sup\limits_{z\in D}|z-z_0|$ and
\begin{equation}\label{eq8.11.4}
||K^T_{\mu}||_{1}(z_0,r)=\int\limits_{D\cap
S(z_0,r)}K^T_{\mu}(z,z_0)\,|dz|\, .\end{equation} Then the Beltrami
equation (\ref{eqBeltrami}) has a regular solution $f$ of the
Dirichlet problem (\ref{eqGrUsl}) for each continuous function
$\varphi:\partial D\to{\Bbb R}$.\end{theorem}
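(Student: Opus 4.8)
The plan is to follow, almost verbatim, the argument that proves Lemma \ref{lemDIR9}, but to feed it the homeomorphic boundary extension furnished by Theorem \ref{th7.KR4.1c} (whose hypothesis on the tangent dilatation coincides with (\ref{eq8.11.2}) restricted to $\partial D$) in place of the continuous-extension Lemma \ref{lem13.3.333}. First I would invoke the existence of a \emph{regular homeomorphic} $W^{1,1}_{\rm loc}$ solution $F$ of (\ref{eqBeltrami}) — available, as in the proof of Lemma \ref{lemDIR9}, from Lemma 4.1 in \cite{RSY$_6$} under $K_{\mu}\in L^1_{\rm loc}(D)$ and (\ref{eq8.11.2}). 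Writing $D^*=F(D)$, one checks, just as there, that $\overline{\Bbb C}\setminus D^*$ cannot reduce to a single point: in that case $\partial D^*$ would be weakly flat, so Theorem \ref{th7.KR4.1c} would extend $F$ to a homeomorphism of $\overline D$ onto $\overline{D^*}$, contradicting that $\partial D$ is a Jordan curve and hence not a singleton. Consequently $D^*$ is simply connected (Lemma 5.3 in \cite{IR} or Lemma 6.5 in \cite{MRSY}), and the Riemann mapping theorem (Theorem II.2.1 in \cite{Gol}) supplies a conformal map $R$ of $D^*$ onto $\Bbb D$. Since postcomposition with a conformal map leaves the complex coefficient $\mu$, hence the tangent dilatation $K^T_{\mu}(z,z_0)$, untouched, $g=R\circ F$ is again a regular homeomorphic $W^{1,1}_{\rm loc}$ solution of (\ref{eqBeltrami}) satisfying (\ref{eq8.11.2}), now mapping $D$ onto $\Bbb D$; and since the Jordan domain $D$ is bounded and locally connected on $\partial D$ while $\partial\Bbb D$ is weakly flat, Theorem \ref{th7.KR4.1c} yields a homeomorphic extension $g_*:\overline D\to\overline{\Bbb D}$.

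Next I would exhibit the solution in the form $f=h\circ g$, where $h$ is the analytic function in $\Bbb D$ recovered by the Schwarz formula (\ref{eqDIR4*}) (see \cite{HuCo}) from the boundary data $\varphi\circ g_*^{-1}$, which is continuous on $\partial\Bbb D$ as a composition of continuous maps. For $\varphi\not\equiv{\rm const}$ this data is non-constant, so $h$ is non-constant analytic; then $f=h\circ g$ is continuous, discrete and open (composition of a discrete open map with a homeomorphism), belongs to $W^{1,1}_{\rm loc}$, and satisfies (\ref{eqBeltrami}) a.e. because $f_{\bar z}=h'(g)\,g_{\bar z}=\mu\,h'(g)\,g_z=\mu f_z$; moreover $J_f=|h'(g)|^2\,J_g\neq0$ a.e. since $J_g\neq0$ a.e. and the (discrete) zero set of $h'$ pulls back under the homeomorphism $g$ to a set of measure zero. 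Finally, ${\rm Re}\,h$ is the Poisson integral of the continuous function $\varphi\circ g_*^{-1}$, hence extends continuously to $\overline{\Bbb D}$ with these boundary values, so for every $\zeta\in\partial D$ one gets $\lim_{z\to\zeta}{\rm Re}\,f(z)={\rm Re}\,h(g_*(\zeta))=\varphi\bigl(g_*^{-1}(g_*(\zeta))\bigr)=\varphi(\zeta)$; thus $f$ is a regular solution of the Dirichlet problem (\ref{eqGrUsl}), and the trivial case $\varphi\equiv c$ gives $f\equiv c$.

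Essentially all the genuine work is already done: it sits inside Theorem \ref{th7.KR4.1c} and the existence result of \cite{RSY$_6$}. The two points in the assembly that need real care are (i) ruling out the degenerate shapes of $D^*=F(D)$ so that a Riemann map onto $\Bbb D$ exists, and (ii) the observation that the hypothesis (\ref{eq8.11.2}) is unaffected by the conformal normalization $F\mapsto R\circ F$ — which is exactly what lets Theorem \ref{th7.KR4.1c} be applied to $g$ with target $\Bbb D$. Note in passing that, unlike Lemma \ref{lemDIR9}, Theorem \ref{thDIR2io} does not presuppose integrability of $K^T_{\mu}(z,z_0)$ on a.e.\ circle $S(z_0,r)$; this is precisely why one re-runs the argument through Theorem \ref{th7.KR4.1c}, which requires no such assumption, rather than quoting Lemma \ref{lemDIR9} directly. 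The remaining verifications — $W^{1,1}_{\rm loc}$-membership, discreteness, openness, non-vanishing of $J_f$, and the boundary limit — are routine and identical to those in the proof of Lemma \ref{lemDIR9}.
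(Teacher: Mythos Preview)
Your proposal is correct and matches the paper's own approach exactly: the paper states that Theorem \ref{thDIR2io} follows by ``arguing similarly to the proof of Lemma \ref{lemDIR9}'' but invoking Theorem \ref{th7.KR4.1c} for the homeomorphic boundary extension, which is precisely what you do. Your write-up is in fact more detailed than the paper's one-line reference, and your observation that Theorem \ref{th7.KR4.1c} dispenses with the a.e.-circle integrability hypothesis present in Lemma \ref{lemDIR9} correctly identifies why one must re-run the argument rather than simply quote that lemma.
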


\medskip

\begin{corollary}\label{corDIR2t} Let $D$ be a Jordan domain
and $\mu:D\to{\Bbb C}$ be a measurable function with $|\mu(z)|<1$
a.e. such that $K_{\mu}\in L^1_{\rm loc}(D)$. Suppose that
\begin{equation}\label{eqDIR61}k_{z_{0}}(\varepsilon)=O\left(\log\frac{1}{\varepsilon}\right)
\qquad\mbox{as}\ \varepsilon\to0\qquad\forall\ z_0\in\overline{D}\end{equation} where $k_{z_0}(\varepsilon)$ is
the average of the function $K^T_{\mu}(z,z_0)$ over $S(z_{0},\varepsilon)$. Then the Beltrami equation
(\ref{eqBeltrami}) has a regular solution of the Dirichlet problem (\ref{eqGrUsl}) for each continuous function
$\varphi:\partial D\to{\Bbb R}$. \end{corollary}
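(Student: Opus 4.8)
The plan is to show that Corollary \ref{corDIR2t} is an immediate consequence of Theorem \ref{thDIR2io}, so the only real work is to deduce the divergence of the integral in \eqref{eq8.11.2} from the growth bound \eqref{eqDIR61} on the circular averages of $K^T_{\mu}(z,z_0)$. First I would fix $z_0\in\overline{D}$ and unwind the definitions: by \eqref{eq8.11.4} we have $\|K^T_{\mu}\|_1(z_0,r)=\int_{D\cap S(z_0,r)}K^T_{\mu}(z,z_0)\,|dz|\leqslant\int_{S(z_0,r)}K^T_{\mu}(z,z_0)\,|dz|=2\pi r\,k_{z_0}(r)$, where in the middle inequality we used that $K^T_{\mu}(z,z_0)$ is extended by zero outside $D$, and in the last equality the definition of the average $k_{z_0}(\varepsilon)$ over the circle $S(z_0,\varepsilon)$ of radius $\varepsilon$ and length $2\pi\varepsilon$.

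Next I would feed the hypothesis \eqref{eqDIR61}, namely $k_{z_0}(r)=O(\log(1/r))$ as $r\to0$, into this bound: there are constants $C=C(z_0)>0$ and $r_0\in(0,\delta(z_0))$ such that $\|K^T_{\mu}\|_1(z_0,r)\leqslant 2\pi C\, r\log(1/r)$ for a.e.\ $r\in(0,r_0)$. (We may take $r_0<1/e$ so that $\log(1/r)>0$ there.) Hence, using the standard convention $a/0=\infty$ for the null set of $r$ where $\|K^T_{\mu}\|_1(z_0,r)$ vanishes or is infinite — which only helps the lower bound — we get
\begin{equation}\label{eq:planbound}
\int\limits_0^{\delta(z_0)}\frac{dr}{\|K^T_{\mu}\|_1(z_0,r)}\ \geqslant\ \int\limits_0^{r_0}\frac{dr}{2\pi C\, r\log\frac1r}\ =\ \frac{1}{2\pi C}\Bigl[\log\log\tfrac1r\Bigr]_{r_0}\ =\ \infty,
\end{equation}
since $\int_0 dr/(r\log(1/r))=\log\log(1/r)$ diverges at $0$. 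This establishes \eqref{eq8.11.2} for the chosen $z_0$, and since $z_0\in\overline{D}$ was arbitrary, \eqref{eq8.11.2} holds for all $z_0\in\overline{D}$.

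It remains only to check the remaining hypotheses of Theorem \ref{thDIR2io} and invoke it. The assumptions that $D$ is a Jordan domain, that $\mu$ is measurable with $|\mu|<1$ a.e., and that $K_\mu\in L^1_{\rm loc}(D)$ are carried over verbatim from the statement of the corollary. Also, the finiteness of the circular averages $k_{z_0}(r)$ implicit in \eqref{eqDIR61} together with \eqref{eq:planbound} shows in particular that $\|K^T_{\mu}\|_1(z_0,r)\neq\infty$ for a.e.\ $r$, which is the integrability-on-circles hypothesis needed to form the integral in \eqref{eq8.11.2} meaningfully. Therefore Theorem \ref{thDIR2io} applies and yields a regular solution $f$ of the Dirichlet problem \eqref{eqGrUsl} for each continuous $\varphi:\partial D\to{\Bbb R}$, which is exactly the assertion of Corollary \ref{corDIR2t}. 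I do not anticipate a genuine obstacle here; the only point demanding a little care is handling the conventions for $1/\|K^T_{\mu}\|_1(z_0,r)$ on the exceptional null set of radii and confirming that the comparison $\|K^T_{\mu}\|_1(z_0,r)\leqslant 2\pi r\,k_{z_0}(r)$ survives the zero-extension of $K^T_{\mu}$ outside $D$, both of which are routine.
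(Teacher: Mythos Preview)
Your proposal is correct and follows exactly the route the paper intends: Corollary~\ref{corDIR2t} is placed immediately after Theorem~\ref{thDIR2io} as a direct consequence, and your computation $\|K^T_{\mu}\|_1(z_0,r)=2\pi r\,k_{z_0}(r)\leqslant 2\pi C\,r\log(1/r)$ together with the divergence of $\int_0 dr/(r\log(1/r))$ is precisely the intended verification of~(\ref{eq8.11.2}). Under the zero-extension convention the inequality you wrote is in fact an equality, but this makes no difference to the argument.
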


\medskip

\begin{remark}\label{rem2} In particular, the conclusion of Corollary \ref{corDIR2t} holds if
\begin{equation}\label{eqDIR6**} K^T_{\mu}(z,z_0)=O\left(\log\frac{1}{|z-z_0|}\right)\qquad{\rm
as}\quad z\to z_0\quad\forall\ z_0\in\overline{D}\,.\end{equation}\end{remark}

\medskip

\begin{corollary}\label{corDIlogR2t} Let $D$ be a Jordan domain
and $\mu:D\to{\Bbb C}$ be a measurable function with $|\mu(z)|<1$
a.e. such that $K_{\mu}\in L^1_{\rm loc}(D)$ and
\begin{equation}\label{edgddgsddgdsfweweIR6**c}
k_{z_{0}}(\varepsilon)=O\left(\left[\log\frac{1}{\varepsilon}\cdot\log\log\frac{1}
{\varepsilon}\cdot\ldots\cdot\log\ldots\log\frac{1}{\varepsilon}
\right]\right) \qquad\forall\ z_0\in \overline{D}
\end{equation}
as $\varepsilon\to0$, where $k_{z_0}(\varepsilon)$ is the average of
the function $K^T_{\mu}(z,z_0)$ over $S(z_{0},\varepsilon)$. Then
the Beltrami equation (\ref{eqBeltrami}) has a regular solution of
the Dirichlet problem (\ref{eqGrUsl}) for each continuous function
$\varphi:\partial D\to{\Bbb R}$. \end{corollary}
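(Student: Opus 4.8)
The plan is to derive Corollary \ref{corDIlogR2t} as a special case of Theorem \ref{thDIR2io}, exactly in the spirit of how Corollary \ref{corDIR2tcd} follows from Theorem \ref{th7.KR4.1c}. Fix a Jordan domain $D$ and a point $z_0\in\overline D$. First I would record that the hypothesis $K_\mu\in L^1_{\rm loc}(D)$ together with $K^T_\mu(z,z_0)\leqslant K_\mu(z)$ guarantees $K^T_\mu(\cdot,z_0)\in L^1_{\rm loc}$, so by the Fubini theorem the circular norm $\|K^T_\mu\|_1(z_0,r)=\int_{D\cap S(z_0,r)}K^T_\mu(z,z_0)\,|dz|$ is finite for a.e. small $r$, and the key quantity is comparable to the circular average: since $D\cap S(z_0,r)$ has length at most $2\pi r$,
$$\|K^T_\mu\|_1(z_0,r)\ \leqslant\ 2\pi r\cdot k_{z_0}(r)\,,$$
where $k_{z_0}(\varepsilon)$ is the average of $K^T_\mu(z,z_0)$ over $S(z_0,\varepsilon)$ (with $K^T_\mu$ extended by zero outside $D$, as stipulated).

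Next I would insert the growth hypothesis (\ref{edgddgsddgdsfweweIR6**c}): there is a constant $C<\infty$ and an $\varepsilon_1\in(0,\delta(z_0))$ so that for all $r\in(0,\varepsilon_1)$
$$k_{z_0}(r)\ \leqslant\ C\,L(r)\,,\qquad L(r):=\log\frac1r\cdot\log\log\frac1r\cdot\ldots\cdot\underbrace{\log\cdots\log}_{m}\frac1r\,,$$
the finite iterated–logarithm product appearing in the statement. Combining the two displays gives $\|K^T_\mu\|_1(z_0,r)\leqslant 2\pi C\,r\,L(r)$ for a.e.\ $r\in(0,\varepsilon_1)$, whence
$$\int_0^{\varepsilon_1}\frac{dr}{\|K^T_\mu\|_1(z_0,r)}\ \geqslant\ \frac1{2\pi C}\int_0^{\varepsilon_1}\frac{dr}{r\,L(r)}\,.$$
The remaining analytic point is that this last integral diverges. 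This is the classical fact that $\int_0 dr/\bigl(r\log\frac1r\log\log\frac1r\cdots\log\cdots\log\frac1r\bigr)=\infty$: substituting $t=\log(1/r)$ turns it into $\int^{\infty} dt/\bigl(t\log t\cdots\log\cdots\log t\bigr)$, which diverges by the standard iterated Cauchy condensation / Bertrand–series argument, one logarithm at a time. Hence $\int_0^{\delta(z_0)}dr/\|K^T_\mu\|_1(z_0,r)=\infty$ for every $z_0\in\overline D$, and since the divergence is what (\ref{eq8.11.2}) asks for, Theorem \ref{thDIR2io} applies and produces the regular solution $f$ of the Dirichlet problem (\ref{eqGrUsl}) for each continuous $\varphi:\partial D\to{\Bbb R}$.

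I expect no serious obstacle here; the only things requiring a line of care are (i) making sure $K^T_\mu(\cdot,z_0)$ is genuinely circular–integrable for a.e.\ small $r$ so that the expression $\|K^T_\mu\|_1(z_0,r)$ is a.e.\ finite and Theorem \ref{thDIR2io}'s standing hypotheses are met — this is immediate from $K^T_\mu\leqslant K_\mu\in L^1_{\rm loc}$ and Fubini — and (ii) phrasing the elementary divergence of $\int_0 dr/(rL(r))$ cleanly for the arbitrary finite number of iterated logarithms. Both are routine, so the whole proof is essentially the bound $\|K^T_\mu\|_1(z_0,r)\leqslant 2\pi r\,k_{z_0}(r)$ followed by a citation of Theorem \ref{thDIR2io}.
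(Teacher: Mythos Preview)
Your proposal is correct and follows precisely the route the paper intends: Corollary \ref{corDIlogR2t} is stated immediately after Theorem \ref{thDIR2io} as a direct consequence of it, and your argument---using $\|K^T_\mu\|_1(z_0,r)=2\pi r\,k_{z_0}(r)$ (in fact equality, since $K^T_\mu$ is extended by zero outside $D$) together with the classical divergence of the Bertrand integral $\int_0 dr/(r\,L(r))$---is exactly the intended verification of condition (\ref{eq8.11.2}). The only inaccuracy is cosmetic: Corollary \ref{corDIR2tcd} is a consequence of Theorem \ref{thKPR9.1}, not of Theorem \ref{th7.KR4.1c}, but this does not affect your argument.
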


\medskip

Finally, combining Theorems \ref{th5.555} and \ref{thDIR2io}, we
obtain the following.

\medskip

\begin{theorem}\label{thKR4.1} Let $D$ be a Jordan domain and
$\mu:D\to{\Bbb C}$ be a measurable function with $|\mu(z)|<1$ a.e.
such that $K_{\mu}\in L^1_{\rm loc}(D)$. Suppose that
\begin{equation}\label{eqKR4.1sh}\int\limits_{D\cap U_{z_0}}\Phi_{z_0}\left(K^T_{\mu}(z,z_0)\right)\,dm(z)<\infty
\qquad\forall\ z_0\in \overline{D}\end{equation} for a neighborhood
$U_{z_0}$ of $z_0$ and a convex non-decreasing function
$\Phi_{z_0}:[0,\infty]\to[0,\infty]$ such that
\begin{equation}\label{eqKR4.2ras}
\int\limits_{\delta(z_0)}^{\infty}\frac{d\tau}{\tau\Phi_{z_0}^{-1}(\tau)}=\infty\end{equation} for some
$\delta(z_0)>\Phi_{z_0}(0)$. Then the Beltrami equation (\ref{eqBeltrami}) has a regular solution of the
Dirichlet problem (\ref{eqGrUsl}) for each continuous function $\varphi:\partial D\to{\Bbb R}$. \end{theorem}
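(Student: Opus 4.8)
The plan is to reduce Theorem \ref{thKR4.1} to Theorem \ref{thDIR2io} exactly as in the proof of Lemma \ref{lemDIR9}, by verifying that the hypotheses (\ref{eqKR4.1sh})--(\ref{eqKR4.2ras}) force the divergence condition (\ref{eq8.11.2}) at every point $z_0\in\overline{D}$. To this end, fix $z_0\in\overline{D}$ and apply Theorem \ref{th5.555} with the measurable function $Q(z)=K^T_{\mu}(z,z_0)$ restricted to a disk $B(z_0,r_0)\subset U_{z_0}$, rescaled to $\Bbb D$, and with $\Phi=\Phi_{z_0}$. The integrability hypothesis (\ref{eqKR4.1sh}) gives (\ref{eq5.555}) on that disk, while (\ref{eqKR4.2ras}) is precisely the condition (\ref{eq3.333a}). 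Hence Theorem \ref{th5.555} yields $\int_0^{r_0}dr/(r\,q_{z_0}(r))=\infty$, where $q_{z_0}(r)$ is the average of $K^T_{\mu}(z,z_0)$ over the circle $S(z_0,r)$.

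Next I would translate this circular-average divergence into the dashed-line norm divergence (\ref{eq8.11.2}). By definition, $\|K^T_{\mu}\|_1(z_0,r)=\int_{D\cap S(z_0,r)}K^T_{\mu}(z,z_0)\,|dz|\leqslant \int_{S(z_0,r)}K^T_{\mu}(z,z_0)\,|dz|=2\pi r\,q_{z_0}(r)$, because $K^T_{\mu}$ is extended by zero outside $D$ and the integrand is nonnegative. Therefore
\begin{equation*}
\int\limits_0^{\delta(z_0)}\frac{dr}{\|K^T_{\mu}\|_1(z_0,r)}\ \geqslant\ \frac{1}{2\pi}\int\limits_0^{\delta(z_0)}\frac{dr}{r\,q_{z_0}(r)}\ =\ \infty
\end{equation*}
for $\delta(z_0)\leqslant r_0$, which is (\ref{eq8.11.2}). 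One should also check $K_{\mu}\in L^1_{\rm loc}(D)$, but that is already among the hypotheses of Theorem \ref{thKR4.1}; and the integrability of $K^T_{\mu}(z,z_0)$ over a.e. circle $D\cap S(z_0,r)$ follows from (\ref{eqKR4.1sh}) together with $\Phi_{z_0}(t)\geqslant ct$ for large $t$ (a consequence of convexity and (\ref{eqKR4.2ras}), since otherwise $\Phi_{z_0}^{-1}$ would grow too fast and the integral in (\ref{eqKR4.2ras}) would converge) and the Fubini theorem.

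Having verified all hypotheses of Theorem \ref{thDIR2io}, I would invoke it directly to conclude that the Beltrami equation (\ref{eqBeltrami}) has a regular solution of the Dirichlet problem (\ref{eqGrUsl}) for each continuous $\varphi:\partial D\to{\Bbb R}$, which is the assertion of Theorem \ref{thKR4.1}. The only genuinely delicate point I expect is the passage from the convexity-plus-divergence hypothesis on $\Phi_{z_0}$ to the linear lower bound $\Phi_{z_0}(t)\gtrsim t$ needed for the local integrability of $K^T_{\mu}(z,z_0)$ on circles; everything else is a bookkeeping application of Theorem \ref{th5.555}, the trivial majorization $\|K^T_{\mu}\|_1(z_0,r)\leqslant 2\pi r\,q_{z_0}(r)$, and Theorem \ref{thDIR2io}. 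Alternatively, if one prefers to avoid that growth estimate, one can note that by Remark \ref{remeq333F} condition (\ref{eqKR4.2ras}) is equivalent to (\ref{eq333D}), from which the same conclusion about $K^T_{\mu}(z,z_0)$ on a.e. circle follows by the standard argument used to prove Theorem \ref{th5.555} in \cite{RSY}.
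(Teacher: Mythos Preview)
Your proposal is correct and follows precisely the route the paper takes: the paper's entire proof of Theorem \ref{thKR4.1} is the single phrase ``combining Theorems \ref{th5.555} and \ref{thDIR2io}'', and you have spelled out that combination in full detail. The only remark is that with $K^T_{\mu}(\cdot,z_0)$ extended by zero outside $D$, your majorization $\|K^T_{\mu}\|_1(z_0,r)\leqslant 2\pi r\,q_{z_0}(r)$ is in fact an equality, and your discussion of the linear lower bound on $\Phi_{z_0}$ is extra care beyond what Theorem \ref{thDIR2io} formally requires.
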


\medskip

\begin{corollary}\label{corDIR1000} In particular, the conclusion of
Theorem \ref{thKR4.1} holds if
\begin{equation}\label{eqKR4.1c}\int\limits_{D\cap
U_{z_0}}e^{\alpha(z_0) K^T_{\mu}(z,z_0)}\,dm(z)<\infty \qquad\forall\ z_0\in \overline{D}\end{equation} for some
$\alpha(z_0)>0$ and a neighborhood $U_{z_0}$ of the point $z_0$.
\end{corollary}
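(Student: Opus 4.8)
The plan is to apply Theorem \ref{thKR4.1} with the explicit choice of the gauge function $\Phi_{z_0}(t) = e^{\alpha(z_0)\,t}$, $t\in[0,\infty]$, at each boundary point $z_0\in\overline{D}$. This $\Phi_{z_0}$ is non-decreasing and convex on $[0,\infty]$, with $\Phi_{z_0}(0)=1$ and $\Phi_{z_0}(\infty)=\infty$, so it is an admissible choice in that theorem. The Jordan-domain hypothesis on $D$, the condition $|\mu(z)|<1$ a.e., and $K_\mu\in L^1_{\rm loc}(D)$ are all inherited unchanged from the statement of the corollary.

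First I would observe that with this $\Phi_{z_0}$ the integral condition (\ref{eqKR4.1sh}) of Theorem \ref{thKR4.1} reads exactly
$$\int\limits_{D\cap U_{z_0}}e^{\alpha(z_0)K^T_{\mu}(z,z_0)}\,dm(z)<\infty\qquad\forall\ z_0\in\overline{D},$$
which is precisely the hypothesis (\ref{eqKR4.1c}) assumed in the corollary; so it holds.

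Next I would verify the divergence condition (\ref{eqKR4.2ras}). Since $\Phi_{z_0}^{-1}(\tau)=\alpha(z_0)^{-1}\log\tau$ for $\tau\geqslant 1$, choosing any $\delta(z_0)>1=\Phi_{z_0}(0)$, say $\delta(z_0)=e$, one gets, via the substitution $u=\log\tau$,
$$\int\limits_{\delta(z_0)}^{\infty}\frac{d\tau}{\tau\,\Phi_{z_0}^{-1}(\tau)}=\alpha(z_0)\int\limits_{e}^{\infty}\frac{d\tau}{\tau\log\tau}=\alpha(z_0)\int\limits_{1}^{\infty}\frac{du}{u}=\infty.$$
(Alternatively one may invoke Remark \ref{remeq333F}: with $H(t)=\log\Phi_{z_0}(t)=\alpha(z_0)t$ one has $H^{-1}(\eta)=\eta/\alpha(z_0)$, whence $\int_{\Delta_*}^{\infty}d\eta/H^{-1}(\eta)=\alpha(z_0)\int_{\Delta_*}^{\infty}d\eta/\eta=\infty$, which is condition (\ref{eq333D}) and hence equivalent to (\ref{eqKR4.2ras}).) With both hypotheses of Theorem \ref{thKR4.1} now verified, its conclusion — existence of a regular solution of the Dirichlet problem (\ref{eqGrUsl}) for every continuous $\varphi:\partial D\to{\Bbb R}$ — follows at once.

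There is no real obstacle here; the corollary is a direct specialization. The only point requiring a moment's care is that $\Phi_{z_0}(0)=1\neq 0$, so $\delta(z_0)$ must be taken strictly larger than $1$; equivalently, one may instead work with $\Phi_{z_0}(t)=e^{\alpha(z_0)t}-1$, which has $\Phi_{z_0}(0)=0$, is still convex and non-decreasing, still satisfies (\ref{eqKR4.1sh}) because $e^{\alpha(z_0)K^T_{\mu}}-1\leqslant e^{\alpha(z_0)K^T_{\mu}}$, and has $\Phi_{z_0}^{-1}(\tau)=\alpha(z_0)^{-1}\log(\tau+1)$, for which (\ref{eqKR4.2ras}) diverges by the same computation.
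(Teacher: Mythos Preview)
Your proof is correct and is precisely the intended specialization: the paper states this corollary without proof because choosing $\Phi_{z_0}(t)=e^{\alpha(z_0)t}$ in Theorem \ref{thKR4.1} immediately yields both hypotheses, with the divergence of $\int d\tau/(\tau\log\tau)$ giving (\ref{eqKR4.2ras}). Your remark about $\Phi_{z_0}(0)=1$ and the need for $\delta(z_0)>1$ is a nice point of care, though not strictly necessary since the theorem only asks for \emph{some} $\delta(z_0)>\Phi_{z_0}(0)$.
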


\medskip

Since $K^T_{\mu}(z,z_0) \leqslant K_{\mu}(z)$ for all $z_0\in \Bbb
C$ and $z\in D$, we obtain the following consequence of Theorem
\ref{thKR4.1}.

\medskip

\begin{corollary}\label{corc.KR4.1} Let $D$ be a Jordan domain and
$\mu:D\to{\Bbb C}$ be a measurable function with $|\mu(z)|<1$ a.e.
such that
\begin{equation}\label{eqc.KR4.1sh}\int\limits_{D}\Phi\left(K_{\mu}(z)\right)\,dm(z)<\infty\end{equation}
for a convex non-decreasing function $\Phi:[0,\infty]\to[0,\infty]$.
If
\begin{equation}\label{eqc.KR4.2ras}
\int\limits_{\delta}^{\infty}\frac{d\tau}{\tau\Phi^{-1}(\tau)}=\infty\end{equation} for some $\delta>\Phi(0)$.
Then the Beltrami equation (\ref{eqBeltrami}) has a regular solution of the Dirichlet problem (\ref{eqGrUsl})
for each continuous function $\varphi:\partial D\to{\Bbb R}$. \end{corollary}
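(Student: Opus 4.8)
The plan is to deduce this corollary directly from Theorem \ref{thKR4.1} by verifying that its hypotheses are met for every boundary point $z_0\in\overline{D}$, using the same convex function $\Phi$ and the same $\delta$ at every point. First I would observe that the divergence condition (\ref{eqKR4.2ras}) in Theorem \ref{thKR4.1}, namely $\int_{\delta(z_0)}^{\infty}d\tau/(\tau\Phi_{z_0}^{-1}(\tau))=\infty$ for some $\delta(z_0)>\Phi_{z_0}(0)$, is satisfied uniformly by taking $\Phi_{z_0}\equiv\Phi$ and $\delta(z_0)\equiv\delta$: this is exactly hypothesis (\ref{eqc.KR4.2ras}). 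So the only point requiring an argument is the integral bound (\ref{eqKR4.1sh}), i.e. $\int_{D\cap U_{z_0}}\Phi(K^T_{\mu}(z,z_0))\,dm(z)<\infty$ for a neighborhood $U_{z_0}$ of each $z_0\in\overline{D}$.

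For that, I would take $U_{z_0}$ to be any bounded neighborhood (e.g. a disk) and use the pointwise inequality $K^T_{\mu}(z,z_0)\leqslant K_{\mu}(z)$ from (\ref{eqConnect}), valid for all $z_0\in\Bbb C$ and $z\in D$, together with the monotonicity of $\Phi$ (it is non-decreasing). This gives $\Phi(K^T_{\mu}(z,z_0))\leqslant\Phi(K_{\mu}(z))$ a.e. in $D$, whence
\begin{equation*}
\int\limits_{D\cap U_{z_0}}\Phi\left(K^T_{\mu}(z,z_0)\right)\,dm(z)\ \leqslant\ \int\limits_{D}\Phi\left(K_{\mu}(z)\right)\,dm(z)\ <\ \infty
\end{equation*}
by hypothesis (\ref{eqc.KR4.1sh}). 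Note this bound does not even depend on $z_0$, so a single choice of $\Phi$, $\delta$, and (say) $U_{z_0}=\Bbb C$ works simultaneously at every boundary point; the condition $K_{\mu}\in L^1_{\rm loc}(D)$ required in Theorem \ref{thKR4.1} follows as well, since $\Phi$ is convex and non-decreasing so $\Phi(t)\geqslant at-b$ for constants $a>0$, $b\in\Bbb R$, forcing $\int_D\Phi(K_\mu)\,dm<\infty$ to imply local integrability of $K_\mu$ (alternatively one observes $D$ may be exhausted by relatively compact subdomains on which finiteness of $\int\Phi(K_\mu)$ gives $K_\mu\in L^1$).

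With both hypotheses of Theorem \ref{thKR4.1} verified, that theorem yields a regular solution of the Dirichlet problem (\ref{eqGrUsl}) for each continuous $\varphi:\partial D\to\Bbb R$, which is precisely the assertion of the corollary. I do not anticipate a genuine obstacle here: the entire content is the two-line domination step $\Phi(K^T_{\mu}(z,z_0))\leqslant\Phi(K_{\mu}(z))$, and the only mild care needed is to record that the single global function $\Phi$ (rather than a family $\Phi_{z_0}$) and the single constant $\delta$ are admissible at every point of $\overline{D}$, and that $K_\mu\in L^1_{\rm loc}(D)$ is implied by (\ref{eqc.KR4.1sh}) via convexity.
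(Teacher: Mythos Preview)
Your proposal is correct and follows the same route as the paper, which simply records that the corollary is an immediate consequence of Theorem \ref{thKR4.1} via the pointwise inequality $K^T_{\mu}(z,z_0)\leqslant K_{\mu}(z)$ from (\ref{eqConnect}) together with the monotonicity of $\Phi$. Your extra paragraph checking $K_{\mu}\in L^1_{\rm loc}(D)$ is a point the paper leaves implicit; note only that the linear lower bound $\Phi(t)\geqslant at-b$ with $a>0$ requires $\Phi$ to be nonconstant, which is indeed forced here since a constant $\Phi$ would make $\Phi^{-1}(\tau)=\infty$ for $\tau>\Phi(0)$ and hence violate the divergence condition (\ref{eqc.KR4.2ras}).
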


\medskip

\begin{remark}\label{remeq333F_2} By the Stoilow theorem, see, e.g., \cite{Sto}, a regular solution $f$
of the Dirichlet problem (\ref{eqGrUsl}) for the Beltrami equation
(\ref{eqBeltrami}) with $K_{\mu}\in L^1_{\rm loc}(D)$ can be
represented in the form $f=h\circ F$ where $h$ is an analytic
function and $F$ is a homeomorphic regular solution of
(\ref{eqBeltrami}) in the class $W_{\rm loc}^{1,1}$. Thus, by
Theorem 5.1 in \cite{RSY13} the condition (\ref{eqc.KR4.2ras}) is
not only sufficient but also necessary to have a regular solution of
the Dirichlet problem (\ref{eqGrUsl}) for an arbitrary Beltrami
equation (\ref{eqBeltrami}) with the integral constraints
(\ref{eqc.KR4.1sh}) for any non-constant continuous  function
$\varphi:\partial D\to\Bbb{R}$, see also Remark \ref{remeq333F}.
\end{remark}

\medskip

\section{On pseudoregular solutions in multiply connected domains}

As it was first noted by Bojarski, see, e.g., section 6 of Chapter 4 in \cite{Vekua}, that in the case of
multiply connected domains the Dirichlet problem for the Beltrami equation, generally speaking, has no solutions
in the class of continuous (simply-valued) functions. Hence it is arose the question: whether the existence of
solutions for the Dirichlet problem can be obtained in a wider function class for the case? It is turned out to
be that this is possible in the class of functions having a certain number of poles at prescribed points in $D$.
More precisely, for $\varphi(\zeta)\not\equiv{\rm const}$, a {\bf pseudoregular solution} of the problem is a
continuous (in $\overline{\Bbb C}={\Bbb C}\cup\{\infty\}$) discrete open mapping $f:D\to\overline{\Bbb C}$ in
the class $W^{1,1}_{\rm loc}$ (outside of these poles) with the Jacobian $J_{f}(z)=|f_z|^2-|f_{\bar z}|^2\neq0$
a.e. satisfying (\ref{eqBeltrami}) a.e. and condition (\ref{eqGrUsl}).

\medskip

As above, we assume in the following lemma that $K^T_{\mu}(z,z_0)$
is extended by zero outside of the domain $D$.

\medskip

\begin{lemma}\label{lem13.5.333ps} Let $D$ be a bounded domain in
${\Bbb C}$ whose boundary consists of $n\geqslant2$ mutually disjoint Jordan curves. Suppose that $\mu:D\to{\Bbb
C}$ is a measurable function with $|\mu(z)|<1$ a.e., $K_{\mu}(z)\in L^1_{\rm loc}(D),$ $K^T_{\mu}(z,z_0)$ is
integrable over $D\bigcap S (z_0, r)$ for a.e. $r \in (0, \varepsilon_0)$ and
\begin{equation}\label{p.3omal}
\int\limits_{\varepsilon<|z-z_0|<\varepsilon_0}
K^T_{\mu}(z,z_0)\cdot\psi^2_{z_0,\varepsilon}(|z-z_0|)\,dm(z)=o(I_{z_0}^{2}(\varepsilon))\quad{\rm
as}\quad\varepsilon\to0\ \ \forall\ z_0\in\overline{D}\end{equation} for some $\varepsilon_0\in(0,\delta_0)$
where $\delta_0=\delta(z_0)=\sup\limits_{z\in D}|z-z_0|$ and $\psi_{z_0,\varepsilon}(t)$ is a family of
non-negative measurable (by Lebesgue) functions on $(0,\infty)$ such that
\begin{equation}\label{eqp.3.5.3}
I_{z_0}(\varepsilon)\colon =\int\limits_{\varepsilon}^{\varepsilon_0}
\psi_{z_0,\varepsilon}(t)\,dt<\infty\qquad\forall\ \varepsilon\in(0,\varepsilon_0)\,.\end{equation} Then the
Beltrami equation (\ref{eqBeltrami}) has a pseudoregular solution of the Dirichlet problem (\ref{eqGrUsl}) for
each continuous function $\varphi:\partial D\to{\Bbb R}$, $\varphi(\zeta)\not\equiv{\rm const}$, with poles at
$n$ prescribed points $z_i\in D$, $i=1,\ldots,n$. \end{lemma}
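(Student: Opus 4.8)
The plan is to follow the scheme of the proof of Lemma \ref{lemDIR9}, replacing only its last step --- the Schwarz representation in the disk --- by the classical construction, going back to Bojarski, of a solution of the Dirichlet problem in a finitely connected circular domain within the class of meromorphic functions with prescribed poles (see section 6 of Chapter 4 in \cite{Vekua}). First I would fix a regular homeomorphic $W^{1,1}_{\rm loc}$ solution $F$ of (\ref{eqBeltrami}), which exists e.g. by Lemma 4.1 in \cite{RSY$_6$}, and study $D^*:=F(D)$. Arguing as in the proof of Lemma \ref{lemDIR9} --- i.e. combining the topological invariance of the connectivity (see, e.g., Lemma 5.3 in \cite{IR} or Lemma 6.5 in \cite{MRSY}) with the observation that an isolated boundary point of $D^*$ would make $\partial D^*$ weakly flat there, so that Lemma \ref{lem3.3.333} would force $F$ to extend to a homeomorphism of $\overline D$ onto $\overline{D^*}$, which is impossible since such a map cannot carry a boundary point of $\overline D$ onto an interior point of $\overline{D^*}$ --- one checks that $D^*$ is a domain of connectivity $n$ whose boundary consists of $n$ nondegenerate continua. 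Hence, by the Koebe theorem on conformal mappings of finitely connected domains onto circular domains (see, e.g., \cite{Gol}), there is a conformal homeomorphism $R$ of $D^*$ onto a circular domain $\mathbb D_*$ bounded by $n$ disjoint circles, and $g:=R\circ F$ is, exactly as in the proof of Lemma \ref{lemDIR9}, again a regular homeomorphic $W^{1,1}_{\rm loc}$ solution of the same equation (\ref{eqBeltrami}) now mapping $D$ onto $\mathbb D_*$. Since $\partial\mathbb D_*$ is smooth, hence weakly flat, and $D$ is locally connected on $\partial D$ (being bounded by Jordan curves), Lemma \ref{lem3.3.333} --- whose hypotheses, for $D'=\mathbb D_*$, are exactly those assumed here --- yields a homeomorphic extension $g_*\colon\overline D\to\overline{\mathbb D_*}$.

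Next I would solve the Dirichlet problem in $\mathbb D_*$. Put $\psi:=\varphi\circ g_*^{-1}$, a continuous non-constant function on $\partial\mathbb D_*$, and $w_i:=g(z_i)$, $i=1,\dots,n$, which are distinct interior points of $\mathbb D_*$. By the classical theory quoted above there is a function $h$, meromorphic in $\mathbb D_*$ with poles among the $w_i$, such that $\lim_{w\to\zeta}\mathrm{Re}\,h(w)=\psi(\zeta)$ for all $\zeta\in\partial\mathbb D_*$; concretely one adds to the harmonic Dirichlet solution a rational function $\sum_i a_i/(w-w_i)$ whose $2n$ free real parameters are chosen, using the nondegeneracy of the period matrix of $\mathbb D_*$, so as to annihilate the $n-1$ conjugate periods around the inner circles. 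Since $\varphi\not\equiv\mathrm{const}$, $h$ is non-constant, hence discrete and open. Then $f:=h\circ g$ is the required pseudoregular solution: $f$ is continuous as a map $D\to\overline{\mathbb C}$ with $f^{-1}(\infty)=\{z_1,\dots,z_n\}$, and it is discrete and open as a composition of discrete open maps; on $D\setminus\{z_1,\dots,z_n\}$ the chain rule for the holomorphic $h$ gives $f_{\bar z}=(h'\circ g)\,g_{\bar z}=\mu\,(h'\circ g)\,g_z=\mu\,f_z$ a.e. and $J_f=|h'\circ g|^2J_g\neq0$ a.e., while $f\in W^{1,1}_{\rm loc}(D\setminus\{z_1,\dots,z_n\})$ since $h$ is locally Lipschitz off its poles and $W^{1,1}_{\rm loc}$ is preserved under post-composition with locally Lipschitz maps (cf. Theorem 1.1.7 in \cite{Maz}); finally, for $\zeta\in\partial D$ one has $g(z)\to g_*(\zeta)\in\partial\mathbb D_*$, away from every $w_i$, so $\mathrm{Re}\,f(z)=\mathrm{Re}\,h(g(z))\to\psi(g_*(\zeta))=\varphi(\zeta)$, which is (\ref{eqGrUsl}).

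The main obstacle is the first stage: verifying that $D^*$ is genuinely an $n$-connected domain whose $n$ boundary components are all nondegenerate continua, so that the Koebe normalization onto a circular domain is legitimate --- this is precisely where the boundary-behaviour results of Sections 5 and 6 (entering through Lemma \ref{lem3.3.333}) have to be combined with the topology of the plane. The meromorphic step, although it is the conceptual reason the solution must carry $n$ poles, is entirely classical once $\mathbb D_*$ and $g_*$ are available.
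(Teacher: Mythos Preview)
Your proof is correct and follows the same scheme as the paper: obtain a regular homeomorphic $W^{1,1}_{\rm loc}$ solution $F$ via Lemma 4.1 of \cite{RSY$_6$}, compose with a conformal map $R$ onto a circular domain $\mathbb{D}_*$, extend $g=R\circ F$ to a homeomorphism $g_*:\overline{D}\to\overline{\mathbb{D}_*}$ by Lemma \ref{lem3.3.333}, and then solve the Dirichlet problem in $\mathbb{D}_*$ by a meromorphic $h$ with poles at the $w_i=g(z_i)$ (the paper cites Theorem 4.14 in \cite{Vekua} for this last step). Your explicit verification that $f=h\circ g$ is discrete, open, in $W^{1,1}_{\rm loc}$ off the poles, satisfies (\ref{eqBeltrami}) with $J_f\neq0$ a.e., and meets the boundary condition is a welcome addition that the paper leaves implicit.

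One small point of organization: your argument that no boundary component of $D^*=F(D)$ degenerates to a point invokes Lemma \ref{lem3.3.333}, but that lemma requires \emph{all} of $\partial D'$ to be weakly flat, not just the suspected isolated point, so the contradiction is not immediate as stated. The paper sidesteps this by applying Koebe's theorem in the general form (Theorem V.6.2 in \cite{Gol}) that allows the boundary of $\mathbb{D}_*$ to consist of circles \emph{or} points; since both are weakly flat, $\partial\mathbb{D}_*$ is weakly flat in its entirety and Lemma \ref{lem3.3.333} applies directly to $g:D\to\mathbb{D}_*$, after which the existence of the homeomorphism $g_*$ rules out point components anyway.
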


\medskip

\begin{proof} Let $F$ be a regular homeomorphic solution of the
Beltrami equation (\ref{eqBeltrami}) of the class $W^{1,1}_{\rm loc}$ that
exists by Lemma 4.1 in \cite{RSY$_6$}. Consider $D^*=f(D)$.
Note that $\partial D^*$ has $n$ connected components $\Gamma_i$, $i=1,\ldots,n$
that correspond in the natural way to connected components of $\partial D$,
the Jordan curves $\gamma_i$, see, e.g., either Lemma 5.3 in \cite{IR}
or Lemma 6.5 in \cite{MRSY}.

Thus, by Theorem V.6.2 in \cite{Gol} the domain $D_*$ can be mapped
by a conformal map $R$ onto a circular domain ${\Bbb D}_*$ whose
boundary consists of $n$ circles or points, i.e. ${\Bbb D}_*$ has a
weakly flat boundary. Note that the mapping $g:=R\circ F$ is a
regular homeomorphic solution of the Beltrami equation in  the class
$W_{\rm loc}^{1,1}$ admitting a homeomorphic extension
$g_*:\overline{D}\to\overline{\Bbb D_*}$ by Lemma \ref{lem3.3.333}.

Let us find a solution of the Dirichlet problem (\ref{eqGrUsl}) in
the form $f=h\circ g$ where $h$ is a meromorphic function with $n$
poles at the prescribed points $w_i=g(z_i)$, $i=1,\ldots,n$ in
${\Bbb D}_*$ with the boundary condition
$$\lim\limits_{w\to\zeta}{\rm Re}\,h(w)=\varphi(g_{*}^{-1}(\zeta))
\qquad\forall\ \zeta\in\partial{\Bbb D}_* \,.$$ Such a function $h$ exists by Theorem 4.14 in \cite{Vekua}.

We see that the function $f=h\circ g$ is the desired pseudoregular
solution of the Dirichlet problem (\ref{eqGrUsl}) for the Beltrami
equation (\ref{eqBeltrami}) with $n$ poles  just at these
prescribed points $z_i$, $i=1,\ldots,n$.
\end{proof}

\medskip

Arguing similarly to the last section, by the special choice of the
functional parameter $\psi$ in Lemma \ref{lem13.5.333ps}, we obtain
the following results.

\medskip

\begin{theorem}\label{thDIR2} Let $D$ be a bounded domain in
${\Bbb C}$ whose boundary consists of $n\geqslant2$ mutually disjoint Jordan curves and $\mu:D\to{\Bbb C}$ be a
measurable function with $|\mu(z)|<1$ a.e. such that $K_{\mu}\in L^1_{\rm loc}(D)$. Suppose that, for every
point $z_0\in \overline{D}$ and a neighborhood $U_{z_0}$ of $z_0$, $K^T_{\mu}(z,z_0)\leqslant Q_{z_0}(z)$ a.e.
in $D\cap U_{z_0}$ for a function $Q_{z_0}:{\Bbb C}\to[0,\infty]$ in the class ${\rm FMO}({z_0})$. Then the
Beltrami equation (\ref{eqBeltrami}) has a pseudoregular solution of the Dirichlet problem (\ref{eqGrUsl}) for
every continuous function $\varphi:\partial D\to{\Bbb R}$, $\varphi(\zeta)\not\equiv{\rm const}$, with poles at
$n$ prescribed points in $D$. \end{theorem}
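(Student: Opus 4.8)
The plan is to obtain Theorem \ref{thDIR2} as a special case of Lemma \ref{lem13.5.333ps}, exactly as Theorem \ref{thDIR2fmo} was obtained from Lemma \ref{lemDIR9}: I would take the functional parameter to be $\psi_{z_0,\varepsilon}(t)\equiv\psi(t)=1/\bigl(t\log(1/t)\bigr)$, independent of $z_0$ and $\varepsilon$, and use Lemma \ref{lem13.4.2} to verify the integral hypothesis (\ref{p.3omal}). First I would fix, for each $z_0\in\overline{D}$, a radius $\varepsilon_0=\varepsilon_0(z_0)$ with $0<\varepsilon_0<\min\bigl(e^{-e},\delta_0,{\rm dist}(z_0,\partial U_{z_0})\bigr)$, where $\delta_0=\sup_{z\in D}|z-z_0|$ and $U_{z_0}$ is the neighborhood from the hypothesis on which $K^T_{\mu}(z,z_0)\le Q_{z_0}(z)$ a.e.; this keeps $\log(1/t)>0$ on $(0,\varepsilon_0)$ and the relevant annuli inside $U_{z_0}$. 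A direct computation gives
\[
I_{z_0}(\varepsilon)\ =\ \int\limits_{\varepsilon}^{\varepsilon_0}\frac{dt}{t\log(1/t)}\ =\ \log\log\frac{1}{\varepsilon}-\log\log\frac{1}{\varepsilon_0}\ ,
\]
which is finite for every $\varepsilon\in(0,\varepsilon_0)$, so (\ref{eqp.3.5.3}) holds, and $I_{z_0}(\varepsilon)\to\infty$ as $\varepsilon\to0$ with $I_{z_0}^{2}(\varepsilon)$ of the exact order $\bigl(\log\log(1/\varepsilon)\bigr)^2$.

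Next I would estimate the left-hand side of (\ref{p.3omal}). Since $Q_{z_0}$ is non-negative and of class ${\rm FMO}(z_0)$, Lemma \ref{lem13.4.2} (applied with $\varphi=Q_{z_0}$, shrinking $\varepsilon_0$ if necessary to the threshold supplied by that lemma) yields
\[
\int\limits_{\varepsilon<|z-z_0|<\varepsilon_0}\frac{K^T_{\mu}(z,z_0)\,dm(z)}{\bigl(|z-z_0|\log\frac{1}{|z-z_0|}\bigr)^2}\ \leqslant\ \int\limits_{\varepsilon<|z-z_0|<\varepsilon_0}\frac{Q_{z_0}(z)\,dm(z)}{\bigl(|z-z_0|\log\frac{1}{|z-z_0|}\bigr)^2}\ =\ O\Bigl(\log\log\frac{1}{\varepsilon}\Bigr)
\]
as $\varepsilon\to0$, where the first inequality uses $K^T_{\mu}(z,z_0)\le Q_{z_0}(z)$ a.e. on the annulus $\varepsilon<|z-z_0|<\varepsilon_0\subset U_{z_0}$. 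The left member is precisely the left-hand side of (\ref{p.3omal}) for the chosen $\psi$, and dividing by $I_{z_0}^{2}(\varepsilon)$, which grows like $\bigl(\log\log(1/\varepsilon)\bigr)^2$, shows it is $o\bigl(I_{z_0}^{2}(\varepsilon)\bigr)$. Hence (\ref{p.3omal}) holds for every $z_0\in\overline{D}$.

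It remains to check the standing hypotheses of Lemma \ref{lem13.5.333ps}: $K_{\mu}\in L^1_{\rm loc}(D)$ is assumed, and the integrability of $K^T_{\mu}(\cdot,z_0)$ over $D\cap S(z_0,r)$ for a.e. $r\in(0,\varepsilon_0)$ follows from Fubini's theorem, since $K^T_{\mu}(\cdot,z_0)\le Q_{z_0}\in L^1\bigl(B(z_0,\varepsilon_0)\bigr)$ (for interior points $z_0$ one may instead use $K^T_{\mu}\le K_{\mu}\in L^1_{\rm loc}(D)$). With all hypotheses of Lemma \ref{lem13.5.333ps} verified, that lemma furnishes the required pseudoregular solution of (\ref{eqGrUsl}) with poles at the $n$ prescribed points. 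The only delicate point is bookkeeping: the threshold $\varepsilon_0$, the neighborhood $U_{z_0}$ and the constant hidden in $O(\cdot)$ all depend on $z_0$, and the majorant $Q_{z_0}$ is itself $z_0$-dependent — which is exactly why the pointwise FMO statement Lemma \ref{lem13.4.2} is the appropriate tool. Since Lemma \ref{lem13.5.333ps} requires (\ref{p.3omal}) only pointwise "for all $z_0\in\overline{D}$'' with its own $\varepsilon_0(z_0)$, no uniformity over $z_0$ is needed and the verification proceeds point by point; I expect this bookkeeping, rather than any analytic difficulty, to be the main thing to get right.
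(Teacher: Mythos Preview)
Your proposal is correct and follows exactly the paper's approach: the paper derives Theorem \ref{thDIR2} from Lemma \ref{lem13.5.333ps} by the same special choice $\psi(t)=1/\bigl(t\log(1/t)\bigr)$ together with Lemma \ref{lem13.4.2}, precisely as Theorem \ref{thDIR2fmo} was derived from Lemma \ref{lemDIR9}. Your bookkeeping on the $z_0$-dependent threshold $\varepsilon_0$ and the verification of the remaining hypotheses of Lemma \ref{lem13.5.333ps} is just an explicit spelling-out of what the paper leaves implicit.
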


\medskip

\begin{remark}\label{rmp.555} In paticular, the conditions and the conclusion of Theorem
\ref{thDIR2} hold if either $Q_{z_0}\in{\rm BMO}_{\rm loc}$ or $Q_{z_0}\in{\rm W}^{1,2}_{\rm loc}$ because
$W^{\,1,2}_{\rm loc} \subset {\rm VMO}_{\rm loc}$. \end{remark}

\medskip

\begin{corollary}\label{corc.DIR2} Let $D$ be a bounded domain in
${\Bbb C}$ whose boundary consists of $n\geqslant2$ mutually disjoint Jordan curves and $\mu:D\to{\Bbb C}$ be a
measurable function with $|\mu(z)|<1$ a.e. such that $K_{\mu}(z)\leqslant Q(z)$ a.e. in $\overline{D}$ for a
function $Q:{\Bbb C}\to[0,\infty]$ in the class ${\rm FMO}(\overline{D})$. Then the Beltrami equation
(\ref{eqBeltrami}) has a pseudoregular solution of the Dirichlet problem (\ref{eqGrUsl}) for every continuous
function $\varphi:\partial D\to{\Bbb R}$, $\varphi(\zeta)\not\equiv{\rm const}$, with poles at $n$ prescribed
points in $D$. \end{corollary}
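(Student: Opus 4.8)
The plan is to obtain this statement as a direct consequence of Theorem \ref{thDIR2}, so the only work is to check that the hypotheses of that theorem follow from the ones assumed here. First I would invoke the pointwise bound (\ref{eqConnect}), namely $K^T_{\mu}(z,z_0)\leqslant K_{\mu}(z)$ for all $z_0\in{\Bbb C}$ and $z\in D$. By the definition of the class ${\rm FMO}(\overline{D})$, the function $Q$ is given on some domain $G\subset{\Bbb C}$ with $\overline{D}\subset G$ and $Q\in{\rm FMO}(G)$; in particular $Q\in{\rm FMO}(z_0)$ for every $z_0\in\overline{D}$, and, since ${\rm FMO}\subset L^1_{\rm loc}$ (see Remark \ref{FMO_rmk2.13a}), also $Q\in L^1_{\rm loc}(G)$.

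Next, the assumption $K_{\mu}(z)\leqslant Q(z)$ a.e. in $\overline{D}$ together with $Q\in L^1_{\rm loc}(G)$ yields $K_{\mu}\in L^1_{\rm loc}(D)$, which is one of the hypotheses of Theorem \ref{thDIR2}. For the remaining hypothesis, fix an arbitrary $z_0\in\overline{D}$ and set $U_{z_0}:=G$ and $Q_{z_0}:=Q$. Then $Q_{z_0}\in{\rm FMO}(z_0)$, and combining $K^T_{\mu}(z,z_0)\leqslant K_{\mu}(z)$ with $K_{\mu}(z)\leqslant Q(z)$ gives $K^T_{\mu}(z,z_0)\leqslant Q_{z_0}(z)$ a.e. in $D\cap U_{z_0}$, exactly as required in Theorem \ref{thDIR2}.

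Hence Theorem \ref{thDIR2} applies and produces a pseudoregular solution of the Dirichlet problem (\ref{eqGrUsl}) for every continuous $\varphi:\partial D\to{\Bbb R}$ with $\varphi(\zeta)\not\equiv{\rm const}$, having poles at the $n$ prescribed points in $D$. I do not expect a genuine obstacle here: the substantive content — representing a homeomorphic regular solution $F$, passing to a conformal image that is a circular domain with weakly flat boundary, extending homeomorphically to the boundary via Lemma \ref{lem3.3.333}, and composing with a meromorphic $h$ having the prescribed poles and the prescribed boundary real part — is already carried out in Lemma \ref{lem13.5.333ps} and Theorem \ref{thDIR2}. The corollary merely records that membership of $Q$ in ${\rm FMO}(\overline{D})$ is a convenient global sufficient condition that feeds into the tangent-dilatation version through the inequality $K^T_{\mu}\leqslant K_{\mu}$.
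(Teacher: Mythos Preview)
Your proposal is correct and matches the paper's approach: the corollary is obtained from Theorem \ref{thDIR2} by taking $Q_{z_0}\equiv Q$ for every $z_0\in\overline{D}$, using the inequality $K^T_{\mu}(z,z_0)\leqslant K_{\mu}(z)$ from (\ref{eqConnect}) together with the definition of ${\rm FMO}(\overline{D})$. Your observation that $Q\in{\rm FMO}(\overline{D})\subset L^1_{\rm loc}$ supplies the hypothesis $K_{\mu}\in L^1_{\rm loc}(D)$ is a detail the paper leaves implicit.
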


\medskip

\begin{corollary}\label{corDIR1500re} In particular, the conclusion of
Theorem \ref{thDIR2} holds if every point $z_0\in\overline{D}$ is the Lebesgue point of a function
$Q_{z_0}:{\Bbb C}\to[0,\infty]$ which is integrable in a neighborhood $U_{z_0}$ of the point $z_0$ such that
$K^T_{\mu}(z,z_0)\leqslant Q_{z_0}(z)$ a.e. in $D\cap U_{z_0}$.\end{corollary}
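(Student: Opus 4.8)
The plan is to obtain this corollary as an immediate consequence of Theorem \ref{thDIR2}: under the stated hypothesis one only has to verify that each dominating function $Q_{z_0}$ belongs to the class ${\rm FMO}(z_0)$, after which Theorem \ref{thDIR2} delivers the desired pseudoregular solution with its $n$ prescribed poles.

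First I would invoke the definition of a Lebesgue point: at such a point $z_0$ the function $Q_{z_0}$ is integrable in a neighbourhood of $z_0$ and
\[
\lim_{\varepsilon\to 0}\ \dashint_{B(z_0,\varepsilon)}|Q_{z_0}(z)-Q_{z_0}(z_0)|\,dm(z)=0,
\]
so in particular this upper limit is finite. Applying Proposition \ref{FMO_pr2.1} with the constant collection $\varphi_\varepsilon\equiv Q_{z_0}(z_0)$, $\varepsilon\in(0,\varepsilon_0]$, then shows that $Q_{z_0}$ has finite mean oscillation at $z_0$, i.e.\ $Q_{z_0}\in{\rm FMO}(z_0)$; this is precisely the reasoning underlying Corollary \ref{FMO_cor2.7b}.

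Since by hypothesis $K^T_{\mu}(z,z_0)\leqslant Q_{z_0}(z)$ a.e.\ in $D\cap U_{z_0}$ for every $z_0\in\overline{D}$, all the assumptions of Theorem \ref{thDIR2} are now in force — the remaining ones ($D$ bounded with boundary a union of $n\geqslant 2$ mutually disjoint Jordan curves, $\mu:D\to{\Bbb C}$ measurable with $|\mu(z)|<1$ a.e., $K_{\mu}\in L^1_{\rm loc}(D)$) being carried over verbatim — and the conclusion of that theorem is exactly the assertion of the corollary. I do not expect any genuine obstacle here: the only mildly delicate point is that the implication ``Lebesgue point $\Rightarrow$ ${\rm FMO}$ at that point'' uses the finiteness of $Q_{z_0}(z_0)$, which is automatic since a Lebesgue point of $Q_{z_0}$ is, by the definition adopted in the excerpt, a point at which the averaged deviation from the (finite) value $Q_{z_0}(z_0)$ tends to zero. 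All the real content of the statement already resides in Theorem \ref{thDIR2}, and the same short argument likewise yields Corollaries \ref{corDIR1000re} and \ref{cor7.DIR1000re_c} from their respective theorems.
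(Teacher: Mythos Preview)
Your proposal is correct and follows precisely the route the paper intends: the corollary is stated in the paper without a separate proof because a Lebesgue point of $Q_{z_0}$ yields $Q_{z_0}\in{\rm FMO}(z_0)$ via Proposition~\ref{FMO_pr2.1} (with $\varphi_\varepsilon\equiv Q_{z_0}(z_0)$), exactly as you argue, after which Theorem~\ref{thDIR2} applies directly. There is nothing to add.
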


\medskip

\begin{corollary}\label{corDIR1000} In particular, the conclusion of
Theorem \ref{thDIR2} holds if \begin{equation}\label{eqDIR6*} \overline{\lim\limits_{\varepsilon\to0}}\ \ \
\dashint_{B(z_0,\varepsilon)}K^T_{\mu}(z,z_0)\,dm(z)<\infty\qquad\forall\ z_0\in\overline{D}\,.\end{equation}
\end{corollary}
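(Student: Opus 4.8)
The plan is to obtain Corollary~\ref{corDIR1000} as an immediate specialization of Theorem~\ref{thDIR2}: the limsup bound (\ref{eqDIR6*}) is precisely the hypothesis of Corollary~\ref{FMO_cor2.1} applied to the function $z\mapsto K^T_\mu(z,z_0)$, and so it forces that function into the class ${\rm FMO}(z_0)$, which is all that Theorem~\ref{thDIR2} needs. First I would fix an arbitrary $z_0\in\overline D$ and put $Q_{z_0}(z):=K^T_\mu(z,z_0)$, extended by zero outside $D$ exactly as stipulated just before the statement of Theorem~\ref{thDIR2}. Since by (\ref{eqTangent}) the tangent dilatation is a ratio of a modulus-squared and the positive quantity $1-|\mu(z)|^2$, it is a nonnegative measurable function of $z$, so $Q_{z_0}\colon{\Bbb C}\to[0,\infty]$ is nonnegative and measurable, and (\ref{eqDIR6*}) says exactly that $\overline{\lim\limits_{\varepsilon\to0}}\ \dashint_{B(z_0,\varepsilon)}Q_{z_0}(z)\,dm(z)<\infty$.

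Next I would invoke Corollary~\ref{FMO_cor2.1}, applied on the ambient domain ${\Bbb C}$ --- this is where the zero-extension is essential: when $z_0\in\partial D$ the disks $B(z_0,\varepsilon)$ are not contained in $D$, and only after extending $K^T_\mu(\cdot,z_0)$ by zero is the mean value over $B(z_0,\varepsilon)$ defined and finite --- to conclude that $Q_{z_0}$ has finite mean oscillation at $z_0$, i.e. $Q_{z_0}\in{\rm FMO}(z_0)$; note that the defining condition (\ref{FMO_eq2.4}) also carries with it the integrability of $Q_{z_0}$ near $z_0$, which is again supplied by (\ref{eqDIR6*}). With the trivial pointwise estimate $K^T_\mu(z,z_0)\leqslant Q_{z_0}(z)$ (in fact equality) holding a.e. in $D\cap U_{z_0}$ for any neighborhood $U_{z_0}$ of $z_0$, every hypothesis of Theorem~\ref{thDIR2} is now verified: $D$ is the prescribed bounded domain whose boundary consists of $n\geqslant2$ mutually disjoint Jordan curves, $\mu$ is measurable with $|\mu|<1$ a.e. and $K_\mu\in L^1_{\rm loc}(D)$, and at every $z_0\in\overline D$ the majorant $Q_{z_0}$ lies in ${\rm FMO}(z_0)$.

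Applying Theorem~\ref{thDIR2} then produces, for each continuous $\varphi\colon\partial D\to{\Bbb R}$ with $\varphi(\zeta)\not\equiv{\rm const}$, a pseudoregular solution of the Dirichlet problem (\ref{eqGrUsl}) for (\ref{eqBeltrami}) with poles at $n$ prescribed points of $D$, which is the assertion of the corollary. I do not expect a genuine obstacle here: the only small points needing care are the nonnegativity of $K^T_\mu(\cdot,z_0)$, so that Corollary~\ref{FMO_cor2.1} (phrased for $|\varphi|$) applies verbatim, and the bookkeeping around the zero-extension outside $D$, which is what legitimizes speaking of mean oscillation at boundary points. All the substantive work --- constructing a homeomorphic regular solution $F$ of (\ref{eqBeltrami}), mapping $F(D)$ conformally onto a circular domain with weakly flat boundary, extending the composition to $\overline D$ via Lemma~\ref{lem3.3.333}, and post-composing with a meromorphic function realizing the prescribed boundary real part through a Schwarz-type representation --- is already carried out inside Theorem~\ref{thDIR2} (via Lemma~\ref{lem13.5.333ps}), so nothing new is required.
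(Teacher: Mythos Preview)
Your proposal is correct and follows exactly the route the paper intends: the paper states this corollary as a direct consequence of Theorem~\ref{thDIR2} via Corollary~\ref{FMO_cor2.1}, with the zero-extension of $K^T_\mu(\cdot,z_0)$ outside $D$ explicitly stipulated just before the statement. There is nothing to add.
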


\medskip

As above, here we assume that $K^T_{\mu}(z,z_0)$ is extended by zero outside of $D$.

\medskip

\begin{theorem}\label{thp.KPRS12b*} Let $D$ be a bounded domain in
${\Bbb C}$ whose boundary consists of $n\geqslant2$ mutually
disjoint Jordan curves and $\mu:D\to{\Bbb C}$ be a measurable
function with $|\mu(z)|<1$ a.e. such that $K_{\mu}\in L^1_{\rm
loc}(D)$. Suppose that
\begin{equation}\label{eqp.KPRS12c*}
\int\limits_{\varepsilon<|z-z_0|<\varepsilon_0}K^T_{\mu}(z,z_0)\,\frac{dm(z)}{|z-z_0|^2}
=o\left(\left[\log\frac{1}{\varepsilon}\right]^2\right)\qquad\forall\ z_0\in\overline{D}\end{equation} as
$\varepsilon\to 0$ for some $\varepsilon_0=\delta(z_0)\in(0,d(z_0))$ where $d(z_0)=\sup_{z\in D}|z-z_0|$. Then
the Beltrami equation (\ref{eqBeltrami}) has a pseudoregular solution of the Dirichlet problem (\ref{eqGrUsl})
for every continuous function $\varphi:\partial D\to{\Bbb R}$, $\varphi(\zeta)\not\equiv{\rm const}$, with poles
at $n$ prescribed points in $D$.\end{theorem}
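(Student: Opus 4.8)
The plan is to deduce Theorem \ref{thp.KPRS12b*} from the general Lemma \ref{lem13.5.333ps} by the special choice of the functional parameter $\psi$, paralleling the way the analogous statements in the preceding sections are obtained. Concretely, I would fix, for every point $z_0\in\overline{D}$ and every $\varepsilon\in(0,\varepsilon_0)$, the function $\psi_{z_0,\varepsilon}(t)\equiv\psi(t):=1/t$. Then
\[
I_{z_0}(\varepsilon)\ =\ \int\limits_{\varepsilon}^{\varepsilon_0}\frac{dt}{t}\ =\ \log\frac{\varepsilon_0}{\varepsilon}\ <\ \infty
\]
for every $\varepsilon\in(0,\varepsilon_0)$, so (\ref{eqp.3.5.3}) holds; moreover $\psi^2(|z-z_0|)=1/|z-z_0|^2$, so the left-hand side of (\ref{p.3omal}) is precisely the integral occurring in (\ref{eqp.KPRS12c*}). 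Since $\log(\varepsilon_0/\varepsilon)=\log(1/\varepsilon)+\log\varepsilon_0$, we have $I_{z_0}^2(\varepsilon)\sim[\log(1/\varepsilon)]^2$ as $\varepsilon\to0$, and hence the hypothesis (\ref{eqp.KPRS12c*}) asserts exactly that the left-hand side of (\ref{p.3omal}) is $o\bigl(I_{z_0}^2(\varepsilon)\bigr)$ as $\varepsilon\to0$, which is (\ref{p.3omal}) for this $\psi$.

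It then remains to verify the two standing assumptions of Lemma \ref{lem13.5.333ps} that are not literally listed among the hypotheses of the theorem: that $D$ is locally connected on $\partial D$, and that $\|K^T_{\mu}\|_1(z_0,r)\neq\infty$ for a.e. $r\in(0,\varepsilon_0)$ at every $z_0\in\overline{D}$. The first is automatic: a bounded domain whose boundary consists of finitely many mutually disjoint Jordan curves is locally connected on its boundary (each boundary component being a Jordan curve, cf. \cite{Wi}), and $D'$ in the lemma is to be taken with a weakly flat boundary, which is arranged inside the proof of Lemma \ref{lem13.5.333ps} via the conformal straightening to a circular domain. For the second, fix $z_0$ and $\varepsilon\in(0,\varepsilon_0)$; on the annulus $\varepsilon<|z-z_0|<\varepsilon_0$ one has $|z-z_0|^{-2}\geqslant\varepsilon_0^{-2}$, so by (\ref{eqp.KPRS12c*})
\[
\int\limits_{\varepsilon<|z-z_0|<\varepsilon_0}K^T_{\mu}(z,z_0)\,dm(z)\ \leqslant\ \varepsilon_0^{2}\int\limits_{\varepsilon<|z-z_0|<\varepsilon_0}K^T_{\mu}(z,z_0)\,\frac{dm(z)}{|z-z_0|^{2}}\ <\ \infty\,,
\]
whence by Fubini in polar coordinates $\|K^T_{\mu}\|_1(z_0,r)<\infty$ for a.e. $r\in(\varepsilon,\varepsilon_0)$; letting $\varepsilon$ run through a sequence tending to $0$ gives this for a.e. $r\in(0,\varepsilon_0)$. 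Together with the assumed $K_{\mu}\in L^1_{\rm loc}(D)$ (and $K^T_{\mu}(z,z_0)\leqslant K_{\mu}(z)$), this covers the integrability requirements of Lemma \ref{lem13.5.333ps}.

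With all hypotheses of Lemma \ref{lem13.5.333ps} in force, that lemma produces a pseudoregular solution $f$ of the Dirichlet problem (\ref{eqGrUsl}) for every continuous $\varphi:\partial D\to\Bbb{R}$ with $\varphi(\zeta)\not\equiv\mathrm{const}$, having poles at the $n$ prescribed points $z_i\in D$, $i=1,\dots,n$, which is the assertion. I do not anticipate a genuine obstacle: the only steps needing a line of care are the asymptotic identification $\log(\varepsilon_0/\varepsilon)\sim\log(1/\varepsilon)$ and the elementary Fubini argument for a.e.-circle integrability of $K^T_{\mu}$; the substantive content — existence of a regular homeomorphic $W^{1,1}_{\rm loc}$ solution $F$, the conformal map of $F(D)$ onto a circular (hence weakly flat) domain, the homeomorphic boundary extension of $g=R\circ F$, and the insertion of a meromorphic function with the prescribed poles solving the straightened Dirichlet problem (Theorem 4.14 in \cite{Vekua}) — is entirely packaged inside Lemma \ref{lem13.5.333ps} (together with Lemma \ref{lem3.3.333}) and the results it rests on.
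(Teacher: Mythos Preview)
Your proposal is correct and follows essentially the same route as the paper: the theorem is obtained from Lemma~\ref{lem13.5.333ps} by the special choice $\psi_{z_0,\varepsilon}(t)\equiv\psi(t)=1/t$, exactly as the paper indicates for this block of results. Your additional verifications (the asymptotic $I_{z_0}(\varepsilon)\sim\log(1/\varepsilon)$ and the Fubini argument showing $\|K^T_{\mu}\|_1(z_0,r)<\infty$ for a.e.\ $r$) make explicit what the paper leaves implicit; note only that Lemma~\ref{lem13.5.333ps} itself does not mention a $D'$ or weak flatness---those considerations are internal to its proof, so your aside about them is unnecessary but harmless.
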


\medskip

\begin{remark}\label{rmp.KRRSa*} Choosing in Lemma \ref{lem13.5.333ps} the function
$\psi(t)=1/(t\log{1/t})$ instead of $\psi(t)=1/t$, we are able to
replace (\ref{eqp.KPRS12c*}) by \begin{equation}\label{eqp.KPRS12f*}
\int\limits_{\varepsilon<|z-z_0|<\varepsilon_0}\frac{K^T_{\mu}(z,z_0)\,dm(z)}
{\left(|z-z_0|\log{\frac{1}{|z-z_0|}}\right)^2}
=o\left(\left[\log\log\frac{1}{\varepsilon}\right]^2\right)\end{equation}
In general, we are able to give here the whole scale of the
corresponding conditions in $\log$ using functions $\psi(t)$ of the
form
$1/(t\log{1}/{t}\cdot\log\log{1}/{t}\cdot\ldots\cdot\log\ldots\log{1}/{t})$.
\end{remark}

\medskip

Arguing similarly to the proof of Lemma \ref{lem13.5.333ps}, we
obtain on the basis of the Theorem \ref{th7.KR4.1c} the following.

\medskip

\begin{theorem}\label{thDIR2i} Let $D$  be a bounded  domain in $\Bbb{C}$ whose
boundary consists of $n\geqslant2$ mutually disjoint Jordan curves
and $\mu:D\to{\Bbb C}$ be a measurable function with $|\mu(z)|<1$
a.e. such that $K_{\mu}\in L^1_{\rm loc}(D)$. Suppose that
\begin{equation}\label{eq4.8.11.2}\int\limits_{0}^{\delta(z_0)}
\frac{dr}{||K^T_{\mu}||_{1}(z_0,r)}=\infty\qquad\forall\
z_0\in\overline{D}\end{equation} for some $\delta(z_0)\in(0,d(z_0))$
where $d(z_0)=\sup\limits_{z\in D}|z-z_0|$ and
\begin{equation}\label{eq8.11.4}
||K^T_{\mu}||_{1}(z_0,r)=\int\limits_{D\cap S(z_0,r)}K^T_{\mu}(z,z_0)\,|dz|\,.\end{equation} Then the Beltrami
equation (\ref{eqBeltrami}) has a pseudoregular solution of the Dirichlet problem (\ref{eqGrUsl}) for every
continuous function $\varphi:\partial D\to{\Bbb R}$, $\varphi(\zeta)\not\equiv{\rm const}$, with poles at $n$
prescribed points in $D$. \end{theorem}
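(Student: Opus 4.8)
The plan is to reproduce the proof of Lemma \ref{lem13.5.333ps}, replacing its homeomorphic boundary-extension input (Lemma \ref{lem3.3.333}) by Theorem \ref{th7.KR4.1c}, whose hypothesis (\ref{eq7.8.11.2a}) is precisely our condition (\ref{eq4.8.11.2}). First, since $K_{\mu}\in L^1_{\rm loc}(D)$, I would invoke Lemma 4.1 in \cite{RSY$_6$} to obtain a regular homeomorphic $W^{1,1}_{\rm loc}$ solution $F$ of (\ref{eqBeltrami}) and set $D_*=F(D)$. Because $\partial D$ is a disjoint union of $n$ Jordan curves $\gamma_1,\dots,\gamma_n$, the boundary $\partial D_*$ splits into $n$ non-degenerate connected components $\Gamma_1,\dots,\Gamma_n$ associated with the $\gamma_i$ (see Lemma 5.3 in \cite{IR} or Lemma 6.5 in \cite{MRSY}), so $D_*$ is an $n$-connected domain in $\overline{\Bbb C}$. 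Note also that the local integrability of $K_{\mu}$ forces $\|K^T_{\mu}\|_1(z_0,r)<\infty$ for a.e. $r$ through (\ref{eqConnect}).

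Next, by the classical theorem on conformal mapping of finitely connected domains onto circular domains (Theorem V.6.2 in \cite{Gol}) I would fix a conformal map $R$ of $D_*$ onto a circular domain ${\Bbb D}_*$ bounded by $n$ circles (non-degeneracy of the $\Gamma_i$ is what excludes point boundary components), so that $\partial{\Bbb D}_*$ is weakly flat. Then $g:=R\circ F$ is again a regular homeomorphic $W^{1,1}_{\rm loc}$ solution of (\ref{eqBeltrami}) (with the same coefficient $\mu$, since $R$ is holomorphic, hence with the same tangent dilatation $K^T_{\mu}(z,z_0)$) mapping $D$ onto ${\Bbb D}_*$. Since $D$ — a bounded domain whose boundary is a finite collection of mutually disjoint Jordan curves — is locally connected on $\partial D$, since $\partial{\Bbb D}_*$ is weakly flat, and since (\ref{eq4.8.11.2}) is precisely (\ref{eq7.8.11.2a}), Theorem \ref{th7.KR4.1c} provides a homeomorphic extension $g_*:\overline{D}\to\overline{{\Bbb D}_*}$.

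It remains to construct the solution in Stoilow form $f=h\circ g$, where $h$ is a meromorphic function on ${\Bbb D}_*$ with poles at the $n$ prescribed points $w_i:=g(z_i)$ (with $z_1,\dots,z_n\in D$ the given points) and with $\lim_{w\to\zeta}{\rm Re}\,h(w)=\varphi(g_*^{-1}(\zeta))$ for every $\zeta\in\partial{\Bbb D}_*$; such an $h$ exists by Theorem 4.14 in \cite{Vekua}. Then $f=h\circ g$ is continuous in $\overline{\Bbb C}$, discrete and open, belongs to $W^{1,1}_{\rm loc}$ off the points $z_i$ with $J_f\neq0$ a.e., satisfies (\ref{eqBeltrami}) a.e. (since $h$ is holomorphic away from its poles), and fulfils (\ref{eqGrUsl}) via $g_*$; hence $f$ is the desired pseudoregular solution of the Dirichlet problem with poles exactly at $z_1,\dots,z_n$.

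The step I expect to require the most care — exactly as in Lemma \ref{lem13.5.333ps} — is assembling the two geometric ingredients cleanly: that the boundary components of $D_*$ are non-degenerate continua, so that the circular domain ${\Bbb D}_*$ genuinely has circle boundary components and hence weakly flat boundary (which is what makes Theorem \ref{th7.KR4.1c} applicable), and that the composite $f=h\circ g$ inherits discreteness, openness, Sobolev regularity off the poles, and $J_f\neq0$ a.e. Both are handled verbatim as in the established pseudoregular lemma; the only substantive change is that the homeomorphic boundary extension of $g$ is now supplied by Theorem \ref{th7.KR4.1c} under the divergence hypothesis (\ref{eq4.8.11.2}) in place of Lemma \ref{lem3.3.333}.
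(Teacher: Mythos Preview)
Your proposal is correct and follows exactly the route the paper indicates: reproduce the proof of Lemma \ref{lem13.5.333ps}, but obtain the homeomorphic boundary extension of $g=R\circ F$ from Theorem \ref{th7.KR4.1c} (whose hypothesis (\ref{eq7.8.11.2a}) is the restriction of (\ref{eq4.8.11.2}) to $\partial D$) in place of Lemma \ref{lem3.3.333}. One small point of attribution: the existence of the regular homeomorphic solution $F$ via Lemma~4.1 in \cite{RSY$_6$} uses the divergence condition (\ref{eq4.8.11.2}) at interior points $z_0\in D$, not merely $K_\mu\in L^1_{\rm loc}(D)$; since (\ref{eq4.8.11.2}) is assumed for all $z_0\in\overline D$, this is available and nothing in your argument changes.
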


\medskip

\begin{corollary}\label{corp.DIR2t} Let $D$ be a bounded domain in
${\Bbb C}$ whose boundary consists of $n\geqslant2$ mutually disjoint Jordan curves and $\mu:D\to{\Bbb C}$ be a
measurable function with $|\mu(z)|<1$ a.e. such that $K_{\mu}\in L^1_{\rm loc}(D)$. Suppose that
\begin{equation}\label{eqp.DIR61fa} k_{z_0}(\varepsilon)=O\left(\log\frac{1}{\varepsilon}\right)\quad
\text{as}\quad\varepsilon\to0\quad\forall\ z_0\in\overline{D}\,,\end{equation} where $k_{z_0}(\varepsilon)$ is
the average of the function $K^T_{\mu}(z,z_0)$ over the circle $\{z\in{\Bbb C}:|z-z_0|=\varepsilon\}$. Then the
Beltrami equation (\ref{eqBeltrami}) has a pseudoregular solution of the Dirichlet problem (\ref{eqGrUsl}) for
every continuous function $\varphi:\partial D\to{\Bbb R}$, $\varphi(\zeta)\not\equiv{\rm const}$, with poles at
$n$ prescribed points in $D$.\end{corollary}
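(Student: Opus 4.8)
The plan is to deduce Corollary \ref{corp.DIR2t} from Theorem \ref{thDIR2i} by showing that the hypothesis (\ref{eqp.DIR61fa}) forces the divergence of the integral (\ref{eq4.8.11.2}) at every $z_0\in\overline D$. Recall that $K^T_{\mu}(z,z_0)$ is extended by zero outside $D$, so for each $r>0$ the average of $K^T_{\mu}(\cdot,z_0)$ over the whole circle $S(z_0,r)$ (which has length $2\pi r$) equals $\tfrac{1}{2\pi r}$ times the integral over $D\cap S(z_0,r)$; that is,
\[
\|K^T_{\mu}\|_{1}(z_0,r)\ =\ \int_{D\cap S(z_0,r)}K^T_{\mu}(z,z_0)\,|dz|\ =\ 2\pi r\, k_{z_0}(r).
\]
In particular, the bound $k_{z_0}(\varepsilon)=O(\log(1/\varepsilon))$ as $\varepsilon\to0$ shows that $k_{z_0}(r)$, hence $\|K^T_{\mu}\|_{1}(z_0,r)$, is finite for a.e.\ small $r$, which legitimizes invoking Theorem \ref{thDIR2i}.

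Next I would fix $z_0\in\overline D$ and choose $\varepsilon_0=\varepsilon_0(z_0)\in(0,\min(1,d(z_0)))$ and a constant $C=C(z_0)<\infty$ with $k_{z_0}(r)\le C\log(1/r)$ for all $r\in(0,\varepsilon_0)$; this is precisely what (\ref{eqp.DIR61fa}) provides, after shrinking $\varepsilon_0$ so that $\log(1/r)>0$ on the whole interval. Combining this with the identity above gives $\|K^T_{\mu}\|_{1}(z_0,r)\le 2\pi C\, r\log(1/r)$ for a.e.\ $r\in(0,\varepsilon_0)$, whence
\[
\int_{0}^{\varepsilon_0}\frac{dr}{\|K^T_{\mu}\|_{1}(z_0,r)}\ \ge\ \frac{1}{2\pi C}\int_{0}^{\varepsilon_0}\frac{dr}{r\log(1/r)}\ =\ \infty,
\]
the last integral diverging by the substitution $u=\log(1/r)$. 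Since $z_0\in\overline D$ was arbitrary, condition (\ref{eq4.8.11.2}) of Theorem \ref{thDIR2i} holds with $\delta(z_0)=\varepsilon_0(z_0)$.

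Finally, since $D$ is a bounded domain whose boundary consists of $n\ge2$ mutually disjoint Jordan curves, $\mu$ is measurable with $|\mu|<1$ a.e., and $K_{\mu}\in L^1_{\rm loc}(D)$ by hypothesis, all the assumptions of Theorem \ref{thDIR2i} are in force, and that theorem yields, for every continuous $\varphi:\partial D\to{\Bbb R}$ with $\varphi(\zeta)\not\equiv{\rm const}$, a pseudoregular solution of the Dirichlet problem (\ref{eqGrUsl}) for (\ref{eqBeltrami}) with poles at the $n$ prescribed points, which is the assertion. There is essentially no genuine obstacle here beyond bookkeeping: the only point needing care is the averaging convention — one must use that $K^T_{\mu}(\cdot,z_0)$ is extended by zero outside $D$, so that the circular average $k_{z_0}(r)$ over $S(z_0,r)$ reproduces $\|K^T_{\mu}\|_1(z_0,r)/(2\pi r)$ — together with the harmless restriction $r<1$ ensuring $\log(1/r)>0$.
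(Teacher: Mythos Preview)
Your argument is correct and coincides with the paper's intended route: Corollary~\ref{corp.DIR2t} is placed immediately after Theorem~\ref{thDIR2i} precisely because the circular-average bound $k_{z_0}(\varepsilon)=O(\log(1/\varepsilon))$ together with the identity $\|K^T_\mu\|_1(z_0,r)=2\pi r\,k_{z_0}(r)$ (valid since $K^T_\mu(\cdot,z_0)$ is extended by zero outside $D$) yields the divergence $\int_0^{\delta(z_0)}\frac{dr}{\|K^T_\mu\|_1(z_0,r)}\ge\frac{1}{2\pi C}\int_0^{\delta(z_0)}\frac{dr}{r\log(1/r)}=\infty$, and then Theorem~\ref{thDIR2i} applies. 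The only care point you identified --- the averaging convention and the restriction to $r<1$ so that $\log(1/r)>0$ --- is exactly right.
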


\medskip

\begin{remark}\label{rem2tyyr} In particular, the conclusion of Corollary \ref{corp.DIR2t} holds if
\begin{equation}\label{eqDIR6w**}K^T_{\mu}(z,z_0)=O\left(\log\frac{1}{|z-z_0|}\right)\qquad{\rm
as}\quad z\to z_0\quad\forall\ z_0\in\overline{D}\,.\end{equation}
\end{remark}

\medskip

\begin{corollary}\label{corDIdgdslogR2t} Let $D$ be a bounded domain in
${\Bbb C}$ whose boundary consists of $n\geqslant2$ mutually
disjoint Jordan curves and $\mu:D\to{\Bbb C}$ be a measurable
function with $|\mu(z)|<1$ a.e. such that $K_{\mu}(z)\in L^1_{\rm
loc}(D)$. Suppose that
\begin{equation}\label{edgddgsddloggdsfweweIR6**c}
k_{z_{0}}(\varepsilon)=O\left(\left[\log\frac{1}{\varepsilon}\cdot\log\log\frac{1}{\varepsilon}\cdot\ldots\cdot\log\ldots\log\frac{1}{\varepsilon}
\right]\right) \qquad\forall\ z_0\in \overline{D}
\end{equation}
where $k_{z_0}(\varepsilon)$ is the average of the function
$K^T_{\mu}(z,z_0)$ over the circle $\{z\in{\Bbb
C}:|z-z_0|=\varepsilon\}$. Then the Beltrami equation
(\ref{eqBeltrami}) has a pseudoregular solution of the Dirichlet
problem (\ref{eqGrUsl}) for every continuous function
$\varphi:\partial D\to{\Bbb R}$, $\varphi(\zeta)\not\equiv{\rm
const}$, with poles at $n$ prescribed points in $D$.\end{corollary}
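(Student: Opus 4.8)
The plan is to reduce this corollary to Theorem~\ref{thDIR2i}, exactly as Corollary~\ref{corp.DIR2t} was obtained from it, but feeding in a sharper majorant. Write $L_1(r)=\log\frac1r$ and $L_{k+1}(r)=\log L_k(r)$, so that the bracket on the right of (\ref{edgddgsddloggdsfweweIR6**c}) is the product $L_1(\varepsilon)\,L_2(\varepsilon)\cdots L_m(\varepsilon)$ for the fixed number $m\ge1$ of iterated logarithms occurring there (the case $m=1$ being Corollary~\ref{corp.DIR2t}). Since $K^T_{\mu}(z,z_0)$ is extended by zero outside $D$, the Fubini theorem in polar coordinates about $z_0$ gives, for a.e.\ $r>0$,
\[
\|K^T_{\mu}\|_1(z_0,r)\;=\;\int\limits_{D\cap S(z_0,r)}K^T_{\mu}(z,z_0)\,|dz|\;=\;\int\limits_{S(z_0,r)}K^T_{\mu}(z,z_0)\,|dz|\;=\;2\pi r\,k_{z_0}(r).
\]
In particular (\ref{edgddgsddloggdsfweweIR6**c}) forces $k_{z_0}(r)<\infty$, hence $\|K^T_{\mu}\|_1(z_0,r)<\infty$, for all sufficiently small $r$, so no finiteness issue arises in applying Theorem~\ref{thDIR2i} at any $z_0\in\overline D$.

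Next I would fix $z_0\in\overline D$ and use (\ref{edgddgsddloggdsfweweIR6**c}) to choose a constant $C=C(z_0)$ and a radius $\delta(z_0)\in(0,d(z_0))$, shrunk if necessary so that $L_1,\ldots,L_m$ are all defined and $\ge1$ on $(0,\delta(z_0))$, with $2\pi r\,k_{z_0}(r)\le 2\pi C\,r\,L_1(r)\cdots L_m(r)$ there. Then, by the identity above,
\[
\int\limits_0^{\delta(z_0)}\frac{dr}{\|K^T_{\mu}\|_1(z_0,r)}\;\ge\;\frac{1}{2\pi C}\int\limits_0^{\delta(z_0)}\frac{dr}{r\,L_1(r)\,L_2(r)\cdots L_m(r)}\;=\;\infty,
\]
since $\frac{d}{dr}L_{m+1}(r)=-\bigl(r\,L_1(r)\cdots L_m(r)\bigr)^{-1}$ and $L_{m+1}(r)\to+\infty$ as $r\to0$. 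This is precisely the divergence hypothesis (\ref{eq4.8.11.2}) of Theorem~\ref{thDIR2i}; together with $K_{\mu}\in L^1_{\rm loc}(D)$ and $|\mu|<1$ a.e., that theorem produces a pseudoregular solution of the Dirichlet problem (\ref{eqGrUsl}) with $n$ poles at the prescribed points $z_i\in D$ for every continuous $\varphi:\partial D\to{\Bbb R}$, $\varphi\not\equiv{\rm const}$, which is the assertion.

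Equivalently, one may bypass Theorem~\ref{thDIR2i} and invoke Lemma~\ref{lem13.5.333ps} directly with the $(z_0,\varepsilon)$-independent weight $\psi(t)=1/\bigl(t\,L_1(t)\cdots L_m(t)\bigr)$: then $I_{z_0}(\varepsilon)=L_{m+1}(\varepsilon)-L_{m+1}(\varepsilon_0)\to\infty$, and the same Fubini computation turns the left-hand side of (\ref{p.3omal}) into $\int_\varepsilon^{\varepsilon_0}\psi^2(r)\,\|K^T_{\mu}\|_1(z_0,r)\,dr\le 2\pi C\int_\varepsilon^{\varepsilon_0}\psi(r)\,dr=2\pi C\,I_{z_0}(\varepsilon)=o\bigl(I_{z_0}^2(\varepsilon)\bigr)$, which is (\ref{p.3omal}). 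There is no substantial obstacle along either route; the only points that deserve attention are the bookkeeping with the zero-extension of $K^T_{\mu}$, which is what makes the circular average $k_{z_0}$ and the $L^1$-norm over the full circle agree, and the elementary verification that $\psi$ has a non-integrable singularity at $0$ while $K_{\mu}\psi^2$ stays controllable — i.e.\ the divergence of the iterated-logarithm integral exhibited above.
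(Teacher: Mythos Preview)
Your proposal is correct and follows precisely the route the paper intends: the corollary is listed immediately after Theorem~\ref{thDIR2i} as one of its consequences, and your verification that the iterated-log bound on $k_{z_0}(\varepsilon)$ implies the divergence condition~(\ref{eq4.8.11.2}) via $\|K^T_{\mu}\|_1(z_0,r)=2\pi r\,k_{z_0}(r)$ and the primitive $-L_{m+1}(r)$ is exactly the intended (and unwritten) argument. The alternative route through Lemma~\ref{lem13.5.333ps} with $\psi(t)=1/\bigl(t\,L_1(t)\cdots L_m(t)\bigr)$ is also the mechanism the paper alludes to in Remark~\ref{rmp.KRRSa*}, so both of your paths are in full agreement with the paper.
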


\medskip

\begin{theorem}\label{thq13.5.333} Let $D$ be a bounded domain in
${\Bbb C}$ whose boundary consists of $n\geqslant2$ mutually disjoint Jordan curves and let $\mu:D\to{\Bbb C}$
be a measurable function with $|\mu(z)|<1$ a.e. such that $K_{\mu}\in L^1_{\rm loc}(D)$. Suppose that
\begin{equation}\label{eqKR4.1}\int\limits_{D\cap U_{z_0}}\Phi_{z_0}\left(K^T_{\mu}(z,z_0)\right)\,dm(z)<\infty
\quad\forall\ z_0\in\overline{D}\end{equation} for a neighborhood $U_{z_0}$of the point $z_0$ and a convex
non-decreasing function $\Phi_{z_0}:[0,\infty]\to[0,\infty]$ such that
\begin{equation}\label{eqKR4.2}\int\limits_{\delta(z_0)}^{\infty}\frac{d\tau}{\tau\Phi_{z_0}^{-1}(\tau)}=\infty\end{equation}
for some $\delta(z_0)>\Phi_{z_0}(0)$. Then the Beltrami equation (\ref{eqBeltrami}) has a pseudoregular solution
of the Dirichlet problem (\ref{eqGrUsl}) for each continuous function $\varphi:\partial D\to{\Bbb R}$,
$\varphi(\zeta)\not\equiv{\rm const}$, with poles at $n$ prescribed inner points of $D$.
\end{theorem}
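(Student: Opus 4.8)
The plan is to derive Theorem \ref{thq13.5.333} from Theorem \ref{thDIR2i} by converting the integral hypotheses (\ref{eqKR4.1})--(\ref{eqKR4.2}) into the divergence condition (\ref{eq4.8.11.2}) at every $z_0\in\overline D$, exactly as Theorem \ref{thKR4.1} was obtained from Theorem \ref{thDIR2io} in the Jordan case by bringing in Theorem \ref{th5.555}. Throughout, $K^T_{\mu}(z,z_0)$ is understood to be $0$ outside $D$; since $K_{\mu}\in L^1_{\rm loc}(D)$ and $K^T_{\mu}(z,z_0)\le K_{\mu}(z)$, Fubini's theorem gives that $\|K^T_{\mu}\|_1(z_0,r)$ is finite for a.e.\ $r$, so all the integrals below make sense.

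Fix $z_0\in\overline D$ and pick $r_0>0$ with $B(z_0,r_0)\subset U_{z_0}$. Pass to the unit disc via $w=(z-z_0)/r_0$ and set $Q(w)=K^T_{\mu}(z_0+r_0w,z_0)$. Since $\Phi_{z_0}$ is non-decreasing and $\Phi_{z_0}(0)<\delta(z_0)<\infty$,
\[
\int_{\mathbb D}\Phi_{z_0}(Q(w))\,dm(w)=\frac1{r_0^{2}}\Bigl(\int_{D\cap B(z_0,r_0)}\Phi_{z_0}(K^T_{\mu}(z,z_0))\,dm(z)+\Phi_{z_0}(0)\,|B(z_0,r_0)\setminus D|\Bigr)<\infty
\]
by (\ref{eqKR4.1}), while (\ref{eqKR4.2}) is precisely condition (\ref{eq3.333a}) for $\Phi=\Phi_{z_0}$ with the admissible threshold $\delta(z_0)>\Phi_{z_0}(0)$. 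Hence Theorem \ref{th5.555} applies and yields $\int_0^1 d\rho/(\rho\,q(\rho))=\infty$, where $q(\rho)$ is the mean of $Q$ over $|w|=\rho$. Undoing the scaling, $q(\rho)=k_{z_0}(r_0\rho)$, and the substitution $r=r_0\rho$ turns this into $\int_0^{r_0}dr/(r\,k_{z_0}(r))=\infty$, where $k_{z_0}$ is the mean of $K^T_{\mu}(\cdot,z_0)$ over circles centred at $z_0$. Since with the zero extension $\|K^T_{\mu}\|_1(z_0,r)=2\pi r\,k_{z_0}(r)$, we get $\int_0^{r_0}dr/\|K^T_{\mu}\|_1(z_0,r)=\infty$, i.e.\ (\ref{eq4.8.11.2}) holds (with $r_0$ in the role of $\delta(z_0)$ there). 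As $z_0$ was arbitrary, Theorem \ref{thDIR2i} now gives the asserted pseudoregular solution with poles at the $n$ prescribed points.

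For completeness, here is the mechanism behind Theorem \ref{thDIR2i}, following the proof of Lemma \ref{lem13.5.333ps}. Let $F$ be a regular homeomorphic $W^{1,1}_{\rm loc}$ solution of (\ref{eqBeltrami}) (Lemma 4.1 in \cite{RSY$_6$}); the boundary of $D^*=F(D)$ has $n$ components matching the Jordan curves $\gamma_i$ (Lemma 5.3 in \cite{IR} or Lemma 6.5 in \cite{MRSY}), so by Theorem V.6.2 in \cite{Gol} a conformal $R$ maps $D^*$ onto a circular domain $\mathbb D_*$ whose boundary (finitely many circles, some perhaps degenerating to points) is weakly flat. Then $g=R\circ F$ is a regular homeomorphic $W^{1,1}_{\rm loc}$ solution of (\ref{eqBeltrami}), and by (\ref{eq4.8.11.2}) together with Theorem \ref{th7.KR4.1c} (or directly by Theorem \ref{th7.KR4.1cr}), $g$ extends to a homeomorphism $g_*:\overline D\to\overline{\mathbb D_*}$, using that $D$ is locally connected on $\partial D$ as its boundary is a finite union of disjoint Jordan curves. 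Finally choose a meromorphic $h$ on $\mathbb D_*$ with $n$ poles at $w_i=g(z_i)$ and ${\rm Re}\,h=\varphi\circ g_*^{-1}$ on $\partial\mathbb D_*$ (Theorem 4.14 in \cite{Vekua}); then $f=h\circ g$ is a pseudoregular solution of (\ref{eqGrUsl}) for (\ref{eqBeltrami}) with poles exactly at $z_1,\dots,z_n$.

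The step I expect to be the main obstacle is the localization of Theorem \ref{th5.555}: it must be carried out pointwise in $z_0$ (both $\Phi_{z_0}$ and $U_{z_0}$ depend on $z_0$), and one has to verify that the zero extension of $K^T_{\mu}(\cdot,z_0)$ preserves the finiteness of $\int\Phi_{z_0}(Q)$ --- which it does, because $\Phi_{z_0}(0)$ is finite and $B(z_0,r_0)$ has finite area --- as well as the compatibility of the threshold $\delta(z_0)>\Phi_{z_0}(0)$ with the rescaling; this is routine but needs care. A secondary point, already handled inside the proof of Lemma \ref{lem13.5.333ps}, is checking that the circular domain $\mathbb D_*$ has a weakly flat boundary even at boundary components that collapse to single points.
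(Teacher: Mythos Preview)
Your proof is correct and follows exactly the route the paper intends: Theorem~\ref{thq13.5.333} is obtained from Theorem~\ref{thDIR2i} by invoking Theorem~\ref{th5.555} pointwise in $z_0\in\overline D$, precisely as Theorem~\ref{thKR4.1} was derived from Theorem~\ref{thDIR2io} in the Jordan case. Your localization via rescaling to $\mathbb D$, the handling of the zero extension through the finiteness of $\Phi_{z_0}(0)$, and the recap of the construction in Lemma~\ref{lem13.5.333ps} are all in order; the parenthetical reference to Theorem~\ref{th7.KR4.1cr} is slightly off (that theorem carries the $\Phi$-condition rather than the divergence condition), but Theorem~\ref{th7.KR4.1c} is the correct citation and you already give it.
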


\medskip

\begin{corollary}\label{corDIR1000} In particular, the conclusion of
Theorem \ref{thq13.5.333} holds if
\begin{equation}\label{eqp.KR4.1c}\int\limits_{D\cap
U_{z_0}}e^{\alpha(z_0) K^T_{\mu}(z,z_0)}\,dm(z)<\infty \qquad\forall\ z_0\in \overline{D}\end{equation} for some
$\alpha(z_0)>0$ and a neighborhood $U_{z_0}$ of the point $z_0$.\end{corollary}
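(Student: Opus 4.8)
The plan is to obtain this corollary as the special case of Theorem \ref{thq13.5.333} corresponding to an exponential majorant, so that no new machinery is needed. For each point $z_0\in\overline{D}$, let $\alpha(z_0)>0$ and the neighborhood $U_{z_0}$ be those furnished by the hypothesis (\ref{eqp.KR4.1c}), and set
\[
\Phi_{z_0}(t)\;=\;e^{\alpha(z_0)\,t},\qquad t\in[0,\infty],\quad \Phi_{z_0}(\infty)=\infty .
\]
This $\Phi_{z_0}$ is strictly increasing and convex on $[0,\infty)$, hence an admissible choice in Theorem \ref{thq13.5.333}. First I would observe that, with this $\Phi_{z_0}$, the integral constraint (\ref{eqKR4.1}) of Theorem \ref{thq13.5.333} reads $\int_{D\cap U_{z_0}}e^{\alpha(z_0)\,K^T_{\mu}(z,z_0)}\,dm(z)<\infty$, i.e.\ it is exactly the assumed estimate (\ref{eqp.KR4.1c}); note in passing that this finiteness already forces $\{z\in D\cap U_{z_0}:K^T_{\mu}(z,z_0)=\infty\}$ to be a null set, consistently with $|\mu|<1$ a.e.

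It remains to verify the divergence condition (\ref{eqKR4.2}). Since $\Phi_{z_0}^{-1}(\tau)=\alpha(z_0)^{-1}\log\tau$ for $\tau\geqslant 1=\Phi_{z_0}(0)$, for any choice $\delta(z_0)>1$ one gets
\[
\int\limits_{\delta(z_0)}^{\infty}\frac{d\tau}{\tau\,\Phi_{z_0}^{-1}(\tau)}
\;=\;\alpha(z_0)\int\limits_{\delta(z_0)}^{\infty}\frac{d\tau}{\tau\log\tau}
\;=\;\infty .
\]
(Equivalently, by Remark \ref{remeq333F} applied with $H(t)=\log\Phi_{z_0}(t)=\alpha(z_0)\,t$, the criterion (\ref{eq333B}) becomes $\alpha(z_0)\int^{\infty}dt/t=\infty$.) Hence all hypotheses of Theorem \ref{thq13.5.333} are met, and that theorem yields the desired pseudoregular solution of the Dirichlet problem (\ref{eqGrUsl}) for every continuous non-constant $\varphi:\partial D\to{\Bbb R}$, with poles at the $n$ prescribed interior points of $D$.

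There is essentially no obstacle here: the argument is a pure specialization of Theorem \ref{thq13.5.333}, and the only points requiring care are the elementary checks that $\Phi_{z_0}$ is convex and non-decreasing, that one may take $\delta(z_0)>\Phi_{z_0}(0)=1$, and that the generalized inverse $\Phi_{z_0}^{-1}$ is the logarithm on the range $[1,\infty)$ over which the integral in (\ref{eqKR4.2}) is effectively taken — all immediate for the exponential.
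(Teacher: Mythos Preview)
Your proposal is correct and follows exactly the approach the paper intends: the corollary is stated without proof as an immediate specialization of Theorem \ref{thq13.5.333}, obtained by taking $\Phi_{z_0}(t)=e^{\alpha(z_0)t}$ and checking the divergence condition (\ref{eqKR4.2}), just as you do. The standing hypotheses of Theorem \ref{thq13.5.333} (in particular $K_{\mu}\in L^1_{\rm loc}(D)$ and the domain assumptions) are understood to be inherited, so nothing further is required.
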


\medskip

\begin{corollary}\label{corc.q13.5.333} Let $D$ be a bounded domain in
${\Bbb C}$ whose boundary consists of $n\geqslant2$ mutually
disjoint Jordan curves and let $\mu:D\to{\Bbb C}$ be a measurable
function with $|\mu(z)|<1$ a.e. such that
\begin{equation}\label{eqc.KR4.1}\int\limits_{D}\Phi\left(K_{\mu}(z)\right)\,dm(z)<\infty\end{equation}
for a convex non-decreasing function $\Phi:[0,\infty]\to[0,\infty]$.
If
\begin{equation}\label{eqc.KR4.2}\int\limits_{\delta}^{\infty}\frac{d\tau}{\tau\Phi^{-1}(\tau)}=\infty\end{equation}
for some $\delta>\Phi(0)$. Then the Beltrami equation (\ref{eqBeltrami}) has a pseudoregular solution of the
Dirichlet problem (\ref{eqGrUsl}) for each continuous function $\varphi:\partial D\to{\Bbb R}$,
$\varphi(\zeta)\not\equiv{\rm const}$, with poles at $n$ prescribed inner points in $D$.
\end{corollary}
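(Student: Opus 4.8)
The plan is to obtain this corollary as a direct specialization of Theorem \ref{thq13.5.333}, taking the convex function $\Phi_{z_0}$ and the threshold $\delta(z_0)$ to be independent of $z_0$, namely $\Phi_{z_0}\equiv\Phi$ and $\delta(z_0)\equiv\delta$, and $U_{z_0}$ to be any fixed bounded neighborhood of $z_0$ (one may even take $U_{z_0}$ so large that $D\cap U_{z_0}=D$). Thus the whole task reduces to checking that the three hypotheses of Theorem \ref{thq13.5.333}, that is, $K_{\mu}\in L^1_{\rm loc}(D)$, the integral bound on $\Phi_{z_0}(K^T_{\mu}(\cdot,z_0))$, and the divergence condition on $\Phi_{z_0}$, are implied by \eqref{eqc.KR4.1} and \eqref{eqc.KR4.2}.

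First I would verify $K_{\mu}\in L^1_{\rm loc}(D)$. The divergence condition \eqref{eqc.KR4.2} forces $\Phi$ to be unbounded: if $\Phi\leqslant M<\infty$, then $\Phi^{-1}(\tau)=\infty$ for $\tau>M$, so with the convention $a/\infty=0$ the integral in \eqref{eqc.KR4.2} collapses to the finite integral over $(\delta,M]$, a contradiction. Being convex, non-decreasing and unbounded, $\Phi$ grows at least linearly: there are $a>0$ and $b\in{\Bbb R}$ with $\Phi(t)\geqslant at+b$ for all large $t$. Together with \eqref{eqc.KR4.1}, the bound $K_{\mu}(z)\geqslant1$, and the boundedness of $D$, this gives $\int_{D}K_{\mu}(z)\,dm(z)<\infty$, in particular $K_{\mu}\in L^1_{\rm loc}(D)$. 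Next, by \eqref{eqConnect} we have $K^T_{\mu}(z,z_0)\leqslant K_{\mu}(z)$ for every $z_0\in{\Bbb C}$ and $z\in D$, and since $\Phi$ is non-decreasing, $\Phi(K^T_{\mu}(z,z_0))\leqslant\Phi(K_{\mu}(z))$ a.e. in $D$; hence for each $z_0\in\overline{D}$ and any neighborhood $U_{z_0}$,
$$\int\limits_{D\cap U_{z_0}}\Phi\left(K^T_{\mu}(z,z_0)\right)\,dm(z)\ \leqslant\ \int\limits_{D}\Phi\left(K_{\mu}(z)\right)\,dm(z)\ <\ \infty,$$
which is exactly the integral hypothesis of Theorem \ref{thq13.5.333} with $\Phi_{z_0}\equiv\Phi$. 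Finally, the remaining hypothesis, $\int_{\delta(z_0)}^{\infty}d\tau/(\tau\Phi_{z_0}^{-1}(\tau))=\infty$ for some $\delta(z_0)>\Phi_{z_0}(0)$, is literally \eqref{eqc.KR4.2} with $\delta(z_0)=\delta$ and $\Phi_{z_0}(0)=\Phi(0)$. Therefore all hypotheses of Theorem \ref{thq13.5.333} hold, and its conclusion is precisely the assertion of the corollary.

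There is no genuine obstacle here: the argument is just the monotonicity inequality $\Phi\circ K^T_{\mu}\leqslant\Phi\circ K_{\mu}$ plus a verbatim matching of the remaining hypotheses. The only point deserving a line of justification is the passage from \eqref{eqc.KR4.1} to $K_{\mu}\in L^1_{\rm loc}(D)$, i.e. the observation that \eqref{eqc.KR4.2} excludes a bounded $\Phi$ and that convexity then yields at-least-linear growth of $\Phi$ at infinity.
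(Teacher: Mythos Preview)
Your proposal is correct and follows the same route as the paper: the corollary is stated immediately after Theorem~\ref{thq13.5.333} and is meant to be obtained from it via the pointwise inequality $K^T_{\mu}(z,z_0)\leqslant K_{\mu}(z)$ from \eqref{eqConnect} together with the monotonicity of $\Phi$ (cf.\ the identical passage preceding Corollary~\ref{corc.KR4.1} in Section~8). Your additional verification that \eqref{eqc.KR4.1}--\eqref{eqc.KR4.2} force $K_{\mu}\in L^{1}(D)$, and hence $K_{\mu}\in L^{1}_{\rm loc}(D)$, fills in a step the paper leaves implicit, since this hypothesis appears in Theorem~\ref{thq13.5.333} but not in the corollary.
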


\medskip

\section{On multi-valued solutions in finitely connected domains}

In  finitely connected domains $D$ in $\Bbb{C}$, in addition to pseudoregular solutions, the Dirichlet problem
(\ref{eqGrUsl}) for the Beltrami equation (\ref{eqBeltrami}) admits multi-valued solutions in the spirit of the
theory of multi-valued analytic functions. We say that a discrete open mapping $f:B(z_0,\varepsilon_0)\to{\Bbb
C}$, where $B(z_0,\varepsilon_0)\subseteq D$, is a {\bf local regular solution of the equation}
(\ref{eqBeltrami}) if $f\in W_{\rm loc}^{1,1}$, $J_f(z)\neq0$ and $f$ satisfies (\ref{eqBeltrami}) a.e. in
$B(z_0,\varepsilon_0)$.

\medskip

The local regular solutions $f:B(z_0,\varepsilon_0)\to{\Bbb C}$ and
$f_*:B(z_*,\varepsilon_*)\to{\Bbb C}$ of the equation
(\ref{eqBeltrami}) will be called extension of each to other if
there is a finite chain of such solutions
$f_i:B(z_i,\varepsilon_i)\to\Bbb{C}$, $i=1,\ldots,m$, that
$f_1=f_0$, $f_m=f_*$ and $f_i(z)\equiv f_{i+1}(z)$ for $z\in
E_i:=B(z_i,\varepsilon_i)\cap
B(z_{i+1},\varepsilon_{i+1})\neq\emptyset$, $i=1,\ldots,m-1$. A
collection of local regular solutions
$f_j:B(z_j,\varepsilon_j)\to{\Bbb C}$, $j\in J$, will be called a
{\bf multi-valued solution} of the equation (\ref{eqBeltrami}) in
$D$ if the disks $B(z_j,\varepsilon_j)$ cover the whole domain $D$
and $f_j$ are extensions of each to other through the collection and
the collection is maximal by inclusion. A multi-valued solution of
the equation (\ref{eqBeltrami}) will be called a {\bf multi-valued
solution of the Dirichlet problem} (\ref{eqGrUsl}) if $u(z)={\rm
Re}\,f(z)={\rm Re}\,f_{j}(z)$, $z\in B(z_j,\varepsilon_j)$, $j\in
J$, is a simply-valued function in $D$ satisfying the condition
$\lim\limits_{z\in\zeta}u(z)=\varphi(\zeta)$ for all $\zeta\to
\partial D$.

\medskip

As usual, we assume in the following lemma that $K^T_{\mu}(\cdot,z_0)$ is extended by zero outside of the domain
$D$.

\medskip

\begin{lemma}\label{lem13.5.3334} Let $D$ be a bounded domain in
${\Bbb C}$ whose boundary consists of $n\geqslant2$ mutually disjoint Jordan curves. Suppose that $\mu:D\to{\Bbb
C}$ is a measurable function with $|\mu(z)|<1$ a.e., $K_{\mu}\in L^1_{\rm loc}(D),$ $K^T_{\mu}(z,z_0)$ is
integrable over $D\bigcap S (z_0, r)$ for a.e. $r \in (0, \varepsilon_0)$ and
\begin{equation}\label{m.3omal}
\int\limits_{\varepsilon<|z-z_0|<\varepsilon_0}
K^T_{\mu}(z,z_0)\cdot\psi^2_{z_0,\varepsilon}(|z-z_0|)\,dm(z)=o(I_{z_0}^{2}(\varepsilon))\quad{\rm
as}\quad\varepsilon\to0\ \ \forall\ z_0\in\overline{D}\end{equation}
for some $\varepsilon_0\in(0,\delta_0)$ where
$\delta_0=\delta(z_0)=\sup_{z\in D}|z-z_0|$ and
$\psi_{z_0,\varepsilon}(t)$ is a family of non-negative measurable
(by Lebesgue) functions on $(0,\infty)$ such that
\begin{equation}\label{eqm.3.5.3}
I_{z_0}(\varepsilon)\colon =\int\limits_{\varepsilon}^{\varepsilon_0}
\psi_{z_0,\varepsilon}(t)\,dt<\infty\qquad\forall\ \varepsilon\in(0,\varepsilon_0)\,.\end{equation} Then the
Beltrami equation (\ref{eqBeltrami}) has a multi-valued solutions of the Dirichlet problem (\ref{eqGrUsl}) for
each continuous function $\varphi:\partial D\to{\Bbb R}$. \end{lemma}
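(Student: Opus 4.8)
The strategy parallels the proof of Lemma \ref{lemDIR9} (the Jordan-domain case) and of Lemma \ref{lem13.5.333ps} (the pseudoregular case), the only genuinely new ingredient being that the global analytic/meromorphic factor is replaced by a multi-valued one. First I would invoke Lemma 4.1 in \cite{RSY$_6$} to obtain a regular homeomorphic $W^{1,1}_{\rm loc}$ solution $F$ of the Beltrami equation (\ref{eqBeltrami}) in $D$. As in Lemma \ref{lem13.5.333ps}, the image $D_*=F(D)$ is a bounded domain whose boundary consists of $n$ connected components corresponding naturally to the $n$ Jordan curves $\gamma_i$ that make up $\partial D$, by Lemma 5.3 in \cite{IR} (equivalently Lemma 6.5 in \cite{MRSY}). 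By Theorem V.6.2 in \cite{Gol} there is a conformal map $R$ taking $D_*$ onto a circular domain ${\Bbb D}_*$ bounded by $n$ circles or points; in particular ${\Bbb D}_*$ has a weakly flat boundary. Setting $g=R\circ F$, the hypotheses (\ref{m.3omal})--(\ref{eqm.3.5.3}) are precisely those of Lemma \ref{lem3.3.333}, so $g$ extends to a homeomorphism $g_*:\overline{D}\to\overline{{\Bbb D}_*}$.

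The second step is to produce the multi-valued harmonic/analytic piece on the circular domain ${\Bbb D}_*$. Since ${\Bbb D}_*$ is $n$-connected, the classical Dirichlet problem for harmonic functions is solvable there for the continuous boundary data $\varphi\circ g_*^{-1}$ on $\partial{\Bbb D}_*$: let $u_*$ be the (single-valued) harmonic solution. Its harmonic conjugate $v_*$ is in general multi-valued — it acquires periods around the $n-1$ independent cycles of ${\Bbb D}_*$ — so $H:=u_*+iv_*$ is a multi-valued analytic function on ${\Bbb D}_*$ whose branches differ by purely imaginary constants and whose real part $u_*$ is single-valued. Pulling back along $g$, the collection $f_j:=H\circ g$ restricted to a suitable cover of $D$ by disks $B(z_j,\varepsilon_j)$ gives a family of local regular solutions of (\ref{eqBeltrami}) in $D$ — each $f_j$ is a composition of a local branch of an analytic function with the regular homeomorphism $g$, hence lies in $W^{1,1}_{\rm loc}$ with nonvanishing Jacobian and satisfies (\ref{eqBeltrami}) a.e. — which are analytic continuations of one another and form a maximal such collection, i.e. a multi-valued solution of (\ref{eqBeltrami}) in the sense defined above. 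Its real part ${\rm Re}\,f_j(z)={\rm Re}\,H(g(z))=u_*(g(z))$ is single-valued in $D$.

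Finally I would verify the boundary condition (\ref{eqGrUsl}): for $\zeta\in\partial D$, as $z\to\zeta$ we have $g(z)\to g_*(\zeta)\in\partial{\Bbb D}_*$ by the continuity of the extension $g_*$, and $u_*(w)\to\varphi(g_*^{-1}(g_*(\zeta)))=\varphi(\zeta)$ by the boundary behaviour of the harmonic solution on the (weakly flat, hence in particular regular for the Dirichlet problem) boundary $\partial{\Bbb D}_*$; composing gives $\lim_{z\to\zeta}{\rm Re}\,f(z)=\varphi(\zeta)$, so $u={\rm Re}\,f$ is the required single-valued function realizing the Dirichlet data.

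The main obstacle is the existence and boundary regularity of the multi-valued analytic function $H$ on the $n$-connected circular domain: one must know that the classical Dirichlet problem for harmonic functions is solvable on a circular domain with continuous data and that the harmonic conjugate, although multi-valued, has branches differing only by imaginary constants (so that ${\rm Re}\,H$ stays single-valued). This is classical potential theory for finitely connected domains, but it is the step that genuinely uses the circular (hence regular-boundary) structure supplied by Theorem V.6.2 in \cite{Gol}; everything else is a transcription of the arguments already given for Lemmas \ref{lemDIR9} and \ref{lem13.5.333ps}. One should also take a little care that the cover $\{B(z_j,\varepsilon_j)\}$ of $D$ can be chosen so that on each disk a single branch of $H\circ g$ is well defined and that the resulting collection is maximal by inclusion — a routine monodromy argument, since the branches of $H$ are obtained by analytic continuation along paths in ${\Bbb D}_*$ and $g$ is a homeomorphism.
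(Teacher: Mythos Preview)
Your proposal is correct and follows essentially the same route as the paper: obtain a regular homeomorphic $W^{1,1}_{\rm loc}$ solution via Lemma~4.1 in \cite{RSY$_6$}, pass to a circular domain by Theorem~V.6.2 in \cite{Gol}, extend homeomorphically to the closure by Lemma~\ref{lem3.3.333}, solve the harmonic Dirichlet problem on the circular domain (the paper cites Section~3 of Chapter~VI in \cite{Gol} for this), and then build the multi-valued analytic function $H$ from the multi-valued harmonic conjugate and pull it back. The only cosmetic difference is that the paper absorbs the conformal map $R$ into $F$ and works directly with $F_*$, whereas you keep $g=R\circ F$ and $g_*$ separate; your discussion of the periods of the conjugate and the monodromy of the local branches is somewhat more explicit than the paper's, which simply asserts that the local $h$ ``can be extended to a multi-valued analytic function $H$''.
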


\medskip

\begin{proof} Let $F$ be a regular homeomorphic solution of the
Beltrami equation (\ref{eqBeltrami}) in the class $W_{\rm
loc}^{1,1}$ that exists by Lemma 4.1 in \cite{RSY$_6$}. As it was
showed under the proof of Lemma \ref{lem13.5.333ps}, we may assume
that $D_*:=F(D)$ is a circular domain and that $F$ can be extended
to a homeomorphism $F_*:\overline{D}\to\overline{D_*}$. Let
$u:D_*\to\Bbb{R}$ be a harmonic function such that
$$\lim\limits_{w\to\zeta}u(w)=\varphi (F_{*}^{-1}(\zeta))\qquad\forall\ \zeta\in\partial D^*$$
whose existence is well-known, see, e.g., Section 3 of Chapter VI
in \cite{Gol}.

Let $z_0\in D$, $B_0:=B(z_0,\varepsilon_0)\subseteq D$ for some
$\varepsilon_0>0$. Then the domain $D_0=F(B_0)$ is simply connected
and hence there is a harmonic function $v(w)$ such that
$h(w)=u(w)+iv(w)$ is a holomorphic function which is unique up to an
additive constant, see, e.g., Theorem 1 in Section 7 of Chapter III,
Part 3 in \cite{HuCo}. Note that $f_0:=h\circ F|_{B_0}$ is a local
regular solution of the Beltrami equation (\ref{eqBeltrami}). Note
that the function $h$ can be extended to a multi-valued analytic
function $H$ in the domain $D_*$ and, thus, $H\circ F$ gives the
desired multi-valued solutions of the Dirichlet problem
(\ref{eqGrUsl}) for the Beltrami equation (\ref{eqBeltrami}).
\end{proof}

\medskip

In particular, by Lemma \ref{lem13.5.3334} above and Lemma
\ref{lem13.4.2} we obtain the following.

\medskip

\begin{theorem}\label{12thDIR2fmo} Let $D$ be a bounded domain in
${\Bbb C}$ whose boundary consists of $n\geqslant2$ mutually disjoint Jordan curves and $\mu:D\to{\Bbb C}$ be a
measurable function with $|\mu(z)|<1$ a.e. such that $K_{\mu}\in L^1_{\rm loc}(D)$. Suppose that, for every
point $z_0\in \overline{D}$ and a neighborhood $U_{z_0}$ of $z_0$, $K^T_{\mu}(z,z_0)\leqslant Q_{z_0}(z)$ a.e.
in $D\cap U_{z_0}$ for a function $Q_{z_0}:{\Bbb C}\to[0,\infty]$ in the class ${\rm FMO}({z_0})$. Then the
Beltrami equation (\ref{eqBeltrami}) has a multi-valued solutions of the Dirichlet problem (\ref{eqGrUsl}) for
each continuous function $\varphi:\partial D\to{\Bbb R}$.\end{theorem}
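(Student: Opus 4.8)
The plan is to obtain Theorem \ref{12thDIR2fmo} from Lemma \ref{lem13.5.3334} by the special choice of the functional parameter $\psi_{z_0,\varepsilon}(t)\equiv\psi(t)=1/\bigl(t\log(1/t)\bigr)$, exactly in the way Theorem \ref{thDIR2fmo} was deduced from Lemma \ref{lemDIR9}. So the whole argument amounts to checking that the hypotheses of Lemma \ref{lem13.5.3334} are met with this $\psi$ and then invoking that lemma.

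First I would fix $z_0\in\overline D$ together with a neighbourhood $U_{z_0}$ and a nonnegative function $Q_{z_0}\in{\rm FMO}(z_0)$ such that $K^T_\mu(z,z_0)\le Q_{z_0}(z)$ a.e. in $D\cap U_{z_0}$, and then pick $\varepsilon_0=\varepsilon_0(z_0)>0$ so small that $B(z_0,\varepsilon_0)\subset U_{z_0}$ and $\varepsilon_0<\delta_0(z_0):=\min(e^{-e},d(z_0))$ with $d(z_0)=\sup_{z\in D}|z-z_0|$. With this $\varepsilon_0$ the function $\psi$ is positive and measurable on $(0,\varepsilon_0)$, and
\[
I_{z_0}(\varepsilon)=\int_\varepsilon^{\varepsilon_0}\frac{dt}{t\log(1/t)}=\log\log\frac1\varepsilon-\log\log\frac1{\varepsilon_0}<\infty\quad\text{for each }\varepsilon\in(0,\varepsilon_0),
\]
so that (\ref{eqm.3.5.3}) holds, while $I_{z_0}(\varepsilon)\to\infty$ and $I_{z_0}(\varepsilon)\sim\log\log(1/\varepsilon)$ as $\varepsilon\to0$. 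Since $Q_{z_0}\ge0$ belongs to ${\rm FMO}(z_0)$, Lemma \ref{lem13.4.2} gives
\[
\int_{\varepsilon<|z-z_0|<\varepsilon_0}\frac{Q_{z_0}(z)\,dm(z)}{\bigl(|z-z_0|\log\frac{1}{|z-z_0|}\bigr)^2}=O\Bigl(\log\log\frac1\varepsilon\Bigr)\qquad\text{as }\varepsilon\to0 .
\]
Because $K^T_\mu(z,z_0)\le Q_{z_0}(z)$ a.e. on $D\cap B(z_0,\varepsilon_0)$, the left-hand side of (\ref{m.3omal}) with $\psi=\psi(t)$ is majorised by the same quantity and hence equals $O(\log\log(1/\varepsilon))=o\bigl((\log\log(1/\varepsilon))^2\bigr)=o(I_{z_0}^2(\varepsilon))$, which is (\ref{m.3omal}). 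Moreover $K^T_\mu(\cdot,z_0)\le Q_{z_0}\in L^1_{\rm loc}$ near $z_0$ together with $K^T_\mu\le K_\mu\in L^1_{\rm loc}(D)$ shows, via the Fubini theorem in polar coordinates, that $K^T_\mu(z,z_0)$ is integrable over $D\cap S(z_0,r)$ for a.e. $r\in(0,\varepsilon_0)$. Thus all hypotheses of Lemma \ref{lem13.5.3334} hold, and applying it yields a multi-valued solution of the Dirichlet problem (\ref{eqGrUsl}) for every continuous $\varphi:\partial D\to{\Bbb R}$.

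I do not expect a genuine obstacle here: the statement is essentially Lemma \ref{lem13.5.3334} repackaged through the FMO estimate of Lemma \ref{lem13.4.2} (compare also Remark \ref{rem333}). The only points that call for a little care are that the hypothesis is imposed at every $z_0\in\overline D$, boundary points included, so $\varepsilon_0$ must be chosen below the threshold $e^{-e}$ of Lemma \ref{lem13.4.2} and inside $U_{z_0}$ with the radius allowed to depend on $z_0$, and that the circle-integrability requirement in Lemma \ref{lem13.5.3334} has to be traced back to the local $L^1$ bound on $K^T_\mu(\cdot,z_0)$ coming from $Q_{z_0}$ (for boundary $z_0$ the assumption $K_\mu\in L^1_{\rm loc}(D)$ alone does not reach $\partial D$).
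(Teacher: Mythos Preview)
Your proposal is correct and follows exactly the paper's own route: the paper derives Theorem \ref{12thDIR2fmo} directly from Lemma \ref{lem13.5.3334} together with Lemma \ref{lem13.4.2} by choosing $\psi(t)=1/\bigl(t\log(1/t)\bigr)$, just as in the simply connected case (Theorem \ref{thDIR2fmo} from Lemma \ref{lemDIR9}). Your write-up simply makes explicit the verification of (\ref{m.3omal}), (\ref{eqm.3.5.3}) and the circle-integrability hypothesis that the paper leaves to the reader.
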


\medskip

\begin{remark}\label{rmm.555} In paticular, the conditions and the conclusion of Theorem
\ref{12thDIR2fmo} hold if either $Q_{z_0}\in{\rm BMO}_{\rm loc}$ or $Q_{z_0}\in{\rm W}^{1,2}_{\rm loc}$ because
$W^{\,1,2}_{\rm loc} \subset {\rm VMO}_{\rm loc}$. \end{remark}

\medskip

\begin{corollary}\label{corc.12thDIR2fmo} Let $D$ be a bounded domain in
${\Bbb C}$ whose boundary consists of $n\geqslant2$ mutually disjoint Jordan curves and $\mu:D\to{\Bbb C}$ be a
measurable function with $|\mu(z)|<1$ a.e. and such that $K_{\mu}(z)\leqslant Q(z)$ a.e. in $D$ for a function
$Q:\Bbb{C}\to[0,\infty]$ in ${\rm FMO}(\overline{D})$. Then the Beltrami equation (\ref{eqBeltrami}) has a
multi-valued solutions of the Dirichlet problem (\ref{eqGrUsl}) for each continuous function $\varphi:\partial
D\to{\Bbb R}$.\end{corollary}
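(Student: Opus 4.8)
The plan is to obtain this statement as an immediate specialization of Theorem \ref{12thDIR2fmo}, taking the family $\{Q_{z_0}\}_{z_0\in\overline D}$ of majorants to be constant, namely $Q_{z_0}\equiv Q$ for every $z_0\in\overline D$. First I would unwind the notation $Q\in{\rm FMO}(\overline D)$: by the definition in Section~2, this means that $Q$ is given in a domain $G\subset\Bbb C$ with $\overline D\subset G$ and $Q\in{\rm FMO}(G)$, i.e.\ $Q\in{\rm FMO}(z_0)$ for every $z_0\in G$; since $Q$ here is in fact defined on all of $\Bbb C$, one may simply take $G=\Bbb C$. In particular $Q\in{\rm FMO}(z_0)$ at each point $z_0\in\overline D$ (boundary points as well as interior points), which is precisely the regularity demanded of the functions $Q_{z_0}$ in Theorem \ref{12thDIR2fmo}; as the neighborhood $U_{z_0}$ one takes any ball $B(z_0,r)$.

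Next I would check the pointwise domination hypothesis. By the two-sided estimate (\ref{eqConnect}) we have $K^T_{\mu}(z,z_0)\leqslant K_{\mu}(z)$ for all $z_0\in\Bbb C$ and $z\in D$, and by assumption $K_{\mu}(z)\leqslant Q(z)$ a.e.\ in $D$; combining these gives $K^T_{\mu}(z,z_0)\leqslant Q(z)=Q_{z_0}(z)$ a.e.\ in $D\cap U_{z_0}$ for every $z_0\in\overline D$. It then remains to verify the auxiliary integrability requirement $K_{\mu}\in L^1_{\rm loc}(D)$ that appears in Theorem \ref{12thDIR2fmo}: by Remark \ref{FMO_rmk2.13a}, ${\rm FMO}\subset L^1_{\rm loc}$, so $Q\in L^1_{\rm loc}(\Bbb C)$ and hence $Q\in L^1_{\rm loc}(D)$, whence $K_{\mu}\leqslant Q$ a.e.\ forces $K_{\mu}\in L^1_{\rm loc}(D)$ as well.

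With these three facts assembled --- $Q_{z_0}=Q\in{\rm FMO}(z_0)$ for all $z_0\in\overline D$, the a.e.\ bound $K^T_{\mu}(\cdot,z_0)\leqslant Q_{z_0}$ on $D\cap U_{z_0}$, and $K_{\mu}\in L^1_{\rm loc}(D)$ --- Theorem \ref{12thDIR2fmo} applies verbatim and produces, for each continuous $\varphi:\partial D\to\Bbb R$, the desired multi-valued solution of the Dirichlet problem (\ref{eqGrUsl}) for the Beltrami equation (\ref{eqBeltrami}). There is essentially no genuine obstacle here; the only point that needs a moment's attention is correctly translating the hypothesis $Q\in{\rm FMO}(\overline D)$ into the per-point form $Q_{z_0}\in{\rm FMO}(z_0)$ required by Theorem \ref{12thDIR2fmo} at \emph{all} points of $\overline D$, including the interior ones at which that hypothesis is also imposed.
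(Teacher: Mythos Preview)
Your argument is correct and matches the paper's intended approach: the corollary is stated immediately after Theorem~\ref{12thDIR2fmo} without a separate proof, and the implicit reasoning is exactly the specialization $Q_{z_0}\equiv Q$ together with the inequality $K^T_{\mu}(z,z_0)\leqslant K_{\mu}(z)$ from~(\ref{eqConnect}), precisely as you have written (compare the identical passage preceding Corollary~\ref{corDIR2fmo} in Section~8). Your added observation that $K_{\mu}\in L^1_{\rm loc}(D)$ follows from $Q\in{\rm FMO}\subset L^1_{\rm loc}$ is a nice point the paper leaves tacit.
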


\medskip

\begin{corollary}\label{12corDIR1500re} In particular, the conclusion of
Theorem \ref{12thDIR2fmo} holds if every point $z_0\in\overline{D}$ is Lebesgue point of a function
$Q_{z_0}:{\Bbb C}\to[0,\infty]$ which is integrable in a neighborhood $U_{z_0}$ of the point $z_0$ such that
$K^T_{\mu}(z,z_0)\leqslant Q_{z_0}(z)$ a.e. in $D\cap U_{z_0}$.
\end{corollary}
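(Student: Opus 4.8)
The plan is to obtain Corollary \ref{12corDIR1500re} as an immediate specialization of Theorem \ref{12thDIR2fmo}. All the standing hypotheses of that theorem --- that $D$ is bounded with $\partial D$ a union of $n\geqslant 2$ mutually disjoint Jordan curves, that $\mu:D\to{\Bbb C}$ is measurable with $|\mu(z)|<1$ a.e., and that $K_{\mu}\in L^1_{\rm loc}(D)$ --- are to be carried over unchanged; the only clause left to verify is that at each point $z_0\in\overline{D}$ the majorant $Q_{z_0}$ belongs to the class ${\rm FMO}(z_0)$.

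First I would fix an arbitrary $z_0\in\overline{D}$. Since $z_0$ is by assumption a Lebesgue point of $Q_{z_0}$, the value $Q_{z_0}(z_0)$ is finite, $Q_{z_0}$ is integrable in a neighbourhood of $z_0$, and, by (\ref{FMO_eq2.7a}), $\dashint_{B(z_0,\varepsilon)}|Q_{z_0}(z)-Q_{z_0}(z_0)|\,dm(z)\to 0$ as $\varepsilon\to 0$. Next I would apply Proposition \ref{FMO_pr2.1} with the constant collection $\varphi_{\varepsilon}\equiv Q_{z_0}(z_0)$, $\varepsilon\in(0,\varepsilon_0]$: the upper limit in (\ref{FMO_eq2.7}) then equals $0<\infty$, so the proposition yields $Q_{z_0}\in{\rm FMO}(z_0)$. (Alternatively, one may pass through Corollary \ref{FMO_cor2.1}, using $\dashint_{B(z_0,\varepsilon)}|Q_{z_0}(z)|\,dm(z)\leqslant |Q_{z_0}(z_0)|+\dashint_{B(z_0,\varepsilon)}|Q_{z_0}(z)-Q_{z_0}(z_0)|\,dm(z)$.) Together with the hypothesis $K^T_{\mu}(z,z_0)\leqslant Q_{z_0}(z)$ a.e. in $D\cap U_{z_0}$, this places us exactly in the situation of Theorem \ref{12thDIR2fmo}, whose conclusion --- existence of a multi-valued solution of the Dirichlet problem (\ref{eqGrUsl}) for every continuous $\varphi:\partial D\to{\Bbb R}$ --- then follows at once.

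There is no genuine obstacle here; the single point demanding attention, rather than a difficulty, is that the definition (\ref{FMO_eq2.4}) of ${\rm FMO}(z_0)$ tacitly includes local integrability of $Q_{z_0}$ at $z_0$, which is precisely the integrability-in-$U_{z_0}$ assumption, so nothing further is required. The argument is verbatim the same as for the analogous Corollaries \ref{corDIR1000re} and \ref{corDIR1500re}, only with multi-valued solutions in place of regular, respectively pseudoregular, ones.
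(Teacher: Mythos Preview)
Your argument is correct and follows precisely the route the paper intends: a Lebesgue point of $Q_{z_0}$ satisfies the hypothesis of Proposition~\ref{FMO_pr2.1} with $\varphi_{\varepsilon}\equiv Q_{z_0}(z_0)$, hence $Q_{z_0}\in{\rm FMO}(z_0)$, and Theorem~\ref{12thDIR2fmo} applies directly. The paper gives no separate proof for this corollary, treating it (like the parallel Corollaries~\ref{corDIR1000re_c}, \ref{cor7.DIR1000re_c}, \ref{corDIR1000re} and \ref{corDIR1500re}) as an immediate specialization via the same Lebesgue-point-implies-FMO observation already recorded after Corollary~\ref{FMO_cor2.1}.
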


\medskip

We assume that $K^T_{\mu}(z,z_0)$ is extended by zero outside of $D$
in the next con\-se\-quen\-ces of Theorem \ref{12thDIR2fmo}.

\medskip

\begin{corollary}\label{12corDIR1} Let $D$ be a bounded domain in
${\Bbb C}$ whose boundary consists of $n\geqslant2$ mutually disjoint Jordan curves and $\mu:D\to{\Bbb C}$ be a
measurable function with $|\mu(z)|<1$ a.e. such that $K_{\mu}\in L^1_{\rm loc}(D)$. Suppose that
\begin{equation}\label{eqDIR6*}\overline{\lim\limits_{\varepsilon\to0}}\quad
\dashint_{B(z_0,\varepsilon)}K^T_{\mu}(z,z_0)\,dm(z)<\infty\qquad\forall\ z_0\in\overline{D}\,.\end{equation}
Then the Beltrami equation (\ref{eqBeltrami}) has a multi-valued solutions of the Dirichlet problem
(\ref{eqGrUsl}) for each continuous function $\varphi:\partial D\to{\Bbb R}$. \end{corollary}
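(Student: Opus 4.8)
The plan is to obtain this statement as a direct specialization of Theorem \ref{12thDIR2fmo} (it is the multi-valued analogue of Corollary \ref{corDIR1} for finitely connected domains). To apply that theorem one must exhibit, at each point $z_0\in\overline D$, a function $Q_{z_0}:{\Bbb C}\to[0,\infty]$ of the class ${\rm FMO}(z_0)$ with $K^T_{\mu}(z,z_0)\leqslant Q_{z_0}(z)$ a.e.\ in $D\cap U_{z_0}$ for some neighborhood $U_{z_0}$ of $z_0$. The natural and in fact the only choice I would make here is to take $Q_{z_0}$ to be $K^T_{\mu}(\cdot,z_0)$ itself, extended by zero outside $D$ as in the standing convention preceding the corollary; then the required domination holds trivially (with equality) a.e.\ in $D$, and the whole proof reduces to verifying that this $Q_{z_0}$ lies in ${\rm FMO}(z_0)$ for every $z_0\in\overline D$.

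\textbf{Key step.} First I would note that $Q_{z_0}$ is non-negative, and that hypothesis \eqref{eqDIR6*} says precisely $\overline{\lim}_{\varepsilon\to0}\dashint_{B(z_0,\varepsilon)}Q_{z_0}(z)\,dm(z)<\infty$. In particular $\int_{B(z_0,\varepsilon)}Q_{z_0}\,dm\leqslant C\pi\varepsilon^{2}<\infty$ for all sufficiently small $\varepsilon>0$, so $Q_{z_0}$ is integrable in a neighborhood of $z_0$, and \eqref{eqDIR6*} is exactly the hypothesis \eqref{FMO_eq2.8} of Corollary \ref{FMO_cor2.1}. That corollary then yields $Q_{z_0}\in{\rm FMO}(z_0)$. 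Since $z_0\in\overline D$ was arbitrary, and since the remaining hypotheses of Theorem \ref{12thDIR2fmo} ($D$ bounded with boundary consisting of $n\geqslant2$ mutually disjoint Jordan curves, $\mu$ measurable with $|\mu|<1$ a.e., $K_{\mu}\in L^1_{\rm loc}(D)$) are assumed outright in the corollary, Theorem \ref{12thDIR2fmo} applies and gives a multi-valued solution of the Dirichlet problem \eqref{eqGrUsl} for every continuous $\varphi:\partial D\to{\Bbb R}$, as claimed.

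\textbf{Main obstacle.} There is essentially no analytic difficulty; the only point requiring a word of care is that $z_0$ may lie on $\partial D$, so the balls $B(z_0,\varepsilon)$ protrude from $D$. This is harmless because $K^T_{\mu}(\cdot,z_0)$ is extended by zero on ${\Bbb C}\setminus D$, and both the averaging condition \eqref{eqDIR6*} and the finite-mean-oscillation property \eqref{FMO_eq2.4} are taken with respect to balls in ${\Bbb C}$ and are unaffected by this zero extension; thus Corollary \ref{FMO_cor2.1} is applicable verbatim at boundary points as well. Hence the corollary follows at once, in exactly the same manner as the earlier consequences of Theorem \ref{12thDIR2fmo}.
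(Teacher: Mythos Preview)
Your proposal is correct and follows exactly the paper's intended approach: Corollary~\ref{12corDIR1} is listed as a direct consequence of Theorem~\ref{12thDIR2fmo}, obtained by taking $Q_{z_0}=K^T_{\mu}(\cdot,z_0)$ (extended by zero outside $D$) and invoking Corollary~\ref{FMO_cor2.1} to pass from the finite-mean-average condition~\eqref{eqDIR6*} to $Q_{z_0}\in{\rm FMO}(z_0)$.
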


\medskip

\begin{theorem}\label{thm.KPRS12b*} Let $D$ be a bounded domain in
${\Bbb C}$ whose boundary consists of $n\geqslant2$ mutually
disjoint Jordan curves and $\mu:D\to{\Bbb C}$ be a measurable
function with $|\mu(z)|<1$ a.e. such that $K_{\mu}\in L^1_{\rm
loc}(D)$. Suppose that
\begin{equation}\label{eqm.KPRS12c*}
\int\limits_{\varepsilon<|z-z_0|<\varepsilon_0}K^T_{\mu}(z,z_0)\,\frac{dm(z)}{|z-z_0|^2}
=o\left(\left[\log\frac{1}{\varepsilon}\right]^2\right)\qquad\forall\ z_0\in\overline{D}\end{equation} as
$\varepsilon\to 0$ for some $\varepsilon_0=\delta(z_0)\in(0,d(z_0))$ where $d(z_0)=\sup_{z\in D}|z-z_0|$. Then
the Beltrami equation (\ref{eqBeltrami}) has a pseudoregular solution of the Dirichlet problem (\ref{eqGrUsl})
for every continuous function $\varphi:\partial D\to{\Bbb R}$, $\varphi(\zeta)\not\equiv{\rm const}$, with poles
at $n$ prescribed points in $D$.\end{theorem}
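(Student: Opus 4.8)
The plan is to obtain Theorem \ref{thm.KPRS12b*} as the special case $\psi_{z_0,\varepsilon}(t)\equiv\psi(t)=1/t$ (independent of both $z_0$ and $\varepsilon$) of the general Lemma \ref{lem13.5.333ps}, in exactly the same way that, say, Theorem \ref{thDIR2fmo} is deduced from Lemma \ref{lemDIR9}. First I would record that, for this choice, $I_{z_0}(\varepsilon)=\int_{\varepsilon}^{\varepsilon_0}dt/t=\log(\varepsilon_0/\varepsilon)$, so that (\ref{eqm.3.5.3}) holds trivially for every $\varepsilon\in(0,\varepsilon_0)$, and moreover $I_{z_0}(\varepsilon)\to\infty$ as $\varepsilon\to0$; in particular $I_{z_0}^2(\varepsilon)=[\log(\varepsilon_0/\varepsilon)]^2$ and, for each fixed $z_0$, the ratio $\log(\varepsilon_0/\varepsilon)/\log(1/\varepsilon)\to1$, so that $I_{z_0}^2(\varepsilon)$ is asymptotically equivalent to $[\log(1/\varepsilon)]^2$ as $\varepsilon\to0$. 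This is consistent with Remark \ref{rem333}: since $\psi$ does not depend on the parameters, a bound of the form (\ref{m.3omal}) automatically entails $I_{z_0}(\varepsilon)\to\infty$.

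Next I would verify the integrability hypothesis appearing in Lemma \ref{lem13.5.333ps}, namely that $K^T_{\mu}(z,z_0)$ is integrable over $D\cap S(z_0,r)$ for a.e. $r\in(0,\varepsilon_0)$. On the annulus $\varepsilon<|z-z_0|<\varepsilon_0$ one has $1/|z-z_0|^2\geqslant 1/\varepsilon_0^2$, so the hypothesis (\ref{eqm.KPRS12c*}) gives $\int_{\varepsilon<|z-z_0|<\varepsilon_0}K^T_{\mu}(z,z_0)\,dm(z)<\infty$ for every $\varepsilon\in(0,\varepsilon_0)$; the Fubini theorem in polar coordinates then yields integrability of $K^T_{\mu}(\cdot,z_0)$ on $D\cap S(z_0,r)$ for a.e. $r\in(\varepsilon,\varepsilon_0)$, and, $\varepsilon$ being arbitrary, for a.e. $r\in(0,\varepsilon_0)$. (The standing assumption $K_{\mu}\in L^1_{\rm loc}(D)$, together with $K^T_{\mu}(z,z_0)\leqslant K_{\mu}(z)$, is in any case what is needed inside Lemma \ref{lem13.5.333ps} to produce the underlying regular homeomorphic $W^{1,1}_{\rm loc}$ solution.)

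The key step is then purely a matter of substitution: with $\psi(t)=1/t$ the left-hand side of (\ref{m.3omal}) is exactly $\int_{\varepsilon<|z-z_0|<\varepsilon_0}K^T_{\mu}(z,z_0)\,dm(z)/|z-z_0|^2$, which by hypothesis (\ref{eqm.KPRS12c*}) is $o([\log(1/\varepsilon)]^2)=o(I_{z_0}^2(\varepsilon))$ in view of the asymptotic equivalence recorded above. Hence all hypotheses of Lemma \ref{lem13.5.333ps} are satisfied at every point $z_0\in\overline{D}$, and that lemma delivers the desired pseudoregular solution of the Dirichlet problem (\ref{eqGrUsl}) for each continuous $\varphi:\partial D\to{\Bbb R}$, $\varphi(\zeta)\not\equiv{\rm const}$, with poles at the $n$ prescribed points of $D$.

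I do not anticipate a genuine obstacle: the statement is a corollary of the machinery already assembled, and the argument is the $\psi(t)=1/t$ instance of the now-routine scheme. The only point demanding a little care is that $\varepsilon_0=\delta(z_0)$ depends on $z_0$, so the passage from $[\log(1/\varepsilon)]^2$ to $I_{z_0}^2(\varepsilon)$ must be performed pointwise in $z_0$; but for each fixed $z_0$ this is immediate, and the hypothesis and conclusion of Lemma \ref{lem13.5.333ps} are themselves pointwise in $z_0$, so nothing is lost. As in Remark \ref{rmp.KRRSa*}, replacing $\psi(t)=1/t$ by $\psi(t)=1/(t\log(1/t))$ — or by the higher iterated-logarithm factors — would yield the corresponding $\log\log$-scale refinements of (\ref{eqm.KPRS12c*}); and if one wished to assert instead multi-valued solutions, matching the heading of this section, one would invoke Lemma \ref{lem13.5.3334} in place of Lemma \ref{lem13.5.333ps} with the identical choice of $\psi$.
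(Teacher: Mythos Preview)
Your proposal is correct and follows exactly the paper's approach: specialize the general lemma by taking $\psi_{z_0,\varepsilon}(t)\equiv\psi(t)=1/t$, so that $I_{z_0}(\varepsilon)=\log(\varepsilon_0/\varepsilon)\sim\log(1/\varepsilon)$ and condition (\ref{m.3omal}) (resp.\ (\ref{p.3omal})) reduces to the hypothesis (\ref{eqm.KPRS12c*}). The only difference is that the paper's surrounding Remark \ref{rmm.KRRSa*} points to Lemma \ref{lem13.5.3334} (the multi-valued version), consistent with the section heading, whereas you invoke Lemma \ref{lem13.5.333ps} (the pseudoregular version), consistent with the conclusion actually stated in the theorem; you already noted this discrepancy, and in either case the argument is identical.
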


\medskip

\begin{remark}\label{rmm.KRRSa*} Choosing in Lemma \ref{lem13.5.3334} the function
$\psi(t)=1/(t\log{1/t})$ instead of $\psi(t)=1/t$, we are able to
replace (\ref{eqm.KPRS12c*}) by \begin{equation}\label{eqm.KPRS12f*}
\int\limits_{\varepsilon<|z-z_0|<\varepsilon_0}\frac{K^T_{\mu}(z,z_0)\,dm(z)}
{\left(|z-z_0|\log{\frac{1}{|z-z_0|}}\right)^2}
=o\left(\left[\log\log\frac{1}{\varepsilon}\right]^2\right)\end{equation}
In general, we are able to give here the whole scale of the
corresponding conditions in $\log$ using functions $\psi(t)$ of the
form
$1/(t\log{1}/{t}\cdot\log\log{1}/{t}\cdot\ldots\cdot\log\ldots\log{1}/{t})$.
\end{remark}

\medskip

Arguing similarly to the proof of Lemma \ref{lem13.5.3334}, we
obtain on the basis of Theorem \ref{th7.KR4.1c} the next result.

\begin{theorem}\label{12thDIR2io} Let $D$ be a bounded domain in
${\Bbb C}$ whose boundary consists of $n\geqslant2$ mutually disjoint Jordan curves and $\mu:D\to\Bbb{C}$ be a
measurable function with $|\mu(z)|<1$ a.e. such that $K_{\mu}\in L^1_{\rm loc}(D)$. Suppose that
\begin{equation}\label{eq3.8.11.2}\int\limits_{0}^{\delta(z_0)}
\frac{dr}{||K^T_{\mu}||_{1}(z_0,r)}=\infty \qquad\forall\
z_0\in\overline{D}\end{equation} for some $\delta(z_0)\in(0,d(z_0))$
where $d(z_0)=\sup\limits_{z\in D}|z-z_0|$ and
\begin{equation}\label{eq8.11.4}
||K^T_{\mu}||_{1}(z_0,r)=\int\limits_{D\cap S(z_0,r)}K^T_{\mu}(z,z_0)\,|dz|\, .\end{equation}  Then the Beltrami
equation (\ref{eqBeltrami}) has a multi-valued solutions of the Dirichlet problem (\ref{eqGrUsl}) for each
continuous function $\varphi:\partial D\to{\Bbb R}$.\end{theorem}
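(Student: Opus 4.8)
The plan is to reproduce the Stoilow‐type factorization argument of Lemma~\ref{lem13.5.3334}, substituting Theorem~\ref{th7.KR4.1c} for the boundary‑extension ingredient. First I would invoke Lemma 4.1 in \cite{RSY$_6$} to obtain a regular homeomorphic $W^{1,1}_{\rm loc}$ solution $F$ of (\ref{eqBeltrami}) in $D$. Since $D$ is bounded and $\partial D$ is a union of $n\geqslant 2$ mutually disjoint Jordan curves $\gamma_1,\dots,\gamma_n$, the image $D_*=F(D)$ is again $n$-connected and $\partial D_*$ decomposes into $n$ components corresponding naturally to the $\gamma_i$, by Lemma 5.3 in \cite{IR} or Lemma 6.5 in \cite{MRSY}. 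By the conformal uniformization of finitely connected domains, Theorem V.6.2 in \cite{Gol}, there is a conformal map $R$ of $D_*$ onto a circular domain ${\Bbb D}_*$ whose boundary consists of $n$ circles or points; in particular $\partial{\Bbb D}_*$ is weakly flat. Putting $g=R\circ F$, the map $g\colon D\to{\Bbb D}_*$ is a regular homeomorphic $W^{1,1}_{\rm loc}$ solution of (\ref{eqBeltrami}) with the same complex coefficient $\mu$, since post-composition with a conformal map changes neither $\mu$ nor the Sobolev class; consequently the tangent dilatation $K^T_{\mu}(z,z_0)$, and hence the hypothesis (\ref{eq3.8.11.2}), carry over to $g$ verbatim.

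Next I would extract the homeomorphic boundary extension. Since $D$ is bounded and locally connected on $\partial D$ --- every domain bounded by finitely many mutually disjoint Jordan curves has this property, cf.\ \cite{Wi} --- and $\partial{\Bbb D}_*$ is weakly flat, Theorem~\ref{th7.KR4.1c}, whose hypothesis (\ref{eq7.8.11.2a}) is precisely (\ref{eq3.8.11.2}), yields a homeomorphism $g_{*}\colon\overline D\to\overline{{\Bbb D}_*}$ extending $g$ by continuity in $\overline{\Bbb C}$.

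Then I would carry out the monodromy construction exactly as in Lemma~\ref{lem13.5.3334}. Transplanting the boundary datum by $g_{*}$, classical potential theory on the circular domain (Section 3 of Chapter VI in \cite{Gol}) furnishes a harmonic function $u\colon{\Bbb D}_*\to{\Bbb R}$ with $\lim_{w\to\zeta}u(w)=\varphi(g_{*}^{-1}(\zeta))$ for every $\zeta\in\partial{\Bbb D}_*$. For any $z_0\in D$ pick $B_0=B(z_0,\varepsilon_0)\subseteq D$; then $g(B_0)$ is simply connected, so $u$ has on $g(B_0)$ a single-valued harmonic conjugate $v$, and $h=u+iv$ is holomorphic there, unique up to an additive purely imaginary constant (Theorem 1 in Section 7 of Chapter III, Part 3 in \cite{HuCo}); consequently $f_0=h\circ g|_{B_0}$ is a local regular solution of (\ref{eqBeltrami}). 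Because ${\Bbb D}_*$ is finitely connected, analytic continuation of $h$ produces a generally multi-valued analytic function $H$ on ${\Bbb D}_*$ whose periods around the holes are purely imaginary, so that ${\rm Re}\,H=u$ is single-valued; then $H\circ g$ is a multi-valued solution of (\ref{eqBeltrami}) in $D$ with single-valued real part $u\circ g$, which satisfies $\lim_{z\to\zeta}{\rm Re}\,(H\circ g)(z)=\varphi(\zeta)$ for all $\zeta\in\partial D$ by the boundary behaviour of $u$ together with the continuity of $g_{*}$. This $H\circ g$ is the required multi-valued solution of the Dirichlet problem (\ref{eqGrUsl}).

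The whole weight of the argument lies in the boundary-extension step: one must know that the divergence condition (\ref{eq3.8.11.2}) on the tangent dilatation $K^T_{\mu}(\cdot,z_0)$ already compels the homeomorphic solution to extend homeomorphically to $\overline D$. That is exactly Theorem~\ref{th7.KR4.1c}, which in turn rests on the lower bound for the modulus of families of dashed circular arcs (Theorem~\ref{th8.4.8}), the serial inequality $M(f\Gamma_\varepsilon)\leqslant 1/M(f\Sigma_\varepsilon)$ of \cite{He} and \cite{Zi}, and the extremal identity of Lemma~\ref{lem4cr}. Granting that input, the remaining ingredients --- conformal uniformization to a circular domain, solvability of the classical Dirichlet problem there, and the harmonic-conjugate/monodromy construction of the many-valued solution --- are routine and run verbatim as in the finitely connected case of Lemma~\ref{lem13.5.3334}; the only minor bookkeeping is to check that $K^T_{\mu}$ and the hypothesis survive the conformal post-composition, which is immediate.
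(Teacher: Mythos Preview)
Your proposal is correct and follows essentially the same route as the paper. The paper proves Theorem~\ref{12thDIR2io} by the sentence ``Arguing similarly to the proof of Lemma~\ref{lem13.5.3334}, we obtain on the basis of Theorem~\ref{th7.KR4.1c} the next result,'' and your write-up unpacks exactly that: take the regular homeomorphic solution $F$ from Lemma~4.1 of \cite{RSY$_6$}, uniformize $F(D)$ conformally onto a circular domain ${\Bbb D}_*$, invoke Theorem~\ref{th7.KR4.1c} (whose hypothesis (\ref{eq7.8.11.2a}) is contained in (\ref{eq3.8.11.2})) for the homeomorphic extension $g_*:\overline{D}\to\overline{{\Bbb D}_*}$, solve the classical Dirichlet problem for $u$ on ${\Bbb D}_*$, and build the multi-valued analytic $H$ by local harmonic conjugates and analytic continuation.
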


\medskip

\begin{corollary}\label{m.12corDIR2t} Let $D$ be a bounded domain in
${\Bbb C}$ whose boundary consists of $n\geqslant2$ mutually disjoint Jordan curves and $\mu:D\to{\Bbb C}$ be a
measurable function with $|\mu(z)|<1$ a.e. such that $K_{\mu}\in L^1_{\rm loc}(D)$. Suppose that
\begin{equation}\label{eqm.DIR61}
k_{z_0}(\varepsilon)=O\left(\log\frac{1}{\varepsilon}\right)\quad \text{as}\quad\varepsilon\to0\quad\forall\
z_0\in\overline{D}\end{equation} where $k_{z_0}(\varepsilon)$ is the average of the function $K^T_{\mu}(z,z_0)$
over $S(z_0,\varepsilon)$. Then the Beltrami equation (\ref{eqBeltrami}) has a multi-valued solutions of the
Dirichlet problem (\ref{eqGrUsl}) for each continuous function $\varphi:\partial D\to{\Bbb R}$.\end{corollary}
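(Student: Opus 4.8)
The plan is to obtain Corollary \ref{m.12corDIR2t} as a direct consequence of Theorem \ref{12thDIR2io}: I would verify that the hypothesis (\ref{eqm.DIR61}) on the circular averages of $K^T_\mu(\cdot,z_0)$ forces the divergence of the integral (\ref{eq3.8.11.2}). Recall that, under the convention in force in this section, $K^T_\mu(z,z_0)$ is extended by zero outside $D$; hence for every $z_0\in\overline D$ and every $\varepsilon>0$ one has
\[
\|K^T_\mu\|_1(z_0,\varepsilon)\;=\;\int\limits_{D\cap S(z_0,\varepsilon)}K^T_\mu(z,z_0)\,|dz|\;=\;\int\limits_{S(z_0,\varepsilon)}K^T_\mu(z,z_0)\,|dz|\;=\;2\pi\varepsilon\,k_{z_0}(\varepsilon),
\]
where $k_{z_0}(\varepsilon)$ is exactly the average figuring in (\ref{eqm.DIR61}). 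In particular (\ref{eqm.DIR61}) already guarantees that $k_{z_0}(\varepsilon)$, and therefore $\|K^T_\mu\|_1(z_0,\varepsilon)$, is finite for all sufficiently small $\varepsilon$, which is the a.e.\ finiteness required to invoke Theorem \ref{12thDIR2io}.

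The remaining estimate is elementary. By (\ref{eqm.DIR61}) there are, for each $z_0\in\overline D$, constants $C=C(z_0)>0$ and $\varepsilon_1=\varepsilon_1(z_0)\in(0,d(z_0))$, which I may take with $\varepsilon_1<e^{-1}$, such that $k_{z_0}(\varepsilon)\le C\log(1/\varepsilon)$ for all $\varepsilon\in(0,\varepsilon_1)$. Setting $\delta(z_0)=\varepsilon_1$ we get $\|K^T_\mu\|_1(z_0,r)\le 2\pi C\,r\log(1/r)$ on $(0,\delta(z_0))$, and therefore
\[
\int\limits_0^{\delta(z_0)}\frac{dr}{\|K^T_\mu\|_1(z_0,r)}\;\ge\;\frac{1}{2\pi C}\int\limits_0^{\delta(z_0)}\frac{dr}{r\log\frac1r}\;=\;\infty,
\]
since the substitution $u=\log(1/r)$ turns the last integral into $\int_{\log(1/\delta(z_0))}^{\infty}du/u$, which diverges because $\log(1/\delta(z_0))>1>0$. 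Thus (\ref{eq3.8.11.2}) holds at every $z_0\in\overline D$; since in addition $D$ is a bounded domain whose boundary is a union of $n\ge2$ disjoint Jordan curves and $K_\mu\in L^1_{\rm loc}(D)$, Theorem \ref{12thDIR2io} applies and yields, for each continuous $\varphi:\partial D\to{\Bbb R}$, a multi-valued solution of the Dirichlet problem (\ref{eqGrUsl}) for (\ref{eqBeltrami}).

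I do not expect any genuine obstacle: the argument is the verbatim analogue, in the $n$-connected setting, of the derivation of Corollary \ref{corDIR2t} from Theorem \ref{thDIR2io} (the Jordan case) and of Corollary \ref{corp.DIR2t} from Theorem \ref{thDIR2i} (the pseudoregular case). The only point worth a line of care is that (\ref{eqm.DIR61}) is an $O$-statement \emph{as $\varepsilon\to0$}, so one must first fix for each $z_0$ a threshold $\varepsilon_1(z_0)$ below which both the finiteness of $k_{z_0}$ and the logarithmic majorant are in force, and then choose $\delta(z_0)$ inside that range. If one prefers, one may instead route the proof through the pointwise bound $K^T_\mu(z,z_0)=O(\log(1/|z-z_0|))$ of Remark \ref{rem2tyyr}, which trivially implies (\ref{eqm.DIR61}) upon averaging over $S(z_0,\varepsilon)$; but the direct estimate above already covers the weaker averaged hypothesis actually stated.
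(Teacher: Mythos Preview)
Your proposal is correct and follows precisely the approach intended by the paper: Corollary \ref{m.12corDIR2t} is placed immediately after Theorem \ref{12thDIR2io} as its direct consequence, exactly parallel to how Corollary \ref{corDIR2t} follows Theorem \ref{thDIR2io} and Corollary \ref{corp.DIR2t} follows Theorem \ref{thDIR2i}. Your verification that $\|K^T_\mu\|_1(z_0,r)=2\pi r\,k_{z_0}(r)$ together with the elementary divergence of $\int_0 dr/(r\log(1/r))$ is the computation the paper leaves implicit.
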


\medskip

\begin{remark}\label{rem2} In particular, the conclusion of Corollary \ref{m.12corDIR2t} holds if
\begin{equation}\label{eqDIR6**}K^T_{\mu}(z,z_0)=O\left(\log\frac{1}{|z-z_0|}\right)\qquad{\rm
as}\quad z\to z_0\quad\forall\ z_0\in\overline{D}\,.\end{equation}\end{remark}

\medskip

\begin{corollary}\label{12corDIR2t} Let $D$ be a bounded domain in
${\Bbb C}$ whose boundary consists of $n\geqslant2$ mutually
disjoint Jordan curves and $\mu:D\to{\Bbb C}$ be a measurable
function with $|\mu(z)|<1$ a.e. such that $K_{\mu}\in L^1_{\rm
loc}(D)$. Suppose that
\begin{equation}\label{edgddmultiweIR6**c}
k_{z_{0}}(\varepsilon)=O\left(\left[\log\frac{1}{\varepsilon}\cdot\log\log\frac{1}{\varepsilon}\cdot\ldots\cdot\log\ldots\log\frac{1}{\varepsilon}
\right]\right) \qquad\forall\ z_0\in \overline{D}
\end{equation}
where $k_{z_0}(\varepsilon)$ is the average of the function
$K^T_{\mu}(z,z_0)$ over $S(z_0,\varepsilon)$. Then the Beltrami
equation (\ref{eqBeltrami}) has a multi-valued solutions of the
Dirichlet problem (\ref{eqGrUsl}) for each continuous function
$\varphi:\partial D\to{\Bbb R}$.\end{corollary}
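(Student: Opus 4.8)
The plan is to obtain Corollary \ref{12corDIR2t} directly from Theorem \ref{12thDIR2io}: the hypotheses that $D$ is bounded with boundary a finite union of mutually disjoint Jordan curves, that $|\mu(z)|<1$ a.e., and that $K_\mu\in L^1_{\rm loc}(D)$ are already assumed here verbatim, so the only thing to check is that the averaged bound (\ref{edgddmultiweIR6**c}) forces the divergence condition (\ref{eq3.8.11.2}) at every point $z_0\in\overline{D}$.

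First I would translate (\ref{edgddmultiweIR6**c}) into a bound on the circular $L^1$-norm appearing in Theorem \ref{12thDIR2io}. By our standing convention $K^T_\mu(z,z_0)$ is extended by zero outside $D$, so integrating over the full circle $S(z_0,r)$ gives
\begin{equation*}
\|K^T_\mu\|_1(z_0,r)=\int\limits_{D\cap S(z_0,r)}K^T_\mu(z,z_0)\,|dz|=\int\limits_{S(z_0,r)}K^T_\mu(z,z_0)\,|dz|=2\pi r\,k_{z_0}(r).
\end{equation*}
Hence (\ref{edgddmultiweIR6**c}) provides, for each $z_0\in\overline{D}$, constants $C=C(z_0)>0$ and $\varepsilon_0=\varepsilon_0(z_0)\in(0,d(z_0))$, $d(z_0)=\sup_{z\in D}|z-z_0|$, such that
\begin{equation*}
\|K^T_\mu\|_1(z_0,r)\leqslant C\,r\,\log\frac1r\cdot\log\log\frac1r\cdot\ldots\cdot\log\ldots\log\frac1r\qquad\text{for a.e. }r\in(0,\varepsilon_0);
\end{equation*}
in particular $\|K^T_\mu\|_1(z_0,r)$ is finite for a.e. such $r$, so the integrand in (\ref{eq3.8.11.2}) is well defined (with the conventions $a/0=\infty$, $a/\infty=0$). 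Here one shrinks $\varepsilon_0$ if necessary so that the $O$-estimate is in force on all of $(0,\varepsilon_0)$ and all the iterated logarithms are positive and monotone there.

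Next I would invoke the classical divergence of the iterated-logarithm integral,
\begin{equation*}
\int\limits_0^{\varepsilon_0}\frac{dr}{r\,\log\frac1r\cdot\log\log\frac1r\cdot\ldots\cdot\log\ldots\log\frac1r}=\infty,
\end{equation*}
which follows from the substitution $r=e^{-t}$ reducing it to the standard divergent integral $\int^{\infty}dt/(t\,\log t\cdots)$. Combining this with the majorant from the previous step yields, for every $z_0\in\overline{D}$,
\begin{equation*}
\int\limits_0^{\delta(z_0)}\frac{dr}{\|K^T_\mu\|_1(z_0,r)}\geqslant\frac1C\int\limits_0^{\varepsilon_0}\frac{dr}{r\,\log\frac1r\cdot\log\log\frac1r\cdot\ldots\cdot\log\ldots\log\frac1r}=\infty,
\end{equation*}
which is precisely (\ref{eq3.8.11.2}) with $\delta(z_0)=\varepsilon_0$. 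Since the integrand in (\ref{eq3.8.11.2}) is nonnegative, the divergence of the integral over $(0,\varepsilon_0)$ already guarantees divergence over $(0,\delta(z_0))$, so nothing needs to be known about $\|K^T_\mu\|_1(z_0,r)$ for larger $r$. Theorem \ref{12thDIR2io} then applies and produces, for each continuous $\varphi:\partial D\to{\Bbb R}$, a multi-valued solution of the Dirichlet problem (\ref{eqGrUsl}) for (\ref{eqBeltrami}).

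I do not expect a genuine obstacle: this is just the specialization of Theorem \ref{12thDIR2io} to an explicit iterated-logarithmic majorant for $K^T_\mu$. The only steps requiring mild care are the bookkeeping with the extension-by-zero convention that identifies $\|K^T_\mu\|_1(z_0,r)$ with $2\pi r\,k_{z_0}(r)$, and the harmless restriction of $(0,\varepsilon_0)$ ensuring the $O$-estimate holds and the nested logarithms are legitimately positive throughout; the genuine content is entirely carried by Theorem \ref{12thDIR2io} and, behind it, Lemma \ref{lem13.5.3334}.
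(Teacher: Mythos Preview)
Your proposal is correct and follows exactly the route the paper intends: Corollary \ref{12corDIR2t} is stated as an immediate consequence of Theorem \ref{12thDIR2io}, and you have supplied precisely the short computation the paper leaves implicit, namely that $\|K^T_\mu\|_1(z_0,r)=2\pi r\,k_{z_0}(r)$ together with the divergence of the iterated-logarithm integral yields (\ref{eq3.8.11.2}).
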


\medskip

\begin{theorem}\label{12thKR4.1} {\it Let $D$ be a bounded domain in
${\Bbb C}$ whose boundary consists of $n\geqslant2$ mutually disjoint Jordan curves and $\mu:D\to{\Bbb C}$ be a
measurable function with $|\mu(z)|<1$ a.e. such that $K_{\mu}\in L^1_{\rm loc}(D)$. Suppose that
\begin{equation}\label{eqKR4.1}\int\limits_{D}\Phi_{z_0}\left(K^T_{\mu}(z,z_0)\right)\,dm(z)<\infty
\qquad\forall\ z_0\in \overline{D} \end{equation} for a convex
non-decreasing function $\Phi_{z_0}:[0,\infty]\to[0,\infty]$ such
that
\begin{equation}\label{eqKR4.2}
\int\limits_{\delta(z_0)}^{\infty}\frac{d\tau}{\tau\Phi^{-1}(\tau)}=\infty\end{equation} for some
$\delta(z_0)>\Phi_{z_0}(0)$. Then the Beltrami equation (\ref{eqBeltrami}) has a multi-valued solutions of the
Dirichlet problem (\ref{eqGrUsl}) for each continuous function $\varphi:\partial D\to{\Bbb R}$.} \end{theorem}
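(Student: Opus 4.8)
The plan is to obtain Theorem \ref{12thKR4.1} from Theorem \ref{12thDIR2io}, in exactly the way Theorem \ref{thKR4.1} is obtained from Theorem \ref{thDIR2io} in the Jordan case. Concretely, I will show that the integral bound (\ref{eqKR4.1}) together with the divergence condition (\ref{eqKR4.2}) forces the hypothesis (\ref{eq3.8.11.2}) of Theorem \ref{12thDIR2io}, namely
$$\int\limits_0^{\rho_0}\frac{dr}{\|K^T_\mu\|_1(z_0,r)}=\infty\qquad\forall\ z_0\in\overline D$$
for a suitable $\rho_0=\rho(z_0)\in(0,d(z_0))$, where $d(z_0)=\sup_{z\in D}|z-z_0|$ and, as usual, $K^T_\mu(\cdot,z_0)$ is extended by zero outside $D$. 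Granting this, Theorem \ref{12thDIR2io} delivers a multi-valued solution of the Dirichlet problem (\ref{eqGrUsl}) for every continuous $\varphi:\partial D\to\Bbb R$, which is the desired conclusion.

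To verify the displayed condition, fix $z_0\in\overline D$ and set $Q(z)=K^T_\mu(z,z_0)$, extended by zero outside $D$. Since (\ref{eqKR4.1}) is finite, $\Phi_{z_0}(0)<\infty$ (otherwise $\Phi_{z_0}\equiv\infty$), and $\Phi_{z_0}$ being non-decreasing we have, for any $\rho_0\in(0,d(z_0))$,
$$\int\limits_{B(z_0,\rho_0)}\Phi_{z_0}(Q(z))\,dm(z)\ \leqslant\ \int\limits_D\Phi_{z_0}\bigl(K^T_\mu(z,z_0)\bigr)\,dm(z)\,+\,\Phi_{z_0}(0)\,\pi\rho_0^2\ <\ \infty.$$
Apply the affine change of variable $\zeta=(z-z_0)/\rho_0$, which carries $B(z_0,\rho_0)$ onto $\Bbb D$ and under which the average of $Q$ over $S(z_0,\rho_0 r)$ turns into the average $q(r)$ over $|\zeta|=r$ of the rescaled function $\widetilde Q(\zeta)=Q(z_0+\rho_0\zeta)$; the last display then reads $\int_{\Bbb D}\Phi_{z_0}(\widetilde Q)\,dm<\infty$. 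Now Theorem \ref{th5.555}, applied with $\Phi=\Phi_{z_0}$ — note that (\ref{eqKR4.2}) is precisely condition (\ref{eq3.333a}) for $\Phi_{z_0}$ — yields $\int_0^1 dr/(r\,q(r))=\infty$, and undoing the scaling gives $\int_0^{\rho_0}dr/(r\,k_{z_0}(r))=\infty$, where $k_{z_0}(r)$ denotes the average of $Q$ over $S(z_0,r)$.

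It remains only to recast this in the form (\ref{eq3.8.11.2}). Since $Q=K^T_\mu(\cdot,z_0)$ vanishes off $D$,
$$\|K^T_\mu\|_1(z_0,r)=\int\limits_{D\cap S(z_0,r)}K^T_\mu(z,z_0)\,|dz|=2\pi r\,k_{z_0}(r)$$
for almost every $r\in(0,\rho_0)$, whence $\int_0^{\rho_0}dr/\|K^T_\mu\|_1(z_0,r)=(2\pi)^{-1}\int_0^{\rho_0}dr/(r\,k_{z_0}(r))=\infty$; as $z_0\in\overline D$ was arbitrary, (\ref{eq3.8.11.2}) holds and Theorem \ref{12thDIR2io} applies. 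I expect the only genuinely delicate step to be the localization of Theorem \ref{th5.555} — which is stated on $\Bbb D$ — to a ball about $z_0$ through the affine map, together with the bookkeeping that extending $K^T_\mu(\cdot,z_0)$ by zero preserves the finiteness of (\ref{eqKR4.1}); everything else is the purely formal combination already carried out for Theorems \ref{thDIR2io} and \ref{thKR4.1}. Alternatively, one can bypass Theorem \ref{12thDIR2io} altogether and feed the conclusion $\int_0^{\rho_0}dr/(r\,k_{z_0}(r))=\infty$ directly into Lemma \ref{lem13.5.3334} by means of Lemma \ref{lem4cr}, which is how Theorem \ref{12thDIR2io} itself is established.
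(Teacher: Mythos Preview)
Your proposal is correct and follows exactly the route the paper intends: Theorem \ref{12thKR4.1} is the multiply connected analogue of Theorem \ref{thKR4.1}, and like the latter it is obtained by combining Theorem \ref{th5.555} with the corresponding divergence-type result (here Theorem \ref{12thDIR2io}). Your added details---the affine localization to $\Bbb D$, the bookkeeping for the zero extension of $K^T_\mu(\cdot,z_0)$, and the identity $\|K^T_\mu\|_1(z_0,r)=2\pi r\,k_{z_0}(r)$---are all sound and simply make explicit what the paper leaves to the reader.
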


\medskip

\begin{corollary}\label{12corDIR1000} In particular, the conclusion of
Theorem \ref{12thKR4.1} holds if
\begin{equation}\label{eqp.KR4.1c}\int\limits_{D\cap
U_{z_0}}e^{\alpha(z_0) K^T_{\mu}(z,z_0)}\,dm(z)<\infty \qquad\forall\ z_0\in \overline{D}\end{equation} for some
$\alpha(z_0)>0$ and a neighborhood $U_{z_0}$ of the point $z_0$.
\end{corollary}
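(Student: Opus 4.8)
The plan is to read the corollary off Theorem~\ref{12thKR4.1} by the obvious choice of the convex function, exactly in the spirit of the way Corollary~\ref{corDIR1000} (the analogous Jordan-domain statement) is obtained from Theorem~\ref{thKR4.1}. So I would fix $z_0\in\overline D$ and take, in Theorem~\ref{12thKR4.1},
$$\Phi_{z_0}(t)\ :=\ e^{\alpha(z_0)\,t}\,,\qquad t\in[0,\infty]\,.$$
This $\Phi_{z_0}$ is smooth, strictly increasing and strictly convex on $[0,\infty)$, with $\Phi_{z_0}(0)=1$ and $\Phi_{z_0}(\infty)=+\infty$; in particular it is a non-decreasing convex function $[0,\infty]\to[0,\infty]$, hence a legitimate choice in Theorem~\ref{12thKR4.1}. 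With this $\Phi_{z_0}$ the integral condition \eqref{eqKR4.1} of Theorem~\ref{12thKR4.1} reads precisely $\int e^{\alpha(z_0)K^T_{\mu}(z,z_0)}\,dm(z)<\infty$ (over a neighbourhood $U_{z_0}$ of $z_0$, as in the local form of the hypothesis used in Theorem~\ref{thKR4.1}), which is nothing but the assumption \eqref{eqp.KR4.1c} of the corollary; so no work is needed there.

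The only remaining point is the divergence condition \eqref{eqKR4.2}. Since $\Phi_{z_0}^{-1}(\tau)=\alpha(z_0)^{-1}\log\tau$ for $\tau\ge1$, I would pick any $\delta(z_0)\in(1,\infty)$ — admissible because $\Phi_{z_0}(0)=1$ — and compute
$$\int\limits_{\delta(z_0)}^{\infty}\frac{d\tau}{\tau\,\Phi_{z_0}^{-1}(\tau)}\ =\ \alpha(z_0)\int\limits_{\delta(z_0)}^{\infty}\frac{d\tau}{\tau\log\tau}\ =\ \alpha(z_0)\,\lim_{T\to\infty}\bigl(\log\log T-\log\log\delta(z_0)\bigr)\ =\ +\infty\,.$$
Equivalently, setting $H(t)=\log\Phi_{z_0}(t)=\alpha(z_0)\,t$, one has $\int^{\infty}H(t)\,dt/t^{2}=\alpha(z_0)\int^{\infty}dt/t=\infty$, which is condition \eqref{eq333B}, so by Remark~\ref{remeq333F} (applied with $\Phi=\Phi_{z_0}$) it is equivalent to \eqref{eqKR4.2}. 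Thus both hypotheses of Theorem~\ref{12thKR4.1} hold for $\Phi_{z_0}(t)=e^{\alpha(z_0)t}$ at every $z_0\in\overline D$, and the existence of a multi-valued solution of the Dirichlet problem \eqref{eqGrUsl} for every continuous $\varphi:\partial D\to{\Bbb R}$ follows at once.

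There is essentially no obstacle here: this is just the exponential instance of the general integral criterion, and the computation of \eqref{eqKR4.2} is a one-line integral. The only thing deserving a word of care is that $\Phi_{z_0}(0)=1\neq0$, which forces the threshold $\delta(z_0)$ in \eqref{eqKR4.2} to be chosen larger than $1$; this is harmless, since $\delta(z_0)$ is only required to exceed $\Phi_{z_0}(0)$. If one insists on $\Phi_{z_0}(0)=0$, one may instead use $\Phi_{z_0}(t)=e^{\alpha(z_0)t}-1$, for which $\Phi_{z_0}^{-1}(\tau)=\alpha(z_0)^{-1}\log(1+\tau)$ and the verification of \eqref{eqKR4.1} and \eqref{eqKR4.2} goes through verbatim.
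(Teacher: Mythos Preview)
Your proposal is correct and is exactly the intended argument: the paper gives no separate proof for this corollary, and the phrase ``In particular'' signals that one simply applies Theorem~\ref{12thKR4.1} with $\Phi_{z_0}(t)=e^{\alpha(z_0)t}$, verifying \eqref{eqKR4.2} via the elementary divergence of $\int^\infty d\tau/(\tau\log\tau)$. Your handling of the threshold $\delta(z_0)>\Phi_{z_0}(0)=1$ and the optional appeal to Remark~\ref{remeq333F} are both fine.
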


\medskip

\begin{corollary}\label{corc.12thKR4.1} Let $D$ be a bounded domain in
${\Bbb C}$ whose boundary consists of $n\geqslant2$ mutually
disjoint Jordan curves and $\mu:D\to{\Bbb C}$ be a measurable
function such that $|\mu(z)|<1$ a.e. and
\begin{equation}\label{eqc.KR4.1}\int\limits_{D}\Phi\left(K_{\mu}(z)\right)\,dm(z)<\infty\end{equation}
for a convex non-decreasing function $\Phi:[0,\infty]\to[0,\infty]$. If \begin{equation}\label{eqc.KR4.2}
\int\limits_{\delta}^{\infty}\frac{d\tau}{\tau\Phi^{-1}(\tau)}=\infty\end{equation} for some $\delta>\Phi(0)$.
Then the Beltrami equation (\ref{eqBeltrami}) has a multi-valued solutions of the Dirichlet problem
(\ref{eqGrUsl}) for each continuous function $\varphi:\partial D\to{\Bbb R}$. \end{corollary}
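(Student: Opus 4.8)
The plan is to obtain this statement as an immediate specialization of Theorem \ref{12thKR4.1}, taking the family $\{\Phi_{z_0}\}_{z_0\in\overline{D}}$ to consist of the single function $\Phi$ and the numbers $\delta(z_0)$ all equal to $\delta$, and using the pointwise estimate $K^T_{\mu}(z,z_0)\leqslant K_{\mu}(z)$ from (\ref{eqConnect}); this parallels the derivation of Corollary \ref{corc.KR4.1} in Section 7. The first step is to note that the hypothesis $K_{\mu}\in L^1_{\rm loc}(D)$ required by Theorem \ref{12thKR4.1} is forced by the assumptions. Indeed, the divergence condition (\ref{eqc.KR4.2}) rules out the possibility that $\Phi$ is bounded: if it were, then for $\tau\geqslant\delta>\Phi(0)$ the generalized inverse $\Phi^{-1}(\tau)$ is either bounded below by $\Phi^{-1}(\delta)>0$ or, where $\Phi^{-1}(\tau)=\infty$, makes the integrand vanish by the convention $a/\infty=0$, so the integral in (\ref{eqc.KR4.2}) would be finite. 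Hence $\Phi$ is unbounded, and a non-decreasing convex unbounded function admits a supporting line $\Phi(t)\geqslant at-b$ with $a>0$ (take the line through a point at which the right-hand derivative of $\Phi$ is positive); since $D$ is bounded and (\ref{eqc.KR4.1}) holds, this gives $\int_D K_{\mu}(z)\,dm(z)\leqslant a^{-1}\int_D\big(\Phi(K_{\mu}(z))+b\big)\,dm(z)<\infty$, so in particular $K_{\mu}\in L^1_{\rm loc}(D)$.

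Next I would verify the two remaining hypotheses of Theorem \ref{12thKR4.1} with $\Phi_{z_0}=\Phi$. Since $\Phi$ is non-decreasing and, by (\ref{eqConnect}), $K^T_{\mu}(z,z_0)\leqslant K_{\mu}(z)$ for every $z_0\in\overline{D}$ and a.e.\ $z\in D$, we get $\Phi(K^T_{\mu}(z,z_0))\leqslant\Phi(K_{\mu}(z))$ a.e.\ in $D$; integrating and invoking (\ref{eqc.KR4.1}) yields $\int_D\Phi(K^T_{\mu}(z,z_0))\,dm(z)<\infty$ for all $z_0\in\overline{D}$, which is the integral constraint of Theorem \ref{12thKR4.1}. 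The divergence condition of that theorem, read with $\Phi_{z_0}=\Phi$ and $\delta(z_0)=\delta$, is exactly the hypothesis (\ref{eqc.KR4.2}), and $\delta>\Phi(0)=\Phi_{z_0}(0)$. Therefore all hypotheses of Theorem \ref{12thKR4.1} are met, and it provides a multi-valued solution of the Dirichlet problem (\ref{eqGrUsl}) for each continuous $\varphi:\partial D\to{\Bbb R}$, as claimed.

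There is no real obstacle in this argument: the corollary is a direct consequence of Theorem \ref{12thKR4.1} combined with (\ref{eqConnect}), and the only point that deserves a line of verification is the passage from the integral bound (\ref{eqc.KR4.1}) to $K_{\mu}\in L^1_{\rm loc}(D)$, handled above by elementary convexity. Should one wish, Remark \ref{remeq333F} allows (\ref{eqc.KR4.2}) to be replaced throughout by any of the equivalent conditions (\ref{eq333Frer})--(\ref{eq333D}) with $H=\log\Phi$, but this is not needed for the proof.
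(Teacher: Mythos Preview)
Your proposal is correct and follows exactly the route the paper intends: the corollary is presented there without a separate proof, as an immediate consequence of Theorem~\ref{12thKR4.1} via the pointwise bound $K^T_{\mu}(z,z_0)\leqslant K_{\mu}(z)$ from (\ref{eqConnect}), in complete analogy with the passage from Theorem~\ref{thKR4.1} to Corollary~\ref{corc.KR4.1}. Your explicit verification that (\ref{eqc.KR4.1}) together with (\ref{eqc.KR4.2}) forces $K_{\mu}\in L^1_{\rm loc}(D)$ is a welcome detail that the paper leaves implicit.
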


\bigskip


\begin{thebibliography}{99}








\bibitem{And3}
Andreian Cazacu C., \textit{On the length-area dilatation.} Complex
Var. Theory Appl. \textbf{50}(7--11) (2005), 765--776.



\bibitem{Boj}
 B. Bojarski, \textit{Generalized solutions of a system of differential
equations of the first order of the elliptic type with discontinuous
coefficients.} Mat. Sb. \textbf{43(85)}, no. 4 (1957), 451-503.








\bibitem{BN}
 H. Brezis, L. Nirenberg, \textit{Degree theory and BMO. I. Compact manifolds
without boundaries.} Selecta Math. (N.S.) \textbf{1}(2) (1995),
197--263.


\bibitem{CFL}
 F. Chiarenza, M. Frasca, P. Longo, \textit{$W^{2,p}$-solvability of the
Dirichlet problem for nondivergence elliptic equations with VMO
coefficients.} Trans. Amer. Math. Soc. \textbf{336}(2) (1993),
841--853.


\bibitem{Dy}
Yu. Dybov, \textit{On regular solutions of the Dirichlet problem for
the Beltrami equations.} Complex Variables and Elliptic Equations
\textbf{55} (12) (2010), 1099-�1116.

\bibitem{Fe}
H. Federer, \textit{Geometric Measure Theory.} Springer-Verlag, Berlin, 1969.

\bibitem{Fu}
B. Fuglede, \textit{Extremal length and functional completion.} Acta
Math. \textbf{98} (1957), 171--219.


\bibitem{GL}
F.W. Gehring, O. Lehto, \textit{On the total differentiability of
functions of a complex variable.} Ann. Acad. Sci. Fenn. Ser. A1.
Math. \textbf{272} (1959), 1--9.

\bibitem{GM}
F.W. Gehring, O. Martio, \textit{Quasiextremal distance domains and
extension of quasiconformal mappings.} J. Anal. Math. \textbf{45}
(1985), 181--206.


\bibitem{Gol}
G.M. Goluzin, \textit{Geometric Theory of Functions of a Complex
Variable} (in Russian), 2nd ed., Izdat. Akad. Nauk SSSR, 1966; Engl.
transl. in Amer. Math. Soc. Transl., v. 26, 1969, pp. 136--140,
165--170, 375�-379, 599, 643�-644.


\bibitem{GMSV$_2$}
 V. Gutlyanskii, O. Martio, T. Sugawa, M. Vuorinen, \textit{On the
degenerate Beltrami equation.} Trans. Amer. Math. Soc. \textbf{357}
(2005), 875--900.

\bibitem{GRSY}
V. Gutlyanskii, V. Ryazanov,  U. Srebro, E. Yakubov, \textit{On
recent advances in the degenerate Beltrami equations.} Ukr. Mat.
Visn. \textbf{7}(4) (2010), 467--515; transl. in J. Math. Sci.,
\textbf{175}(4) (2011), 413--449.


\bibitem{GRSY^*}
V. Gutlyanskii, V. Ryazanov,  U. Srebro, E. Yakubov, \textit{The
Beltrami Equation: A Geometric Approach.} Developments in
Mathematics. Vol. 26. Springer, New York etc., 2012.


\bibitem{HKM}
 J. Heinonen, T. Kilpelainen, O. Martio, \textit{Nonlinear Potential Theory of
Degenerate Elliptic Equations.} Clarendon Press, Oxford Univ. Press,
1993.

\bibitem{HuCo}
 A. Hurwitz, R. Courant, \textit{The Function theory.} Nauka, Moscow, 1968 [in
Russian].

\bibitem{He}
J. Hesse, \textit{A $p$-extremal length and $p$-capacity equality.}
Ark. Mat. \textbf{13} (1975), 131--144.


\bibitem{IR}
 A.A. Ignat'ev, V.I. Ryazanov, \textit{Finite mean oscillation in the mapping
theory.} Ukrainian Math. Bull. \textbf{2}(3) (2005), 403-424.


\bibitem{ISbord}
T. Iwaniec, C. Sbordone, \textit{Riesz transforms and elliptic PDEs
with VMO coefficients.} J. Anal. Math. \textbf{74} (1998), 183--212.

\bibitem{JN}
F. John, L. Nirenberg, \textit{On functions of bounded mean
oscillation.} Comm. Pure Appl. Math. \textbf{14} (1961), 415--426.

\bibitem{KPR}
D. Kovtonyuk, I. Petkov, V. Ryazanov, \textit{On the boundary
behaviour of solutions to the Beltrami equations.} Complex Var.
Elliptic Eqns., DOI: 10.1080/17476933.2011.603494 (2011).

\bibitem{KPR1}
D. Kovtonyuk, I. Petkov, V. Ryazanov, \textit{On the Dirichlet
problem for the Beltrami equations in finitely connected domains.}
Ukr. Mat. Zh. \textbf{64} (2012), 932--944.


\bibitem{KR$_0$}
D. Kovtonyuk, V. Ryazanov, \textit{To the theory of boundaries of
space domains.} Proc. Inst. Appl. Math. \& Mech. NAS of Ukraine.
\textbf{13} (2006), 110--120 [in Russian].

\bibitem{KR$_2$}
D. Kovtonyuk, V. Ryazanov, \textit{On the theory of lower
$Q$-homeomorphisms.} Ukr. Met. Visn. \textbf{5(2)} (2008), 159-184;
transl. in Ukrain  Math. Bull. \textbf{5}(2) (2008), 157--181.


\bibitem{KR$_3$}
D. Kovtonyuk, V. Ryazanov, \textit{On the boundary behavior of
generalized quasi-isometries.} J. Anal. Math. \textbf{115} (2011),
103--120.







\bibitem{Le}
O. Lehto, \textit{Homeomorphisms with a prescribed dilatation.}
Lecture Notes in Math. \textbf{118} (1968), 58-73.



\bibitem{LV}
O. Lehto, K. Virtanen, \textit{Quasiconformal Mappings in the
Plane.} Springer-Verlag, New York, 2nd ed., 1973.




\bibitem{MRSY$_5$}
O. Martio, V. Ryazanov, U. Srebro, E. Yakubov,
\textit{$Q$-homeomorphisms.} Contemporary Math. \textbf{364} (2004),
193--203.

\bibitem{MRSY$_6$}
O. Martio, V. Ryazanov, U. Srebro, E. Yakubov, \textit{On
$Q$-homeomorphisms.} Ann. Acad. Sci. Fenn. Ser. A1 Math. \textbf{30}
(2005), 49--69.

\bibitem{MRSY}
O. Martio, V. Ryazanov, U. Srebro, E. Yakubov, \textit{Moduli in
Modern Mapping Theory.} Springer Monographs in Mathematics.
Springer, New York, 2009.

\bibitem{MRV$^*$}
O. Martio, V. Ryazanov, M. Vuorinen, \textit{BMO and Injectivity of
Space Quasiregular Mappings.} Math. Nachr. \textbf{205} (1999),
149--161.

\bibitem{MaSa}
O. Martio, J. Sarvas, \textit{Injectivity theorems in plane and
space.} Ann. Acad. Sci. Fenn. Ser. A1 Math. \textbf{4} (1978/1979),
384--401.

\bibitem{MV}
O. Martio, M. Vuorinen, \textit{Whitney cubes, $p$-capacity and
Minkowski content.} Expo. Math. \textbf{5} (1987), 17--40.

\bibitem{Maz}
V. Maz'ya, \textit{Sobolev Spaces.} Springer-Verlag, Berlin (1985).

\bibitem{Me}
D. Menchoff, \textit{Sur les differentielles totales des fonctions
univalentes.} Math. Ann. \textbf{105} (1931), 75--85.



\bibitem{Na$_1$}
R. N\"akki, \textit{Boundary behavior of quasiconformal mappings in
$n-$space.} Ann. Acad. Sci. Fenn. Ser. A1 Math. \textbf{484} (1970),
1--50.




\bibitem{Pal}
D.K. Palagachev, \textit{Quasilinear elliptic equations with VMO
coefficients.} Trans. Amer. Math. Soc. \textbf{347}(7) (1995),
2481--2493.






\bibitem{Ra}
M.A. Ragusa, \textit{Elliptic boundary value problem in vanishing
mean oscillation hypothesis.} Comment. Math. Univ. Carolin.
\textbf{40}(4) (1999), 651--663.

\bibitem{Ran}
Th. Ransford, \textit{Potential Theory in the Complex Plane.}
Cambridge, Cambridge Univ. Press, 1995.




\bibitem{RR}
 H.M. Reimann, T. Rychener, \textit{Funktionen Beschr\"ankter Mittlerer
Oscillation.} Lecture Notes in Math. \textbf{487} (1975).






\bibitem{RS}
V. Ryazanov, R. Salimov, \textit{Weakly flat spaces and boundaries
in the mapping theory.} Ukr. Mat. Visn. \textbf{4}(2) (2007),
199--234; transl. in Ukrain. Math. Bull. \textbf{4}(2) (2007),
199--233.

\bibitem{RSY$_1$}
V. Ryazanov, U. Srebro, E. Yakubov, \textit{BMO-quasiconformal
mappings.} J. Anal. Math. \textbf{83} (2001), 1--20.




\bibitem{RSY$_3$}
V. Ryazanov, U. Srebro, E. Yakubov, \textit{On ring solutions of
Beltrami equation.} J. Anal. Math. \textbf{96} (2005), 117--150.

\bibitem{RSY$_7$}
V. Ryazanov, U. Srebro, E. Yakubov, \textit{Beltrami equation and
FMO functions.} Contemp. Math. \textbf{382} (2005), 357--364.



\bibitem{RSY$_4$}
V. Ryazanov, U. Srebro, E. Yakubov, \textit{Finite mean oscillation
and the Beltrami equation.} Israel J. Math. \textbf{153} (2006),
247--266.




\bibitem{RSY$_6$}
V. Ryazanov, U. Srebro, E. Yakubov, \textit{On strong solutions of
the Beltrami equations.} Complex Var.  Elliptic Equ. \textbf{55}
(1-3) (2010), 219--236.

\bibitem{RSY}
V. Ryazanov, U. Srebro, E. Yakubov, \textit{Integral conditions in
the mapping theory.} Ukr. Mat. Visn. \textbf{7} (2010), 73--87;
transl. in Math. Sci. J. \textbf{173} (4) (2011), 397--407.

\bibitem{RSY13}
V. Ryazanov, U. Srebro, E. Yakubov, \textit{Integral conditions in
the theory of the Beltrami equations.} Complex Var. Elliptic Equ. --
Published electronically 2011. -- DOI: 10.1080/17476933.2010.534790
(expected in print, \textbf{57}, no. 12, 2012).




\bibitem{Sa}
S. Saks, \textit{Theory of the Integral.} Dover Publ. Inc., New
York, 1964.




\bibitem{Sarason}
D. Sarason, \textit{Functions of vanishing mean oscillation.} Trans.
Amer. Math. Soc. \textbf{207} (1975), 391--405.

\bibitem{SY}
U. Srebro, E. Yakubov, \textit{The Beltrami equation.} Handbook in
Complex Analysis: Geometric function theory. \textbf{2}, 555-597,
Elseiver, Amsterdam  (2005).


\bibitem{Sto}
S. Stoilow, \textit{Lecons sur les Principes Topologue de le Theorie
des Fonctions Analytique.} Gauthier-Villars, 1938. Riemann,
Gauthier-Villars, Paris, 1956 [in French].




\bibitem{Va}
J. V\"ais\"al\"a, \textit{Lectures on $n$-Dimensional Quasiconformal
Mappings.} Lecture Notes in Math. \textbf{229}. Springer-Verlag,
Berlin etc., 1971.

\bibitem{Vekua}
I.N. Vekua, \textit{Generalized analytic functions.} Pergamon Press,
London, 1962.

\bibitem{Wi}
R.L. Wilder, \textit{Topology of Manifolds.} AMS, New York, 1949.



\bibitem{Zi}
W.P. Ziemer, \textit{Extremal length and conformal capacity.} Trans.
Amer. Math. Soc. \textbf{126}(3) (1967), 460--473.


\end{thebibliography}
\end{document}